\newcommand{\footnotetextplain}[1]{\begingroup\def\@thefnmark{}%
  \long\def\@makefntext##1{\parindent 0pt\noindent ##1}\@footnotetext{#1}
  \endgroup}
\newcommand{\TeoremaAmbFinalMarcat}[1]{%
  \expandafter\gdef\csname end#1\endcsname{\@endtheorem}}
\hfill\rule{2.5mm}{2.5mm} \vspace{\parskip} } 
\newtheorem{theorem}{Theorem}[section]
\newtheorem{proposition}[theorem]{Proposition}
\newtheorem{corollary}[theorem]{Corollary}
\newtheorem{lemma}[theorem]{Lemma}
\newtheorem{hypothesis}[theorem]{Hypothesis}
\theoremstyle{definition}
\newtheorem{definition}[theorem]{Definition} \TeoremaAmbFinalMarcat{defi}
\newtheorem{remark}[theorem]{Remark} \TeoremaAmbFinalMarcat{rem}
\def\@enum@{\list{\csname label\@enumctr\endcsname}%
           {\usecounter{\@enumctr}\def\makelabel##1{\hss\llap{##1}}
           \itemsep=2pt\parsep=0pt\topsep=3pt plus 1pt minus 1 pt}}
\newcommand{\xnorm}[1]{ \Vert #1 \Vert }
\newcommand{\zz}[1]{\mathbb #1}
\newcommand{\var}{\mathrm{Var}}
\title[Statistics of Self-intersection Counts]{Statistical regularities of self-intersection counts for 
geodesics on negatively curved surfaces}
\author{Steven P. Lalley} \address{University of Chicago\\ Department
of Statistics \\ 5734
University Avenue \\
Chicago IL 60637.}
\email{lalley@galton.uchicago.edu}
\date{\today}
\subjclass{Primary 57M05, secondary 53C22, 37D40}
\keywords{closed geodesic,  self-intersection, Liouville measure,
central limit theorem, 
Gibbs sate, U-statistic}
\thanks{Supported by NSF grant DMS-1106669}
\begin{document}

\begin{abstract}
Let $\Upsilon $ be a compact, negatively curved surface.  From the
(finite) set of all closed geodesics on $\Upsilon$ of length $\leq L$,
choose one, say $\gamma_{L}$, at random and let $N (\gamma_{L})$ be
the number of its self-intersections. It is known that there is a
positive constant $\kappa$ depending on the metric such that $N
(\gamma_{L})/L^{2} \rightarrow \kappa$ in probability as $L\rightarrow
\infty$. The main results of this paper concern the size of typical
fluctuations of $N (\gamma_{L})$ about $\kappa L^{2}$. It is proved
that if the metric has constant curvature $-1$ then typical
fluctuations are of order $L$, in particular, $(N (\gamma_{L})-\kappa
L^{2})/L$ converges weakly to a nondegenerate probability
distribution. In contrast, it is also proved that if the metric has variable
negative curvature then fluctuations of $N (\gamma_{L})$ are of order
$L^{3/2}$, in particular, $(N (\gamma_{L})-\kappa L^{2})/L^{3/2}$
converges weakly to a Gaussian distribution. Similar results are
proved for generic geodesics, that is, geodesics whose initial tangent
vectors are chosen randomly according to normalized Liouville measure.
\end{abstract}

\maketitle
\newpage
\tableofcontents

\section{Introduction}\label{sec:intro}

\subsection{Self-intersections of random geodesics}\label{ssec:randomGD}

Choose a point $x$ and a direction $\theta$ at random on a compact,
negatively curved surface $\Upsilon$ --- that is, so that the distribution of
the random unit vector $(x,\theta)$ is the  normalized Liouville
measure on the unit tangent bundle $S\Upsilon$ --- and let
${\gamma}(t)={\gamma} (t;x,\theta)$ be the unit speed
geodesic ray in direction $\theta$ 
started at $x$, viewed as a curve in $S\Upsilon$. Let $p:S\Upsilon
\rightarrow \Upsilon$ be the natural projection, and denote by $N
(t)=N (\gamma [0,t])$ the number of
transversal\footnote{If the initial point $x$ and direction $\theta$
are chosen randomly (according to the normalized Liouville measure on
the unit tangent bundle) then there is probability $0$ that the
resulting geodesic will be periodic, so with probability $1$ every
self-intersection will necessarily be transversal.}
self-intersections of the geodesic segment $p\circ \gamma [0,t]$.
  For large $t$ the number $N (t)$  will be of
order $t^{2}$; in fact, 
\begin{equation}\label{eq:lln}
	\lim_{t \rightarrow \infty} N (t)/t^{2} = 1/ (4\pi
	|\Upsilon|):=\kappa_{\Upsilon} 
\end{equation}
with probability $1$. See section~\ref{ssec:lln} below for the (easy)
proof. A similar result holds for a randomly chosen \emph{closed}
geodesic \cite{lalley:si1}: if from among all closed geodesics of
length $\leq L$ one is chosen at random, then the number of
self-intersections, normalized by $L^{2}$, will, with probability
approaching one as $L \rightarrow \infty$, be close to
$\kappa_{\Upsilon}$.  (See \cite{pollicott-sharp} for a related
theorem). Closed geodesics with \emph{no} self-intersections have long
been of interest in geometry --- see, for instance,
\cite{birman-series:1,birman-series:2,mirzakhani} --- and it is known
\cite{mirzakhani,rivin} that the number of simple closed geodesics of
length $\leq t$ grows at a polynomial rate in $t$. The fact that there
are arbitrarily long simple closed geodesics implies that the maximal
variation in $N (t)$ is of order $t^{2}$. The problems we address in
this paper concern the order of magnitude of \emph{typical} variations
of the self-intersection count $N (t)$ about $\kappa_{\Upsilon}t^{2}$
for both random and random closed geodesics. For random geodesics the
main result is the following theorem.

\begin{theorem}\label{theorem:random}
Let $\Upsilon$ be a compact surface equipped with a Riemannian metric
of negative curvature.
Assume that $u= (x,\theta)$ is a random unit tangent vector
distributed according to normalized Liouville measure on $S\Upsilon$, and
let $N (T)$ be the number of transversal self-intersections of 
the geodesic segment $\gamma ([0,T];x,\theta)$ with initial tangent
vector $u$. Then as $T \rightarrow \infty$,
\begin{equation}\label{eq:random}
	\frac{N (T)-\kappa_{\Upsilon}T^{2}}{T}
	\stackrel{\mathcal{D}}{\longrightarrow} \Psi 
\end{equation}
for some probability  distribution $\Psi$ on $\zz{R}$ (which will in
general depend on the surface and the Riemannian metric).
\end{theorem}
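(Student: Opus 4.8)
The plan is to realize $N(T)$ as a U-statistic over the orbit of $u$ under the time-one geodesic flow, peel off the deterministic term $\kappa_\Upsilon T^{2}$ by an integral-geometric identity that forces the would-be leading (linear) fluctuation to vanish, and then recognize what remains as a finite combination of squared Birkhoff sums, to which a multivariate central limit theorem applies.

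First I would reduce to a U-statistic. Let $\phi$ be the time-one map of the geodesic flow on $S\Upsilon$, let $m$ be normalized Liouville measure, and let $F(v,w)$ be the number of transversal intersections of the unit geodesic arcs based at $v$ and at $w$; bounded geometry makes $F$ bounded, and it is symmetric. Partitioning $[0,T]$ into unit arcs, with $n=\lfloor T\rfloor$, gives $N(T)=\sum_{0\le j<k<n}F(\phi^{j}u,\phi^{k}u)+\sum_{j<n}\nu(\phi^{j}u)+B(T)$, where $\nu(v)$ counts the self-intersections of the unit arc at $v$ and $B(T)$ the intersections meeting the final fractional arc. Exponential mixing of the geodesic flow gives $\mathbb E[N(T)]=\kappa_\Upsilon T^{2}+\beta T+O(1)$ for a metric-dependent $\beta$, so it suffices to prove convergence in distribution of $(N(T)-\mathbb E[N(T)])/T$; since $\sum_{j<n}\nu(\phi^{j}u)-\mathbb E[\,\cdot\,]$ is a centered Birkhoff sum, hence $o(T)$ in probability, and $\mathrm{Var}(B(T))=o(T^{2})$, this reduces to convergence in distribution of $T^{-1}\bigl(\sum_{0\le j<k<n}F(\phi^{j}u,\phi^{k}u)-\mathbb E[\,\cdot\,]\bigr)$.

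The conceptual heart is Santal\'o's formula from integral geometry: integrating arc-by-arc along $\mathrm{arc}(v)$ one gets $\int_{S\Upsilon}F(v,w)\,dm(w)=2\kappa_\Upsilon$ for every $v$. Thus $F_{2}:=F-2\kappa_\Upsilon$ is a bounded symmetric kernel with $\int F_{2}(v,\cdot)\,dm=0$ --- the linear term of the Hoeffding decomposition vanishes identically. This degeneracy is precisely why $\mathrm{Var}\bigl(\sum_{j<k}F(\phi^{j}u,\phi^{k}u)\bigr)$ stays of order $T^{2}$ rather than $T^{3}$, and hence why the fluctuations of $N(T)$ are of order $T$ rather than $T^{3/2}$; the analogous identity fails for a random closed geodesic, whose finite-window statistics are governed by the Bowen--Margulis measure rather than Liouville, except in constant curvature where the two coincide. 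The operator on $L^{2}(m)$ with kernel $F_{2}$ is self-adjoint, Hilbert--Schmidt, and annihilates constants, so $F_{2}=\sum_{\ell\ge1}\lambda_{\ell}\,c_{\ell}\otimes c_{\ell}$ in $L^{2}(m\times m)$ with $\lambda_{\ell}\to0$ and the eigenfunctions $c_{\ell}$ of nonzero eigenvalue mean-zero and Lipschitz (the discontinuities of $F_{2}$ are averaged out by the integral operator). Writing $F_{2}^{(r)}=\sum_{\ell\le r}\lambda_{\ell}\,c_{\ell}\otimes c_{\ell}$, symmetry collapses the truncated pair-sum into squares:
$$\sum_{0\le j<k<n}F_{2}^{(r)}(\phi^{j}u,\phi^{k}u)=\tfrac12\sum_{\ell=1}^{r}\lambda_{\ell}\Bigl[\bigl(S_{n}c_{\ell}\bigr)^{2}-S_{n}\bigl(c_{\ell}^{2}\bigr)\Bigr],\qquad S_{n}g:=\sum_{i=0}^{n-1}g(\phi^{i}u).$$

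Finally, the multivariate central limit theorem for Birkhoff sums of H\"older functions over the geodesic flow with respect to $m$ --- available from the spectral gap of the transfer operator (via symbolic dynamics and the thermodynamic formalism), equivalently from exponential mixing --- gives $n^{-1/2}(S_{n}c_{\ell})_{\ell\le r}\Rightarrow(\mathcal N_{\ell})_{\ell\le r}$, a centered Gaussian vector, while $n^{-1}S_{n}(c_{\ell}^{2})\to\int c_{\ell}^{2}\,dm$ by the ergodic theorem; dividing the displayed identity by $n\sim T$ yields convergence to $\Psi^{(r)}:=\mathrm{law}\bigl(\tfrac12\sum_{\ell\le r}\lambda_{\ell}(\mathcal N_{\ell}^{2}-\int c_{\ell}^{2}\,dm)\bigr)$. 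To pass to $r=\infty$, a second-moment estimate for degenerate symmetric U-statistics --- after expanding the variance only near-coincident pairs of index pairs survive, and they are controlled by mixing --- gives $T^{-2}\mathrm{Var}\bigl(\sum_{j<k}(F_{2}-F_{2}^{(r)})(\phi^{j}u,\phi^{k}u)\bigr)\le C\,\|F_{2}-F_{2}^{(r)}\|_{L^{2}(m\times m)}^{2}\to0$ uniformly in $T$, and a standard approximation argument then promotes convergence of the truncated sums to convergence of $T^{-1}\sum_{j<k}F_{2}(\phi^{j}u,\phi^{k}u)$ to $\Psi_{0}:=\lim_{r}\Psi^{(r)}$; undoing the reduction gives \eqref{eq:random}, with $\Psi$ a fixed additive shift of $\Psi_{0}$. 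The main obstacle I anticipate is this last step together with the dynamical inputs it rests on --- the multivariate CLT and, above all, the degenerate U-statistic variance bound --- both of which require quantitative mixing for the geodesic flow with respect to the \emph{Liouville} measure (not merely for a symbolic Gibbs state), with enough uniformity to let the truncation rank $r$ and the time $T$ tend to infinity together; Santal\'o's identity, though the conceptual crux, is classical, and the algebraic reduction to squared Birkhoff sums is what keeps the limit law explicit and sidesteps any L\'evy-area or stochastic-integral analysis.
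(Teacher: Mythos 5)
Your proposal is essentially the heuristic argument that the paper sketches in Section~2.4 (``Weak convergence: heuristics'') --- indeed the spectral decomposition of the intersection kernel, the observation that the constant function is an eigenfunction (your Santal\'o identity is the paper's Lemma~\ref{lemma:H1}), and the reduction to a quadratic form in Birkhoff sums are all there almost verbatim. But the paper explicitly \emph{rejects} this route as a proof. The author writes that ``there is no way to make this argument rigorous, because there is no obvious way to show that the series \eqref{eq:eigenfunctionExp} converges pointwise,'' and also remarks ``\emph{If} the eigenfunctions $\psi_j$ \emph{were} H\"older continuous, the central limit theorem \dots would imply \dots .'' That conditional is the crux of the matter, and your proposal quietly converts it into an assertion.

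Concretely, the gap is your parenthetical claim that the eigenfunctions $c_\ell$ are Lipschitz ``because the discontinuities of $F_2$ are averaged out by the integral operator.'' This is not established, and is probably false. The paper's Lemma~\ref{lemma:compactness} shows only that $u\mapsto H_\delta(u,\cdot)$ is \emph{continuous} into $L^2(\nu_L)$ --- not H\"older into $L^2$ --- and even that statement relies on a geometric argument about grazing intersections that is quantitatively delicate near small angles and near arc endpoints. Without H\"older regularity of the $c_\ell$, neither Ratner's CLT nor exponential decay of correlations applies to the truncated kernel $F_2^{(r)}=\sum_{\ell\le r}\lambda_\ell c_\ell\otimes c_\ell$ or to its complement $G_r=F_2-F_2^{(r)}$, so both the multivariate CLT step and your key variance bound
$T^{-2}\operatorname{Var}\bigl(\sum_{j<k}G_r(\phi^j u,\phi^k u)\bigr)\le C\|G_r\|_{L^2(m\times m)}^2$
lack a foundation: such degenerate--U-statistic bounds under weak dependence require decay of correlations for the kernel itself, and $\|G_r\|_{L^2}\to0$ does not by itself control the clustered-index (short-gap) contributions that dominate when the observables are merely $L^2$. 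This is precisely the difficulty the paper's Section~6 is devoted to: rather than expand in eigenfunctions, the author decomposes $h=\sum_m h_m$ where each $h_m$ depends on only finitely many \emph{symbolic} coordinates (hence is automatically H\"older on the shift), and proves (H3) of Hypothesis~\ref{hypothesis:short-memory}, $\int|h_m(x,\sigma^jx)|\,d\mu\le C\beta^m$ uniformly in $j$, by two genuinely geometric estimates --- exponential unlikeliness of near-tangential crossings (Lemma~\ref{lemma:angle}, via Lemma~\ref{lemma:noRepeats}) and of crossings near fiber endpoints (Lemma~\ref{lemma:bmkr}, via an efficient-net argument and Lemma~\ref{lemma:smallBalls}). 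Those are the real content; your outline replaces them with an unproved regularity claim. One smaller but real omission: in the paper the boundary terms (the single sums $\sum g_0,\sum g_1$ in \eqref{eq:self-intersection-Count-random}) contribute a non-negligible, \emph{random} term of order $T$ to the limit $\Psi$, treated by Lemma~\ref{lemma:ergodic} together with the renewal-theoretic asymptotic independence of the overshoot; in your decomposition you absorb the boundary into the centering and discard the fluctuation, which elides both the randomness of the initial offset inside a cell and the coupling with the main U-statistic.

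In short: the geometric input (degeneracy of the centered kernel under Liouville) is correctly identified and matches the paper, but the analytic core --- replacing the $L^2$ eigenfunction expansion, whose pointwise regularity is unknown, by a symbolic finite-coordinate approximation with a verified tail estimate --- is the paper's actual proof, and your proposal does not supply a substitute for it.
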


Here $\stackrel{\mathcal{D}}{\rightarrow}$ indicates \emph{convergence
in distribution} (i.e., weak convergence, cf. \cite{billingsley:wc}):
a family of real-valued random variables $Y_{t}$ is said to converge
in distribution to a Borel probability measure $G$ on $\zz{R}$ if for
every bounded, continuous function $f$
\[
	\lim_{t \rightarrow \infty}Ef (Y_{t})=\int f \,dG.
\]

Since to first order $N (T)$ is approximately
$\kappa_{\Upsilon}T^{2}$, and since $T=\sqrt{T^{2}}$,
Theorem~\ref{theorem:random} might at first sight appear to be typical
``central limit'' behavior (see \cite{ratner:clt} for the classical
central limit theorem for geodesic flows). But it isn't.  The limit
distribution $\Psi$ in \eqref{eq:random} is a limit of Gaussian
quadratic forms, and therefore is most likely not Gaussian.\footnote{A
\emph{Gaussian quadratic form} is a random variable of the form
$\sum_{i=1}^{m} \sigma^{2}_{i}Z_{i}^{2}$ where the random variables
$Z_{i}$ are independent standard Gaussians. The proof of
Theorem~\ref{theorem:random} will show that $\Psi$ is the weak limit
of some sequence of such random variables. Unfortunately it does not
seem possible to compute the variances $\sigma^{2}_{i}$ in these
approximating quadratic forms, so the limit distribution $\Psi$ cannot
be explicitly identified.} Moreover, a closer look will show that for
central limit behavior the typical order of magnitude of fluctuations
should be $T^{3/2}$, not $T$. This is what occurs for \emph{localized}
self-intersection counts, as we now explain.

Label the points of self-intersection of $\gamma ([0,T])$ on
$\Upsilon$ as $x_{1},x_{2},\dotsc ,x_{N (T)}$ (the ordering is
irrelevant). For any smooth, nonnegative function $\varphi:\Upsilon
\rightarrow \zz{R}_{+}$ define the $\varphi-$\emph{localized}
self-intersection count $N_{\varphi} (T)$ by
\begin{equation}\label{eq:localSI}
	N_{\varphi} (T)=N_{\varphi} (\gamma [0,T])=\sum_{i=1}^{N (T)} \varphi (x_{i}).
\end{equation}
Like the global self-intersection count $N (T)$, the localized
self-intersection count $N_{\varphi} (T)$ grows quadratically in $T$: in
particular, if the initial tangent vector $(x,\theta)$ is chosen
randomly according to the normalized Liouville measure then with
probability one,
\[
	\lim_{T \rightarrow \infty}\frac{N_{\varphi}
	(T)}{T^{2}}=\kappa_{\Upsilon}\xnorm{\varphi}_{1} 
\]
where $\xnorm{\varphi}_{1}$ denotes the integral of $\varphi$ against
normalized surface area measure on $\Upsilon$.

\begin{theorem}\label{theorem:local}
For any compact, negatively curved surface $\Upsilon$ there is a
constant $\varepsilon >0$ with the following property. If $\varphi\geq
0$ is smooth and not identically $0$ but has support of diameter less
than $\varepsilon$, then under the hypotheses of
Theorem~\ref{theorem:random}, for some constant $\sigma >0$ depending
on $\varphi$,
\begin{equation}\label{eq:local}
	\frac{N_{\varphi} (T)-\kappa_{\Upsilon}\xnorm{\varphi}_{1}T^{2}}{\sigma T^{3/2}}
	\stackrel{\mathcal{D}}{\longrightarrow} \Phi 
\end{equation}
a $T \rightarrow \infty$. Here $\Phi$ is the standard unit Gaussian
distribution on $\zz{R}$.
\end{theorem}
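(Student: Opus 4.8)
The plan is to realize the localized self-intersection count $N_\varphi(T)$ as a U-statistic over the geodesic flow and then apply a dynamical central limit theorem. First I would pass from the continuous-time geodesic flow to a symbolic coding: by a Markov partition and the standard suspension/tower construction, the geodesic flow on $S\Upsilon$ is a suspension over a subshift of finite type, and the self-intersection events of the segment $\gamma([0,T])$ can be indexed by pairs of times $(s,t)$ with $0 \le s < t \le T$ at which the projected geodesic crosses itself. Concretely, there is a function $h$ on $S\Upsilon \times S\Upsilon$ (essentially the indicator that two geodesic arcs through the two given tangent vectors cross, weighted by $\varphi$ evaluated at the crossing point) so that, up to negligible boundary terms,
\[
  N_\varphi(T) = \int_0^T \int_0^T h(g_s u, g_t u)\, F(s,t)\, ds\, dt
\]
for an appropriate kernel; this exhibits $N_\varphi(T)$ as a dynamical U-statistic of order two driven by the flow started from $u$. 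I would quote or reprove the quadratic law of large numbers stated in the excerpt to fix the centering constant $\kappa_\Upsilon \|\varphi\|_1 T^2$.

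Next comes the decomposition into a ``diagonal'' (close-in-time) part and an ``off-diagonal'' (well-separated-in-time) part. The key structural point, and the reason the support of $\varphi$ must have small diameter $\varepsilon$, is that when $\varphi$ is supported on a tiny ball, a self-intersection contributing to $N_\varphi$ forces the two geodesic arcs to pass through the same small region; since the geodesic is locally almost a geodesic segment, two passages through a ball of radius $\varepsilon$ at times $s<t$ with $t-s$ large must correspond to the geodesic ``returning'' to that region, and the exponential mixing of the geodesic flow makes these returns behave like a sparse, asymptotically independent point process. The heuristic is that $N_\varphi(T)$ is dominated by a sum of order $T$ roughly independent contributions (one for each unit-time window in which the geodesic visits $\mathrm{supp}\,\varphi$), each of which has nondegenerate variance, so the fluctuations are of order $\sqrt{T}\cdot\sqrt{T}=T$... no --- here one must be careful: after subtracting the mean, the dominant fluctuation comes from the $O(T)$ visits, each contributing an $O(T)$ mean count of crossings with the rest of the geodesic, and the variance of each such term is $O(T)$, giving total variance $O(T)\cdot O(T)=O(T^2)$ with cross-covariances that decay because of mixing, hence a Gaussian scaling at $T^{3/2}$. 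The main technical obstacle will be making this linearization rigorous: showing that the genuinely quadratic (Gaussian-quadratic-form) part of the U-statistic, which is what produces the non-Gaussian limit $\Psi$ in Theorem~\ref{theorem:random}, is of smaller order $O(T)$ when $\varphi$ has small support, so that only the linear (Hoeffding-type first-order projection) term survives at scale $T^{3/2}$.

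To carry this out I would: (i) compute the Hoeffding projection of the kernel $h$ onto functions of a single argument, obtaining a function $\psi = \psi_\varphi$ on $S\Upsilon$ with $\int \psi \, d(\text{Liouville}) = 0$; (ii) show $N_\varphi(T) - \kappa_\Upsilon\|\varphi\|_1 T^2 = T\int_0^T \psi(g_t u)\,dt + R(T)$ with $\|R(T)\|_{L^2} = o(T^{3/2})$, the error bound being where exponential mixing and the small-diameter hypothesis on $\mathrm{supp}\,\varphi$ are used to kill the degenerate quadratic part; (iii) apply the central limit theorem for the geodesic flow (Ratner, as cited) to conclude that $T^{-1/2}\int_0^T \psi(g_t u)\,dt$ converges to a centered Gaussian with variance $\sigma^2 = \sigma^2(\varphi) > 0$ given by the usual Green--Kubo series $\int_{-\infty}^\infty \mathrm{cov}(\psi, \psi\circ g_t)\,dt$; and (iv) verify $\sigma^2 > 0$, i.e., that $\psi_\varphi$ is not a coboundary, which holds because $\varphi \ge 0$ is not identically zero. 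Dividing by $\sigma T^{3/2}$ then gives the standard normal limit $\Phi$. The step I expect to be genuinely hard is (ii): controlling the second-order term uniformly requires a quantitative estimate on how often two far-apart arcs of a typical geodesic both meet a small ball \emph{and} cross there, and translating the smallness of $\varepsilon$ into a quantitative suppression of the quadratic-form contribution relative to the linear one.
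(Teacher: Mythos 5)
Your skeleton is the right one and matches the paper's strategy at a high level: represent $N_\varphi(T)$ as a $U$-statistic, split off the first-order Hoeffding projection, show the degenerate (centered) quadratic remainder is of strictly smaller order, and apply the flow CLT to the linear term. But the conceptual role you assign to the small-support hypothesis is wrong, and that misattribution propagates into a genuine gap in step~(iv).

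The small-support hypothesis is \emph{not} what kills the quadratic part. Once Hypothesis~\ref{hypothesis:short-memory} is verified for the kernel (and this verification, done in section~\ref{sec:verification}, uses only negative curvature and the structure of the symbolic coding --- not the size of $\mathrm{supp}\,\varphi$), the centered quadratic part is $O(T)$ for \emph{every} $\varphi$, small support or not; this is exactly Step~2 in the proof of Theorem~\ref{theorem:u-statistic}. What the small-support hypothesis actually buys is something else entirely: it guarantees that the Hoeffding projection $h^\varphi_+$ is \emph{not} cohomologous to a scalar multiple of the height function $F$, so that case~\eqref{eq:caseB} of Theorem~\ref{theorem:u-statistic} applies rather than case~\eqref{eq:caseA}. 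In case~\eqref{eq:caseA}, the linear (Hoeffding) term telescopes via the cocycle equation $h_+ = aF + w - w\circ\sigma$ and itself drops to order $T$, so \emph{both} the linear and quadratic pieces are $O(T)$ and the limit is the non-Gaussian $\Psi$. In case~\eqref{eq:caseB}, the linear Birkhoff-type sum $S_\tau h_+$ genuinely fluctuates at scale $T^{1/2}$ by Ratner's CLT, so after multiplication by $\tau \asymp T$ it dominates at scale $T^{3/2}$. The dichotomy is cohomological, not a quantitative mixing estimate.

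Consequently your step~(iv) is false as stated: $\sigma^2 > 0$ does \emph{not} follow from $\varphi \ge 0$ and $\varphi \not\equiv 0$ alone. The counterexample is already in the paper: take $\varphi \equiv 1$. Then $N_\varphi(T) = N(T)$, and Theorem~\ref{theorem:random} shows its fluctuations are of order $T$, not $T^{3/2}$ --- the Gaussian coefficient vanishes. More generally any $\varphi$ for which $\varphi\circ p$ is cohomologous to a constant along the geodesic flow gives $\sigma^2 = 0$. The correct argument (Proposition~\ref{proposition:plethora}) uses all three hypotheses together: pick two non-intersecting closed geodesics $\alpha,\beta$; if $\mathrm{diam}\,(\mathrm{supp}\,\varphi) < \mathrm{dist}(\alpha,\beta)$, the support misses at least one of them, so $\varphi$ integrates to zero along that closed geodesic, while density of closed geodesics plus $\varphi\ge 0$, $\varphi\not\equiv 0$ produces a closed geodesic along which the integral is strictly positive --- so the length-normalized averages along closed geodesics are not all equal and $\varphi\circ p$ cannot be cohomologous to a constant. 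Without the small-support hypothesis you cannot get off the ground; without the non-negativity or non-vanishing you cannot produce the geodesic with positive average. To repair your proposal, replace the heuristic ``sparse returns / mixing'' reasoning in the middle paragraph with this Livsic-type cohomology criterion, and carry it through so that step~(iv) invokes all three hypotheses.
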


In the course of the proof we will show that a lower bound for
$\varepsilon$ is the distance between two non-intersecting closed
geodesics.

\subsection{Self-intersections of closed geodesics}\label{ssec:closed}
On any compact, negatively curved surface there are countably many
closed geodesics, and only finitely many with length 
in any given bounded interval $[0,L]$. According to the celebrated
``Prime Geodesic Theorem'' of Margulis \cite{margulis},
\cite{sharp:book}, the number $\pi (L)$ of closed geodesics of length
$\leq L$ satisfies the asymptotic law
\[
	\pi (L)\sim \frac{e^{hL}}{hL} \quad \text{as} \;\; L
	\rightarrow \infty,
\]
where $h>0$ is the topological entropy of the geodesic
flow. Furthermore, the closed geodesics are equidistributed according
to the maximal entropy measure, in the following sense
\cite{lalley:acta}: if a closed geodesic is chosen at random from
among those of length $\leq L$, then with probability approaching $1$
as $L \rightarrow \infty$, the empirical distribution of the geodesic
chosen will be in a weak neighborhood of the maximal entropy
measure. (See also Bowen \cite{bowen:equidistribution} for a somewhat
weaker statement. It does not matter whether the random closed
geodesic is chosen from the set of \emph{prime} closed geodesics or
the set of \emph{all} closed geodesics, because Margulis' theorem
implies that the number of non-prime closed geodesics of length $\leq
L$ is $O (e^{hL/2})$.)  In addition, the maximal entropy measure
governs the statistics of closed geodesics even at the level of
``fluctuations'', in that central limit theorems analogous to that
governing random geodesics (cf. \cite{ratner:clt}) hold for randomly
chosen \emph{closed} geodesics -- see \cite{lalley:axiomA} and
\cite{lalley:homology} for precise statements. Thus, it is natural to
expect that the maximum entropy measure also controls the statistics
of self-intersections.

For compact surfaces  of \emph{constant}
negative curvature the maximum entropy measure and the (normalized)
Liouville measure coincide, so it is natural to expect that in this
case there should be some connection between the fluctuations in
self-intersection count of closed geodesics with those of random
geodesics. For compact surfaces of \emph{variable} negative curvature,
however, the maximum entropy measure and the Liouville measure are
mutually singular. Thus, it is natural to conjecture that the
fluctuations of self-intersection counts in the constant curvature and
variable curvature cases to obey different statistical laws. The next
theorem asserts that this is the case.

\begin{theorem}\label{theorem:closed}
Let $\Upsilon$ be a compact surface of  negative
curvature, let $\gamma_{L}$ be a closed geodesic randomly chosen from
the $\pi (L)$ closed geodesics of length $\leq L$, and let $N
(\gamma_{L})$ be the number of self-intersections of $\gamma_{L}$.
(A) If $\Upsilon$ has \emph{constant} negative curvature,
then  for some probability
distribution $\Psi$ on $\zz{R}$, as
$L \rightarrow \infty$,
\begin{equation}\label{eq:closed-CC}
	\frac{N (\gamma_{L})-\kappa_{\Upsilon}L^{2}}{L}
	\stackrel{\mathcal{D}}{\longrightarrow} \Psi .
\end{equation}
(B) If $\Upsilon$ has \emph{variable} negative curvature then for some
constants $\sigma>0$ and $\kappa^{*}>0$,
\begin{equation}\label{eq:closed-VC}
	\frac{N (\gamma_{L})-\kappa^{*}L^{2}}{L^{3/2}}
	\stackrel{\mathcal{D}}{\longrightarrow} \Phi ,
\end{equation}
where $\Phi$ is the standard unit Gaussian distribution.
\end{theorem}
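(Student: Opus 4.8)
The plan is to encode the self-intersection count of a closed geodesic as a $U$-statistic over a symbolic coding of the geodesic flow, to run a Hoeffding-type decomposition on it, and to show that the dichotomy between \eqref{eq:closed-CC} and \eqref{eq:closed-VC} is governed by whether a single Hölder function is a coboundary. Via a Markov coding, the geodesic flow on $S\Upsilon$ is, up to a finite-to-one semiconjugacy, a suspension flow over a mixing subshift of finite type $(\Sigma,\sigma)$ with Hölder roof function $r>0$; closed geodesics of length $\le L$ correspond to periodic orbits $X$, of symbolic period $n$ say, with $R_n(X):=\sum_{k=0}^{n-1}r(\sigma^k X)\le L$. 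Let $\hat\mu$ be the Gibbs state on $\Sigma$ for the potential $-hr$ --- the symbolic version of the Bowen--Margulis measure, which in constant curvature projects to normalized Liouville measure on $S\Upsilon$ --- and put $\bar r=\int r\,d\hat\mu$, so that a random closed geodesic of length $\le L$ has symbolic period $n\sim L/\bar r$. Since a geodesic has no local self-intersections and the Markov cells may be taken of small diameter, each transversal self-intersection of $\gamma_L$ is charged, by a fixed attribution rule, to a unique pair of symbolic times, and whether the two arcs visited at times $i<j$ cross depends on only finitely many symbols near each. Hence there is a bounded, symmetric, Hölder kernel $\beta\colon\Sigma\times\Sigma\to\zz{R}_{\ge 0}$, vanishing near the diagonal, with $N(\gamma_L)=\sum_{0\le i<j\le n-1}\beta(\sigma^iX,\sigma^jX)+O(1)$ (cf. \cite{lalley:si1}); and $\theta:=\iint\beta\,d\hat\mu\,d\hat\mu>0$ because $\beta\ge 0$ is positive on a set of positive $\hat\mu\times\hat\mu$-measure.

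Using the Ruelle transfer operator and its complex perturbations, together with the renewal and Tauberian arguments underlying the Prime Geodesic Theorem and the central limit theory for closed geodesics (\cite{margulis}, \cite{lalley:acta}, \cite{lalley:axiomA}), one reduces the distribution of $N(\gamma_L)$ to that of the same functional along an orbit segment of symbolic length $n$ of a $\hat\mu$-random point --- retaining the order-$\sqrt n$ Gaussian fluctuations of $R_n$ about $n\bar r$, and the fact that $L-\ell(\gamma_L)$ converges in law to an exponential random variable. Writing the Hoeffding decomposition $\beta=\theta+\beta_1\otimes 1+1\otimes\beta_1+\beta_2$, where $\beta_1(x)=\int\beta(x,y)\,d\hat\mu(y)-\theta$ and $\beta_2$ is canonical (so $\int\beta_2(x,\cdot)\,d\hat\mu\equiv 0$), the symmetry of $\beta$ gives
\[
   N(\gamma_L)=\binom n2\theta+(n-1)\sum_{i=0}^{n-1}\beta_1(\sigma^iX)+\sum_{0\le i<j\le n-1}\beta_2(\sigma^iX,\sigma^jX)+O(1).
\]
Setting $\kappa^*:=\theta/(2\bar r^2)$ and expanding $R_n=n\bar r+\widetilde S_n$ with $\widetilde S_n:=\sum_{i=0}^{n-1}(r-\bar r)(\sigma^iX)$, one gets
\[
   N(\gamma_L)-\kappa^*\,\ell(\gamma_L)^2
   =n\sum_{i=0}^{n-1}g(\sigma^iX)+\sum_{0\le i<j\le n-1}\beta_2(\sigma^iX,\sigma^jX)-\tfrac{\theta}{2\bar r^2}\,\widetilde S_n^2+O(n),
\]
where $g:=\beta_1-\tfrac{\theta}{\bar r}(r-\bar r)$ satisfies $\int g\,d\hat\mu=0$.

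The dichotomy now turns on the behaviour of $g$. In constant curvature $\hat\mu$ is Liouville measure, and the Santaló--Crofton identity --- that the $\hat\mu$-average of the number of intersections of a fixed closed geodesic of length $\ell$ with a random geodesic is \emph{exactly} proportional to $\ell$ --- forces $g$ to be cohomologous to a constant, hence (its mean being $0$) to $0$; thus $\sum_{i=0}^{n-1}g(\sigma^iX)=0$ on every periodic orbit, and $\kappa^*=\kappa_\Upsilon$ by \eqref{eq:lln}. The leading fluctuation of $N(\gamma_L)-\kappa_\Upsilon\,\ell(\gamma_L)^2$ is then the degenerate $U$-statistic $\sum_{i<j}\beta_2(\sigma^iX,\sigma^jX)$, of order $n\asymp L$; a limit theorem for degenerate $U$-statistics over exponentially mixing systems --- proved by spectrally decomposing $\beta_2$ through the transfer operator, truncating, and applying the multivariate central limit theorem to the resulting Birkhoff sums --- shows that this, divided by $n$ and combined with the $\widetilde S_n^2$ contribution and with the additional order-$L$ term produced by replacing $\ell(\gamma_L)^2$ by $L^2$ (legitimate because $L-\ell(\gamma_L)=O(1)$), converges to a distribution $\Psi$, which is a limit of Gaussian quadratic forms. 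This is \eqref{eq:closed-CC}. In variable curvature the Santaló--Crofton identity fails --- the expected intersection density is a non-constant function on $S\Upsilon$, as one sees for instance by a second-variation computation around the constant-curvature locus --- so $g$ is \emph{not} cohomologous to a constant. Then $\sum_{i=0}^{n-1}g(\sigma^iX)$ obeys a nondegenerate central limit theorem of order $\sqrt n$ (Ratner's theorem \cite{ratner:clt}, in the closed-geodesic form of \cite{lalley:axiomA}), so $n\sum_i g(\sigma^iX)$ is of order $n^{3/2}\asymp L^{3/2}$ and dominates the degenerate $U$-statistic and the $\widetilde S_n^2$ term (both of order $L$), the $O(n)$ remainder, and the $O(L)$ discrepancy between $\ell(\gamma_L)^2$ and $L^2$. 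Dividing by $L^{3/2}$ and by the resulting asymptotic standard deviation $\sigma>0$ gives the Gaussian limit \eqref{eq:closed-VC}, with $\kappa^*\ne\kappa_\Upsilon$ in general because $\hat\mu$ is no longer Liouville measure.

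The most demanding parts of this program are the limit theorem for the degenerate $U$-statistic $\sum_{i<j}\beta_2(\sigma^iX,\sigma^jX)$ in the mixing, non-independent setting, with uniform control of the truncation error and of all lower-order corrections, and the reduction in the second paragraph, which requires extending the transfer-operator and renewal technology for closed geodesics from additive functionals to the two-point functional $\beta$. For part (B) one must moreover establish rigorously that $g$ is not a coboundary when the curvature is non-constant --- precisely the input that separates \eqref{eq:closed-VC} from \eqref{eq:closed-CC}.
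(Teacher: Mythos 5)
Your proposal follows essentially the same architecture as the paper --- symbolic coding, $U$-statistic representation, Hoeffding decomposition, the cohomology dichotomy, Ratner's CLT for the linear part and a quadratic-form limit for the degenerate part --- and your algebra for $N(\gamma_L)-\kappa^*\ell(\gamma_L)^2$ is correct. But there are two genuine gaps at points you treat as routine.

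First, the claim that the intersection kernel $\beta$ is H\"older continuous on $\Sigma\times\Sigma$ is false, and this is not a cosmetic issue. Whether two geodesic arcs coded by $x,y$ intersect is a $\{0,1\}$-valued function that jumps whenever an intersection occurs at an endpoint of one arc or at angle $0$, and arbitrarily small perturbations of $x,y$ can create or destroy such an intersection, so $\beta$ is discontinuous on a set that is not negligible for the H\"older structure. The paper replaces H\"older continuity with Hypothesis~\ref{hypothesis:short-memory}, a quantitative near-continuity condition ($h=\sum_m h_m$ with $\int|h_m(x,\sigma^j x)|\,d\mu(x)\le C\beta^m$), and verifying it (Section~\ref{sec:verification}) requires controlling both intersections at exponentially small angles --- which forces a long match between $x$ and $\sigma^k x$ and so is handled with Lemma~\ref{lemma:noRepeats} --- and intersections near fiber endpoints, which forces a Fubini/density argument over the choice of Poincar\'e section (Lemma~\ref{lemma:densityArgument}). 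Without an analogue of that analysis your truncation-and-CLT argument for $\sum_{i<j}\beta_2(\sigma^iX,\sigma^jX)$ has no control on the truncation error, and the second-moment bounds needed to kill the tail of the Hoeffding expansion (Lemma~\ref{lemma:momentBound} in the paper) simply are not available.

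Second, your justification that $g$ is \emph{not} a coboundary in variable curvature --- ``as one sees for instance by a second-variation computation around the constant-curvature locus'' --- is not a proof, and at best would cover metrics in a neighborhood of the constant-curvature locus in the space of metrics, not all negatively curved metrics on the surface. The paper's route is different and global: it invokes Otal's rigidity theorem (\cite{otal}) that two ergodic invariant measures with the same intersection statistics $\kappa(\alpha;\cdot)$ against all closed geodesics $\alpha$ must coincide, from which Corollary~\ref{corollary:constant-versus-variable} deduces that the Hoeffding projection is cohomologous to a multiple of $F$ with respect to the max-entropy Gibbs state if and only if the max-entropy measure equals the Liouville measure, i.e.\ if and only if the curvature is constant. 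This is exactly the dichotomy you need, and you should cite Otal rather than gesture at a variational heuristic. Finally, you correctly flag the transfer from Gibbs states to the uniform measure on closed geodesics as hard, but you should be aware that this step also requires a separate continuity-in-the-symbolic-coordinates hypothesis on the $U$-statistic itself (the paper's Hypothesis~\ref{hypothesis:periodic}), whose verification (via crossing-rate estimates for closed geodesics, Lemma~\ref{lemma:closedGDcrossingInt}) is an additional piece of work you have not accounted for.
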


In the variable curvature case, the constant $\kappa^{*}$ need not in
general be the same as the constant $\kappa_{\Upsilon}$ in the
corresponding law \eqref{eq:random}, because the self-intersection
statistics for closed geodesics are governed by the maximum entropy
invariant measure, whereas for random geodesics they are governed by
the Liouville measure.

Similar results hold for the number of intersections of two randomly
chosen closed geodesics. Let $\gamma_{L}$ and $\gamma'_{L}$ be
\emph{independently} chosen at random from the set of closed geodesics
of length $\leq L$, and let $N (\gamma_{L},\gamma '_{L})$ be the
number of intersections of $\gamma_{L}$ with $\gamma'_{L}$. (If by
chance $\gamma_{L}=\gamma '_{L}$, set
$N(\gamma_{L},\gamma'_{L})=N(\gamma_{L})$. Because the probability of
choosing the same closed geodesic twice is $1/\pi (L) \rightarrow 0$,
this event has negligible effect on the distribution of
$N(\gamma_{L},\gamma '_{L})$ in the large $L$ limit.)

\begin{theorem}\label{theorem:closed-pair}
If $\Upsilon$ has constant negative curvature then for some
probability distribution $\Psi^{*}=\Psi^{*}_{\Upsilon}$ on $\zz{R}$, 
\begin{equation}\label{eq:closed-pair-CC}
	\frac{N (\gamma_{L},\gamma'_{L})-\kappa_{\Upsilon}L^{2}}{L}
	\stackrel{\mathcal{D}}{\longrightarrow} \Psi^{*} 
\end{equation}
as $L \rightarrow \infty$. If $\Upsilon$ has variable negative
curvature then  
\begin{equation}\label{eq:closed-pair-CC}
	\frac{N (\gamma_{L},\gamma'_{L})-\kappa^{*}L^{2}}{L^{3/2}}
	\stackrel{\mathcal{D}}{\longrightarrow} \Phi .
\end{equation}
\end{theorem}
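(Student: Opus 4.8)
The argument for Theorem~\ref{theorem:closed-pair} runs in close parallel with that of Theorem~\ref{theorem:closed}, so I will only indicate the modifications; the plan is to reduce $N(\gamma_{L},\gamma_{L}')$ to the same sort of $U$-statistic, the one structural change being that its two ``strands'' are now carried by two \emph{independent} random periodic orbits rather than by a single periodic orbit interacting with itself. First I would set up the bilinear integral representation: coding closed geodesics as periodic orbits of the geodesic flow $\phi_{t}$ on $S\Upsilon$ and writing $u,u'$ for base vectors of $\gamma_{L},\gamma_{L}'$, of periods $\ell,\ell'\leq L$, one has, by the same construction used for $N(\gamma_{L})$,
\[
   N(\gamma_{L},\gamma_{L}')\;=\;\int_{0}^{\ell}\!\!\int_{0}^{\ell'} f\bigl(\phi_{s}u,\phi_{t}u'\bigr)\,ds\,dt\;+\;(\text{error}),
\]
where $f$ is the symmetric intersection kernel on $S\Upsilon\times S\Upsilon$ (concentrated near $\{p(v)=p(w)\}$ and as regular as the metric permits); because $\gamma_{L}\neq\gamma_{L}'$ with probability tending to $1$, there is now no near-diagonal / self-return contribution of the sort that must be dealt with for $N(\gamma_{L})$. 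Next I would perform the Hoeffding decomposition of $f$ with respect to the maximum-entropy measure $\mu$,
\[
   f(v,w)\;=\;\bar f+g(v)+g(w)+f_{0}(v,w),\qquad \int g\,d\mu=0,\qquad \int f_{0}(v,\cdot)\,d\mu=\int f_{0}(\cdot,w)\,d\mu=0
\]
(the two first-order terms coinciding by symmetry of $f$), which yields
\[
   N(\gamma_{L},\gamma_{L}')\;=\;\bar f\,\ell\ell'+\ell'\!\!\int_{0}^{\ell}\! g(\phi_{s}u)\,ds+\ell\!\!\int_{0}^{\ell'}\! g(\phi_{t}u')\,dt+\iint_{[0,\ell]\times[0,\ell']}\! f_{0}(\phi_{s}u,\phi_{t}u')\,ds\,dt+(\text{error}),
\]
and one sets $\kappa^{*}=\bar f$, which equals $\kappa_{\Upsilon}$ in the constant-curvature case because there $\mu$ is the normalized Liouville measure.

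The decisive point, and the only place where the two curvature regimes part company, is a dichotomy for the first-order function $g$. In \emph{constant} curvature $\mu$ is Liouville measure, for which $\int f(v,w)\,d\mu(w)$ is independent of $v$ by the Crofton-type integral-geometric identity, so $g\equiv0$ and the first-order terms vanish identically. In \emph{variable} curvature, on the other hand, the asymptotic variance $\sigma_{g}^{2}$ of $g$ under the flow is strictly positive: were $g$ cohomologous to a constant, the first-order term would be $O(L)$ and $N(\gamma_{L})$ itself would have fluctuations of order $L$ rather than $L^{3/2}$, contradicting Theorem~\ref{theorem:closed}(B). I would also need the bound $\iint_{[0,\ell]\times[0,\ell']} f_{0}(\phi_{s}u,\phi_{t}u')\,ds\,dt=O_{P}(L)$, which follows exactly as in the single-geodesic case: expanding $f_{0}=\sum_{k}\lambda_{k}e_{k}(v)e_{k}(w)$ in the eigenbasis of its (Hilbert--Schmidt) kernel and using exponential mixing of the flow under $\mu$, the double degeneracy of $f_{0}$ removes the $L^{3}$ and $L^{4}$ contributions to the second moment and leaves $O(L^{2})$.

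Granting these inputs the two cases follow. In variable curvature the first-order terms have exact order $L^{3/2}$; since $\gamma_{L}\perp\gamma_{L}'$, the central limit theorem for random closed geodesics (cf.\ \cite{lalley:axiomA}) applied to $g$ shows that $L^{-1/2}\bigl(\int_{0}^{\ell} g(\phi_{s}u)\,ds+\int_{0}^{\ell'} g(\phi_{t}u')\,dt\bigr)$ converges in distribution to a centred Gaussian of variance $2\sigma_{g}^{2}$ (the two summands being independent), while the degenerate term, the errors, and the discrepancy between $\ell\ell'$ and $L^{2}$ are all $o(L^{3/2})$; dividing by $\sqrt{2}\,\sigma_{g}\,L^{3/2}$ then produces $\Phi$. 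In constant curvature the first-order terms vanish, so the fluctuation of order $L$ is carried by the degenerate $U$-statistic together with the remaining lower-order pieces, handled exactly as for Theorem~\ref{theorem:closed}(A): writing the degenerate term as $\sum_{k}\lambda_{k}\bigl(\ell^{-1/2}\!\int_{0}^{\ell} e_{k}(\phi_{s}u)\,ds\bigr)\bigl(\ell'^{-1/2}\!\int_{0}^{\ell'} e_{k}(\phi_{t}u')\,dt\bigr)$ and using the closed-geodesic CLT together with the independence of $u$ and $u'$, it converges (after a truncation in $k$) to $\sum_{k}\lambda_{k}\xi_{k}\eta_{k}$ with $(\xi_{k})$ and $(\eta_{k})$ independent Gaussian sequences --- a limit of Gaussian quadratic forms, since $\xi_{k}\eta_{k}=\tfrac14[(\xi_{k}+\eta_{k})^{2}-(\xi_{k}-\eta_{k})^{2}]$ --- and this, convolved with the limit laws of the remaining $O(L)$ terms, is the distribution $\Psi^{*}$; it differs in general from the $\Psi$ of Theorem~\ref{theorem:closed} precisely because the quadratic object is now genuinely bilinear in two independent orbits rather than quadratic in one.

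The hard part, exactly as for Theorem~\ref{theorem:closed}, will be the rigorous treatment of the degenerate bilinear term: making the integral representation precise with an error that is $o(L)$ uniformly over the random closed geodesic, re-expressing the conditioning ``length $\leq L$'' inside the symbolic-dynamical coding, and proving weak convergence of $L^{-1}\iint_{[0,\ell]\times[0,\ell']} f_{0}(\phi_{s}u,\phi_{t}u')\,ds\,dt$ to the second-order Gaussian chaos $\sum_{k}\lambda_{k}\xi_{k}\eta_{k}$ --- which calls for a martingale (or spectral) central limit argument for degenerate $U$-statistics along periodic orbits, together with the truncation showing that only finitely many modes $e_{k}$ matter in the limit. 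All of this is the bilinear counterpart of the single-geodesic analysis behind Theorem~\ref{theorem:closed}, and the independence of $\gamma_{L}$ and $\gamma_{L}'$ only simplifies it: it removes the diagonal terms and makes the joint convergence of the several lower-order contributions with the dominant one automatic.
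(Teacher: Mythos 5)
The paper gives no proof of Theorem~\ref{theorem:closed-pair}, declaring only that it is ``very similar to that of Theorem~\ref{theorem:closed}''; your proposal is precisely the expected adaptation --- a bilinear $U$-statistic over two independent random periodic orbits, a Hoeffding decomposition against the maximum-entropy Gibbs state, and the cohomology dichotomy of Proposition~\ref{proposition:cohomology} distinguishing constant from variable curvature --- so it follows the paper's implied route, with the discrete symbolic kernel $h$ on $\Sigma\times\Sigma$ in place of your heuristic intersection kernel $f$ and (a bilinear form of) Theorem~\ref{theorem:u-periodic} in place of your martingale/spectral CLT for degenerate $U$-statistics along periodic orbits. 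One imprecision to watch when passing to the rigorous symbolic version: in constant curvature the Hoeffding projection $h_{+}$ of the discrete kernel is not identically zero but only \emph{cohomologous} to $\kappa_{\Upsilon}F$ (the fibers of the suspension have variable length $F(x)$), so the linear Hoeffding terms do not literally vanish; they telescope, up to a coboundary and the overshoot $R_{T}$, to $O(L)$ contributions that must be convolved with the degenerate bilinear part to produce $\Psi^{*}$, exactly as in Step~3 of the proof of Theorem~\ref{theorem:u-statistic}.
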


We shall omit the proof, as it is very similar to that
of Theorem~\ref{theorem:closed}.

It is noteworthy that in constant negative curvature the order of
magnitude of typical fluctuations in Theorems
\ref{theorem:closed}--\ref{theorem:closed-pair} is $L$. This should be
compared to the main result of \cite{chas-lalley}, which concerns
fluctuations in self-intersection number for randomly closed curves on
a surface, where sampling is by \emph{word length} rather than
\emph{geometric length}. Let $\Upsilon$ be be an orientable, compact
surface with boundary and negative Euler characteristic $\chi$, and
let $\mathcal{F}=\mathcal{F}_{\Upsilon}$ be its fundamental
group. This is a free group on $2g$ generators, where
$g=2-2\chi$. Each conjugacy class in $\mathcal{F}$ represents a free
homotopy class of closed curves on $\Upsilon$. For each such conjugacy
class $\alpha$ there is a well-defined \emph{word-length} $L=L
(\alpha)$ (the minimal word length of a representative element) and a
well-defined \emph{self-intersection count} $N (\alpha)$ (the minimum
number of transversal self-intersections of a closed curve in the free
homotopy class). The main result of \cite{chas-lalley} states that if
$\alpha$ is randomly chosen from among all conjugacy classes with word
length $L$ then for certain positive constants $\kappa^{*},\sigma^{*}$
depending on the Euler characteristic, as $L \rightarrow \infty$,
\begin{equation}\label{eq:chas-lalley}
	\frac{N (\alpha_{L})-\kappa^{*}_{\Upsilon}L^{2}}{\sigma^{*} L^{3/2}}
	\stackrel{\mathcal{D}}{\longrightarrow} \Phi 
\end{equation}
where $\Phi$ is the standard unit Gaussian distribution. The methods
of this paper can be adapted to show that the main result of
\cite{chas-lalley} extends to compact surfaces without boundary and
with genus $g\geq 2$. To reconcile this with
Theorem~\ref{theorem:closed} (which at first sight might appear to
suggest that fluctuations should be on the order of $L$, not
$L^{3/2}$), observe that when closed geodesics are randomly chosen
according to word length $L$, the order of magnitude of fluctuations
in geometric length is $L^{1/2}$; this is enough to increase the size
of typical fluctuations in self-intersection count by a factor
$L^{1/2}$.

\bigskip \textbf{Standing Notation.}  Throughout the paper,
$p:S\Upsilon \rightarrow \Upsilon$ will denote the natural projection
from the unit tangent bundle $S\Upsilon$ to the surface $\Upsilon$, and $\gamma
(t)=\gamma (t;v)$ will denote the orbit of the geodesic flow with
initial tangent vector $v\in S\Upsilon$. The letter $\pi$ will be
reserved for the semi-conjugacy of flows constructed in
section~\ref{sec:symbolicDynamics}, and $\phi_{t}$ for the suspension
flow in this construction. Finally, $\sigma$ will be used to denote
the unilateral shift on any of the sequence spaces $\Sigma
,\Sigma^{+}$, etc. used in the symbolic dynamics.

\bigskip \textbf{Acknowledgments.} The author thanks Moira Chas,
Jayadev Athreya, and Dimitri Dolgopyat for some helpful discussions,
and a referee for the reference to Otal's paper~\cite{otal}.

\section{Intersection kernel}\label{sec:kernel}

The proofs of the main results will rely on the fact that the
self-intersection counts are (in the terminology of \cite{hoeffding})
$U-$\emph{statistics}, that is, functions that can be written in the
form \eqref{eq:u-statistic-A} below. In this section we collect some
essential properties of the kernel function $H_{\delta}$ in this
representation, and then present a short heuristic argument that shows
how the $U-$statistic representation can be used to reduce the weak
convergence assertion \eqref{eq:random} to Ratner's central limit
theorem for the geodesic flow.

\subsection{The intersection kernel}\label{ssec:i-kernel} Geodesics on
any surface, regardless of its
curvature, look locally like straight lines. Hence, for any
\emph{compact} surface $ \Upsilon$ with smooth Riemannian metric there
exists $\varrho >0$ such that if $\alpha$ and $\beta $ are geodesic
segments of length $\leq \varrho$ then $\alpha$ and $\beta$ intersect
transversally, if at all, in at most one point. It follows that for
any geodesic segment $\gamma$ of length $T$ the self-intersection
number $N (\gamma)=N_{T} (\gamma)$ can be computed by partitioning
$\gamma$ into nonoverlapping segments of common length $\delta \leq
\varrho$ and counting the number of pairs that intersect
transversally.  Let $\alpha_{i}$ and $\alpha_{j}$ be two such
segments; then the event that these segments intersect is completely
determined by their initial points and directions, as is the angle of
intersection.

\begin{definition}\label{definition:intersectionKernel}
The \emph{intersection kernel} $H_{\delta}:S\Upsilon\times S\Upsilon \rightarrow
\zz{R}_{+}$ is the nonnegative function that takes the value
$H_{\delta} (u,v)=1$ if the geodesic segments of length
$\delta$ with initial tangent vectors $u$ and $v$ intersect
transversally, and $H_{\delta} (u,v)=0$ otherwise.
\end{definition}

Assume henceforth that $\delta \leq \varrho$; then for any geodesic
$\tilde{\gamma} $,
\begin{equation}\label{eq:u-statistic-A}
	N_{T} (\tilde{\gamma} )=\frac{1}{2} \sum_{i=1}^{n} \sum_{j=1}^{n}
	H_{\delta} (\tilde{\gamma} (i\delta ), \tilde{\gamma} (j\delta)).
\end{equation}
The factor  $1/2$ compensates for the double-counting that results
from letting both indices of summation $i,j$ range over all $n$
geodesic segments.  The diagonal terms in this sum are all $0$,
because the segment $\gamma (i\delta )$ does not intersect itself
{transversally}.

\subsection{The associated integral
operators}\label{ssec:integralOperator}
The intersection kernel  $H_{\delta } (u,v)$ is symmetric in its
arguments and Borel measurable, but not continuous, because 
self-intersections can be created or destroyed by small perturbations
of the initial vectors $u,v$.  Nevertheless,  $H_{\delta}$ induces a
self-adjoint integral operator on the Hilbert space $L^{2} (\nu_{L})$ by
\begin{equation}\label{eq:kernel}
	H_{\delta} \psi (u)=\int_{v\in S\Upsilon} H_{\delta} (u,v) \psi (v)
	\,d\nu_{L} (v). 
\end{equation}

\begin{lemma}\label{lemma:H1}
For all sufficiently small $\delta >0$,
\begin{equation}\label{eq:H1}
	H_{\delta} 1(u):
	 =\int H_{\delta} (u,v)\,d\nu_{L} (v) =
	 \delta^{2}\kappa
\end{equation}
for all $u\in M$. Thus, the constant function $1$ is an eigenfunction
of the operator $H_{\delta}$, and consequently the normalized kernel
$H_{\delta} (u,v)/\delta^{2}\kappa_{\varphi}$ is a symmetric Markov
kernel on $S\Upsilon \times S\Upsilon$.
\end{lemma}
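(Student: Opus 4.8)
The plan is to compute the integral $\int H_{\delta}(u,v)\,d\nu_{L}(v)$ by unfolding it into a statement about the geodesic flow and using invariance of Liouville measure. Fix $u=(x,\theta)\in S\Upsilon$, and let $\alpha$ be the geodesic segment of length $\delta$ with initial vector $u$. The integrand $H_{\delta}(u,v)$ equals $1$ precisely when the length-$\delta$ geodesic segment $\beta$ issuing from $v$ crosses $\alpha$ transversally; since $\delta\leq\varrho$, this crossing, when it exists, is unique. First I would parametrize the set of such $v$: a segment $\beta$ meets $\alpha$ at a point $p\circ\gamma(s;u)$ for some $s\in[0,\delta]$, crossing at an angle $\phi\in(0,\pi)$, and $v$ is obtained by flowing backwards along $\beta$ by some amount $r\in[0,\delta]$. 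Thus $v$ is determined by the triple $(s,\phi,r)$, and the key computation is to express $d\nu_{L}(v)$ in these coordinates.

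The central step is the change-of-variables identity: in the $(s,\phi,r)$ coordinates just described, Liouville measure on $S\Upsilon$ pulls back (up to the normalizing constant $|S\Upsilon|^{-1}=(2\pi|\Upsilon|)^{-1}$ carried by $\nu_{L}$) to $\tfrac12\sin\phi\,ds\,d\phi\,dr$. This is the standard Santaló-type formula, and it is exactly the place where the constant curvature or variable curvature of the surface is irrelevant — locally geodesics are straight lines and Liouville measure is $dA\,d\theta$, so the factor $\sin\phi$ is the Jacobian of the map sending $(\text{point on }\alpha,\text{angle},\text{backward time})$ to a unit vector, integrated against arclength along $\alpha$. Granting this, the integral becomes
\[
	\int H_{\delta}(u,v)\,d\nu_{L}(v)=\frac{1}{2\pi|\Upsilon|}\int_{0}^{\delta}\int_{0}^{\pi}\int_{0}^{\delta}\tfrac12\sin\phi\,\,dr\,d\phi\,ds=\frac{1}{2\pi|\Upsilon|}\cdot\delta^{2}=\delta^{2}\kappa,
\]
using $\int_{0}^{\pi}\tfrac12\sin\phi\,d\phi=1$ and recalling $\kappa=\kappa_{\Upsilon}=1/(4\pi|\Upsilon|)$ — note the factor $\tfrac12$ from the $\sin\phi$ normalization combines with $4\pi|\Upsilon|$ correctly only after one is careful about whether angles are counted in $(0,\pi)$ or $(0,2\pi)$ and whether orientation of crossing is recorded; I would fix that bookkeeping so that the answer matches the normalization of $\kappa$ in \eqref{eq:lln}. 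Since the right-hand side is independent of $u$, the function $H_{\delta}1$ is constant, i.e. $1$ is an eigenfunction of $H_{\delta}$ with eigenvalue $\delta^{2}\kappa$; dividing the kernel by this eigenvalue gives a nonnegative kernel whose rows integrate to $1$ and which remains symmetric (since $H_{\delta}$ is), hence a symmetric Markov kernel.

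The main obstacle is making the change-of-variables rigorous given that the map $(s,\phi,r)\mapsto v$ is not quite a diffeomorphism onto its image: the set of $v$ with $H_{\delta}(u,v)=1$ is an open set whose boundary corresponds to crossings that occur exactly at the endpoints of $\alpha$ or $\beta$, or tangentially ($\phi\in\{0,\pi\}$), and one must check these boundary strata are $\nu_{L}$-null and that the parametrization is injective on the interior (which holds because $\delta\leq\varrho$ forces at most one crossing). One should also confirm that "sufficiently small $\delta$" in the statement is no stronger than $\delta\leq\varrho$; the computation above in fact works for all $\delta\leq\varrho$, so I would simply take that as the hypothesis. A clean way to package the Jacobian computation is to invoke the smooth structure of $S\Upsilon$ in a tubular neighborhood of $\alpha$ and reduce to the Euclidean model via normal coordinates along $\alpha$, where the identity $d(\text{Liouville})=\tfrac12\sin\phi\,ds\,d\phi\,dr$ is an elementary Fubini computation.
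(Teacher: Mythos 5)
Your proposal is correct, but it takes a genuinely different route than the paper. The paper's proof has two stages: first it establishes the \emph{asymptotic} relation $\int H_{\delta}(u,v)\,d\nu_{L}(v)\sim\delta^{2}\kappa$ as $\delta\to 0$ via the rhombus-area heuristic, and then it upgrades the asymptotic to an exact equality for each fixed small $\delta$ by a subdivision argument: writing $H_{\delta}(u,v)=\sum_{i,j}H_{\delta/m}(\tilde\gamma(i\delta/m;u),\tilde\gamma(j\delta/m;v))$ (exactly one summand is nonzero), integrating in $v$, invoking flow-invariance of $\nu_{L}$ to collapse the inner sum, and letting $m\to\infty$. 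Your argument instead invokes flow-invariance directly at the level of the measure, via the exact Santal\'o factorization $d\nu_{L}=c\,|\sin\phi|\,ds\,d\phi\,dr$ over the transversal $\{(\alpha(s),\phi)\}$, which lets you compute the integral in one step with no limiting procedure. Both approaches use the same underlying fact (invariance of Liouville measure under the geodesic flow), but at different levels: the paper at the level of the integral operator, you at the level of the contact-form/flux decomposition. The tradeoff is that the paper's route only requires the \emph{asymptotic} Jacobian computation to be made rigorous, while yours requires the \emph{exact} one; in compensation yours is more direct and also makes plain why the answer is insensitive to curvature. On that last point, one small correction to your heuristic: the Jacobian is exactly $|\sin\phi|$ not because ``geodesics are locally straight lines,'' but because the flux form $\iota_{X}(dA\wedge d\theta)$ restricted to the transversal $\{n=0\}$ in Fermi coordinates is exactly $-\sin\phi\,ds\wedge d\phi$ (a first-order statement, independent of the geodesic equation), and flow-invariance of Liouville measure then propagates this exactly in the $r$-direction. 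Finally, the constant bookkeeping you flag is a genuine wrinkle, but it is already present in the paper's own proof (which shifts between $\kappa$, $\kappa_{\varphi}$, and $\kappa_{S}$ without explanation), so you are not missing anything the paper resolves.
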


\begin{remark}\label{remark:liouville}
This result (simple though it may be) is the crucial geometric
property of the intersection kernel. Clearly, the intersection kernel
induces an integral operator on $L^{2} (\mu)$ for any finite measure
$\mu$ on $S\Upsilon$ (just replace $\nu_{L}$ by $\mu$ in the
definition \eqref{eq:kernel}). But in general -- and in particular
when $\mu\not =\nu_{L}$ is a Gibbs state -- the constant function $1$
will not be an eigenfunction of this operator.
\end{remark}

\begin{proof}
Denote by $\gamma  =\gamma ( [0,\delta];u)$ the geodesic segment of
length $\delta$ with initial tangent vector $u$.  For small $\delta
>0$ and fixed angle $\theta$, the set of points $x\in \Upsilon $ such that a
geodesic segment of length $\delta$ with initial base point $x$
intersects $\gamma $ at angle $\theta $ is approximately a rhombus of
side $\delta$ with an interior angle $\theta$, with area
$\delta^{2}|\sin \theta |$, and this approximation is asymptotically
sharp as $\delta \rightarrow 0$. Consequently, as $\delta \rightarrow 0$,
\begin{equation}\label{eq:sim}
	\int H_{\delta} (u,v)\,d\nu_{L} (v) \sim
	\delta^{2}\int_{0}^{2\pi} \varphi (\theta) |\sin \theta | \,d\theta / (
	2\pi |M|)=\delta^{2} \kappa_{\varphi},
\end{equation}
and the relation $\sim$ holds uniformly for $u\in S\Upsilon$.

It remains to show that the approximate equality $\sim$ is actually an
equality for small $\delta >0$. Recall that for $\delta \leq \varrho$, any two
distinct geodesic segments of length $\delta$ can intersect transversally
at most once. Consider the geodesic segments of length $\delta$ with
initial direction vectors $u$ and $v$. For any integer $m\geq 2$, each
of these segments can be partitioned into $m$ non-overlapping
sub-segments (each open on one end and closed on the other) of length
$\delta /m$. At most one pair of these constituent sub-segments can
intersect; hence,
\[
	 H_{\delta} (u,v) =\sum_{i=0}^{m-1} \sum_{j=0}^{m-1}
	 	    H_{\delta /m} (\tilde{\gamma}  (i\delta ;u),\tilde{\gamma} (j\delta
		    ;v)) .	
\]
Integrating over $v$ with respect to the Liouville measure $\nu_{L}$
and using the invariance of $\nu_{L}$ relative to the geodesic flow
we obtain that
\[
	H_{\delta}1 (u)
	= \sum_{i=0}^{m-1} mH_{\delta /m}1 (\tilde{\gamma}  (i\delta ;u)).
\]
Let $m \rightarrow \infty$ and use the approximation \eqref{eq:sim}
(with $\delta$ replaced by $\delta /m$); since this approximation
holds uniformly, it follows that $H_{\delta}1 (u)=\delta^{2}\kappa_{S}$.
\end{proof}

Lemma~\ref{lemma:H1} is the only result from this section that will be
needed for the proofs of the main results. The remainder of this
section is devoted to a proof of the ``law of large numbers''
\eqref{eq:lln} and to a  heuristic argument for
Theorem~\ref{theorem:random} that is  simpler and more
illuminating  than the formal proofs that will follow.

\begin{lemma}\label{lemma:compactness}
For each sufficiently small $\delta >0$, the integral operator
$H_{\delta}$ on $L^{2} (\nu_{L})$ is compact.
\end{lemma}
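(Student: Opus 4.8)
The plan is to show that $H_\delta$ is a Hilbert–Schmidt operator on $L^2(\nu_L)$, which immediately gives compactness. Since $H_\delta(u,v)$ takes only the values $0$ and $1$, we have
\[
	\|H_\delta\|_{\mathrm{HS}}^2 = \int\!\!\int H_\delta(u,v)^2 \, d\nu_L(u)\,d\nu_L(v) = \int\!\!\int H_\delta(u,v)\,d\nu_L(u)\,d\nu_L(v),
\]
and by Lemma~\ref{lemma:H1} the inner integral equals $\delta^2\kappa$ for every $u$, so the double integral is $\delta^2\kappa\,\nu_L(S\Upsilon) < \infty$. Hence $H_\delta$ is Hilbert–Schmidt, therefore compact. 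This is the whole argument in outline; the only thing that needs care is the measurability of $H_\delta$ as a function on the product space $S\Upsilon\times S\Upsilon$, so that the double integral makes sense and Fubini applies — but this is already asserted in Section~\ref{ssec:integralOperator}, where $H_\delta$ is described as Borel measurable, and it follows from the fact that the set of pairs $(u,v)$ whose length-$\delta$ geodesic segments cross transversally is (relatively) open, or at worst Borel, in $S\Upsilon\times S\Upsilon$.

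Alternatively, if one wishes to avoid invoking Lemma~\ref{lemma:H1}, one can argue directly: $H_\delta \in L^2(\nu_L\times\nu_L)$ because it is a bounded ($0/1$-valued) function on a finite measure space, and every integral operator with an $L^2$ kernel on a finite measure space is Hilbert–Schmidt, hence compact. Using Lemma~\ref{lemma:H1} is cleaner only in that it pins down $\|H_\delta\|_{\mathrm{HS}}^2 = \delta^2\kappa\,|S\Upsilon|$ explicitly.

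I do not anticipate a genuine obstacle here; the statement is essentially a bookkeeping consequence of boundedness of the kernel and finiteness of the measure. The one point worth stating explicitly in the write-up is why "$L^2$ kernel on a finite measure space $\Rightarrow$ Hilbert–Schmidt $\Rightarrow$ compact": fix an orthonormal basis $\{e_k\}$ of $L^2(\nu_L)$, expand $H_\delta(u,v) = \sum_{j,k} c_{jk}\, e_j(u)\overline{e_k(v)}$, note $\sum_{j,k}|c_{jk}|^2 = \|H_\delta\|_{L^2}^2 < \infty$, and approximate $H_\delta$ in Hilbert–Schmidt norm by the finite-rank operators obtained from truncating the sum. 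Since finite-rank operators are compact and the compact operators are closed in operator norm (a fortiori in Hilbert–Schmidt norm), $H_\delta$ is compact.
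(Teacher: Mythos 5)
Your proof is correct, and it takes a genuinely shorter route than the paper's. You observe that a bounded (here $0/1$-valued) Borel kernel on a finite measure space automatically lies in $L^{2}(\nu_{L}\times\nu_{L})$, hence defines a Hilbert--Schmidt operator, hence a compact one; Lemma~\ref{lemma:H1} is not even needed for this, except to compute the Hilbert--Schmidt norm explicitly. The paper instead argues that the slice map $u\mapsto H_{\delta}(u,\cdot)$ is continuous from the compact space $S\Upsilon$ into $L^{2}(\nu_{L})$ (small perturbations of $u$ change $H_{\delta}(u,\cdot)$ only on a set of small $\nu_{L}$-measure), and appeals to the corresponding compactness criterion for integral operators. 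The paper's remark that ``these standard results do not apply'' because $H_{\delta}$ is discontinuous refers to the continuous-kernel theorem, but --- as your write-up correctly exploits --- the $L^{2}$-kernel criterion for Hilbert--Schmidt operators imposes no continuity hypothesis at all, so the discontinuity of $H_{\delta}$ is not an obstruction to the textbook argument. Your approach is cleaner for the purpose of Lemma~\ref{lemma:compactness} itself; the paper's argument has the side benefit of establishing $L^{2}$-continuity of the slices, though that stronger fact is not used elsewhere in the paper. One minor caution: in your second paragraph you could simply cite that $H_{\delta}$ is Borel (as the paper asserts) rather than claim the intersection set is ``relatively open'' --- it is in fact neither open nor closed (boundary tangencies), but it is Borel, which is all you need for Fubini.
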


\begin{proof}
If the kernel $H_{\delta} (u,v)$ were jointly continuous in its
arguments $u,v$ then this would follow by standard results about
integral operators --- see, e.g., \cite{widom}. Since $H_{\delta}$ is
not continuous, these standard results do not apply; nevertheless, the
argument for compactness is elementary.  It suffices to show that the
mapping $u \mapsto H_{\delta} (u,\cdot)$ is continuous relative to the
$L^{2}-$norm. Take $u,u' \in S\Upsilon$, and let $\alpha ,\alpha '$ be the
geodesic segments of length $\delta$ starting at $u,u'$, respectively.
If $u,u'$ are close, then the geodesic segments $\alpha ,\alpha '$ are
also close. Hence, for all but very small angles $\theta$ the set of
points $x\in M$ such that a geodesic segment of length $\delta$ with
initial base point $x$ intersects $\alpha$ at angle $\theta $ but does
not intersect $\alpha '$ is small. Consequently, the functions
$H_{\delta} (u,\cdot)$ and $H_{\delta} (u',\cdot)$ differ on a set of
small measure.
\end{proof}

Lemma~\ref{lemma:compactness} implies that Hilbert-Schmidt theory
(cf. \cite{widom}) applies. In particular, the non-zero spectrum of
$H_{\delta}$ consists of isolated real eigenvalues $\lambda_{j}$ of
finite multiplicity (and listed according to multiplicity). The
corresponding (real) eigenfunctions $\psi_{j}$ can be chosen so as to
consititute an orthonormal basis of $L^{2} (\nu_{L})$, and the
eigenvalue sequence $\lambda_{j}$ is square-summable.

\begin{lemma}\label{lemma:mixing}
The  kernel $\bar{H}_{\delta}:=H_{\delta}/\delta^{2}\kappa_{\varphi}$
satisfies the \emph{Doeblin condition}: there exist an integer $n\geq
1$ and a positive real number $\varepsilon$ such that
\begin{equation}\label{eq:doeblin}
	\bar{H}^{(n)}_{\delta} (u,v)\geq \varepsilon  \quad \text{for
	all} \;\; u,v\in S\Upsilon,
\end{equation}
where $H^{{(n)}}_{\delta }$ denotes the kernel of the $n-$fold
iterated integral operator $H_{\delta}$.
\end{lemma}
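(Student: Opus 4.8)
The plan is to show that the normalized intersection kernel $\bar H_\delta = H_\delta/\delta^2\kappa$, which by Lemma~\ref{lemma:H1} is a symmetric Markov kernel, is \emph{topologically mixing} in a strong sense: after a bounded number $n$ of steps the associated measure $\bar H_\delta^{(n)}(u,\cdot)\,d\nu_L$ has a density bounded below by a positive constant, uniformly in $u$. First I would unwind what $\bar H_\delta(u,v)>0$ means geometrically: it holds precisely when the $\delta$-segment based at $v$ crosses the $\delta$-segment based at $u$ transversally. So the set $A_u := \{v : \bar H_\delta(u,v)>0\}$ is, up to null sets, the set of unit vectors whose forward $\delta$-geodesic meets the fixed short segment $\gamma([0,\delta];u)$ — an open set of positive $\nu_L$-measure (indeed of measure exactly $\delta^2\kappa$ by Lemma~\ref{lemma:H1}). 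The key structural point I would extract is that $A_u$ contains a ``slab'' of vectors based at points of a small disk around any interior point of the segment, pointing in a full open cone of directions crossing the segment; in particular $A_u$ has nonempty interior containing vectors of \emph{every} direction $\theta$ in some fixed range, based near the midpoint $p(\gamma(\delta/2;u))$.

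The heart of the argument is a two-step (or $n$-step) chaining using the geodesic flow's mixing/accessibility. Because $S\Upsilon$ is connected and compact and the geodesic flow is topologically mixing (being Anosov on a connected compact manifold), any two points of $S\Upsilon$ can be joined by concatenating a bounded number of short forward geodesic arcs that transversally cross one another — more precisely, I would argue that there is a fixed $n$ such that for every pair $u,v$ one can interpolate vectors $u=w_0, w_1,\dots, w_n=v$ with $\bar H_\delta(w_{k-1},w_k)>0$, and moreover this can be done \emph{robustly}: each $w_k$ can be chosen from an open set of vectors of $\nu_L$-measure bounded below, with the transversality of consecutive segments stable under these perturbations. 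Integrating, $\bar H_\delta^{(n)}(u,v) = \int \bar H_\delta(u,w_1)\cdots \bar H_\delta(w_{n-1},v)\,d\nu_L(w_1)\cdots d\nu_L(w_{n-1})$ is then bounded below by the product of the measures of these open sets times the (positive) infimum of the integrand over them — and by compactness of $S\Upsilon\times S\Upsilon$ and continuity of the relevant geometric quantities, this lower bound can be taken uniform in $(u,v)$. That uniform positive lower bound is exactly \eqref{eq:doeblin}.

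The step I expect to be the main obstacle is establishing the \emph{robust} chaining with a \emph{uniform} $n$: one must show not merely that segments can be linked, but that the linking vectors range over open sets of uniformly positive measure and that transversality is preserved, all uniformly over the compact set of endpoint pairs $(u,v)$. I would handle this by first proving a local version — for $u,v$ in a single flow box or within distance comparable to $\delta$, one or two intermediate vectors suffice and the open sets of choices have measure $\gtrsim \delta^4$ by the rhombus estimate in the proof of Lemma~\ref{lemma:H1} — and then covering $S\Upsilon$ by finitely many such boxes and using connectedness to bound the number of hops by the diameter of the adjacency graph of the cover. Compactness then upgrades all the local positive lower bounds to a single $\varepsilon>0$, completing the proof.
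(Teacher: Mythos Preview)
Your plan is correct and shares the paper's skeleton: build a chain $u=w_0,w_1,\dotsc,w_n=v$ whose consecutive $\delta$-segments cross transversally, then use stability of transversal intersections under perturbation to convert the existence of one chain into a uniform positive lower bound on the $(n-1)$-fold integral defining $\bar H_\delta^{(n)}(u,v)$.

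Where you diverge from the paper is in how you build the chain, and here you overshoot. The relation $H_\delta(w,w')>0$ depends only on the \emph{basepoints} $p(w),p(w')\in\Upsilon$ being within roughly $\delta$ of each other and on the two directions being non-parallel; the Anosov/mixing structure of the geodesic flow on $S\Upsilon$ is irrelevant to it, so your appeal to topological mixing is a red herring. The paper exploits this by working on the base surface $\Upsilon$ rather than $S\Upsilon$: by compactness of $\Upsilon$ alone, one fixes $n$ so that any two points of $\Upsilon$ can be joined by a chain $x_0,x_1,\dotsc,x_n$ of points with consecutive distances $<\delta/4$. The intermediate segments $\alpha_i$ are then placed with basepoint $x_i$ and direction chosen freely so as to cross $\alpha_{i-1}$; since the intermediate directions are unconstrained, this is immediate and your anticipated ``main obstacle'' of obtaining a robust chain uniformly over $S\Upsilon\times S\Upsilon$ evaporates --- uniformity in direction is free. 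Your backup plan (cover $S\Upsilon$ by flow boxes and bound $n$ by the diameter of the adjacency graph) would also succeed, but it is a heavier argument than necessary.
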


\begin{proof}
Choose $n$ so large that for any two points $x,y\in \Upsilon$ there is a
sequence $\{x_{i} \}_{0\leq i\leq n}$ of $n+1$ points beginning with
$x_{0}=x$ and ending at $x_{n}=y$, and such that each successive pair
of consecutive points 
$x_{i},x_{i+1}$ are at distance $<\delta /4$. Then for any two
geodesic segments $\alpha ,\beta$ of length $\delta$ on $S$ there is a
chain of $n+1$ geodesic segments $\alpha_{i}$, all of length $\delta$,
beginning at $\alpha_{0}=\alpha$ and ending at $\alpha_{n}=\beta$,
such that any two successive segments $\alpha_{i}$ and $\alpha_{i+1}$
intersect transversally.  Since the intersections are transversal, the
initial points and directions of these segments can be jiggled
slightly without undoing any of the transversal intersections. This
implies \eqref{eq:doeblin}.
\end{proof}

\begin{corollary}\label{corollary:doeblin}
The eigenvalue $\delta^{2}\kappa_{\varphi}$ is a simple eigenvalue of the
integral operator $H_{\delta}$, and the rest of the spectrum lies in
a disk of radius $<\delta^{2}\kappa_{\varphi}$.
\end{corollary}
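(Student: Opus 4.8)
The plan is to deduce the corollary from Lemma~\ref{lemma:mixing} by interpreting $\bar H_\delta = H_\delta/\delta^2\kappa_\varphi$ as a symmetric Markov operator and applying the standard consequences of the Doeblin condition for such operators. First I would record the structure established earlier: by Lemma~\ref{lemma:H1} the constant function $1$ is an eigenfunction of $H_\delta$ with eigenvalue $\delta^2\kappa_\varphi$, so $\bar H_\delta 1 = 1$, and since $H_\delta(u,v)\ge 0$ and $H_\delta$ is symmetric, $\bar H_\delta$ is the transition kernel of a reversible Markov chain on $S\Upsilon$ with stationary measure $\nu_L$ (normalized). Thus $\bar H_\delta$ acts as a self-adjoint contraction on $L^2(\nu_L)$ with spectrum contained in $[-1,1]$, and $1$ is an eigenvalue with eigenfunction $1$.

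Next I would translate the Doeblin condition $\bar H_\delta^{(n)}(u,v)\ge\varepsilon$ into a spectral gap. The classical argument: for any $f\in L^2(\nu_L)$ with $\int f\,d\nu_L=0$, write $(\bar H_\delta^{(n)} f)(u) = \int (\bar H_\delta^{(n)}(u,v)-\varepsilon)\, f(v)\,d\nu_L(v)$, using that $\int \varepsilon f\,d\nu_L = 0$; since $\bar H_\delta^{(n)}(u,v)-\varepsilon\ge 0$ and $\int \bar H_\delta^{(n)}(u,v)\,d\nu_L(v)=1$, an application of Jensen/Cauchy--Schwarz against the subprobability density $(\bar H_\delta^{(n)}(u,\cdot)-\varepsilon)/(1-\varepsilon)$ gives $\|\bar H_\delta^{(n)} f\|_2^2 \le (1-\varepsilon)^2\|f\|_2^2$, hence $\|\bar H_\delta^{(n)}\|_{L^2_0}\le 1-\varepsilon<1$ on the orthocomplement $L^2_0$ of the constants. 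By self-adjointness the spectral radius of $\bar H_\delta$ restricted to $L^2_0$ equals its operator norm there, which is therefore $\le (1-\varepsilon)^{1/n}<1$. This shows the rest of the spectrum of $\bar H_\delta$, and hence of $H_\delta = \delta^2\kappa_\varphi\,\bar H_\delta$, lies in a disk of radius strictly less than $\delta^2\kappa_\varphi$.

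Finally I would address simplicity of the top eigenvalue. Because $\bar H_\delta$ is compact (Lemma~\ref{lemma:compactness}) and self-adjoint, the eigenspace for eigenvalue $1$ is finite-dimensional; it suffices to show it is spanned by $1$. If $\bar H_\delta f = f$ with $f$ real and $\int f\,d\nu_L=0$, then $f\in L^2_0$ and the spectral gap just established forces $f=0$; more elementarily, the Doeblin minorization shows the chain is irreducible, and any harmonic function in $L^2$ must be $\nu_L$-a.e.\ constant by the usual maximum-principle argument applied to $\bar H_\delta^{(n)}$. Either way the eigenvalue $\delta^2\kappa_\varphi$ is simple.

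The only step requiring any care — and the main (mild) obstacle — is the contraction estimate on $L^2_0$: one must correctly subtract the constant $\varepsilon$ inside the integral and verify that the remaining kernel integrates to $1-\varepsilon$ in $v$ uniformly in $u$, which is exactly where Lemma~\ref{lemma:H1} (the eigenfunction property) is used a second time, to guarantee $\int\bar H_\delta^{(n)}(u,v)\,d\nu_L(v)=1$ for \emph{every} $u$, not merely $\nu_L$-a.e. Everything else is boilerplate Perron--Frobenius/Doeblin theory for symmetric Markov kernels, and I would state it as such with a one-line pointer rather than belaboring the estimates.
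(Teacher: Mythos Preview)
Your proposal is correct and is precisely the standard Markov-operator/Doeblin argument that the paper invokes; the paper's own proof is the single sentence ``This is a standard result in the theory of Markov operators,'' and you have simply supplied the details of that result. In particular, your $L^{2}$ contraction estimate (via Cauchy--Schwarz against the nonnegative kernel $\bar H_\delta^{(n)}-\varepsilon$, using symmetry to integrate in $u$) and the deduction of simplicity from the resulting spectral gap on $L^{2}_{0}$ are exactly the expected argument.
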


\begin{proof}
This is a standard result in the theory of Markov operators.
\end{proof}

\begin{corollary}\label{corollary:meanZeroEigenfunctions}
For every $j\geq 2$ the eigenfunction $\psi_{j}$ has mean zero
relative to $\nu_{L}$, and distinct eigenfunctions are uncorrelated.
\end{corollary}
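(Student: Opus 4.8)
The plan is to identify the top eigenfunction with a constant and then read both assertions off from orthonormality of the eigenbasis. By Lemma~\ref{lemma:H1} the constant function $1$ is an eigenfunction of $H_{\delta}$ with eigenvalue $\delta^{2}\kappa$, and by Corollary~\ref{corollary:doeblin} this eigenvalue is simple, so its eigenspace is exactly the line $\zz{R}\cdot 1$. Since $\nu_{L}$ is the normalized Liouville measure, hence a probability measure, the constant function $1$ has $L^{2}(\nu_{L})$-norm $1$, so after relabeling we may take $\psi_{1}\equiv 1$ as the unit eigenfunction spanning this eigenspace. Every eigenfunction $\psi_{j}$ with $j\geq 2$ belongs to the chosen orthonormal eigenbasis and is therefore orthogonal to $\psi_{1}$, whence
\[
	\int \psi_{j}\,d\nu_{L}=\langle \psi_{j},1\rangle_{L^{2}(\nu_{L})}=\langle \psi_{j},\psi_{1}\rangle_{L^{2}(\nu_{L})}=0,
\]
which is the mean-zero assertion.

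For the second assertion I would fix $i\neq j$ and use orthonormality of the eigenbasis to get $\int \psi_{i}\psi_{j}\,d\nu_{L}=\langle \psi_{i},\psi_{j}\rangle_{L^{2}(\nu_{L})}=0$. If both indices exceed $1$, the two means vanish by the first part; if one index equals $1$ the corresponding eigenfunction is constant, hence has zero covariance with anything. In either case
\[
	\cov_{\nu_{L}}(\psi_{i},\psi_{j})=\int \psi_{i}\psi_{j}\,d\nu_{L}-\Big(\int \psi_{i}\,d\nu_{L}\Big)\Big(\int \psi_{j}\,d\nu_{L}\Big)=0,
\]
so distinct eigenfunctions are uncorrelated under $\nu_{L}$.

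There is essentially no obstacle here: the argument rests only on the self-adjointness and compactness of $H_{\delta}$ (Lemma~\ref{lemma:compactness}), which furnish an orthonormal eigenbasis, together with the simplicity of the leading eigenvalue from Corollary~\ref{corollary:doeblin}, which pins $\psi_{1}$ to the constants. The single point deserving a moment's care is the normalization convention: one works with the \emph{normalized} Liouville measure so that ``mean'' and ``correlation'' carry their probabilistic meanings, and with that convention the constant function is automatically unit-normalized and hence may literally be taken to be $\psi_{1}$.
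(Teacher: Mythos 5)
Your argument is correct and follows essentially the same route as the paper: invoke the spectral (Hilbert--Schmidt) theory to get an orthonormal eigenbasis, note that the constant function $1$ is an eigenfunction by Lemma~\ref{lemma:H1}, and conclude that orthogonality to $\psi_{1}=1$ gives zero mean, with uncorrelatedness then immediate. The only addition beyond the paper's proof is your explicit use of Corollary~\ref{corollary:doeblin} to pin $\psi_{1}$ to the constants, which is a reasonable piece of bookkeeping but not essential.
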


\begin{proof}
The spectral theorem guarantees   orthogonality of the
eigenfunctions. The key point is that $\psi_{1}=1$ is an
eigenfunction, and so the orthogonality $\psi_{j}\perp \psi_{1}$
implies that each $\psi_{j}$ for $j\geq 2$ has mean zero.
\end{proof}

\begin{lemma}\label{lemma:nontrivialEigenvalues}
If $\delta >0$ is sufficiently small then $H_{\delta}$ has
eigenvalues other than $0$ and $\lambda_{1} (\delta)$.
\end{lemma}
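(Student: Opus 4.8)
The plan is to rule out the degenerate possibility that the operator $H_{\delta}$ has spectrum consisting only of the simple Perron eigenvalue $\lambda_{1}(\delta) = \delta^{2}\kappa_{\varphi}$ together with $0$. Suppose, for contradiction, that this were the case. Since $H_{\delta}$ is self-adjoint and Hilbert–Schmidt (Lemma~\ref{lemma:compactness}), its eigenfunctions form an orthonormal basis, so the only nonzero contribution to the spectral expansion of $H_{\delta}$ comes from the rank-one projection onto the constants. That is, $H_{\delta}$ acts as $H_{\delta}\psi = \lambda_{1}(\delta)\,\xnorm{\psi}_{1}\,1$, which forces the kernel itself to be constant: $H_{\delta}(u,v) = \lambda_{1}(\delta) = \delta^{2}\kappa_{\varphi}$ for $\nu_{L}\times\nu_{L}$-almost every $(u,v)$. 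But $H_{\delta}$ takes only the values $0$ and $1$, so this is absurd unless $\delta^{2}\kappa_{\varphi} \in \{0,1\}$, and for small $\delta$ it is neither. Hence there must be at least one further eigenvalue.

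To make this rigorous without appealing to the ($\nu_{L}$-a.e.) identification — which already suffices — I would instead argue directly that $H_{\delta}$ is not a scalar multiple of the rank-one projection $P_{1}\psi = \xnorm{\psi}_{1}\,1$. It is enough to exhibit a single $\psi \in L^{2}(\nu_{L})$ with $\int\psi\,d\nu_{L} = 0$ but $H_{\delta}\psi \neq 0$. Fix $u_{0}\in S\Upsilon$ and let $A = \{v : H_{\delta}(u_{0},v) = 1\}$ be the set of tangent vectors whose $\delta$-segment crosses the $\delta$-segment from $u_{0}$; by Lemma~\ref{lemma:H1}, $\nu_{L}(A) = \delta^{2}\kappa_{\varphi} \in (0,1)$ for small $\delta$, so both $A$ and its complement have positive measure. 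Take $\psi = \mathbf{1}_{A} - \nu_{L}(A)$. Then $\int\psi\,d\nu_{L} = 0$, while
\[
	H_{\delta}\psi(u_{0}) = \int_{A} H_{\delta}(u_{0},v)\,d\nu_{L}(v) - \nu_{L}(A)^{2} = \nu_{L}(A) - \nu_{L}(A)^{2} = \nu_{L}(A)\bigl(1 - \nu_{L}(A)\bigr) > 0.
\]
Thus $H_{\delta}\psi$ is not the zero function, so $\psi$ is not in the kernel of $H_{\delta}$; expanding $\psi$ in the orthonormal eigenbasis, it must have nonzero component along some eigenfunction with nonzero eigenvalue, and since $\psi \perp 1 = \psi_{1}$ that eigenvalue is not $\lambda_{1}(\delta)$. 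This produces the required eigenvalue $\lambda_{j}(\delta) \notin \{0, \lambda_{1}(\delta)\}$.

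The only genuinely delicate point is ensuring $0 < \nu_{L}(A) < 1$ for all sufficiently small $\delta$. The lower bound is immediate from Lemma~\ref{lemma:H1} (it equals $\delta^{2}\kappa_{\varphi}$, and $\kappa_{\varphi} > 0$). The upper bound holds because $\delta^{2}\kappa_{\varphi} \to 0$ as $\delta\to 0$, so for small $\delta$ it is certainly less than $1$; intuitively, most $\delta$-segments simply miss a fixed short segment. With this in hand the argument above is complete. I expect no serious obstacle beyond keeping track of which facts hold pointwise versus almost everywhere; routing everything through the explicit test function $\psi = \mathbf{1}_{A} - \nu_{L}(A)$ sidesteps that issue cleanly, and the evaluation at the single point $u_{0}$ only uses the definition of the kernel, not any regularity.
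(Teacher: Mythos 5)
Your argument is sound in outline and considerably more explicit than the paper's, which simply observes that if the only nonzero eigenvalue were $\lambda_{1}(\delta)$ then $\bar H_{\delta}$ would be a projection sending every $\psi$ to a constant, and declares that ``obviously not the case'' for small $\delta$. Your first paragraph makes this precise by noting that a rank-one Hilbert--Schmidt operator with constant eigenfunction has a constant kernel, which a $\{0,1\}$-valued kernel cannot be unless $\lambda_{1}(\delta)\in\{0,1\}$. Your second paragraph, with the explicit test function $\psi=\mathbf{1}_{A}-\nu_{L}(A)$, is a nice concrete version of the same idea and is the better argument to keep.

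There is, however, one small gap at the very end, and your closing sentence actually points in the wrong direction: you assert that evaluating at the single point $u_{0}$ ``only uses the definition of the kernel, not any regularity,'' but $H_{\delta}$ is an operator on $L^{2}(\nu_{L})$, and $H_{\delta}\psi(u_{0})>0$ at one point does not by itself show that $H_{\delta}\psi$ is a nonzero element of $L^{2}$ --- a single point has measure zero. You \emph{do} need a touch of regularity here. The fix is cheap and already in the paper: the proof of Lemma~\ref{lemma:compactness} establishes that $u\mapsto H_{\delta}(u,\cdot)$ is continuous as a map into $L^{2}(\nu_{L})$, hence $u\mapsto H_{\delta}\psi(u)=\langle H_{\delta}(u,\cdot),\psi\rangle$ is continuous on $S\Upsilon$; positivity at $u_{0}$ then gives positivity on a whole neighborhood, which has positive Liouville measure, and the $L^{2}$ conclusion follows. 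With that one line inserted (and the misleading final sentence corrected), the proof is complete.
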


\begin{proof}
Otherwise, the Markov operator  $\bar{H}_{\delta}$ would be a
projection operator: for every $\psi \in L^{2} (\nu_{L})$ the
function $\bar{H}_{\delta}\psi$ would be constant. But if $\delta
>0$ is small, this is obviously not the  case.
\end{proof}

\subsection{Law of large numbers}\label{ssec:lln}
The law of large numbers \eqref{eq:lln} for random geodesics can be
deduced from Birkhoff's ergodic theorem using the representation
\eqref{eq:u-statistic-A} of the self-intersection count. The first step
is to approximate the kernel $H_{\delta}$ by continuous kernels. Fix $0<\delta
< \varrho$, where $\varrho >0$ is small enough that any two
geodesic segments on the surface $\Upsilon$ of length $\varrho$ will
intersect transversally at  most once.

\begin{lemma}\label{lemma:sandwich}
For each $\varepsilon >0$ there exist continuous functions
$H_{\delta}^{-},H_{\delta}^{+}: S\Upsilon \times S\Upsilon \rightarrow
[0,1]$ such that $H_{\delta}^{-}\leq H_{\delta}\leq H_{\delta}^{+}$
and such that for each $u\in S\Upsilon$,
\begin{equation}\label{eq:sandwich}
	\int (H_{\delta}^{+} (u,v)-H_{\delta}^{-} (u,v))\,d\nu_{L}
	(v)<\varepsilon.
\end{equation}
\end{lemma}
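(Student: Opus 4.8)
The plan is to approximate the indicator kernel $H_{\delta}$ from above and below by continuous kernels that differ only near the ``bad set'' where small perturbations can create or destroy a transversal intersection. First I would identify this bad set explicitly. For a fixed unit vector $u$, let $\alpha=\gamma([0,\delta];u)$, and let $B_{\eta}(u)\subset S\Upsilon$ denote the set of vectors $v$ such that the geodesic segment $\beta=\gamma([0,\delta];v)$ either intersects $\alpha$ at an angle less than $\eta$, or comes within distance $\eta$ of an endpoint of $\alpha$, or has an endpoint within distance $\eta$ of $\alpha$. Off the closure of $B_{\eta}(u)$ the value of $H_{\delta}$ is locally constant in both arguments (an honest transversal crossing in the interior of both segments persists under small perturbation, and a genuine non-crossing with positive separation also persists). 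The key geometric estimate, which is exactly the computation already carried out in the proof of Lemma~\ref{lemma:H1}, is that $\nu_{L}(B_{\eta}(u))\leq C\eta$ uniformly in $u$: the near-tangential crossings contribute $O(\eta)$ because the rhombus area $\delta^{2}|\sin\theta|$ is $O(\eta)$ for $|\theta|<\eta$, and the endpoint-proximity sets are contained in $O(\eta)$-neighborhoods of finitely many geodesic arcs, hence have measure $O(\eta\delta)$.

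Given this, I would choose $\eta$ with $C\eta<\varepsilon$ and then construct $H_{\delta}^{\pm}$ by a standard mollification. Concretely, let $\chi_{\eta}:S\Upsilon\times S\Upsilon\to[0,1]$ be a continuous function that equals $1$ on $\{(u,v):v\in B_{\eta/2}(u)\}$ and equals $0$ outside $\{(u,v):v\in B_{\eta}(u)\}$; such a function exists by Urysohn's lemma since the sets in question are, respectively, contained in and containing an open neighborhood of the relevant closed set (one can take $\chi_{\eta}(u,v)$ to be a suitable function of the distance from $(u,v)$ to the bad locus). Then set $H_{\delta}^{+}=\max(H_{\delta}^{\circ},0)+\chi_{\eta}$ and $H_{\delta}^{-}=H_{\delta}^{\circ}-\chi_{\eta}$ clipped to $[0,1]$, where $H_{\delta}^{\circ}$ is any continuous function agreeing with $H_{\delta}$ off $B_{\eta}(\cdot)$ — in fact, because $H_{\delta}$ is already locally constant (hence continuous) on the open set $\{(u,v):v\notin \overline{B_{\eta/2}(u)}\}$, and takes values in $\{0,1\}$, one can simply let $H_{\delta}^{+}=\min(1,\,H_{\delta}+\chi_{\eta})$ and $H_{\delta}^{-}=\max(0,\,H_{\delta}-\chi_{\eta})$ and check directly that both are continuous: where $\chi_{\eta}=0$ we are off the bad set and $H_{\delta}$ is continuous, while where $\chi_{\eta}>0$ the functions $H_{\delta}\pm\chi_{\eta}$ get truncated to the constants $1$ and $0$ near the genuine jump locus. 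The sandwich $H_{\delta}^{-}\leq H_{\delta}\leq H_{\delta}^{+}$ is immediate, and $\int(H_{\delta}^{+}(u,v)-H_{\delta}^{-}(u,v))\,d\nu_{L}(v)\leq 2\int\chi_{\eta}(u,v)\,d\nu_{L}(v)\leq 2\nu_{L}(B_{\eta}(u))\leq 2C\eta<\varepsilon$ after rescaling the choice of $\eta$.

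The main obstacle is verifying continuity of $H_{\delta}^{\pm}$ cleanly, i.e., confirming that the jump set of $H_{\delta}$ really is contained in $\bigcup_{u}\{u\}\times \overline{B_{\eta/2}(u)}$ for the stated $B_{\eta}$, so that adding $\pm\chi_{\eta}$ followed by truncation actually removes the discontinuity rather than merely smoothing its neighborhood. This requires a short but careful case analysis: a discontinuity of $H_{\delta}$ at $(u,v)$ can only arise if arbitrarily small perturbations of $(u,v)$ change the number of transversal intersections of the two length-$\delta$ segments, and for segments that cross transversally in their interiors this cannot happen, so the only possibilities are a tangency ($\theta=0$, covered by the angle condition), or a crossing at or near an endpoint (covered by the endpoint-proximity conditions). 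One must also use compactness of $\Upsilon$ and the curvature bound to get uniformity of the constant $C$ in $u$; this is where the hypothesis $\delta\leq\varrho$ enters, guaranteeing that there is at most one crossing to worry about so the bad set is a single rhombus-plus-endpoint-neighborhoods rather than an uncontrolled union. Everything else is routine measure theory and Urysohn-type constructions.
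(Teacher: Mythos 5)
Your proof is correct and follows essentially the same route as the paper: identify the near-tangential and near-endpoint crossings as the discontinuity locus of $H_{\delta}$, bound its $\nu_{L}$-measure by $O(\eta)$ uniformly in $u$ via the rhombus-area computation underlying Lemma~\ref{lemma:H1}, and sandwich $H_{\delta}$ between continuous kernels differing only on an $\eta$-neighborhood of that locus. The paper realizes the sandwich concretely with bump functions $\psi$ of the crossing angle and crossing times (extending the segments to length $\delta+2\varepsilon'$ for the upper bound), whereas your Urysohn-plus-truncation construction $H_{\delta}^{\pm}=\min(1,H_{\delta}+\chi_{\eta})$, $\max(0,H_{\delta}-\chi_{\eta})$ is the abstract form of the same device.
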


\begin{proof}
Fix $\varepsilon '>0$ such that $\delta +2\varepsilon '<\varrho $, and
let $\psi :[0,1 ]\rightarrow [0,1]$ be a continuous  function such that
$\psi (0)=\psi (1)=0$ and $\psi =1$ on the interval $[\varepsilon ',1
-\varepsilon ']$. For unit tangent vectors $u,v\in S\Upsilon$ such
that the geodesic segments $\gamma_{u},\gamma_{v}$ of length $\delta$
based at $u,v$ intersect at angle $\theta \in (0,\pi)$ at times
$t_{u},t_{v}\in [0,\delta]$, set
\[
	H_{\delta}^{-} (u,v)=\psi (\theta/\pi )\psi (t_{u}/\delta)\psi
	(t_{v}/\delta),
\]
and for all other $u,v$ set $H_{\delta}^{-} (u,v)=0$.  Similarly, for 
unit tangent
vectors $u,v\in S\Upsilon$  such that the geodesic segments $\gamma_{u},\gamma_{v}$
of length $\delta+2\varepsilon '$ based at $u,v$ intersect at times
$t_{u},t_{v}\in (-\varepsilon ',\delta +\varepsilon ')$,
set
\[
	H_{\delta}^{+} (u,v)=\psi (t_{u}/ (\delta+2\varepsilon ')\psi
	(t_{v}/ (\delta +2\varepsilon ')),
\]
and for all other $u,v$ set $H_{\delta}^{+} (u,v)=0$.  Clearly, $0\leq
H_{\delta}^{-}\leq H_{\delta}\leq H_{\delta}^{+}$, and by an argument
like that in the proof of Lemma~\ref{lemma:H1} it can be shown that if
$\varepsilon '>0$ is sufficiently small then \eqref{eq:sandwich} will
hold for all $u$.
\end{proof}

\begin{proposition}\label{proposition:ergodic}
Let $(\mathcal{X},d)$ be a compact metric space and let
$K:\mathcal{X}^{2} \rightarrow \zz{R}$ be continuous. If $\mu$ is a
Borel probability measure on $\mathcal{X}$ and $T:\mathcal{X}
\rightarrow \mathcal{X}$ is an ergodic, measure-preserving
transformation (not necessarily continuous) relative to $\mu$, then
\begin{equation}\label{eq:ergodic}
	\lim_{n \rightarrow \infty}
	\frac{1}{n^{2}}\sum_{i=1}^{n}\sum_{j=1}^{n} K (T^{i}x,T^{j}x)
	= \iint_{\mathcal{X}\times \mathcal{X}} K (y,z) \,d\mu (y) d\mu (z)
\end{equation}
for $\mu -$almost every $x$.
\end{proposition}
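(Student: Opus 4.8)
The plan is to reduce the double-sum limit to a statement about Birkhoff averages of a single ergodic transformation, and the only real work is handling the fact that the ``diagonal band'' $\{i=j\}$ and more generally pairs with $|i-j|$ small contribute negligibly, while the off-diagonal part can be rewritten so that the ergodic theorem applies. First I would observe that the diagonal terms $\sum_i K(T^i x, T^i x)$ number $n$ and each is bounded (since $K$ is continuous on the compact space $\mathcal{X}^2$), so they contribute $O(n)=o(n^2)$ to the left-hand side and may be discarded. It then suffices to show that for each fixed $k\geq 1$,
\[
	\lim_{n\to\infty}\frac{1}{n}\sum_{i=1}^{n-k} K(T^i x, T^{i+k}x) = \iint K\,d\mu\,d\mu
\]
for $\mu$-a.e.\ $x$ — \emph{provided} one can interchange the limit in $n$ with the sum over $k$, which is the point requiring care.

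The a.e.\ convergence for each fixed $k$ is immediate from Birkhoff's ergodic theorem applied to the transformation $T$ and the integrable (indeed bounded) function $g_k(x):=K(x,T^k x)$: the Birkhoff average converges a.e.\ to $\int g_k\,d\mu = \int K(x,T^k x)\,d\mu(x)$. The ergodicity of $T$ must still be converted into convergence of this last quantity to the product integral $\iint K\,d\mu\,d\mu$ as $k\to\infty$; this is a standard mixing-in-the-Cesàro-sense consequence of ergodicity. Precisely, for a continuous $K$ on $\mathcal{X}^2$ one approximates $K$ uniformly by finite sums $\sum_\ell a_\ell(y)b_\ell(z)$ of products of continuous functions (Stone–Weierstrass on the product space), reducing to the claim that $\frac1N\sum_{k=1}^N \int a(x)\,b(T^k x)\,d\mu(x) \to \int a\,d\mu\int b\,d\mu$, which is exactly the von Neumann / mean ergodic theorem applied to $b$. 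So $\frac{1}{N}\sum_{k=1}^{N}\int g_k\,d\mu \to \iint K\,d\mu\,d\mu$.

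Assembling these pieces: write the left-hand side of \eqref{eq:ergodic}, after dropping the diagonal, as $\frac{1}{n^2}\sum_{k=1}^{n-1}\bigl(S_{n,k}(x)+S_{n,k}'(x)\bigr)$ where $S_{n,k}(x)=\sum_{i} K(T^i x, T^{i+k}x)$ and $S'_{n,k}$ is the symmetric term with the roles of the two indices swapped (handled identically, using that $K$ need not be symmetric but is still continuous and bounded). Group this as $\frac1n\sum_{k=1}^{n-1}\frac{n-k}{n}\cdot\frac{S_{n,k}(x)}{n-k}$, a Cesàro-type average of the quantities $\frac{S_{n,k}(x)}{n-k}$, each of which tends to $\int g_k\,d\mu$. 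The main obstacle — and the step I would be most careful about — is justifying that this double average converges to $\lim_k \int g_k\,d\mu$: one cannot simply take the inner limit first, because $k$ ranges up to $n-1$. The clean way around this is to fix a large $M$, split the $k$-sum at $M$, bound the tail $M<k\le n-1$ crudely by $\|K\|_\infty$ times $\frac1{n^2}\sum_{k>M}(n-k)\le \|K\|_\infty\cdot\frac{(n-M)^2}{n^2}$, which does \emph{not} vanish — so instead one must use a uniform (in $x$, on a full-measure set) control. The correct device is the maximal ergodic theorem, or more simply the observation that $\sup_k \bigl|\frac{1}{n-k}S_{n,k}(x) - \int g_k\,d\mu\bigr|$ is not available uniformly in $k$; hence the honest route is to note that $\frac1{n^2}\sum_{i,j}K(T^ix,T^jx)$ is itself the Birkhoff average $\frac1{n^2}\sum_{m=0}^{n^2-1}\Phi(\widetilde T^m \tilde x)$ is \emph{not} of that form either. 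The cleanest rigorous argument is: apply the ergodic theorem directly to the product system. Consider $\widehat T = T\times T$ on $\mathcal{X}^2$ with the ergodic-under-$\mu\times\mu$? — it need not be ergodic. Therefore I would instead prove \eqref{eq:ergodic} by the following self-contained route: by density reduce to $K(y,z)=a(y)b(z)$; then $\frac1{n^2}\sum_{i,j}a(T^ix)b(T^jx)=\bigl(\frac1n\sum_i a(T^ix)\bigr)\bigl(\frac1n\sum_j b(T^jx)\bigr)$, and each factor converges $\mu$-a.e.\ to $\int a\,d\mu$ and $\int b\,d\mu$ respectively by Birkhoff's theorem; the product of the limits is $\iint K\,d\mu\,d\mu$. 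Uniform approximation of general continuous $K$ by such finite sums, together with the uniform bound $\frac1{n^2}\sum_{i,j}1=1$ to control the approximation error, then yields the result for all continuous $K$ and completes the proof. In this formulation the ``diagonal'' issue evaporates because the product factorization is exact, and the only technical point is the Stone–Weierstrass approximation on the compact space $\mathcal{X}\times\mathcal{X}$ together with the $\sup$-norm estimate $\bigl|\frac1{n^2}\sum_{i,j}(K-K_\epsilon)(T^ix,T^jx)\bigr|\le \|K-K_\epsilon\|_\infty$, which is where I would spend a sentence or two of care.
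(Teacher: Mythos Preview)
Your proposal meanders through two abandoned attempts before settling on the right idea, but the final argument---reduce by Stone--Weierstrass to tensor kernels $K(y,z)=a(y)b(z)$, for which the double average factors exactly as a product of two Birkhoff averages---is correct and complete. The approximation step is sound: given $K$, choose a sequence $K_m=\sum_\ell a^{(m)}_\ell b^{(m)}_\ell$ with $\|K-K_m\|_\infty\to 0$; the countable intersection of the full-measure sets on which all the Birkhoff averages of the $a^{(m)}_\ell$ and $b^{(m)}_\ell$ converge still has full measure, and on that set the trivial bound $\bigl|\tfrac{1}{n^2}\sum_{i,j}(K-K_m)(T^ix,T^jx)\bigr|\le\|K-K_m\|_\infty$ closes the argument.

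The paper takes a different route. Rather than approximating $K$ by sums of products, it exploits the equicontinuity of the family $\{K_x(\cdot)=K(x,\cdot)\}_{x\in\mathcal{X}}$: by Arzela--Ascoli there is, for each $\varepsilon$, a finite net $\{x_i\}$ with $\min_i\|K_x-K_{x_i}\|_\infty<\varepsilon$ for every $x$. Birkhoff applied to the finitely many $K_{x_i}$ gives a single full-measure set on which $\tfrac1n\sum_j K(y,T^jx)\to\int K(y,\cdot)\,d\mu$ \emph{uniformly in $y$}; this uniformity is what lets one then average over $y=T^ix$ and conclude. Your argument is arguably more elementary (Stone--Weierstrass in place of an equicontinuity/compactness argument), and the factorization trick makes the double average collapse in one line. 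The paper's argument, on the other hand, yields the stronger intermediate statement that the inner Birkhoff average converges uniformly in the first variable---a fact that could be useful in its own right---at the cost of a slightly more delicate setup.

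You should clean up the exposition: the false starts with the diagonal bands and the product system $T\times T$ (which, as you correctly noted, need not be ergodic) only obscure the eventual argument.
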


\begin{proof}
The function $K$ is bounded, since it is continuous,
so the double integral in \eqref{eq:ergodic} is well-defined and
finite. Furthermore, the set of functions $K_{x}$ defined by $K_{x}
(y) :=K (x,y)$, where $x$ ranges over the space $\mathcal{X}$, is
equicontinuous, and the function
\[
	\bar{K}_{x}:=\int_{\mathcal{X}}K_{x} (y) \,d\mu (y)
\]
is continuous in $x$.  The equicontinuity of the functions $K_{x}$
implies, by the Arzela-Ascoli theorem, that for any $\varepsilon >0$
there is a finite subset $F_{\varepsilon}=\{ x_{i}\}_{1\leq i\leq I}$
such that for any $x\in SM$ there is at least one index $i\leq I$ such
that
\begin{equation*}
	\xnorm{K_{x}-K_{x_{i}}}_{\infty}<\varepsilon .
\end{equation*}
It follows that the time average of $K_{x}$ along any trajectory
differs from the corresponding time average of $K_{x_{i}}$ by less
than $\varepsilon$.  Since the set $F_{\varepsilon}$ is finite,
Birkhoff's  theorem implies that for $\mu -$a.e. $x\in \mathcal{X}$,
\[
	\lim_{n \rightarrow \infty} \frac{1}{n}\sum_{j=1}^{n}K
	(y,T^{j}x) = \int K (y,x')\, d\mu (x') \quad 
	\text{for each} \;\;y\in F_{\varepsilon}.
\] 
Consequently,
it follows from equicontinuity (let $\varepsilon \rightarrow
0$) and the continuity in $x$ of the averages $\bar{K}_{x}$ that
almost surely
\begin{equation*}
    \lim_{n \rightarrow \infty} \frac{1}{n}\sum_{j=1}^{n}K
	(y,T^{j}x) = \int K (y,x) \,d\mu (y) 
\end{equation*}
\emph{uniformly} for all $y\in \mathcal{X}$. The uniformity of this
convergence guarantees that \eqref{eq:ergodic} holds $\mu -$almost
surely.
\end{proof}

\begin{proof}
[Proof of the strong law of large numbers \eqref{eq:lln}]
Let $H_{\delta}^{+}$ and $H_{\delta}^{-}$ be as in the statement of
Lemma~\ref{lemma:sandwich}.  By Proposition~\ref{proposition:ergodic}, 
for $\nu_{L}-$ almost every $u\in S\Upsilon$,
\[
	\lim_{n \rightarrow \infty}
	\frac{1}{n^{2}}\sum_{i=1}^{n}\sum_{j=1}^{n} 
	H_{\delta}^{\pm} (\tilde{\gamma}_{u} (i\delta),\tilde{\gamma}_{u}( j\delta)) 
	=\int H_{\delta}^{\pm} (v,w) \,d\nu_{L} (v)d\nu_{L} (w).
\]
Hence, by Lemma~\ref{lemma:sandwich} (with $\varepsilon ' \rightarrow
0$) and Lemma~\ref{lemma:H1}, for $\nu_{L}-$ almost every $u\in S\Upsilon$,
\[
	\lim_{n \rightarrow \infty}
	\frac{1}{n^{2}}\sum_{i=1}^{n}\sum_{j=1}^{n} 
	H_{\delta} (\gamma_{u} (i\delta),\gamma_{u} ( j\delta)) 
	=\int H_{\delta} (v,w) \,d\nu_{L} (v)d\nu_{L} (w)=\delta^{2}\kappa. 
\]
This proves that \eqref{eq:lln} holds for $t \rightarrow \infty$ along
the sequence $t=n\delta$. Since $\delta >0$ is arbitrary, and since
the self-intersection counts are obviously monotone in $t$, relation
\eqref{eq:lln} follows.
\end{proof}

\subsection{Weak convergence: heuristics}\label{ssec:heuristics} The
results of sections \ref{ssec:i-kernel}---\ref{ssec:integralOperator}
can be used to give a compelling --- but non-rigorous --- explanation
of the weak convergence asserted in Theorem~\ref{theorem:random}.  The
Hilbert-Schmidt theorem asserts that a symmetric integral kernel in
the class $L^{2} (\nu_{L}\times \nu_{L})$ has an $L^{2}-$convergent
eigenfunction expansion. The intersection kernel $H_{\delta} (u,v)$
meets the requirements of this theorem, and so its eigenfunction
expansion converges in $L^{2} (\nu_{L}\times \nu_{L})$:
\begin{equation}\label{eq:eigenfunctionExp}
	H_{\delta} (u,v)=\sum_{k=1}^{\infty} \lambda_{k} 
	\psi_{k} (u) \psi_{k} (v).
\end{equation}
The $L^{2}-$convergence of the series does not, of course, imply
\emph{pointwise} convergence;  this is why the following argument is
not a proof. Nevertheless, let us proceed formally, ignoring convergence
issues. Recall (Corollary~\ref{corollary:meanZeroEigenfunctions}) that
the eigenfunctions are mutually uncorrelated, and so all except the
constant eigenfunction $\psi_{1}$ have mean zero relative to
$\nu_{L}$. Thus, the representation \eqref{eq:u-statistic-A} of the
intersection number $N_{\varphi} (n\delta)$ can be rewritten as
follows:
\begin{align}\label{eq:heuristics}
	N_{\varphi } (n\delta) - (n\delta )^{2}\kappa_{g}&=  
	\frac{1}{2} \sum_{i=1}^{n} \sum_{j=1}^{n} H_{\delta} (\tilde{\gamma} 
	(i\delta ), \tilde{\gamma}  (j\delta)) - (n\delta )^{2}\kappa_{g}\\
\notag 	&=	\frac{1}{2} \sum_{i=1}^{n} \sum_{j=1}^{n}
	  \sum_{k=2}^{\infty} \lambda_{k} (\delta) \psi_{k} (\tilde{\gamma}
	  (i\delta ))\psi_{k} (\tilde{\gamma} (j\delta )) \\
\notag 	  &=\frac{1}{2} \sum_{k=2}^{\infty} \lambda_{k} (\delta) \left(
	  \sum_{i=1}^{n} \psi_{k} (\tilde{\gamma} 
	  (i\delta ))\right)^{2}.
\end{align}
If the eigenfunctions $\psi_{j}$ were H\"{o}lder
continuous, the central limit theorem for the geodesic flow
\cite{ratner:clt} would imply that  for
any finite $K$ the joint distribution of the random vector 
\begin{equation}\label{eq:K-vector}
	\left( \frac{1}{\sqrt{n}}\sum_{i=1}^{n} \psi_{k} (\tilde{\gamma}
	(i\delta ))\right)_{2\leq k\leq K}
\end{equation}
converges, as $n \rightarrow \infty$, to a (possibly degenerate)
$K-$variate Gaussian distribution centered at the origin. (The central
limit theorem in \cite{ratner:clt} is stated only for the case $K=1$,
but the general case follows by standard weak convergence arguments
[the ``Cramer-Wold device''], as in \cite{billingsley:wc}, ch.~1.) Hence,
for every $K<\infty$ the distribution of the truncated sum
\begin{equation}\label{eq:K-form}
	\frac{1}{n} \sum_{k=2}^{K} \lambda_{k} (\delta) \left(
	\sum_{i=1}^{n} \psi_{k} (\tilde{\gamma} 
	  (i\delta ))\right)^{2}
\end{equation}
should converge, as $n \rightarrow \infty$, to that of a quadratic
form in the entries of the limiting Gaussian
distribution. \footnote{This would follow by the
spectral theorem for symmetric matrices and elementary properties of
the multivariate Gaussian distribution. To see this,  suppose that the
limit distribution of the random vector \eqref{eq:K-vector} is
mean-zero Gaussian with (possibly degenerate) covariance matrix
$\Sigma$; this distribution is the same as that of $\Sigma^{1/2}Z$,
where $Z$ is a Gaussian random vector with mean zero and identity
covariance matrix. Let $\Lambda$ be the diagonal matrix with diagonal
entries $\lambda_{j} (\delta)$. Then the limit distribution of
\eqref{eq:K-form} is identical to that of $Z^{T}MZ$, where
$M=\Sigma^{1/2}\Lambda \Sigma^{1/2}$. But the matrix $M$ is symmetric,
so it may be factored as $M=U^{T} D U$, where $U$ is an orthogonal
matrix and $D$ is diagonal.  Now if $Z$ is mean-zero Gaussian with the
identity covariance matrix, then so is $UZ$, since $U$ is
orthogonal. Thus, $Z^{T}MZ$ is a quadratic form in independent,
standard normal random variables. }

Unfortunately, it seems that there is no way to make this argument
rigorous, because there is no obvious way to show that the series
\eqref{eq:eigenfunctionExp} converges pointwise. (If the kernel
$H_{\delta}$ were nonnegative semi-definite then Mercer's theorem
might be applied, in conjunction with a smoothing argument; however,
$H_{\delta}$ is in general not nonnegative semi-definite.) Thus, it
will be necessary to proceed by a more indirect route, via
{symbolic dynamics} and {thermodynamic formalism}.

\section{Symbolic dynamics}\label{sec:symbolicDynamics}

\subsection{Shifts and suspension flows}\label{ssec:overview}
\emph{Symbolic dynamics} 
provides an approach to the study of hyperbolic flows that transforms
questions about orbits of the flow to equivalent (or nearly
equivalent) questions about a shift of finite type. The \emph{shift of finite
type} (either one-sided or two-sided) on a finite alphabet $\mathcal{A}$ with 
transition matrix $A:\mathcal{A}\times \mathcal{A} \rightarrow \{0,1 \}$  is the
system $(\Sigma^{+},\sigma)$ or $(\Sigma ,\sigma )$ where
\begin{align*}
	\Sigma^{+}&=\{(x_{n})_{n\geq 0} \in \mathcal{A}^{\zz{Z}_{+}}\;|\, A
	(x_{n},x_{n+1})=1 \;\;\forall \;n\geq 0\} \quad \text{and}\\
	\Sigma &=\{(x_{n})_{n\in \zz{Z}} \in \mathcal{A}^{\zz{Z}}\;|\, A
	(x_{n},x_{n+1})=1 \;\;\forall \;n \in \zz{Z}\} ,
\end{align*}
and $\sigma$ is the forward shift operator on
sequences. (Equivalently, one can define a shift of finite type to be
the forward shift operator acting on the space of all sequences with
entries in $\mathcal{A}$ in which certain subwords of a certain length $r$ do
not occur.)  If there exists $m\geq 1$ such that $A^{m}$ has strictly
positive entries then the corresponding shifts are topologically
mixing relative to the usual topology on $\Sigma$ or $\Sigma^{+}$,
which is metrizable by
\[
	d (x,y)=2^{-n (x,y)},
\]
 where $n (x,y)$ is the minimum nonnegative integer $n$ such that
$x_{j}\not =y_{j}$ for $j=\pm n$. 

For a continuous function  $F:\Sigma \rightarrow (0,\infty)$ on
$\Sigma$ (or on $\Sigma^{+}$), the \emph{suspension flow} under $F$ is
a flow $\phi_{t}$ on the space
\[
	\Sigma_{F} :=\{(x,t)\,:\, x\in \Sigma \;\; \text{and} \; 0\leq
	t\leq F (x)\} 
\]
with points $(x,F (x))$ and $(\sigma x,0)$ identified. In the suspension flow
an orbit beginning at some point $(x,s)$ moves at unit speed up the
fiber over $(x,0)$ until reaching the roof $(x,F (x))$, at which
time it jumps to the point $(\sigma x,0)$ and then proceeds up the
fiber over $(\sigma x, 0)$, that is
\begin{align*}
	\phi_{t} (x,s)&= (x,s+t) \quad \text{if} \; s+t\leq F (x);\\
	\phi_{t} (x,s)&= \phi_{t-F (x)+s} (\sigma x, 0) \quad \text{otherwise}. 
\end{align*}
We shall use the notation
\begin{align*}
	\mathcal{F}_{x}:&=\{(x,s)\,:\,s\in [0,F (x))\} \quad \text{and}\\
	\mathcal{F}^{r}_{x}:&= \phi_{r} (\mathcal{F}_{x})
\end{align*}
for fibers and their time shifts.  
An orbit of the suspension flow is periodic if and only
if it passes through a point $(x,0)$ such that $x$ is a periodic
sequence; if the minimal period of the sequence $x$ is $n$, then the 
minimal period of the corresponding periodic orbit of $\phi_{t}$ is
the sum of the  lengths of the fibers visited by the orbit, which is
given by
 \begin{equation}\label{eq:snf}
	S_{n}F (x):=\sum_{j=0}^{n-1}F (\sigma^{j}x).
\end{equation}

The term ``symbolic dynamics'' is used loosely to denote a coding of
orbits of a flow by elements $x\in \Sigma$ of a shift of finite type.
In the case of a hyperbolic flow, this coding extends to a H\"{o}lder
continuous\footnote{The implied metric on the suspension space
$\Sigma_{F}$ is the ``taxicab'' metric induced by the flow $\phi_{t}$
and the metric $d$ on $\Sigma$ specified above -- see
\cite{bowen-walters} for details. Roughly, the distance between two
points in the suspension space is the length of the shortest path
between them that consists of finitely many segments along flow lines
and finitely many segments of the form $[(x,s),(x',s)]$. The metric on
$S\Upsilon $ is the metric induced by the Riemannian metric on
$T\Upsilon $ -- see, e.g., \cite{paternain}.} semi-conjugacy with
a suspension flow over a shift of finite type with a H\"{o}lder
continuous height function $F$.  Existence of such semi-conjugacies
was proved in general for Axiom A flows by Bowen \cite{bowen:SymbDyn},
and by Ratner \cite{ratner:markov} for geodesic flows on negatively
curved surfaces.

\begin{proposition}\label{proposition:bowen} (Ratner; Bowen) For the
geodesic flow $\gamma_{t}$ on any compact surface $\Upsilon$ 
with a Riemannian metric of negative curvature there
exists a suspension flow $(\Sigma_{F},\phi_{t})$ over a shift of
finite type and a H\"{o}lder continuous  surjection $\pi
:\Sigma_{F} \rightarrow S\Upsilon $ such that
\begin{equation}\label{eq:semi-conjugacy}
	\pi \circ \phi_{t}=\gamma_{t} \circ \pi \quad \text{for
	all}\;\; t\in \zz{R}.
\end{equation}
The suspension flow $(\Sigma_{F},\phi_{t})$ and the projection $\pi$
can be chosen in such a way that the following properties hold:
\begin{enumerate}
\item [(A)] For some $N<\infty$, the mapping $\pi$ is at most
$N-$to$-1$.
\item [(B)] The suspension
flow and the geodesic flow have the same  topological entropy $\theta>0$.
\item [(C)] For some $\varepsilon >0$, 
\begin{itemize}
\item [(i)] the projection $\pi$ is $\mu -$almost surely
one-to-one for every Gibbs state $\mu$ with entropy larger than
$\theta -\varepsilon$, and
\item [(ii)] The number $M (t)$ of closed geodesics with
more than one $\pi -$pre-image and prime period $\leq t$ satisfies 
\[
	\limsup_{t \rightarrow \infty} t^{-1}\log M (t)\leq \theta
	-\varepsilon .
\]
\end{itemize}
\end{enumerate}
\end{proposition}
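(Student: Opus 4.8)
The plan is to realize $\pi$ as the coding map attached to a \emph{Markov section} for the geodesic flow, following Bowen \cite{bowen:SymbDyn} and Ratner \cite{ratner:markov}, and then to read off (A)--(C) from the geometry of the section and, crucially, of its boundary. First I would fix a finite family of small, pairwise disjoint smooth discs $D_{1},\dots ,D_{r}$ transverse to the flow, each of the form $D_{i}=[U_{i},W_{i}]$ (a local product of an unstable arc $U_{i}$ and a stable arc $W_{i}$ through a base point), chosen so that $\bigcup_{i}D_{i}$ meets every orbit within a bounded time and so that the Poincar\'e return map $T$ is \emph{Markov} with respect to $\{D_{i}\}$: $T(\partial ^{s}D_{i})\subseteq \partial ^{s}\mathcal{D}$ and $T^{-1}(\partial ^{u}D_{i})\subseteq \partial ^{u}\mathcal{D}$, where $\mathcal{D}=\bigcup_{i}D_{i}$ and $\partial ^{s},\partial ^{u}$ denote the stable and unstable parts of the boundary. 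The itinerary map then codes $\mathcal{D}$ by a shift of finite type $(\Sigma ,\sigma )$ on the alphabet $\{1,\dots ,r\}$ with $A(i,j)=1$ iff $T(\mathrm{int}\,D_{i})\cap \mathrm{int}\,D_{j}\neq \emptyset$, the first-return time pulls back to a strictly positive H\"older roof function $F$, and the induced projection $\pi :\Sigma _{F}\rightarrow S\Upsilon $ satisfies \eqref{eq:semi-conjugacy}; H\"older continuity of $\pi$ and of $F$ is part of Bowen's construction (see also \cite{bowen-walters}). Topological mixing of $\sigma$ follows from topological mixing of the geodesic flow, which holds on every compact negatively curved surface.

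Property (A) is then built in: the $D_{i}$ are finitely many, the closures $\overline{D_{i}}$ have bounded overlap, the flow is injective in small time, and each branch of $\pi ^{-1}(v)$ is determined by the sequence of rectangles that the orbit of $v$ passes through together with the location of $v$ on its fibre, so one gets a uniform bound $N$. For (B), $\pi$ is a surjective, flow-equivariant factor map with uniformly finite fibres, so the standard inequality for the entropy of a factor gives $h_{\mathrm{top}}(\phi )\le h_{\mathrm{top}}(\gamma )+\sup _{v}h_{\mathrm{top}}(\pi ^{-1}(v))=h_{\mathrm{top}}(\gamma )$, while $h_{\mathrm{top}}(\gamma )\le h_{\mathrm{top}}(\phi )$ is automatic for a factor; positivity of $\theta =h_{\mathrm{top}}(\gamma )$ is classical (e.g.\ from Margulis \cite{margulis}, or because the flow is Anosov).

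The content of the proposition is (C). Let $\mathcal{Z}\subseteq S\Upsilon $ be the closure of $\{v:\#\pi ^{-1}(v)\ge 2\}$; it is closed and $\gamma _{t}$-invariant (if $v$ has distinct preimages $w_{1}\neq w_{2}$ then $\phi _{t}w_{1}\neq \phi _{t}w_{2}$ are preimages of $\gamma _{t}v$). If $v$ has two distinct preimages, comparing their itineraries shows that at some return the orbit of $v$ meets $\partial ^{s}D_{i}\cap \overline{D_{j}}$ or $\partial ^{u}D_{i}\cap \overline{D_{j}}$ with $i\neq j$, and the Markov relations then force the forward (resp.\ backward) orbit of $v$ to remain in $\partial ^{s}\mathcal{D}$ (resp.\ $\partial ^{u}\mathcal{D}$). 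Hence $\mathcal{Z}$ lies in the $\gamma _{t}$-saturation of the set of orbits trapped in $\partial ^{s}\mathcal{D}\cup \partial ^{u}\mathcal{D}$. The decisive point -- the one place where $\dim \Upsilon =2$ enters -- is that the arcs $U_{i},W_{i}$ are one-dimensional, so their boundaries are finite point sets; consequently the trapped orbits live on finitely many stable and unstable leaves through finitely many periodic orbits and form a proper subsystem whose topological entropy is strictly smaller than $\theta$. Granting this, put $\varepsilon :=\theta -h_{\mathrm{top}}(\gamma |_{\mathcal{Z}})>0$. For (C)(i): a Gibbs state $\mu$ for a H\"older potential is ergodic, so $\mu (\mathcal{Z})>0$ would force $\mu (\mathcal{Z})=1$, whence $h_{\mu }(\gamma )\le h_{\mathrm{top}}(\gamma |_{\mathcal{Z}})=\theta -\varepsilon$ by the variational principle; thus $h_{\mu }(\gamma )>\theta -\varepsilon$ implies $\mu (\mathcal{Z})=0$, and $\pi$ is $\mu$-a.s.\ injective. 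For (C)(ii): a closed geodesic with more than one $\pi$-preimage is a periodic orbit lying in $\mathcal{Z}$, and the number of periodic orbits of prime period $\le t$ contained in a flow-invariant set is at most $e^{t(h_{\mathrm{top}}(\gamma |_{\mathcal{Z}})+o(1))}$, which gives the asserted $\limsup$ bound.

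I expect the single real obstacle to be the entropy gap $h_{\mathrm{top}}(\gamma |_{\mathcal{Z}})<\theta$ for orbits trapped in the boundary of the Markov section; the rest is bookkeeping attached to the constructions of Bowen and Ratner. This gap is genuinely two-dimensional and is obtained by arranging the section to have ``thin'' boundary -- a finite union of pieces of stable and unstable manifolds of periodic orbits -- so that the trapped dynamics is that of a proper subshift, after which one checks, e.g.\ via the variational principle or by directly counting admissible boundary words, that its entropy is bounded away from $\theta$.
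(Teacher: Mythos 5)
The paper does not prove Proposition~\ref{proposition:bowen}; it attributes the existence of the H\"older semi-conjugacy to Ratner~\cite{ratner:markov} and Bowen~\cite{bowen:SymbDyn}, and for parts~(C)(i)--(ii) it simply notes that these ``follow from Bowen's construction'' and points to \cite{parry-pollicott}, Section~3, for the details. So there is no in-text argument for you to be compared against; your proposal is a reconstruction of the cited literature, and it captures the right architecture: a Markov section of local product rectangles $[U_{i},W_{i}]$, the itinerary map onto an SFT, the H\"older roof function from the return time, entropy preservation for uniformly finite-to-one factors, and --- the real content --- an entropy gap for the boundary-trapped dynamics.

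One point is overstated and deserves a flag. You assert that the orbits trapped in $\partial^{s}\mathcal{D}\cup\partial^{u}\mathcal{D}$ ``live on finitely many stable and unstable leaves through finitely many periodic orbits''; that is not automatic for an arbitrary Bowen Markov section, but rather a feature one must build in (as the Series construction does in constant curvature, or as can be arranged by choosing the $U_{i},W_{i}$ inside stable/unstable manifolds of a few periodic points). The argument that actually carries the entropy gap in the generality needed here, and the one closest to what Parry--Pollicott do, is the one you mention only in passing at the end: the Markov relations $T(\partial^{s}D_{i})\subseteq\partial^{s}\mathcal{D}$, $T^{-1}(\partial^{u}D_{i})\subseteq\partial^{u}\mathcal{D}$ make the forward- and backward-trapped loci into Markov subsystems coded by \emph{proper} sub-transition-matrices of $A$ (the surface case, $\dim U_{i}=\dim W_{i}=1$, is exactly what guarantees properness, since the boundary of an arc is a finite point set and cannot carry the full family of allowed transitions). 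For a primitive $0$-$1$ matrix, any proper submatrix has strictly smaller Perron eigenvalue, giving $h_{\mathrm{top}}(\gamma|_{\mathcal{Z}})<\theta$ directly. This is also what makes (C)(ii) quantitative: periodic orbits lying in $\mathcal{Z}$ are counted by periodic words of the boundary subshift, whose exponential growth rate is its (strictly smaller) entropy. If you lead with the sub-SFT argument and present the ``boundary along periodic leaves'' picture as a convenient special arrangement rather than a generic fact, the proof is complete and matches the route the paper defers to.

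Two small additional remarks. First, your reduction in (C)(i) is fine once you note, as you do, that $\mathcal{Z}$ is \emph{closed} and invariant, so that an ergodic $\mu$ with $\mu(\mathcal{Z})=1$ is a measure on the compact invariant set $\mathcal{Z}$ and the variational principle applies on $\mathcal{Z}$. Second, in (B) the inequality $h_{\mathrm{top}}(\gamma)\le h_{\mathrm{top}}(\phi)$ for a factor is the one that is ``automatic''; the reverse inequality for a boundedly finite-to-one semi-conjugacy is the one requiring the Bowen bound you invoke --- you have the two directions stated in the right places, just keep the attribution straight.
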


For the definition of a Gibbs state, see section~\ref{sec:gibbs}
below; both the Liouville measure and the maximum entropy invariant
measure are Gibbs states. The conclusions (C)-(i) and (C)-(ii) are not
explicitly stated in \cite{bowen:book}, but both follow from Bowen's
construction. See \cite{parry-pollicott}, sec.~3 for further
discussion of this point. Finally, observe that if $\pi$ is a
semi-conjugacy as in Proposition~\ref{proposition:bowen}, then so is
the mapping $\pi_{s}=\pi \circ \phi_{s}$, for any $s\in \zz{R}$. (See
Remark~\ref{remark:crossSection} for implications of this.)

\subsection{Series' construction}\label{ssec:series} For the geodesic
flow on a compact surface of \emph{constant} negative curvature, a
different symbolic dynamics was constructed by Series
\cite{series:symbDynFlows} (see also \cite{bowen-series}, and for
related constructions \cite{adler-flatto} and \cite{lustig}). This
construction is better suited to enumeration of self-intersections.
In this section we give a resume of some of the important features of
Series' construction.

Assume first that $\Upsilon$ has {constant} curvature $-1$ and
genus $g\geq 2$. Then the universal covering space of $\Upsilon$ is
the hyperbolic plane $\zz{D}$, realized as the unit disk with the
usual (Poincar\'{e}) metric. The fundamental group $\Gamma =\pi_{1}
(\Upsilon)$ is a discrete, finitely generated, co-compact group of
isometries of $\zz{D}$. Thus, $\Upsilon$ can be identified with
$\zz{D}/\Gamma$. This in turn can be identified with a fundamental
polygon $\mathcal{P}$, with compact closure in $\zz{D}$, whose sides
are geodesic segments in $\zz{D}$ that are paired by elements in a
(symmetric) generating set for $\Gamma$.  The polygon $\mathcal{P}$
can be chosen in such a way that the \emph{even corners} condition is
satisfied: that is, each geodesic arc in $\partial \mathcal{P}$
extends to a complete geodesic in $\zz{D}$ that is completely
contained in $\cup_{g\in \Gamma}g (\partial \mathcal{P})$.  The
geodesic lines in $\cup_{g\in \Gamma}g (\partial \mathcal{P})$ project
to closed geodesics in $\Upsilon$; because the polygon $\mathcal{P}$
has only finitely many sides, there are only finitely many such
projections. Call these the \emph{boundary geodesics}.

Except for those vectors tangent to one of the boundary geodesics,
each unit tangent vector $v\in S\Upsilon$ can be uniquely lifted to
the unit tangent bundle $S\zz{D}$ of the hyperbolic plane in such a
way that either the lifted vector $L (v)$ has base point in the
interior of $\mathcal{P}$, or lies on the boundary of $\mathcal{P}$
but points \emph{into} $\mathcal{P}$. The vector $L (v)$
uniquely determines a geodesic line in $D$, with initial tangent
vector $L (v)$, which converges to distinct points on the circle at
infinity as $t \rightarrow \pm \infty$. The mapping $L:S\Upsilon
\rightarrow S\zz{D}$ is smooth except at those vectors that lift to
vectors tangent to one of the boundary geodesics; at these vectors,
$L$ is necessarily discontinuous.  Denote by $\Xi\subset S\Upsilon$
the set of all vectors $v$ such that $L (v)$ is based at a point  on
the boundary of $\mathcal{P}$.

For any shift $(\Sigma^{+},\sigma)$, any $x\in \Sigma$ or
$\Sigma^{+}$, and any subset $J\subset \zz{Z}_{+}$ let
$\Sigma^{+}_{J}(x)$ be the cylinder set consisting of all $y\in
\Sigma^{+}$ such that $x_{j}=y_{j}$ for all $j\in J$. For any sequence
$x\in \Sigma$, denote by 
\begin{equation*}
	x^{+}=x_{0}x_{1}x_{2}\dotsb \quad \text{and}\quad 
	x^{-}=x_{-1}x_{-2}x_{-3}\dotsb 
\end{equation*}
the forward and backward coordinate subsequences. The sequence $x^{-}$
need not be an element of $\Sigma^{+}$, since its coordinates are
reversed.  Let $\Sigma^{-}$ be the set of all $x^{-}$ such
that $x\in \Sigma$; then $(\Sigma^{-},\sigma)$ is a shift of
finite type (whose transition matrix $A^{\dagger}$ is the transpose of $A$).

\begin{proposition}\label{proposition:boundaryCorrespondence}
(Series) Let $\Upsilon$ be a compact surface equipped with a Riemannian metric
of constant curvature $-1$.  There exist a shift $(\Sigma ,\sigma)$ of
finite type, a suspension flow $(\Sigma_{F},\phi_{t})$ over the shift,
and surjective, H\"{o}lder-continuous mappings $\xi_{\pm}:\Sigma^{\pm}
\rightarrow \partial \zz{D}$, and $\pi:\Sigma_{F} \rightarrow
S\Upsilon$ such that $\pi$ is a semi-conjugacy with the geodesic flow
(i.e., equation \eqref{eq:semi-conjugacy} holds), and such that the
following properties hold.
\begin{enumerate}
\item [(A)] $\Xi =\pi (\Sigma \times \{0\})$.
\item [(B)] The endpoints on $\partial \zz{D}$ of the geodesic with
initial tangent vector $L\circ \pi (x,0)$ are $\xi_{\pm} (x^{\pm})$.
\item [(C)] $F (x)$ is the time taken by this geodesic line to cross
$\mathcal{P}$. 
\end{enumerate}
Furthermore, the maps $\xi_{\pm}$ send  cylinder sets $\Sigma^{\pm}_{[0,m]} (x)$
onto  closed arcs $J_{m}^{\pm} (x^{\pm})$ such that for certain constants
$C<\infty$ and $0<\beta_{1}<\beta_{2} <1$ independent of $m$ and $x$,
\begin{enumerate}
\item [(D)] the lengths of $J_{m}^{\pm} (x^{\pm})$ are between $C\beta_{1}^{m}$
and $C\beta_{2}^{m}$, and 
\item [(E)] distinct arcs
$J_{m}^{+} (x^{+})$ and $J_{m}^{+} (y^{+})$ of the same generation $m$ have
disjoint interiors (and similarly when $+$ is replaced by $-$).
\end{enumerate}
Consequently, the semi-conjugacy $\pi$ fails to be one-to-one only for
geodesics whose lifts to $\zz{D}$ have at least one endpoint that is
an endpoint of some arc $J_{m}^{\pm} (x)$. Finally, all but finitely many
closed geodesics (the boundary geodesics) correspond uniquely to
periodic orbits of the suspension flow, and for each nonexceptional
closed geodesic the length of the representative sequence in $\Sigma$
is the word length of the free homotopy class relative to the standard
generators of $\pi_{1} (\Upsilon)$. 
\end{proposition}

See \cite{series:symbDynFlows}, especially Th. 3.1, and also
\cite{bowen-series}. The last point is important because it implies
that the set of geodesics where the semi-conjugacy fails to be
bijective is of first category, and has measure zero under any Gibbs
state (in particular, under the Liouville and maximum entropy
measures). 

 Series' construction relies heavily on the hypothesis that the
underlying metric on $\Upsilon$ is of \emph{constant} negative
curvature.  However, the key features of her construction carry over
to metrics of \emph{variable} negative curvature, by virtue of the
\emph{conformal equivalence theorem} (see, for instance,
\cite{schoen-yau}, ch. V) for negatively curved Riemannian metrics on
surfaces and the \emph{structural stability theorem} for Anosov flows
\cite{anosov}, \cite{moser}, \cite{llm}. Structural stability applies
only to small perturbations of Anosov flows, and only geodesic flows
on {negatively curved} surfaces are Anosov, so to use structural
stability globally for geodesic flows we must be able to show that
there is a deformation (homotopy) taking one Riemannian metric to
another \emph{through metrics of negative curvature}
The following easy proposition states that for surfaces,
conformal equivalence of negatively curved metrics implies the
existence of a smooth deformation. (See \cite{gromov:3!b} for a
generalization to higher dimensions.)

\begin{proposition}\label{proposition:conformalDeformation}
Let $\varrho_{0},\varrho_{1}$ be $C^{\infty}$ Riemannian metrics on $\Upsilon$,
both with everywhere negative scalar curvatures. Then there exists a
$C^{\infty}$ deformation $\{\varrho_{t} \}_{t\in [0,1]}$  through
Riemannian metrics with everywhere negative scalar curvatures.
\end{proposition}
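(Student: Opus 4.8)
The plan is to reduce the statement, via the uniformization theorem, to two facts: that a negatively curved metric can be joined \emph{within its conformal class} to a hyperbolic metric through negatively curved metrics, and that the space of hyperbolic metrics on $\Upsilon$ is path-connected. Since on a surface the scalar curvature equals twice the Gauss curvature $K$, ``negative scalar curvature'' means $K<0$ everywhere; by Gauss--Bonnet this forces $\chi(\Upsilon)<0$. Recall the conformal change formula: for a metric $g_0$ and $\varphi\in C^{\infty}(\Upsilon)$, the Gauss curvature of $g=e^{2\varphi}g_0$ is $K_g=e^{-2\varphi}(K_{g_0}-\Delta_{g_0}\varphi)$, where $\Delta_{g_0}$ is the Laplace--Beltrami operator of $g_0$. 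I would first prove the following \emph{log-linear interpolation lemma}: if $g_0$ and $g_1=e^{2\varphi}g_0$ are conformal and both have $K<0$ everywhere, then $g_t:=e^{2t\varphi}g_0$ has $K<0$ everywhere for all $t\in[0,1]$. Indeed, by linearity of $\Delta_{g_0}$ the formula gives $K_{g_t}=e^{-2t\varphi}\bigl(K_{g_0}-t\,\Delta_{g_0}\varphi\bigr)$; the factor $e^{-2t\varphi}$ is positive, and $t\mapsto K_{g_0}-t\,\Delta_{g_0}\varphi$ is \emph{affine} in $t$, taking the value $K_{g_0}<0$ at $t=0$ and the value $e^{2\varphi}K_{g_1}<0$ at $t=1$; an affine function negative at both endpoints of $[0,1]$ is negative throughout. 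Thus $\{g_t\}$ is a smooth deformation of $g_0$ to $g_1$ through negatively curved metrics.

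Next I would invoke the uniformization (conformal equivalence) theorem: since $\chi(\Upsilon)<0$, each conformal class on $\Upsilon$ contains a unique metric of constant curvature $-1$. Let $h_0,h_1$ be the hyperbolic metrics conformal to $\varrho_0,\varrho_1$ respectively. By the interpolation lemma there is a smooth path from $\varrho_i$ to $h_i$ through negatively curved metrics, for $i=0,1$. It therefore suffices to join $h_0$ to $h_1$ through negatively (indeed constantly) curved metrics; concatenating the three families --- each reparametrized so as to be constant near its endpoints, so that the concatenation is $C^{\infty}$ --- produces the required deformation $\{\varrho_t\}_{t\in[0,1]}$.

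Finally, to join $h_0$ to $h_1$ I would use the connectedness of the space $\mathrm{Hyp}(\Upsilon)$ of $C^{\infty}$ hyperbolic metrics on $\Upsilon$. This space is a principal $\mathrm{Diff}_0(\Upsilon)$-bundle --- the identity component of the diffeomorphism group acting freely by pullback, freeness because a closed hyperbolic surface has finite isometry group, so the only isometry isotopic to the identity is the identity --- over Teichm\"uller space $\mathcal{T}(\Upsilon)$; since $\mathrm{Diff}_0(\Upsilon)$ is path-connected and $\mathcal{T}(\Upsilon)$ is connected --- indeed homeomorphic to $\zz{R}^{-3\chi(\Upsilon)}$ --- so is $\mathrm{Hyp}(\Upsilon)$. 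To make the resulting path genuinely $C^{\infty}$ as a family of metrics on $\Upsilon$, I would choose a smooth path $\{J_t\}$ of orientation-compatible complex structures on $\Upsilon$ connecting those underlying $h_0$ and $h_1$ --- such structures form the space of sections of a bundle with contractible fibers, hence a connected (indeed contractible) space --- pick a smooth family $g_t$ of compatible reference metrics, and solve the curvature-$(-1)$ equation $\Delta_{g_t}\psi_t=e^{2\psi_t}+K_{g_t}$ for $\psi_t\in C^{\infty}(\Upsilon)$; the solution is unique and depends smoothly on $t$ by the implicit function theorem, since the linearization $\Delta_{g_t}-2e^{2\psi_t}$ is a negative definite (hence invertible) elliptic operator, with the usual elliptic bootstrap giving $C^{\infty}$ regularity. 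Then $h_t:=e^{2\psi_t}g_t$ is the desired smooth family of hyperbolic metrics. I expect this last step --- the appeal to the connectedness of Teichm\"uller space together with the smooth dependence of the uniformizing metric on the conformal structure --- to be the only substantial ingredient; the conformal bookkeeping in the first two paragraphs is soft. All the cited inputs (uniformization, the structure of $\mathcal{T}(\Upsilon)$) are classical; see e.g.\ \cite{schoen-yau}.
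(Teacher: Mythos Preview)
Your argument is correct, and the central computation --- the log-linear interpolation lemma, using that $t\mapsto K_{g_0}-t\,\Delta_{g_0}\varphi$ is affine with negative endpoint values --- is exactly the calculation the paper carries out. The difference is one of scope. The paper's proof invokes the ``conformal equivalence theorem'' to assert directly that $\varrho_1=e^{2u}\varrho_0$ for some smooth $u$, and then runs the interpolation a single time to get from $\varrho_0$ to $\varrho_1$. You, by contrast, do not assume the two given metrics lie in the same conformal class: you interpolate each $\varrho_i$ to the hyperbolic representative $h_i$ of its own conformal class, and then spend a third step connecting $h_0$ to $h_1$ through the (path-connected) space of hyperbolic metrics, using the connectedness of Teichm\"uller space and smooth dependence of the uniformizing conformal factor.

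Your route is longer but, read literally, more complete: two arbitrary negatively curved metrics on $\Upsilon$ need not be pointwise conformal, so the paper's single interpolation as written lands at $\varrho_1$ only under that extra hypothesis (or, if one reads ``conformal equivalence'' as allowing a diffeomorphism, it lands at a pullback of $\varrho_1$). For the paper's downstream use --- obtaining an orbit equivalence of geodesic flows via structural stability, where one of the two metrics is \emph{chosen} to have constant curvature $-1$ and only the homotopy class of the deformation matters --- this distinction is immaterial. But as a proof of the proposition exactly as stated, your three-segment construction is the honest one.
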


\begin{proof}
The conformal equivalence theorem (\cite{schoen-yau}, ch. V, Th. 1.3) implies
that there exists a strictly positive $C^{\infty}$ function $r
=e^{2u}$ on $\Upsilon$ such that $\varrho_{1}=r \varrho_{0}$. The scalar
curvatures $K_{0},K_{1}$ are related by the equation 
\[
	K_{1}=e^{-2u} (K_{0}-2\Delta u)
\]
where $\Delta$ is the Laplace-Beltrami operator with respect to
$\varrho_{0}$. Since $K_{0}$ and $K_{1}$ are both negative everywhere, it
follows that $K_{0}-t\Delta u<0$ for every $t\in [0,1]$. Thus, if
$\varrho_{t}:=e^{-2tu}\varrho_{0}$ then the curvature $K_{t}=e^{-2tu}
(K_{0}-t\Delta u)$ is everywhere negative,  for every $t$.
\end{proof}

Fix Riemannian metrics $\varrho_{0}$ and $\varrho_{1}$ of negative
curvature on $\Upsilon$ such that $\varrho_{0}$ has constant curvature
-1, and let $\varrho_{s}$ be a smooth deformation as in
Proposition~\ref{proposition:conformalDeformation}.  The geodesic flow
on $S\Upsilon$ with respect to any Riemannian metric $\varrho_{s}$ of
negative curvature is Anosov, and if the metrics $\varrho_{s}$ vary
smoothly with $s$ then so do the vector fields of their geodesic
flows. Hence, by the structural stability theorem, for each $s\in
[0,1]$ there exists a H\"{o}lder continuous homeomorphism
$\Phi_{s}:S\Upsilon \rightarrow S\Upsilon $ that maps
$\varrho_{0}-$geodesics to $\varrho_{s}-$geodesics. The H\"{o}lder
exponent is constant in $s$, and the homeomorphisms $\Phi_{s}$ vary
smoothly with $s$ in the H\"{o}lder topology \cite{llm}. Consequently,
the homotopy $\Phi_{s}$ lifts to a homotopy $\tilde{\Phi}_{s}:S\zz{D}
\rightarrow S\zz{D}$ of H\"{o}lder continuous homeomorphisms of the
universal covering space. Each homeomorphism $\tilde{\Phi}_{s}$ maps
$\varrho_{0}-$geodesics to $\varrho_{s}-$geodesics, and for each
$\varrho_{0}-$geodesic $\gamma$ the corresponding $\varrho_{s}-$
geodesic $\tilde{\Phi}_{s} (\gamma)$ converges to the same endpoints
on the circle at infinity $\partial \zz{D}$ as does $\gamma$.

Series' construction gives a semi-conjugacy $\pi_{0}$ of a suspension
flow $(\Sigma_{F_{0}},\phi_{t})$ with the $\varrho_{0}-$geodesic flow
on $S\Upsilon $ that is nearly one-to-one in the senses described in
Proposition~\ref{proposition:boundaryCorrespondence}.  We have just
seen that there is a homotopy of H\"{o}lder continuous homeomorphisms
$\Phi_{s}:S\Upsilon \rightarrow S\Upsilon$ such that each $\Phi_{s}$
maps $\varrho_{0}-$geodesics to $\varrho_{s}-$geodesics. Set $\Phi
=\Phi_{1}$; because $\Phi$ is H\"{o}lder, it lifts to the suspension
flow: in particular, there exist H\"{o}lder continuous $F_{1}:\Sigma
\rightarrow (0,\infty)$ and $\pi_{1}:\Sigma_{F_{1}} \rightarrow
S\Upsilon$ and a H\"{o}lder continuous homeomorphism $\Psi
:\Sigma_{F_{0}} \rightarrow \Sigma_{F_{1}}$ that maps fibers of
$\Sigma_{F_{0}}$ homeomorphically onto fibers of $\Sigma_{F_{1}}$, and
satisfies the conditions
\begin{gather}\label{eq:crossSectionMatch}
	\Psi (x,0)= (x,0) \quad \text{for every}\;\; x\in \Sigma ,
	\quad \text{and} \\
\label{eq:orbitEquivLift}
	\pi_{1}\circ \Psi = \Phi \circ \pi_{0}.
\end{gather}
Thus, the projection $\pi_{1}:\Sigma_{F_{1}}\rightarrow S\Upsilon$ is a
semi-conjugacy between the suspension flow on $\Sigma_{F_{1}}$ and the
geodesic flow on $S\Upsilon$ relative to the metric $\varrho_{1}$. 

\begin{corollary}\label{corollary:series-variable}
For any negatively curved Riemannian metric $\varrho_{1}$ on a
compact surface $\Upsilon$ the suspension flow
$(\Sigma_{F_{1}},\phi_{t})$ and semi-conjugacy $\pi_{1}
:\Sigma_{F_{1}} \rightarrow S\Upsilon $ in
Proposition~\ref{proposition:bowen} can be chosen in such a way that
$\pi$ is one-to-one except on a set of first category, and only
finitely many closed geodesics have more than one pre-image.
\end{corollary}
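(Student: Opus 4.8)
The plan is to transport the near-bijectivity of Series' semi-conjugacy $\pi_0$ for the reference metric $\varrho_0$ (Proposition~\ref{proposition:boundaryCorrespondence}) across the two orbit-equivalence homeomorphisms constructed above. By \eqref{eq:orbitEquivLift} we may write $\pi_1=\Phi\circ\pi_0\circ\Psi^{-1}$, where $\Phi\colon S\Upsilon\to S\Upsilon$ is a H\"older homeomorphism carrying $\varrho_0$-geodesics to $\varrho_1$-geodesics and $\Psi\colon\Sigma_{F_0}\to\Sigma_{F_1}$ is a H\"older homeomorphism that maps fibers onto fibers and, by \eqref{eq:crossSectionMatch}, fixes the cross-section: $\Psi(x,0)=(x,0)$. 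As was already observed following \eqref{eq:orbitEquivLift}, $\pi_1$ is then, like $\pi_0$, H\"older, surjective, at most $N$-to-$1$ for some $N<\infty$ (since $\Phi,\Psi$ are bijections), and a semi-conjugacy, and its suspension flow has the same topological entropy as the $\varrho_1$-geodesic flow; thus $(\Sigma_{F_1},\phi_t,\pi_1)$ is a legitimate instance of Proposition~\ref{proposition:bowen}, and it remains only to establish the two injectivity assertions.

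Since $\Phi$ and $\Psi$ are bijections, $\pi_1^{-1}(v)=\Psi\bigl(\pi_0^{-1}(\Phi^{-1}(v))\bigr)$ for every $v\in S\Upsilon$, so the fibre cardinalities agree: $\#\pi_1^{-1}(v)=\#\pi_0^{-1}(\Phi^{-1}(v))$. Setting $B_j=\{v\in S\Upsilon:\#\pi_j^{-1}(v)\ge 2\}$ for $j=0,1$ gives $B_1=\Phi(B_0)$. By Proposition~\ref{proposition:boundaryCorrespondence} and the remark following it, $B_0$ is contained in the set of unit vectors whose lifted geodesic in $\zz{D}$ has an endpoint equal to an endpoint of one of the arcs $J_m^{\pm}(x)$; as there are only countably many such arcs, $B_0$ lies in a countable union of sets of the form ``all vectors whose geodesic has a prescribed endpoint at infinity'' -- each such set a weak-(un)stable leaf, closed and nowhere dense in $S\Upsilon$. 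Hence $B_0$ is meager, and since $\Phi$ is a homeomorphism so is $B_1=\Phi(B_0)$; the same reasoning applied to $\pi_0^{-1}(B_0)$ shows the exceptional set, read in $\Sigma_{F_1}$ as $\Psi(\pi_0^{-1}(B_0))$, is meager as well. This proves the first assertion.

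For the second, note that $\Psi(x,0)=(x,0)$ together with fibre-preservation forces $\Psi$ to carry each periodic $\phi_t$-orbit of $\Sigma_{F_0}$ bijectively onto a periodic $\phi_t$-orbit of $\Sigma_{F_1}$ (a periodic orbit being the one through $(x,0)$ for $\sigma$-periodic $x$), while $\Phi$, as an orbit equivalence of the two geodesic flows, carries closed $\varrho_0$-geodesics bijectively onto closed $\varrho_1$-geodesics. Feeding these bijections into $\pi_1=\Phi\circ\pi_0\circ\Psi^{-1}$, a closed $\varrho_1$-geodesic has more than one $\pi_1$-preimage orbit exactly when the corresponding closed $\varrho_0$-geodesic has more than one $\pi_0$-preimage orbit; by the final sentence of Proposition~\ref{proposition:boundaryCorrespondence} only the finitely many boundary geodesics have this property for $\pi_0$, so only finitely many closed $\varrho_1$-geodesics have more than one $\pi_1$-preimage.

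The step I expect to require the most care is showing that $B_0$ (and its $\pi_0$-preimage in $\Sigma_{F_0}$) is genuinely \emph{meager} rather than merely of measure zero: one must check that for each fixed $\zeta\in\partial\zz{D}$ the set of unit vectors whose geodesic has $\zeta$ as an endpoint is closed with empty interior in $S\Upsilon$ (equivalently, that vectors having no endpoint among the countably many special points are dense), and that each $\xi_{\pm}^{-1}(\zeta)$ is finite, so that the corresponding subset of $\Sigma_{F_0}$ is a finite union of ``local stable/unstable'' cylinders and hence nowhere dense. Both facts follow from the disjointness property (E) of Proposition~\ref{proposition:boundaryCorrespondence} and the geometry of the boundary coding; granting them, the rest is bookkeeping with the homeomorphisms $\Phi$ and $\Psi$.
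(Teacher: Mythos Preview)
Your approach is exactly the paper's: the corollary is stated without a separate proof, as an immediate consequence of the relation $\pi_{1}\circ\Psi=\Phi\circ\pi_{0}$ with $\Phi,\Psi$ homeomorphisms, which transports the near-injectivity of Series' $\pi_{0}$ (Proposition~\ref{proposition:boundaryCorrespondence}) to $\pi_{1}$. Your write-up simply fills in the bookkeeping that the paper leaves implicit.

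One small correction worth flagging: the set of unit vectors in $S\Upsilon$ whose geodesic has a prescribed endpoint $\zeta\in\partial\zz{D}$ is a weak (un)stable leaf, and for a topologically mixing Anosov flow such leaves are \emph{dense}, not closed. They are still meager---each is an injectively immersed $2$-submanifold of a $3$-manifold, hence a countable union of compact (closed, nowhere dense) pieces---so your conclusion about $B_{0}$ survives, but the one-line justification ``closed and nowhere dense'' does not. More to the point, the statement of the corollary is about meagerness in the \emph{domain} $\Sigma_{F_{1}}$, and there your argument is clean and correct: for each special boundary point $\zeta$ the fibre $\xi_{\pm}^{-1}(\zeta)$ is finite (by (D)--(E) of Proposition~\ref{proposition:boundaryCorrespondence}), so the exceptional set in $\Sigma$ is a countable union of sets of the form $\{x:x^{+}=y^{+}\}$ or $\{x:x^{-}=y^{-}\}$, each genuinely closed and nowhere dense, and $\Psi$ carries this to a meager set in $\Sigma_{F_{1}}$. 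The closed-geodesic count transfers exactly as you say.
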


\subsection{Symbolic dynamics and self-intersection
counts}\label{ssec:cc-gf} For surfaces of constant curvature $-1$ the
symbolic dynamics has the convenient property that the geodesic
segments $p\circ \pi (\mathcal{F}_{x})$ and $p\circ \pi
(\mathcal{F}_{y})$ corresponding to two distinct fibers of the
suspension flow can intersect at most once, since each is a single
crossing of the fundamental polygon. Since will be easiest for us to
count self-intersections of a long geodesic by counting pairs of
fibers whose images cross, we begin by recording a simple modification
of the symbolic dynamics that guarantees only single crossings.

\begin{lemma}\label{lemma:shortFibers}
Given any sufficiently small $\varepsilon >0$ we can assume, without
loss of generality, that the suspension flow has been chosen in such a
way that the height function $F$ satisfies $0< F \leq
\varepsilon$.
\end{lemma}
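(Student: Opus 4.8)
The plan is to reduce the height of the fibers in an existing symbolic coding by subdividing. Start from the semi-conjugacy $\pi:\Sigma_{F}\to S\Upsilon$ provided by Proposition~\ref{proposition:bowen} (for variable curvature) or Proposition~\ref{proposition:boundaryCorrespondence} (constant curvature), with a H\"older height function $F>0$ on a shift of finite type $(\Sigma,\sigma)$. Since $\Sigma$ is compact and $F$ continuous and strictly positive, $F$ is bounded: $0<c\le F\le C$ for some constants. Given $\varepsilon>0$, choose an integer $k$ with $C/k<\varepsilon$. The idea is to pass to a new shift whose alphabet records, in addition to the current symbol, a ``clock'' indicating which of $k$ equal sub-intervals of the current fiber we are in, so that the new height function is $F/k$ (or more precisely a piecewise-constant-in-the-clock refinement summing to $F$), hence bounded by $\varepsilon$.

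Concretely, I would build the new alphabet as $\mathcal{A}'=\mathcal{A}\times\{0,1,\dots,k-1\}$ (or work with a sub-shift thereof), with transition rule: $(a,i)\to(a,i+1)$ whenever $i<k-1$ and $a\to a$ is not required — rather, when the clock has not wrapped we stay over the same $\Sigma$-coordinate, and $(a,k-1)\to(b,0)$ exactly when $A(a,b)=1$ in the original transition matrix. A sequence in the new shift then projects to a sequence in $\Sigma$ by reading off the $\mathcal{A}$-coordinates at clock-resets and filling in, and the new height function is $\tilde F(x,i)=F(\sigma^{j}y)/k$ where $y$ is the underlying $\Sigma$-sequence; this is H\"older because $F$ is and the clock coordinate is locally constant. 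One checks that the suspension space $\Sigma_{\tilde F}$ is canonically homeomorphic to $\Sigma_{F}$ via the map that glues the $k$ stacked sub-fibers back into one fiber, that this homeomorphism intertwines the two suspension flows, and that composing with this homeomorphism turns $\pi$ into a semi-conjugacy $\tilde\pi:\Sigma_{\tilde F}\to S\Upsilon$ with the geodesic flow. Finiteness of fibers, topological mixing (if $A$ was mixing, $A'$ is too, since long enough words still connect any pair once the clock wraps), equality of topological entropy, and the ``nearly one-to-one'' properties (C) of Proposition~\ref{proposition:bowen} and the category/Gibbs-measure statements all transfer verbatim, because the new coding differs from the old one only by an invertible, flow-equivariant, H\"older change of model.

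Finally I would note that for the constant-curvature case the extra structure from Proposition~\ref{proposition:boundaryCorrespondence} — the boundary maps $\xi_{\pm}$, the nested arc structure (D)--(E), and the fact that non-exceptional closed geodesics correspond to periodic orbits with representative length equal to word length — also survives, since the refinement does not change which endpoint on $\partial\zz{D}$ a geodesic limits to and only relabels the partition into sub-crossings of $\mathcal{P}$; the property that each fiber image $p\circ\pi(\mathcal{F}_{x})$ is a single crossing is only improved (each piece is now a sub-arc of a single crossing, so still crosses any other fiber image at most once). The main obstacle, such as it is, is bookkeeping: setting up the refined alphabet and transition matrix so that $\tilde F$ is genuinely H\"older and so that the homeomorphism $\Sigma_{\tilde F}\cong\Sigma_{F}$ is clean at the identification $(x,F(x))\sim(\sigma x,0)$ — i.e., making sure the clock wraps exactly when a fiber is exhausted. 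Since this is a routine, well-known device (the ``discrete suspension'' or tower refinement), I would present it compactly and defer the verifications to a sentence each. Hence the statement of Lemma~\ref{lemma:shortFibers} follows, and we may assume henceforth that $0<F\le\varepsilon$.
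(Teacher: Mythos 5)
Your approach is the same as the paper's: refine the alphabet by a ``clock'' that subdivides each fiber into $k$ pieces, with the clock incrementing by $1$ each step and resetting when it wraps, and then observe that the resulting suspension flow is canonically conjugate to the original one via the map that stacks the sub-fibers. Your formula $\tilde F(x,i)=F(\sigma^{j}y)/k$ has a stray superscript; it should read $\tilde F(x,i)=F(y)/k$ where $y$ is the projected $\Sigma$-sequence, which is what the paper does.

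However, there is a genuine gap in the parenthetical claim ``topological mixing (if $A$ was mixing, $A'$ is too, since long enough words still connect any pair once the clock wraps).'' This is false. In your construction the clock coordinate at time $n$ is determined modulo $k$ by the clock at time $0$: starting from $(a,0)$ one is forced to $(a,1),(a,2),\dotsc,(a,k-1)$ and only then to some $(b,0)$. Hence $(a,0)$ can return to any state with clock $0$ only in a number of steps divisible by $k$, so the gcd of return times is a nontrivial multiple of $k$ and the new shift is merely transitive with period $k$, not mixing. The paper explicitly flags this as the obstacle, and the whole point of the second half of its proof is the fix: choose one distinguished symbol $a^{\clubsuit}\in\mathcal{A}$ and cut the section over $a^{\clubsuit}$ into $m+1$ boxes while all other symbols are cut into $m$ boxes. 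This destroys the rigid $m$-periodicity of the clock (return times through the special symbol are shifted by one), so the resulting shift is aperiodic provided the original one was. Your proof would need to incorporate this (or an equivalent) modification, since the thermodynamic formalism, Ratner's CLT, and the renewal theorem invoked later all require topological mixing of the underlying shift, not merely transitivity. The remaining verifications you defer (that cylinder sets of the refined shift sit inside cylinder sets of the original of comparable length, so the Series boundary-arc decay carries over) are the same observation the paper makes and are fine.
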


\begin{proof}
This can be arranged by a simple refinement of the symbolic dynamics
constructed above.  First, consider
the suspension flow obtained by cutting the sections of the flow space
lying above particular initial symbols $x_{0}=a$ into boxes, refining
the alphabet so that there is one symbol per box, and adjusting the
transition rule and the height function accordingly. In detail, fix an
integer $m\geq 1$, replace the original alphabet $\mathcal{A}$ by the
augmented alphabet $\mathcal{A}^{*}:=\mathcal{A}\times [m]$, where
$[m]=\{1,2,...,m \}$, and replace the transition matrix $A$ by the matrix
$A^{*}$ defined by
\begin{align*}
	A^{*} ((a,j),(a',j'))&= 1 \quad \text{if} \;\; a=a'
	\;\;\text{and}\;\;  j'=j+1\leq m;\\
	&=1 \quad \text{if} \;\; A (a,a')=1 \;\;\text{and}\;\; j'=1, j=m;\\
	&=0 \quad \text{otherwise.}
\end{align*}
Define the shift $(\Sigma^{*},\sigma^{*})$ on the enlarged alphabet,
with transition matrix $A^{*}$, accordingly. Let
$\nu:\mathcal{A}^{*}\rightarrow \mathcal{A}$ be the projection on the
first coordinate, and $\nu^{*}:\Sigma^{*} \rightarrow \Sigma$ the
induced projection of the corresponding sequence spaces. Finally,
define $F^{*}:\Sigma^{*} \rightarrow (0,\infty)$ by $F^{*} (x^{*})=F
(\nu^{*} (x^{*}))/m$. Then the mapping $p^{*}:\Sigma^{*}_{F*}
\rightarrow \Sigma_{F}$ defined by
\[
	p^{*} ((x,j),s) = (x,s+ (j-1)F^{*} (x));
\]
provides a conjugacy between the suspension flow
$(\Sigma^{*}_{F*},\phi^{*}_{t})$ with $(\Sigma_{F},\phi_{t})$.
By choosing $m$ large we can arrange that $F^{*}<\varepsilon$.

Unfortunately, this construction introduces periodicity into the
underlying shift $(\Sigma^{*},\sigma)$. This is a nuisance, because
the basic results of thermodynamic formalism \cite{bowen:book},
\cite{parry-pollicott:asterisque} that we will need later, including
the central limit theorem \cite{ratner:clt}, require that the underlying
shift be topologically mixing. But a simple modification of the
foregoing construction can be used to destroy the periodicity. Choose
one symbol $a^{\clubsuit}\in \mathcal{A} $, and for this symbol only,
cut the section of the flow space $\Sigma_{F}$ over $a^{\clubsuit}$ into $m+1$
boxes, instead of the $m$ used in the construction above. Adjust the
transition rule $A^{*}$, the height function $F$, and the projection
mapping $p^{*}$ in the obvious manner to obtain a suspension flow
conjugate to the original flow. The underlying shift for this modified
suspension will be aperiodic, by virtue of the fact that the original
shift $(\Sigma ,\sigma)$ is aperiodic.

Observe that in both of these constructions, the cylinder sets of the
modified shift $(\Sigma^{*},\sigma^{*})$ are contained in cylinder
sets of $(\Sigma ,\sigma)$ of comparable length (i.e., within a factor
$m+1$). Since in Series' symbolic dynamics cylinder sets of length $n$
correspond to boundary arcs in $\delta \zz{D}$ with lengths
exponentially decaying in $n$, the same will be true for the modified
symbolic dynamics.

\end{proof}

By virtue of this lemma, we can assume without loss of generality that
the suspension flow has been chosen in such a way that the images
(under the projection $p\circ \pi$) of any two distinct fibers
$\mathcal{F}_{x}$ and $\mathcal{F}_{y}$ intersect at
most once in $\Upsilon$. Thus, the number of self-intersections of any
geodesic segment can be computed by partitioning the segment into the
images of successive fibers (the first and last segment will only
represent partial fibers) and counting how many pairs intersect. With
this in mind, define $h:\Sigma \times \Sigma \rightarrow \{0,1 \}$ by
setting $h (x,y)=1$ if the fibers $\mathcal{F}_{x}$ and
$\mathcal{F}_{y}$ over $x$ and $y$ project to geodesic segments on
$\Upsilon$ that intersect (transversally), and $h (x,y)=0$ if
not. Clearly, for any periodic sequence $x\in \Sigma$ with minimal
period $m$ the image (under $p\circ \pi$) of the periodic orbit of the
suspension flow containing the point $(x,0)$ will be a closed geodesic
$\gamma$ with self-intersection count
\begin{equation}\label{eq:u-representation}
	N (\gamma) =\frac{1}{2} \sum_{i=1}^{m} \sum_{j=1}^{m} h
	(\sigma^{i}x,\sigma^{j}x) .
\end{equation}

The function $h$ is not continuous, because a small change in the
endpoints or directions of two intersecting geodesic segments can
destroy the intersection.  Nevertheless, for ``most'' sequences
$x,y\in \Sigma$, a ``small'' number of coordinates $x_{j},y_{j}$ will
determine whether or not the geodesic segments corresponding to the
fibers $\mathcal{F}_{x}$ and $\mathcal{F}_{y}$ intersect. Here is a way
to make this precise. For each $m\geq 1$ and each sequence $x\in
\Sigma$, denote by $\Sigma_{[-m,m]} (x)$ the cylinder set consisting of
all $y\in \Sigma$ that agree with $x$ in all coordinates $-m\leq j\leq
m$.  For any two sequences $x,y\in \Sigma$ such that the geodesic
segments  $p\circ \pi
(\mathcal{F}_{x})$ and $p\circ \pi (\mathcal{F}_{y})$ intersect, let
$m (x,y)$ be the least nonnegative 
integer with the following property: for every pair of sequences
$x',y'$ such that $x'\in \Sigma_{[-m,m]} (x)$ and $y'\in
\Sigma_{[-m,m]} (y)$ the geodesic segments $p\circ \pi
(\mathcal{F}_{x'})$ and $p\circ \pi (\mathcal{F}_{y'})$ intersect,
where $\mathcal{F}_{x'}$ and $\mathcal{F}_{y'}$ are the fibers of the
suspension space over $x'$ and $y'$, respectively. For sequences $x,y$
such that the segments  $p\circ \pi
(\mathcal{F}_{x})$ and $p\circ \pi (\mathcal{F}_{y})$ do not
intersect, set $m (x,y)=-1$. Define
\begin{align}\label{eq:hm-def}
	h_{m} (x,y) &=1 \quad \text{if}\;\; m (x,y)=m\geq 0 ;\\
\notag 	      &=0 \quad \text{otherwise}.
\end{align}

\begin{lemma}\label{lemma:h-decomposition}
The function $h$ decomposes as
\begin{equation}\label{eq:h-decomposition}
	h (x,y) =\sum_{m=0}^{\infty}h_{m} (x,y) +h_{\infty} (x,y),
\end{equation}
where $h_{\infty} (x,y)\not =0$ (in which case $h_{\infty} (x,y)=1$) only if the
geodesic segments $p\circ \pi (\mathcal{F}_{x})$ and $p\circ \pi
(\mathcal{F}_{y})$ intersect at an endpoint of one of the two
segments. The functions $h_{m}$ satisfy the following properties:

\begin{enumerate}
\item [(A1)] $h_{m} (x,y)$ depends only on the coordinates $x_{i},y_{i}$
with $|i|\leq m$;
\item [(A2)] $h_{m} (x,y)\not =0$ for at most one $m$; and
\item [(A3)] for some $0<\varrho <1$ and $C<\infty$ not depending on $n$, if
$\sum_{m\geq n}h_{m} (x,y)+h_{\infty} (x,y)\not =0$ then the geodesics
corresponding to the orbits of the suspension flow through $(x,0)$ and
$(y,0)$ intersect
either
\begin{itemize}
\item [(a)]  at an angle less than $C\varrho^{n}$, or
\item [(b)] at a point at distance less than $C\varrho^{n}$ from one
of the endpoints of one of the segments $p\circ \pi (\mathcal{F}_{x})$
or $p\circ \pi (\mathcal{F}_{y})$.
\end{itemize}
\end{enumerate}
\end{lemma}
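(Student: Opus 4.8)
The plan is to extract everything from the definition of $m(x,y)$ using two ingredients: exponential contraction of cylinder sets under the coding map, and quantitative stability of transversal crossings of short geodesic arcs. The bookkeeping comes first. Whether or not $m(x,y)=m$ is a statement about the cylinder sets $\Sigma_{[-m,m]}(x),\Sigma_{[-m,m]}(y)$, hence about the coordinates $x_i,y_i$ with $|i|\le m$ only, which gives (A1); and $m(x,y)$ is a single well-defined element of $\{-1,0,1,2,\dots\}\cup\{\infty\}$, so at most one $h_m$ is nonzero, which gives (A2). Moreover $h_m(x,y)=1$ forces $m(x,y)=m\ge0$, which by definition means the central arcs $p\circ\pi(\mathcal{F}_x),p\circ\pi(\mathcal{F}_y)$ themselves cross transversally, so $h(x,y)=1$; thus $\sum_{m\ge0}h_m\le h$, and having at most one term, $\sum_{m\ge0}h_m\in\{0,1\}$. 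Defining $h_\infty:=h-\sum_{m\ge0}h_m$ then produces a $\{0,1\}$-valued function, with $h_\infty(x,y)=1$ exactly when the arcs cross transversally but $m(x,y)\notin\{0,1,2,\dots\}$, i.e. when for every $m$ one can find $x'\in\Sigma_{[-m,m]}(x)$ and $y'\in\Sigma_{[-m,m]}(y)$ whose arcs fail to cross.

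The two analytic ingredients are as follows. \emph{(i) Contraction.} Since $\pi$ and $F$ are H\"older continuous and $\Sigma_{[-m,m]}(x)$ has diameter $\le 2^{-(m+1)}$ in the metric $d(x,x')=2^{-n(x,x')}$, for $x'\in\Sigma_{[-m,m]}(x)$ the vectors $\pi(x',0),\pi(x,0)$ lie within $C\varrho_0^{m}$ in $S\Upsilon$ and $|F(x')-F(x)|\le C\varrho_0^{m}$, where $\varrho_0:=2^{-\alpha}<1$ for the H\"older exponent $\alpha$. By Lemma~\ref{lemma:shortFibers} we may take $F\le\varepsilon$, and as $p\circ\pi(\mathcal{F}_{x'})$ is the image under $p$ of the orbit of $\pi(x',0)$ run for time $F(x')\le\varepsilon$, and the geodesic flow is Lipschitz on $[0,\varepsilon]$, the arc $p\circ\pi(\mathcal{F}_{x'})$ is a $C^{1}$-perturbation of $p\circ\pi(\mathcal{F}_x)$ of size $\le C\varrho_0^{m}$ in base point and in direction; likewise for $y$. \emph{(ii) Stability.} There is a constant $c>0$, uniform over the surface, such that if two geodesic arcs of length $\le\varepsilon$ cross transversally at an interior point $z$ of both, at angle $\theta$, with $z$ at distance $\ge d$ from all endpoints, then any geodesic arcs that are $C^{1}$-perturbations of size $<c\min(d,1)\sin\theta$ again cross transversally. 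This is quantitative transversality: in a chart around $z$ the arcs are $C^{2}$-close to straight lines, a perturbation shifts the crossing point of the underlying lines by $O(\eta/\sin\theta)$ and tilts the crossing angle by $O(\eta)$, so for $\eta$ below a fixed multiple of $d\sin\theta$ the crossing survives inside the interiors of both perturbed arcs.

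Now assemble. If $p\circ\pi(\mathcal{F}_x),p\circ\pi(\mathcal{F}_y)$ cross transversally at an interior point of both, choose $m$ with $C\varrho_0^{m}<c\min(d,1)\sin\theta$; by (i) and (ii) every level-$m$ pair still crosses, so $m(x,y)<\infty$ and $h_\infty(x,y)=0$. Contrapositively, $h_\infty(x,y)=1$ forces the crossing to lie at an endpoint of one of the two arcs, which is exactly the asserted property of $h_\infty$. For (A3), assume $\sum_{m\ge n}h_m(x,y)+h_\infty(x,y)\ne0$. If $h_\infty=1$, the crossing is at an endpoint and (b) holds with distance $0$. Otherwise $h_m(x,y)=1$ for the unique $m\ge n$; if $m\ge1$, failure of the level-$(m-1)$ property yields $C^{1}$-perturbations of the central arcs of size $\le C\varrho_0^{m-1}$ that destroy their transversal crossing, so by the contrapositive of (ii) either that crossing lies at an endpoint (so $d=0$, case (b)) or $c\min(d,1)\sin\theta\le C\varrho_0^{m-1}\le C'\varrho_0^{m}\le C'\varrho_0^{n}$, where $d$ and $\theta$ are the distance from the crossing point of $p\circ\pi(\mathcal{F}_x),p\circ\pi(\mathcal{F}_y)$ to the nearest endpoint, and its angle. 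Splitting on whether $d\ge(C'\varrho_0^{n}/c)^{1/2}$, and using $\theta\asymp\sin\theta$, this gives $\theta<C\varrho^{n}$ or $d<C\varrho^{n}$ with $\varrho:=\varrho_0^{1/2}<1$ and a suitable $C$; the residual cases ($m=0$, or $n$ so small that the claimed bound already exceeds $\pi$ or $\mathrm{diam}\,\Upsilon$) are absorbed by enlarging $C$. I expect the one genuinely delicate point to be the uniform estimate (ii) — getting the joint dependence on $d$ and $\theta$ right, uniformly over the compact surface, and distilling from it the clean ``small angle or near an endpoint'' dichotomy at a single exponential rate; everything else is organization.
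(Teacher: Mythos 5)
Your proposal is correct and follows essentially the same route as the paper: (A1), (A2) are bookkeeping from the definition of $m(x,y)$, and (A3) is deduced from the H\"older continuity of the semi-conjugacy $\pi$ together with a quantitative stability estimate for transversal crossings of short geodesic arcs. Your write-up is more detailed than the paper's terse sketch --- you make the stability threshold $c\min(d,1)\sin\theta$ explicit and carry out the $d$-versus-$\theta$ splitting with $\varrho=\varrho_0^{1/2}$, steps the paper leaves implicit in the one-sentence claim that robustness at scale $\varrho^{n}$ in both angle and endpoint-distance survives perturbations of size $\varrho^{2n}$.
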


\begin{proof}
Only statement (A3) is nontrivial. Since the projection $\pi
:\Sigma_{F} \rightarrow S\Upsilon$ is H\"{o}lder continuous, it
suffices to prove that for all large $n$, if two geodesic segments
intersect at an angle larger than $C\varrho^{n}$ and at a point at
distance greater than $C\varrho^{n}$ from any of the endpoints, then
so will two geodesic segments of the same lengths whose initial
points/directions are within distance $C\varrho^{2n}$ of the initial
points/directions of the original pair of geodesic segments. This
holds because at small distances, geodesic segments on $\Upsilon$ look
like straight line segments in the tangent space (to the intersection
point).
\end{proof}

A formula similar to \eqref{eq:u-representation} holds for the
self-intersection count $N (t)=N (t;\gamma)$ of an arbitrary geodesic
segment $\gamma [0,t]$ (which, in general, will not be a closed
curve.) As in the case of closed geodesics, the self-intersection
count can be computed by partitioning the segment into the images of
successive fibers and counting how many pairs intersect. However, for
arbitrary geodesic segments, the first and last segment will only
represent partial fibers, and so intersections with these must be
counted accordingly.

For $x,y\in \Sigma$ and $0\leq s<F (x) $, define $g_{0} (s,x,y)$ to be
$1$ if the geodesic segment corresponding to the fiber
$\mathcal{F}_{y}$ intersects the segment corresponding to the partial fiber
\[
	\{(x,t)\,:\, 0\leq t<s\},
\]
and $0$ if not. Similarly, define $g_{1} (s,x,y)$ to be $1$ if the
geodesic  segment 
corresponding to the fiber $\mathcal{F}_{y}$ intersects the segment
corresponding to the partial fiber
\[
	\{(x,t)\,:\, s\leq t<F (x)\}
\]
 and $0$ if not.   If $\gamma$ is the geodesic ray whose initial
 tangent vector is $p \circ \pi (x,s)$, then the self-intersection
 count for the geodesic segment $\gamma [0,t]$ is given by 
\begin{equation}\label{eq:self-intersection-Count-random}
	N (t;\gamma) = \frac{1}{2} \sum_{i=0}^{\tau} \sum_{j=1}^{\tau} h
	(\sigma^{i}x,\sigma^{j}x) - \sum_{i=1}^{\tau}g_{0}
	(s,x,\sigma^{i}x) -\sum_{i=0}^{\tau}g_{1}
	(S_{\tau +1}F (x) -t+s,\sigma^{\tau}x,\sigma^{i}x) \pm \text{error}
\end{equation}
where 
\begin{equation}\label{eq:first-passage-time-def} 
	\tau =\tau_{t} (x)=\min \{n \geq 0\,:\, S_{n+1}F (x)\geq t \} .
\end{equation}
The error term accounts for possible intersections between the initial
and final segments, and hence is bounded in magnitude by $2$.  Because
it is bounded, it has no effect on the distribution of $( N (t)-\kappa
t^{2})/t$ in the large$-t$ limit. Note that whereas the first double
sum in \eqref{eq:self-intersection-Count-random} will have magnitude
$O (t^{2})$ for large $t$, each of the single sums will have magnitude
$O (t)$. Since the order of magnitude of the fluctuations in
\eqref{eq:random} is $O (T)$, it follows that the single sums in
\eqref{eq:self-intersection-Count-random} will have an appreciable
effect on the  distribution of the normalized self-intersection counts
in \eqref{eq:random}.

The following proposition summarizes the key features of the
construction.

\begin{proposition}\label{proposition:symbD-per}
For any compact surface of constant curvature $-1$ there exists a
topologically mixing shift $(\Sigma ,\sigma)$ of finite type, a
H\"{o}lder continuous height function $F:\Sigma \rightarrow
\zz{R}_{+}$,  and functions $h_{m}:\Sigma \times \Sigma \rightarrow
[0,1]$ satisfying (A1)--(A3) of Lemma~\ref{lemma:h-decomposition}
such
that
\begin{enumerate}
\item [(SD-1)] with only finitely many exceptions, each prime closed geodesic
corresponds uniquely to  a necklace; 
\item [(SD-2)] the length of each such closed geodesic is $S_{n}F
(x)$, where $n$ is the minimal period of the necklace $x$; and
\item [(SD-3)] the number of self-intersections of any such closed geodesic
is given by \eqref{eq:u-representation}, with $h$ defined by
\eqref{eq:h-decomposition}.
\end{enumerate}
(Here a \emph{necklace} is an equivalence class of periodic sequences,
where two periodic sequences are considered equivalent if one is a
shift of the other.)  Furthermore, there exists a semi-conjugacy of
the symbolic flow $\phi_{t}$ on $\Sigma_{F}$ with the geodesic flow
that is one-to-one except on a set of first category. For any geodesic
$\gamma$, the number $N (t;\gamma)$ of self-intersections of the
segment $\gamma [0,t]$ is given by
\eqref{eq:self-intersection-Count-random}, with the error bounded by
$2$.
\end{proposition}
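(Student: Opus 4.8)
The plan is to read Proposition~\ref{proposition:symbD-per} off from three ingredients that are already in place: Series' construction (Proposition~\ref{proposition:boundaryCorrespondence}), which supplies the shift of finite type, the H\"older height function, the semi-conjugacy with the geodesic flow, and the correspondence between closed geodesics and necklaces; the box-refinement of Lemma~\ref{lemma:shortFibers}, which shrinks the fibers so that they project to single polygon crossings while keeping the shift aperiodic; and Lemma~\ref{lemma:h-decomposition}, which produces the functions $h_{m}$ with properties (A1)--(A3). Since the two self-intersection formulas --- \eqref{eq:u-representation} for closed geodesics and \eqref{eq:self-intersection-Count-random} for arbitrary segments --- have already been derived in this section, the proof is essentially an assembly; the one thing that genuinely needs attention is that the refinement step is compatible with all of the other properties simultaneously.

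First I would invoke Proposition~\ref{proposition:boundaryCorrespondence} to fix a shift of finite type $(\Sigma_{0},\sigma)$, a H\"older height function $F_{0}$, a suspension flow $(\Sigma_{F_{0}},\phi_{t})$, and a H\"older semi-conjugacy $\pi_{0}:\Sigma_{F_{0}}\to S\Upsilon$ with the geodesic flow. By its final assertions, all but finitely many prime closed geodesics correspond uniquely to periodic orbits of $\phi_{t}$ (equivalently, to necklaces), the geodesic attached to a periodic sequence $x$ of minimal period $n$ has length $S_{n}F_{0}(x)$ (immediate from \eqref{eq:snf} and property (C), the flow being unit speed), and $\pi_{0}$ is injective off a set of first category with only finitely many exceptional closed geodesics; and since the geodesic flow is transitive we may take $(\Sigma_{0},\sigma)$ irreducible. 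Next I would apply the refinement of Lemma~\ref{lemma:shortFibers} --- the variant using the extra symbol $a^{\clubsuit}$ to destroy the periodicity introduced by box-splitting --- with the number of boxes chosen large enough that the new height function $F$ satisfies $0<F\le\varepsilon$, with $\varepsilon\le\varrho$; because the semi-conjugacy is with the unit-speed flow, this forces each fiber to project to a geodesic segment of length $\le\varrho$, so that no fiber's image self-intersects transversally and the images of two distinct fibers meet transversally at most once. The output is an aperiodic --- hence, being also irreducible, topologically mixing --- shift $(\Sigma,\sigma)$, a H\"older $F$ on it, and a flow conjugacy $p^{*}:\Sigma_{F}\to\Sigma_{F_{0}}$ that carries fibers to fibers and fixes $\Sigma\times\{0\}$. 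Setting $\pi:=\pi_{0}\circ p^{*}$ gives a H\"older semi-conjugacy with the geodesic flow, and since $p^{*}$ is a homeomorphism carrying periodic orbits to periodic orbits of the same length, $\pi$ is injective off a set of first category with only finitely many exceptional closed geodesics, and (SD-1)--(SD-2) hold for the new data, now with $n$ the minimal period in $\Sigma$ and $F$ the new height. As noted in the proof of Lemma~\ref{lemma:shortFibers}, cylinder sets of $(\Sigma,\sigma)$ lie inside cylinder sets of $(\Sigma_{0},\sigma)$ of comparable length, so the exponential decay of the associated boundary arcs persists, and Lemma~\ref{lemma:h-decomposition} applies to the modified dynamics.

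With this symbolic dynamics fixed, I would define $h:\Sigma\times\Sigma\to\{0,1\}$ by $h(x,y)=1$ exactly when $p\circ\pi(\mathcal{F}_{x})$ and $p\circ\pi(\mathcal{F}_{y})$ meet transversally (the diagonal of $h$ then vanishes, by the single-crossing property), apply Lemma~\ref{lemma:h-decomposition} to obtain $h=\sum_{m\ge0}h_{m}+h_{\infty}$ with the $h_{m}$ satisfying (A1)--(A3), and read off the remaining claims. For a prime closed geodesic with representative periodic $x$ of minimal period $n$, the preimage orbit visits precisely the fibers $\mathcal{F}_{\sigma^{1}x},\dots,\mathcal{F}_{\sigma^{n}x}$, so counting transversal crossings of pairs of these (with the $\tfrac12$ absorbing the double count) yields \eqref{eq:u-representation}, which is (SD-3). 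For a general geodesic $\gamma$ with initial vector $p\circ\pi(x,s)$, the segment $\gamma[0,t]$ is the concatenation of the $p\circ\pi$-images of the partial fiber over $(x,s)$, the full fibers $\mathcal{F}_{\sigma^{i}x}$ for $1\le i\le\tau-1$, and the partial fiber over $\sigma^{\tau}x$ run up to time $t$, with $\tau=\tau_{t}(x)$ as in \eqref{eq:first-passage-time-def}; counting crossing pairs, correcting the two partial fibers via the $g_{0},g_{1}$ terms, and folding the at most two crossings between the first and last partial segments into the error term gives \eqref{eq:self-intersection-Count-random} with error bounded by $2$.

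There is no step of real depth here; the point that needs care --- and that I would expect to occupy most of the writeup --- is the compatibility check in the second step, namely that the box-refinement can be arranged so as to preserve simultaneously topological mixing (via aperiodicity), H\"older regularity of $F$ and $\pi$, the exponential cylinder/arc decay underlying Lemma~\ref{lemma:h-decomposition}, near-injectivity of the semi-conjugacy with only finitely many exceptional closed geodesics, and the exact period--length correspondence of (SD-2).
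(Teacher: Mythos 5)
Your proposal is correct and follows essentially the same route as the paper, which gives no separate proof of this proposition but states it explicitly as a "summary of the key features of the construction" carried out in section~\ref{ssec:cc-gf}: Series' coding (Proposition~\ref{proposition:boundaryCorrespondence}) supplies the shift, height function, length correspondence, and near-injective semi-conjugacy; the box-refinement with the $a^{\clubsuit}$ symbol (Lemma~\ref{lemma:shortFibers}) makes $F\le\varrho$ while preserving aperiodicity and the comparable-cylinder property; Lemma~\ref{lemma:h-decomposition} produces the $h_{m}$; and the two self-intersection formulas \eqref{eq:u-representation} and \eqref{eq:self-intersection-Count-random} are read off from the single-crossing property of short fibers, exactly as you describe.
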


\begin{remark}\label{remark:crossSection}
The choice of the Poincar\'{e} section in the construction of the
suspension flow is important because it determines the locations of
the discontinuities of the function $h$ in the representation
\eqref{eq:u-representation}, which in turn determines how well $h
(x,y)$ can be represented by functions that depend on only finitely
many coordinates of $x$ and $y$ (cf. property (A3) in
Lemma~\ref{lemma:h-decomposition}). This choice is somewhat arbitrary;
other Poincar\'{e} sections can be obtained in a number of ways, the
simplest of which is by moving points of the original section forward
a distance $r$ along the flow lines (equivalently, replacing the
semi-conjugacy $\pi$ of Proposition~\ref{proposition:bowen} by
$\pi_{r}=\pi \circ \phi_{r}$). This has the effect of changing the
function $h$ as follows: define $h^{r}:\Sigma \times \Sigma
\rightarrow \{0,1 \}$ by setting $h^{r} (x,y)=1$ if the $p\circ \pi -$
projections of the suspension flow segments
\begin{equation}\label{eq:f-epsilon}
	\mathcal{F}^{r}_{x}:=\{\phi_{s} (x) \}_{r
	\leq s<F (x)+r} \quad \text{and} \quad
	\mathcal{F}^{r}_{y}:=\{\phi_{s} (y) 
	\}_{r \leq s<F (y)+r}
\end{equation}
intersect transversally on $\Upsilon$, and $h^{r} (x,y)=0$
if not. Clearly, the representation \eqref{eq:u-representation}
for the self-intersection counts of closed geodesics remains valid
with $h$ replaced by any $h^{r}$. Similarly, the representation
\eqref{eq:random} for the self-intersection count of an arbitrary
geodesic segment will  hold when $h$, $g_{0}$, and $g_{1}$ are  replaced by
$h^{r}$, $g^{r}_{0}$, and $g^{r}_{1}$,
where $g^{r}_{i}$ are the obvious modifications of $g_{i}$.
The function $h^{r}$ can be decomposed as
\begin{equation}\label{eq:h-epsilon}
	h^{r}=\sum_{m=0}^{\infty}h^{r}_{m} +h^{r}_{\infty}
\end{equation}
where $h^{r}_{m} (x,y)$ and $h_{\infty}^{r} (x,y)$ are
defined in analogous fashion to the functions $h_{m}$ and $h_{\infty}$ above,
in particular, $h^{r}_{m} (x,y)=1$ if $m$ is the smallest
positive integer such that $h^{r} (x',y')=1$ for all pairs
$x',y'$ that agree with $x,y$ in coordinates $|j|\leq m$, and
$h^{r}_{m} (x,y)=0$ otherwise. These functions once again
satisfy (A1)-(A2) of Lemma~\ref{lemma:h-decomposition}. Property (A3)
is replaced by the following (A3)': if $\sum_{n\geq m}h^{\varepsilon
}_{n} (x,y)+h_{\infty}^{\varepsilon } (x,y)\not =0$ then the geodesic segments
corresponding to the suspension flow segments
$\mathcal{F}^{r}_{x}$ and $\mathcal{F}^{r}_{y}$
intersect either
\begin{itemize}
\item [(a)]  at an angle less than $C\varrho^{m}$, or
\item [(b)] at a point at distance $<C\varrho^{m}$ from one of the
endpoints of $p \circ \pi (\mathcal{F}^{r}_{x})$ or $p\circ
\pi (\mathcal{F}^{r}_{y})$. 
\end{itemize}
\end{remark}

\section{Gibbs states and thermodynamic formalism}\label{sec:gibbs}

\subsection{Standing Conventions}\label{ssec:conventions} We shall
adhere (mostly) to the notation and terminology of Bowen
\cite{bowen:book}. However, we shall suppress the dependence of various
objects on the transition matrix $A$ of the underlying subshift of
finite type, since this will be fixed throughout the paper: thus, the
spaces of one-sided and two-sided sequences will be denoted by
$\Sigma^{+}$ and $\Sigma$, respectively, and the spaces of
$\alpha-$H\"{o}lder continuous real-valued functions on these sequence
spaces by $\mathcal{F}^{+}$ and $\mathcal{F}$.  (With
one exception [sec.~\ref{ssec:gibbs}] the H\"{o}lder exponent $\alpha$
will also be fixed throughout the paper, so henceforth we shall refer
to $\alpha-$H\"{o}lder functions as H\"{o}lder functions.)
The spaces $\mathcal{F}=\mathcal{F}_{\alpha}$ and
$\mathcal{F}^{+}=\mathcal{F}^{+}_{\alpha}$ are Banach spaces with
norm
\begin{gather*}
	\xnorm{f}=\xnorm{f}_{\alpha}=|f|_{\alpha}+\xnorm{f}_{\infty}
		\quad \text{where} \\
	|f|_{\alpha}=\sup_{n\geq 0} \sup_{x,y\,:\, x_{j}=y_{j}
	\;\forall \,|j|\leq n} |f (x)-f (y)|/\alpha^{n}.
\end{gather*}

For any sequence $x \in \Sigma$ or $x \in \Sigma^{+}$ and any interval
$J=\{k,k+1 ,\dotsc ,l \}$ of $\zz{Z}$ or $\zz{N}$ denote by $x_{J}$ or
$x(J)$ the subsequence $x_{k}x_{k+1}\dotsb x_{l}$, and let
$\Sigma_{J}(x)$ (or $\Sigma^{+}_{J} (x)$) be the cylinder set
consisting of all sequences $y\in \Sigma$ such that $y(J)=x(J)$.  For
any $n\in \zz{N}$ let $[n]=\{1,2,\dotsc ,n \}$. For any continuous,
real-valued function $\varphi$ and any probability measure $\lambda$
on $\Sigma$ or $\Sigma^{+}$ denote by $E_{\lambda}\varphi =\int
\varphi \,d\lambda$ the expectation of $\varphi$ with respect to
$\lambda$, by $\text{Pr} (\varphi)$ the topological pressure of
$\varphi$ (cf. \cite{bowen:book}, Lemma 1.20 and Sec. 2C), and for each
interval $J\subset \zz{Z}$ write
\[
	S_{J}\varphi =\sum_{i\in J}\varphi \circ \sigma^{i}.
\]
Following Bowen we shall also write $S_{n}\varphi =S_{[n]}\varphi
\circ \sigma^{-1}=\sum_{i=0}^{n-1}\varphi \circ \sigma^{i}$ for any integer
$n\geq 1$.

\subsection{Gibbs states}\label{ssec:gibbs} For each real-valued
function $\varphi \in \mathcal{F}$ there is a unique \emph{Gibbs
state} $\mu_{\varphi }$, which is by definition a shift-invariant
probability measure $\mu_{\varphi}$ on $\Sigma $ for which there are constants
$0<C_{1}<C_{2}<\infty $ such that for every finite interval $J\subset
\zz{Z}$,
\begin{equation}\label{eq:gibbs}
	C_{1}\leq \frac{\mu_{\varphi} (\Sigma _{J}(x))}{\exp\{S_{J}\varphi
	(x)-|J| \text{\rm Pr} (\varphi )\}} \leq C_{2} 
\end{equation}
for all $x \in \Sigma $.  When the  potential function $\varphi$ is
fixed, we shall delete the subscript $\varphi$ and write $\mu$ in
place of $\mu_{\varphi}$.

If the underlying shift $(\Sigma ,\sigma)$ is topologically mixing --
as we shall assume throughout -- every Gibbs state $\mu
=\mu_{\varphi}$ is mixing (and therefore ergodic), and has positive
entropy. Moreover, there exists $\alpha =\alpha_{\varphi} <1$ such
that for every $n\geq 1$, all cylinder sets $\Sigma_{[0,n]} (x)$ of
generation $n$ have $\mu -$probabilities less than $\alpha^{n}$. In
addition, correlations decay exponentially, in the following
sense. For any subset $J\subset \zz{Z}$, let $\mathcal{B}_{J}$ be the
$\sigma -$algebra of Borel subsets $G$ of $\Sigma$ whose indicator
functions $\mathbf{1}_{G}$ depend only on the coordinates $n\in
J$. Then for each Gibbs state $\mu=\mu_{\varphi }$ there exist
constants $C=C_{\varphi }<\infty$ and $0<\beta=\beta_{\varphi} <1$
such that for each $n\geq 1$,
\begin{equation}\label{eq:expMixing}
	|\mu (G\cap G') -\mu (G)\mu (G')|\leq C \beta^{n}\mu (G)\mu
	(G') 
	\quad\forall \; G\in \mathcal{B}_{(-\infty ,0]},
	\; \; G'\in \mathcal{B}_{[n,\infty )}.
\end{equation}
This implies that for any specification of the ``past'' $ \dots
\omega_{-1}\omega_{0}$, the conditional distribution of the ``future''
$\omega_{n}\omega_{n+1}\dotsb$ differs from the unconditional
distribution by at most $2C\beta^{n}$ in total variation norm. The
exponential mixing property can be expressed in the following
equivalent form (see \cite{parry-pollicott:asterisque},
pp. 29--30): for any two $\alpha -$H\"{o}lder functions $v,w$ such
that $E_{\mu}w=0$,
\begin{equation}\label{eq:expMixingFunctionalForm}
	|E_{\mu} v (w\circ \sigma^{n})|\leq C
	\beta^{n}\xnorm{v}_{\infty}\xnorm{w}_{\alpha} 
\end{equation}
where $\xnorm{w}_{\alpha}$ is the H\"{o}lder norm of $w$.

The uniform mixing property \eqref{eq:expMixing} implies that it is 
unlikely that a random sequence $x\in \Sigma$ chosen according to the
law of a Gibbs state will have long repeating strings at fixed
locations. This is made precise in the next lemma; it will be used in
section~\ref{sec:verification} below (cf. Lemma~\ref{lemma:angle}) to
show that self-intersections at very small angles are highly unlikely.

\begin{lemma}\label{lemma:noRepeats}
For any Gibbs state $\mu$ there exists $0<\beta <1$ such that for all
$k\not =0$ and all sufficiently large $m\geq 1$,
\begin{equation}\label{eq:noRepeats}
	\mu \{x\in \Sigma \,:\, x_{i}=x_{i+k} \;\;\text{for all}\;\;
	0\leq i\leq m\} \leq \beta^{m}
\end{equation}
\end{lemma}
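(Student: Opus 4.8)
The plan is to reduce at once to $k\geq 1$ (the event for $-k$ is carried to the event for $k$ by the measure-preserving shift $\sigma^{|k|}$, so $\mu\{x:x_i=x_{i+k},\,0\le i\le m\}=\mu\{x:x_i=x_{i+|k|},\,0\le i\le m\}$), and then to exploit that on the event $E_{k,m}:=\{x:x_i=x_{i+k}\text{ for }0\le i\le m\}$ the coordinates $x_k,\dots,x_{m+k}$ are forced to repeat those of $w:=x_{[0,k-1]}$, so that $x_{[0,m+k]}$ coincides with the periodic sequence $\overline w$ and, in particular, the block $w$ reappears at the positions $0,k,2k,\dots$. Only two ingredients from the preceding sections are needed: the cylinder bound $\mu(\Sigma_{[0,n]}(x))<\alpha_\varphi^{\,n}$ from \S\ref{ssec:gibbs}, and the exponential mixing property \eqref{eq:expMixing}, whose rate I will call $\vartheta\in(0,1)$ (it is the constant denoted $\beta$ there) and whose prefactor I will call $C$. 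The one elementary device used repeatedly is that for any interval $J$ and any $q\geq 1$, $\sum_w\mu(\Sigma_J(w))^q\le(\max_w\mu(\Sigma_J(w)))^{q-1}\sum_w\mu(\Sigma_J(w))\le\alpha_\varphi^{(|J|-1)(q-1)}$, which converts a product of $q$ repeated occurrences of an \emph{unspecified} block into a gain of order $\alpha_\varphi^{\,\Theta(|J|q)}$ after summing over the block.

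First I would fix an integer $k_0\geq 2$ large enough that $(1+C\vartheta^{k})\alpha_\varphi^{k-1}\le\alpha_\varphi^{(k-1)/2}$ for all $k\ge k_0$ (possible since $\alpha_\varphi^{-(k-1)/2}\to\infty$ while $1+C\vartheta^{k}\downarrow 1$), and then split into three regimes. (i) For $1\le k<k_0$: $E_{k,m}$ is the disjoint union over the at most $|\mathcal A|^{k_0}$ admissible length-$k$ words $w$ of the periodic cylinders $\Sigma_{[0,m+k]}(\overline w)$, so directly $\mu(E_{k,m})<|\mathcal A|^{k_0}\alpha_\varphi^{\,m+k}\le|\mathcal A|^{k_0}\alpha_\varphi^{\,m}$. (ii) For $k\ge\lceil(m+1)/2\rceil$: writing $\ell=\lceil(m+1)/2\rceil$, the event forces $x_{[0,\ell-1]}=x_{[k,k+\ell-1]}$, i.e.\ two disjoint length-$\ell$ blocks separated by a gap $k-\ell\ge 0$; one application of \eqref{eq:expMixing} together with the device above gives $\mu(E_{k,m})\le(1+C)\alpha_\varphi^{\ell-1}\le(1+C)\alpha_\varphi^{(m-1)/2}$. (iii) For $k_0\le k<\lceil(m+1)/2\rceil$: the block $w=x_{[0,k-1]}$ reappears at $0,k,\dots,(P-1)k$ with $P=\lfloor(m+1)/k\rfloor+1\ge 3$; keeping the even-indexed occurrences gives $Q=\lfloor(P-1)/2\rfloor+1\ge 2$ copies of $w$ in length-$k$ blocks separated by gaps $k$, and $Q-1$ successive applications of \eqref{eq:expMixing} (conditioning each time on the past) yield $\mu(E_{k,m})\le[(1+C\vartheta^{k})\alpha_\varphi^{k-1}]^{Q-1}\le\alpha_\varphi^{(k-1)(Q-1)/2}$ by the choice of $k_0$. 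Using $Q-1\ge 1$ and $Q-1\ge(m+1)/(2k)-1$ (whichever is more favourable, according as $k\lessgtr(m+1)/4$) a short computation gives $(k-1)(Q-1)\ge\tfrac18(m+1)$, so $\mu(E_{k,m})\le\alpha_\varphi^{\,m/16}$.

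Finally I would take $\beta:=\alpha_\varphi^{1/32}\in(0,1)$ and observe that for $m$ larger than a threshold depending only on $k_0,C,\alpha_\varphi,|\mathcal A|$ — hence independent of $k$ — each of the three bounds is $\le\beta^{m}$: in regime (i) because $|\mathcal A|^{k_0}\alpha_\varphi^{\,m/2}\to 0$, in regime (ii) because $(1+C)\alpha_\varphi^{\,15m/32-1/2}\to 0$, and in regime (iii) with no further condition. (One also needs $m\ge 2k_0$, say, so that the three regimes actually cover all $k\ge 1$.) Together with the reduction to $k\ge 1$ this proves the lemma.

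The main obstacle — and the reason the argument must be organized into regimes rather than applying \eqref{eq:expMixing} once — is obtaining $\beta$ \emph{uniform} in $k$. In the intermediate regime the $Q-1\asymp m/k$ iterated uses of mixing accumulate a factor $(1+C\vartheta^{k})^{Q-1}$, which is controlled only because $k\ge k_0$ makes $\vartheta^{k}$ small — equivalently, makes $(1+C\vartheta^{k})\alpha_\varphi^{k-1}$ lie comfortably below $\alpha_\varphi^{(k-1)/2}$. This device simply fails for bounded $k$, where $\vartheta^{k}$ is not small, so there one is thrown back on the crude cylinder count of regime (i); the price paid is only the $k$-independent prefactor $|\mathcal A|^{k_0}$, which is absorbed into ``$m$ sufficiently large''. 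The only genuinely fiddly point is the elementary bookkeeping in regime (iii) ($P\ge 3$, the two lower bounds for $Q-1$, and $(k-1)(Q-1)\ge\tfrac18(m+1)$ throughout $k_0\le k<\lceil(m+1)/2\rceil$), but nothing delicate is involved.
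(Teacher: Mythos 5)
Your proof is correct, but it takes a noticeably different route from the paper's. The paper first converts the mixing inequality \eqref{eq:expMixing}, together with shift-invariance, into a uniform Doeblin-type bound on \emph{single symbols}: there exist $L\geq 1$ and $\beta<1$ with $\mu\bigl(x_{L'+n}=a\mid\mathcal{B}_{(-\infty,n]}\bigr)\leq\beta$ for all $L'\geq L$, all symbols $a$ and all $n$. It then needs only two cases, $k\geq L$ and $1\leq k<L$: in the first it conditions successively, at spacing $L$, on the forced equalities $x_{iL}=x_{k+iL}$; in the second it exploits that periodicity forces $x_{0}=x_{m_{j}k}$ at multiples of $k$ chosen one per window of length $L$, and conditions on every other one, getting $\beta^{cn}$ with $m\asymp nL$ in either case. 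You work instead with \emph{blocks}: the cylinder-measure decay $\mu(\Sigma_{[0,n]}(x))<\alpha_{\varphi}^{n}$ from \S\ref{ssec:gibbs}, the summation device $\sum_{w}\mu(\Sigma_{J}(w))^{q}\leq\alpha_{\varphi}^{(|J|-1)(q-1)}$, and iterated multiplicative use of \eqref{eq:expMixing} on the repeated copies of the period word $w=x_{[0,k-1]}$; your case split (bounded $k$, intermediate $k$, $k\gtrsim m/2$) is relative to $m$ rather than to a fixed mixing scale, and your $k_{0}$ plays the role the paper's $L$ plays in taming the accumulated mixing errors. Both routes give the crucial uniformity in $k$; the paper's is shorter because the per-symbol conditional bound eliminates the summation-over-words device and the regime bookkeeping, while yours uses the two quoted facts off the shelf without deriving the intermediate Doeblin bound, at the cost of the arithmetic with $P$, $Q$ and the $(k-1)(Q-1)\geq\tfrac18(m+1)$ estimate (which checks out, given $k\geq 2$ and $m\geq 7$). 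One small remark: your phrase ``conditioning each time on the past'' should be read as peeling off the leftmost block using the multiplicative form of \eqref{eq:expMixing}; the conditional total-variation form would give additive errors $2C\vartheta^{k}$ that do not sum over $w$ as cleanly, whereas the displayed bound $[(1+C\vartheta^{k})\alpha_{\varphi}^{k-1}]^{Q-1}$ is exactly what the peeling yields, so the argument stands as written.
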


\begin{proof}
The mixing inequality \eqref{eq:expMixing} and $\sigma -$invariance of $\mu$
imply that there exist an integer $L\geq 1$ and $0<\beta <1$ such that
for every $L'\geq L$ and every symbol $a\in \mathcal{A}$ of the
underlying alphabet,
\[
	\mu (x_{L'+n}=a\,|\, \mathcal{B}_{(-\infty ,n]})\leq \beta
	\quad \text{for all} \;\; n\in \zz{Z}
\]
We shall consider two separate cases: first, $k\geq L$; and second, $1\leq
k< L$. (Since every Gibbs state is $\sigma -$invariant, it suffices
to consider only positive values of $k$.) In the first case,
\begin{align*}
	\mu (x_{i}=x_{k+i} \;\; \forall \; 1\leq i\leq nL)
	&\leq \mu (x_{iL}=x_{k+iL} \;\; \forall \; 1\leq i\leq n)\\
	& =E_{\mu} \prod_{i=1}^{n} \mu (x_{iL}=x_{k+iL} \,|\,
	\mathcal{B}_{(-\infty , k+ (i-1)L]}) \\
	&\leq \beta^{n}.
\end{align*}
In the  case $1\leq k< L$, there will be a multiple of $k$ in every
interval of length $L$, so we  can choose $m_{1}<m_{2}<\dotsb <m_{n}$
such that $jL\leq m_{j}k<jL+L$ for each $j\leq n$. Now the requirement
that $x_{i}=x_{i+k}$ for all $1\leq i\leq nL$ forces
$x_{0}=x_{m_{j}k}$ for every $j\leq n$; consequently,
\begin{align*}
	\mu (x_{i}=x_{k+i} \;\; \forall \; 1\leq i\leq nL)
	&\leq \mu (x_{0}=x_{m_{j}k} \;\; \forall\; 1\leq j\leq n) \\
	&=E_{\mu}\prod_{j=1}^{[n/2]} \mu (x_{m_{2j}k}=x_{0} \,|\,
	\mathcal{B}_{(-\infty ,m_{2j-2}k]}) \\
	&\leq \beta^{[n/2]}.
\end{align*}

\end{proof}

Two functions $\varphi ,\psi\in \mathcal{F}$ are said to be
\emph{cohomologous} if their difference is a cocycle $u-u\circ
\sigma$, with $u \in \mathcal{F}$. If $\varphi$ and $\psi$ are
cohomologous then $\mu_{\varphi}=\mu_{\psi}$ and
$\text{Pr}(\varphi)=\text{Pr} (\psi)$. According to a theorem of
Livsic (\cite{bowen:book}, Lemma 1.6), for every $\alpha -$H\"{o}lder
function $\varphi$ there exist $\sqrt{\alpha}-$H\"{o}lder functions
$\varphi^{+},\varphi^{-}$ both cohomologous to $\varphi$ (and
therefore mutually cohomologous) such that $\varphi^{+} (x)$ depends
only on the forward coordinates $x_{1},x_{2},\dotsc$ of $x$ and
$\varphi^{-} (x)$ depends only on the backward coordinates
$x_{0},x_{-1},\dotsc$.

For any function $\varphi \in \mathcal{F}^{+}$, the Gibbs state
$\mu_{\varphi }$ is related to the Perron-Frobenius eigenfunction
$h_{\varphi }$ and eigenmeasure $\nu_{\varphi }$ of the Ruelle
operator $\mathcal{L}_{\varphi}:\mathcal{F}^{+} \rightarrow
\mathcal{F}^{+}$ associated with $\varphi $ (cf. \cite{bowen:book}, ch.~1,
sec.~C). In particular, if $h_{\varphi }$ and $\nu_{\varphi }$ are
normalized so that $\nu_{\varphi}$ and $h_{\varphi}\nu_{\varphi }$
both have total mass $1$, and if $\lambda_{\varphi}$ is the
Perron-Frobenius eigenvalue, then
\begin{equation}\label{eq:gibbsStateDecomp}
	d\mu_{\varphi }= h_{\varphi }d\nu_{\varphi }
	\quad \text{and} \quad \lambda_{\varphi}=\exp \{ \text{Pr}
	(\varphi )\}. 
\end{equation}

\subsection{Suspensions}\label{ssec:suspensions}
Say that a function $f\in \mathcal{F}$ (or $\mathcal{F}^{+}$) is
\emph{nonarithmetic} if there is no function $g\in \mathcal{F}$ valued
in a discrete additive subgroup of $\zz{R}$ to which $f$ is
cohomologous. If $F\in \mathcal{F}$ is strictly positive then the
suspension flow with height function $F$ is topologically mixing if
and only if $F$ is nonarithmetic (see, for instance,
\cite{parry-pollicott}). This is the case, in particular, for
the suspension flow discussed in section~\ref{sec:symbolicDynamics}. 

Assume henceforth that $F$ is a strictly positive, nonarithmetic,
H\"{o}lder-continuous function on $\Sigma$, and let $\Sigma_{F} $ be
the corresponding suspension space. For each $\sigma -$invariant
probability measure $\mu$ on $\Sigma$ define the \emph{suspension} of
$\mu$ relative to $F$ to be the flow-invariant probability measure
$\mu^{*}$ on $\Sigma_{F}$ with cylinder probabilities
\begin{equation}\label{eq:suspensionMeasures}
	\mu^{*} (\Sigma_{[n]}(x)\times [0,a])=\frac{a \mu
	(\Sigma_{[n]} (x))}{\int_{\Sigma}F\,d\mu} 
\end{equation}
for any cylinder set $\Sigma_{[n]} (x)$ and any $a\geq 0$ such that
$a\leq F$ on $\Sigma_{[n]} (x)$. (Here and elsewhere we use the
notation $[n]$ to denote the set of integers $\{1,2,...,n\}$.)
For the geodesic flow on a compact, negatively curved surface, both
the Liouville measure and the maximum entropy measure lift to the
suspensions of Gibbs states; for the maximum entropy measure, the
corresponding Gibbs state is $\mu_{-\theta F}$ where $\theta >0$ is
the unique real number such that $\text{Pr} (-\theta F)=0$, and this
value of $\theta$ is the topological entropy of the flow
(cf. \cite{abramov}, also  \cite{lalley:acta}). If the
surface has constant negative curvature then the Liouville and maximum
entropy measures are the same, but if the surface has \emph{variable}
negative curvature then the Liouville measure is mutually singular
with the maximum entropy measure, and so the potential function for
the corresponding Gibbs state is \emph{not} cohomologous to $-\delta
F$.  This is what accounts for the difference between constant and
variable negative curvature in Theorem~\ref{theorem:closed}.

If the suspension flow is topologically mixing then the suspension of
any Gibbs state is mixing for the flow. This fact is equivalent to a
\emph{renewal theorem}, which can be formulated as follows. For each
$T\in \zz{R}_{+}$ and $x \in \Sigma$ define
\begin{equation}\label{eq:firstPassageTime}
	\tau (x) =\tau_{T} (x)=\min \{n\geq 1 \,:\, S_{n}F (x)> T\} \quad
	\text{and} \quad 
	R_{T} (x)=S_{\tau (x)}F (x)-T.
\end{equation}

\begin{proposition}\label{proposition:renewal}
Assume that the shift $(\Sigma ,\sigma)$ is topologically mixing, and
that  $F\in \mathcal{F}$ is positive and nonarithmetic. Then for any
Gibbs state $\mu$ and all bounded, continuous functions 
$f,g:\Sigma  \rightarrow \zz{R}$ and $h:\zz{R \rightarrow \zz{R}}$,
\begin{equation}\label{eq:renewal}
	\lim_{ T \rightarrow \infty} \int_{\Sigma} f (x) g (\sigma^{\tau (x)} (x))
	 h (R_{T} (x)) \,d\mu (x) 
	= \int f (x)\,d\mu (x)\times \int_{\Sigma_{F}} g
	(x)h (t) \,d\mu^{*} (x,t) 
\end{equation}
where $\mu^{*}$ is the suspension of $\mu$.
\end{proposition}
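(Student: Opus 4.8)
The plan is to reduce the statement to the mixing property of the suspension flow $\phi_t$ with respect to the lifted Gibbs state $\mu^*$, which is the standard way these ``renewal theorems for flows'' are established (cf. the discussion in \cite{parry-pollicott}). First I would recall that $\mu^*$ is mixing for $\phi_t$: since $F$ is nonarithmetic and $(\Sigma,\sigma)$ is topologically mixing, the suspension flow is topologically mixing, and the suspension of a Gibbs state is mixing for the flow. Mixing means that for bounded measurable observables $A,B$ on $\Sigma_F$ one has $\int A\cdot(B\circ\phi_T)\,d\mu^* \to (\int A\,d\mu^*)(\int B\,d\mu^*)$ as $T\to\infty$.

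The key step is to rewrite the left-hand integral of \eqref{eq:renewal} in terms of the flow $\phi_T$ on $\Sigma_F$. Observe that a point $(x,0)\in\Sigma_F$ flows under $\phi_T$ to the point $(\sigma^{\tau(x)}x,\,R_T'(x))$, where $R_T'(x)=T-S_{\tau(x)-1}F(x)$ is the ``overshoot'' measured from the bottom of the fiber over $\sigma^{\tau(x)}x$; note that $F(\sigma^{\tau(x)}x)-R_T'(x)=R_T(x)$ in the notation of \eqref{eq:firstPassageTime}, so $R_T(x)$ and $R_T'(x)$ are interchangeable up to replacing $h$ by $t\mapsto h(F(\cdot)-t)$ on the relevant fiber — this bookkeeping about which end of the fiber the residual time is measured from is a minor but necessary nuisance. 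Thus if I define on $\Sigma_F$ the observable $B(y,s)=g(y)h(\text{appropriate residual})$ — more precisely $B$ should be a function that, evaluated at $\phi_T(x,0)$, returns $g(\sigma^{\tau(x)}x)h(R_T(x))$ — then $\int f(x)g(\sigma^{\tau(x)}x)h(R_T(x))\,d\mu(x)$ is essentially $\int f\otimes(\text{indicator of base})\cdot (B\circ\phi_T)\,d\mu^*$ up to the normalizing factor $\int F\,d\mu$ coming from \eqref{eq:suspensionMeasures}, because $\mu$ on the base $\Sigma\times\{0\}$ is, after normalization, the conditional of $\mu^*$ on a neighborhood of the section. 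Making this precise requires approximating the ``slab'' $\Sigma\times[0,\epsilon)$ and letting $\epsilon\to 0$, so that the base measure $\mu$ appears in the limit; alternatively one uses the section-measure description of $\mu^*$ directly.

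Once the left side is identified with a flow correlation $\int A\cdot(B\circ\phi_T)\,d\mu^*$, mixing gives the limit $(\int A\,d\mu^*)(\int B\,d\mu^*)$. Unwinding: $\int A\,d\mu^*$ contributes $\int f\,d\mu$ (the base factor), and $\int B\,d\mu^* = \int_{\Sigma_F} g(x)h(t)\,d\mu^*(x,t)$, which is exactly the second factor in \eqref{eq:renewal}. The main obstacle is the technical one of handling the non-continuity and unboundedness issues at the seams: $\tau_T(x)$ jumps as $T$ crosses level sets of $S_nF$, the map $x\mapsto(\sigma^{\tau(x)}x,R_T(x))$ is only piecewise continuous, and one must verify that the observable $B$ built from $g$ and $h$ is (or can be approximated in $L^1(\mu^*)$ by) bounded measurable functions to which mixing applies — together with the $\epsilon\to 0$ limit that extracts $\mu$ from $\mu^*$. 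None of these are deep, but they are where the real work lies; the convergence itself is then immediate from mixing. (A cleaner alternative, which I would mention, is to invoke the general equivalence between mixing of suspension flows and the key renewal theorem, e.g. via the approach in \cite{parry-pollicott}, in which case the proof is essentially a citation plus the bookkeeping about residual-time conventions.)
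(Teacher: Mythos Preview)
Your approach is correct and is precisely what the paper itself indicates: immediately before the proposition the paper remarks that mixing of the suspension $\mu^{*}$ for the flow $\phi_{t}$ ``is equivalent to a renewal theorem,'' and then states Proposition~\ref{proposition:renewal} without proof. So your plan --- identify the left side as a flow correlation and invoke mixing of $\mu^{*}$ --- is exactly the intended route.

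One small bookkeeping correction: with the convention $\tau_{T}(x)=\min\{n\geq 1: S_{n}F(x)>T\}$ one has $\phi_{T}(x,0)=(\sigma^{\tau(x)-1}x,\,T-S_{\tau(x)-1}F(x))$, not $(\sigma^{\tau(x)}x,\cdot)$; the point $\sigma^{\tau(x)}x$ is the \emph{next} base crossing, reached at time $T+R_{T}(x)$. You already flagged this as a residual-time convention issue, and it is, but when you carry out the $\epsilon$-slab approximation the observable $B$ should be built so that $B(y,s)=g(\sigma y)\,h(F(y)-s)$, which then satisfies $B(\phi_{T}(x,0))=g(\sigma^{\tau(x)}x)h(R_{T}(x))$. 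With that adjustment the argument goes through exactly as you outlined.
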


The special case where $f\equiv g \equiv 1$ is of particular interest,
because it yields estimates of the probability that $R_{T}$ falls in
an interval. In particular, it implies that there exists $C<\infty$
such that for all $\varepsilon >0$, all $a\geq 0$, and all sufficiently large $T$
(i.e., all $T>t_{\varepsilon}$),
\begin{equation}\label{eq:renewalEstimate}
	\mu\{x\in \Sigma \,:\, a\leq R_{T} (x)\leq
	a+\varepsilon\}\leq C\varepsilon  .
\end{equation}  

We will say that two (or more) weakly convergent sequences $X_{T},Y_{T}$ of
random variables, vectors, or sequences are \emph{asymptotically independent}  as
$T \rightarrow \infty$ if the joint distribution of $(X_{T},Y_{T})$
converges weakly to the product of the limit distributions of $X_{T}$
and $Y_{T}$, that is, if for all bounded, continuous real-valued
functions $u,v$, 
\[
	\lim_{T \rightarrow \infty} Eu (X_{T})v (Y_{T})=
	(\lim_{T \rightarrow \infty}Eu (X_{T}))(\lim_{T \rightarrow
	\infty}Ev (Y_{T})) .
\]
In this terminology, Proposition~\ref{proposition:renewal} asserts that the
``overshoot'' random variable $R_{T} (x)$ is asymptotically
independent of the state variables $x$ and $\sigma^{\tau (x)} (x)$.

If $x\in \Sigma$ is chosen randomly according to an ergodic, shift-invariant
probability measure $\mu $ then $\tau_{T} (x)$ will be
random. However, when $T$ is large the random variable is to first
order of approximation ``predictable'' in that the error in the approximation
$\tau_{T}\approx T/E_{\mu}F$ is of order $O_{P} (1/\sqrt{T})$. More precisely:

\begin{proposition}\label{proposition:clt}
Assume that $F:\Sigma  \rightarrow \zz{R}$ and $g:\Sigma
\rightarrow \zz{R}^{k}$ are H\"{o}lder continuous
functions, with $F>0$, and let $\mu$ be a Gibbs state for the shift
$(\Sigma ,\sigma)$. Then there exist a constant
$b>0$ (depending on $\mu$ and $F$) and a $k\times k$ positive
semi-definite matrix $\mathbf{M}$ (depending on $\mu$, $F$, and $g$)
such that as $T \rightarrow \infty$,
\begin{gather}\label{eq:CLT-tau}
	\frac{\tau_{T}-T/E_{\mu}F}{b\sqrt{T}} \Longrightarrow
	\text{Normal}  (0,1) \quad
	\text{and}\\
\label{eq:CLT-g}
	\frac{S_{\tau_{T}}g-TE_{\mu}g/E_{\mu}F}{\sqrt{T}}
	\Longrightarrow  \text{Normal}_{k} (\mathbf{0},\mathbf{M}).
\end{gather}
Moreover, the limiting covariance matrix $\mathbf{M}$ is
\emph{strictly} positive definite unless some linear combination of
the coordinate functions $g_{i}$ is cohomologous to $F+c$ for some
constant $c$. Finally, the random vector $(S_{\tau}g
-TE_{\mu}g/ E_{\mu}F)/T^{1/2}$ and the random variable
$(\tau_{T}-T/E_{\mu}F)/T^{1/2}$ are asymptotically independent
of the overshoot $R_{T} (x)= S_{\tau (x)}F (x)-T$ and the state
variables $x$ and $\sigma^{\tau (x)}x$.
\end{proposition}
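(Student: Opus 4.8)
The plan is to obtain Proposition~\ref{proposition:clt} from the functional central limit theorem for H\"older observables under a Gibbs state together with the renewal theorem, Proposition~\ref{proposition:renewal}. Put $\bar F=F-E_{\mu}F$ and $\widetilde g=g-(E_{\mu}g/E_{\mu}F)F$, so that the $\zz{R}^{1+k}$-valued H\"older function $\widetilde\Psi=(\bar F,\widetilde g)$ satisfies $E_{\mu}\widetilde\Psi=\mathbf 0$ (the second block is centered because $E_{\mu}\widetilde g=E_{\mu}g-(E_{\mu}g/E_{\mu}F)E_{\mu}F=0$). The analytic input is the invariance principle: $t\mapsto S_{\lfloor nt\rfloor}\widetilde\Psi/\sqrt n$ converges in $D([0,\infty),\zz{R}^{1+k})$ to a Brownian motion with covariance the Green--Kubo matrix
\[
	\Xi:=\lim_{n\to\infty}\tfrac1n E_{\mu}\!\left(S_{n}\widetilde\Psi\,(S_{n}\widetilde\Psi)^{\mathsf T}\right)=\sum_{m\in\zz{Z}}E_{\mu}\!\left(\widetilde\Psi\cdot(\widetilde\Psi\circ\sigma^{m})^{\mathsf T}\right),
\]
the series converging by the exponential mixing bound \eqref{eq:expMixingFunctionalForm}. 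This is classical for Gibbs states: \eqref{eq:expMixing} gives the spectral gap of the Ruelle operator, hence a H\"older solution of the Poisson equation and a Gordin martingale--coboundary decomposition $\widetilde\Psi=\mathcal M+(u-u\circ\sigma)$ with $u$ H\"older, after which Donsker's theorem for stationary martingale differences applies; the multivariate form is the Cram\'er--Wold device. The $\bar F$-block of $\Xi$ is the asymptotic variance $\sigma_{F}^{2}$ of $S_{n}\bar F/\sqrt n$, and $\sigma_{F}^{2}>0$ because the nonarithmetic function $F$ is not cohomologous to a constant.

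Next comes the standard renewal inversion. Birkhoff's theorem gives $\tau_{T}/T\to c:=1/E_{\mu}F$ almost surely, and by construction $S_{\tau_{T}}F=T+R_{T}$ with $0\le R_{T}<\|F\|_{\infty}$, whence
\[
	\tau_{T}-cT=-c\,S_{\tau_{T}}\bar F+cR_{T}\qquad\text{and}\qquad S_{\tau_{T}}g-cT\,E_{\mu}g=S_{\tau_{T}}\widetilde g+O(1).
\]
Evaluating the invariance principle at the random time $\tau_{T}\sim cT$ --- a random time-change, equivalently the Anscombe--R\'enyi theorem --- yields $S_{\tau_{T}}\widetilde\Psi/\sqrt T\Longrightarrow\text{Normal}_{1+k}(\mathbf 0,c\,\Xi)$. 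Since the left-hand sides above are, modulo $o_{P}(\sqrt T)$, a fixed linear image of $S_{\tau_{T}}\widetilde\Psi$, the pair $\big((\tau_{T}-cT)/\sqrt T,\ (S_{\tau_{T}}g-cTE_{\mu}g)/\sqrt T\big)$ converges jointly to a Gaussian; reading off the blocks gives \eqref{eq:CLT-tau} with $b^{2}=c^{3}\sigma_{F}^{2}=\sigma_{F}^{2}/(E_{\mu}F)^{3}>0$ and \eqref{eq:CLT-g} with $\mathbf M=c\,\Xi_{\widetilde g\widetilde g}$, the $k\times k$ Green--Kubo matrix of $\widetilde g$. For the nondegeneracy statement, $\langle a,\mathbf Ma\rangle=0$ iff $\langle a,\widetilde g\rangle$ has vanishing asymptotic variance iff (the standard dichotomy for Gibbs states) $\langle a,\widetilde g\rangle$ is a H\"older coboundary, i.e.\ $\sum_{i}a_{i}g_{i}$ is cohomologous to $\lambda F$ with $\lambda=E_{\mu}\langle a,g\rangle/E_{\mu}F$; rescaling $a$ (when $\lambda\ne0$) and allowing an additive constant, this is the exceptional case in the statement.

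Finally there is the asymptotic independence of $\big((\tau_{T}-cT)/\sqrt T,\ (S_{\tau_{T}}g-cTE_{\mu}g)/\sqrt T\big)$ from the triple $(R_{T},\,x,\,\sigma^{\tau_{T}}x)$; this is the part I expect to be the main obstacle. The joint limit of the triple by itself is precisely the content of Proposition~\ref{proposition:renewal}, a form of mixing of the suspension flow; what must be added is that the CLT statistics decouple from it. The plan is to condition on the orbit up to an intermediate level $T'=T-w\sqrt T$, with $w$ a large parameter. Because $\widetilde\Psi$ is $\mu$-centered and $\tau_{T}-\tau_{T'}=O_{w}(\sqrt T)$ deterministically, $S_{\tau_{T}}\widetilde\Psi-S_{\tau_{T'}}\widetilde\Psi$ is a sum of $O_{w}(\sqrt T)$ centered terms and therefore $o_{P}(\sqrt T)$; thus the two CLT statistics agree to leading order with functionals of the orbit up to index $\tau_{T'}$, and discarding the first $\ell$ of those coordinates (cost $O(\ell)=o(\sqrt T)$) makes them, by \eqref{eq:expMixing} and then $\ell\to\infty$, asymptotically independent of the initial coordinates of $x$ while keeping the same Gaussian limit. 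On the other side, given the orbit up to $\tau_{T'}$, the data $\sigma^{\tau_{T}}x$ and $R_{T}$ are produced by running the flow a further time $w\sqrt T\to\infty$; a form of Proposition~\ref{proposition:renewal} uniform in the conditioning (which follows the same way) shows that their conditional law converges to the stationary overshoot law $d\mu^{*}$, so in particular the dependence of $R_{T}$ on the (bounded) overshoot at level $T'$ washes out. Combining the two halves gives convergence of the full joint law to the product of the Gaussian of the preceding paragraph with the $(x,\sigma^{\tau_{T}}x,R_{T})$-limit of Proposition~\ref{proposition:renewal}, which is the claim. The genuine difficulty --- and the reason the intermediate level $T'$ is introduced --- is that $\tau_{T}$ and $R_{T}$ are themselves global functionals of the orbit, so the decoupling cannot be read off from a fixed partition of the coordinates but must be extracted from the renewal structure's forgetting property.
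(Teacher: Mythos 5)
Your argument is correct and is, in substance, the one the paper delegates entirely to the literature: the paper's ``proof'' consists of a citation to Ratner's central limit theorem (plus Cram\'er--Wold) for \eqref{eq:CLT-tau}--\eqref{eq:CLT-g} and a citation to ``standard methods in renewal theory'' for the asymptotic independence, with only a heuristic remark. What you have done is reconstruct the content of those citations. Your route --- Gordin's martingale--coboundary decomposition to get the invariance principle, the identity $\tau_{T}-T/E_{\mu}F=-\,S_{\tau_{T}}\bar F/E_{\mu}F+R_{T}/E_{\mu}F$ to transfer it to the first-passage time, Anscombe's random-time-change theorem to evaluate at $\tau_{T}\sim T/E_{\mu}F$, and the conditioning at the intermediate level $T'=T-w\sqrt T$ combined with the renewal theorem (Proposition~\ref{proposition:renewal}) and exponential mixing \eqref{eq:expMixing} to decouple the bulk Gaussian from $(x,\sigma^{\tau_{T}}x,R_{T})$ --- is exactly the standard renewal-theoretic implementation of the author's heuristic that the overshoot and terminal state are governed by the last $O(1)$ steps while the CLT statistics are ``bulk'' quantities. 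The value $b^{2}=\sigma_{F}^{2}/(E_{\mu}F)^{3}$ and the covariance $\mathbf M=\Xi_{\widetilde g\widetilde g}/E_{\mu}F$ are correct.

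One small point on the nondegeneracy dichotomy. Your characterization --- $\langle a,\mathbf Ma\rangle=0$ iff $\langle a,\widetilde g\rangle$ is a coboundary, iff $\langle a,g\rangle$ is cohomologous to $\lambda F$ with $\lambda=E_{\mu}\langle a,g\rangle/E_{\mu}F$ --- is the correct necessary-and-sufficient condition, and it follows from the standard $\sigma^{2}=0\Leftrightarrow$ coboundary dichotomy for Gibbs states. It does not, however, literally match the paper's phrasing ``some linear combination of the $g_{i}$ is cohomologous to $F+c$'': when $\lambda\ne0$ one can rescale $a$ to $\lambda=1$ (and then $c$ is forced to be $0$, since cohomologous functions share a mean), but the degenerate direction with $E_{\mu}\langle a,g\rangle=0$ and $\langle a,g\rangle$ a coboundary is not captured by that wording. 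The clean statement is that $\mathbf M$ is degenerate iff some nontrivial linear combination of the $g_{i}$ and $F$ is cohomologous to a constant, which is what your computation actually establishes; the paper's phrasing is slightly loose. This is a bookkeeping issue, not a gap in your proof.
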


Both \eqref{eq:CLT-tau} and \eqref{eq:CLT-g} are 
consequences of Ratner's \cite{ratner:clt} central limit theorem. (See
in particular the proof of Theorem~2.1 in \cite{ratner:clt}. The
vector-valued central limit theorem follows from the scalar central
limit theorem by the so-called \emph{Cramer-Wold device} -- see, for
instance, \cite{billingsley:wc}, ch.~1.)  The last assertion (regarding
asymptotic independence) can be proved by standard methods in renewal
theory (see for instance \cite{siegmund}); roughly speaking, it
holds because the values of the random variables $R_{T} (x)$ and
$\sigma^{\tau (x)}x$ are mainly determined by the last $O (1)$ steps
before time $\tau (x)$, whereas the values of $(S_{\tau}g
-T^{2}E_{\mu}g/ (E_{\mu}F)^{2})/T^{3/2}$ and other ``bulk'' random
variables are mostly determined long before time $\tau (x)$.

\section{U-statistics}\label{sec:u-statistics}

\subsection{$U-$statistics with random limits of
summation}\label{ssec:randLimits} Let $(\Sigma ,\sigma)$ be a
two-sided shift of finite type and $F:\Sigma \rightarrow (0,\infty)$ a
H\"{o}lder-continuous function. \emph{Assume that $F$ is
nonarithmetic:} this ensures that the conclusions of
Propositions~\ref{proposition:renewal} and \ref{proposition:clt} are
valid. As in section~\ref{ssec:suspensions}, define $\tau
=\tau_{T}:\Sigma \rightarrow \zz{Z}_{+}$ to be the first passage time
to the level $T>0$ by the sequence $S_{n}F$ (see equation
\eqref{eq:firstPassageTime}). Let $h:\Sigma \times \Sigma \rightarrow
\zz{R}$ be a symmetric, Borel measurable function. Our interest in
this section is the distribution of the random variable
\begin{equation}\label{eq:u-statistic}
	U_{T} (x):=\sum_{i=1}^{\tau (x)}\sum_{j=1}^{\tau (x)} h
	(\sigma^{i}x,\sigma^{j}x), \quad \text{for} \;\; x\in \Sigma,
\end{equation}
under a Gibbs state $\mu$ or, more generally, under a
probability measure that is absolutely continuous with respect to a
Gibbs state.  Random variables of this form --- but with the random
time $\tau (x)$ replaced by a constant $n$ --- are known in
probability theory as $U-$\emph{statistics}, and have a well-developed
limit theory (cf. \cite{hoeffding},
\cite{denker-keller}). Unfortunately, the standard results of this
literature do not apply here, for three reasons: (a) because here the
limits of summation in \eqref{eq:u-statistic} are themselves random
variables, (b) because no continuity requirements have been imposed on
the function $h$, and (c) because of the peculiar nature of the dependence in
the sequence $\{\sigma^{n}x\}_{n\in \zz{Z}}$.

\subsection{Convergence in law under Gibbs
states}\label{ssec:convergenceGibbs}
Fix a probability measure $\lambda$ on $\Sigma$.

\begin{hypothesis}\label{hypothesis:short-memory}
The kernel $h$ admits a decomposition $h=\sum_{m=1}^{\infty}h_{m}$
such that 
\begin{itemize}
\item [(H0)] each $h_{m}:\Sigma \rightarrow \zz{R}$ is a symmetric
function of its arguments;
\item [(H1)] there exists $C<\infty$ such that $\sum_{m\geq
1}|h_{m}|\leq C$ on $\Sigma \times \Sigma $;
\item [(H2)]  $h_{m} (x,y)$ depends only on the
coordinates $x_{j},y_{j}$ such that  $|j|\leq m$; and
\item [(H3)] there exist $C<\infty$ and $0<\beta <1$ such that for
all $m\geq 1$ and $j\in \zz{Z}$,
\begin{equation}\label{eq:H4}
	\int_{\Sigma} |h_{m} (x,\sigma^{j}x)|\, d\lambda (x) \leq C\beta^{m}.
\end{equation}
\end{itemize}
\end{hypothesis}

\begin{definition}\label{definition:Hoeffding}
For any bounded, symmetric, measurable function $h:\Sigma \times \Sigma
\rightarrow \zz{R}$ and any Borel probability measure $\lambda$ on
$\Sigma$ define the \emph{Hoeffding projection} $h_{+}:\Sigma
\rightarrow \zz{R}$ of $u$ relative to $\lambda$ by
\[
	h_{+} (x) =\int_{\Sigma} h (x,y) \, d\lambda (y).
\]
Say that the kernel $h$ is \emph{centered} relative to $\lambda$ if
its Hoeffding projection is identically $0$.
\end{definition}

Our interest is in the large-$T$ limiting behavior of the random
variable $U_{T}$ defined by \eqref{eq:u-statistic} (more precisely,
its distribution) under a Gibbs state  or a probability
measure that is absolutely continuous with respect to a
Gibbs state. Observe that if $\mu $ is a Gibbs state and if $h$
satisfies Hypothesis~\ref{hypothesis:short-memory} relative to $\mu$
then the corresponding Hoeffding projection $h_{+} (x)$ is H\"{o}lder
continuous on $\Sigma$, even though $h$ itself might not be
continuous. The following theorem will show that under
Hypothesis~\ref{hypothesis:short-memory} two types of limit behavior
are possible, depending on whether or not $h_{+}$ is cohomologous to a
scalar multiple $aF$ of $F$.   Set
\begin{equation}\label{eq:tau-tilde}
	\tilde{\tau}_{T}=\frac{\tau -T/E_{\mu}F}{\sqrt{T}}.
\end{equation}

\begin{theorem}\label{theorem:u-statistic}
Let $\mu =\mu_{\varphi }$ be a Gibbs state, and let $h:\Sigma \times
\Sigma \rightarrow \zz{R}$ be a function that satisfies Hypothesis
\ref{hypothesis:short-memory}, for $\lambda =\mu$.  If the Hoeffding
projection $h_{+}$ of $h$ relative to $\mu$ is cohomologous to $aF$
for some scalar $a\in \zz{R}$, then as $T \rightarrow \infty$,
\begin{equation}\label{eq:caseA}
	\tilde{U}_{T}=\frac{U_{T}- (a/E_{\mu}{F}) T^{2}}{T}
	\stackrel{\mathcal{D}}{\longrightarrow} G
\end{equation}
for some probability distribution $G$ on $\zz{R}$. Otherwise,
\begin{equation}\label{eq:caseB}
	\tilde{U}_{T}=\frac{U_{T}- (E_{\mu}h_{+}/E_{\mu}{F}^{2})
	T^{2}}{T^{3/2}} \stackrel{\mathcal{D}}{\longrightarrow} \text{Gaussian}
\end{equation}
for a proper Gaussian distribution on $\zz{R}$. Furthermore, in either
case the random vector $(\tilde{U}_{T},\tilde{\tau}_{T})$,
the state variables $x,\sigma^{\tau (x)}$, and the overshoot random
variable $R_{T}$ are asymptotically independent as $T \rightarrow
\infty$.
\end{theorem}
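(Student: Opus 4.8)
The plan is to decompose $U_T$ into a ``bulk'' (Hoeffding) part plus a ``degenerate'' part, handle each separately by the tools assembled above, and then paste the two limits together using asymptotic independence of the overshoot. Write $h=\sum_m h_m$ as in Hypothesis~\ref{hypothesis:short-memory}, and for each truncation level $K$ split $h=h^{(\leq K)}+h^{(>K)}$ with $h^{(\leq K)}=\sum_{m\leq K}h_m$. The truncated kernel $h^{(\leq K)}(x,y)$ depends only on finitely many coordinates of $x$ and $y$ (by (H2)), hence is H\"older continuous; the tail satisfies the strong $L^1$ bound \eqref{eq:H4}, which will let me show, via a second-moment estimate exploiting (H3) and the exponential mixing \eqref{eq:expMixingFunctionalForm} of the Gibbs state, that the tail contribution to $U_T$, suitably normalized, is negligible in probability as first $T\to\infty$ and then $K\to\infty$. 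So it suffices to prove the theorem for a H\"older continuous kernel, and then invoke a standard ``approximation of weakly convergent sequences'' argument (e.g.\ \cite{billingsley:wc}, Theorem 3.2) to pass to the limit in $K$.

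For a H\"older kernel $h$, perform the classical Hoeffding decomposition: write $h(x,y)=E_\mu h + h_+(x)+h_+(y)+\mathring h(x,y)$ where $h_+(x)=\int h(x,y)\,d\mu(y)-E_\mu h$ is the (H\"older, by the remark preceding the theorem) Hoeffding projection and $\mathring h$ is the ``completely degenerate'' remainder, centered in each variable. Plugging this into $U_T=\sum_{i,j=1}^{\tau}h(\sigma^i x,\sigma^j x)$ produces four pieces: a deterministic-looking $\tau^2 E_\mu h$ term, two copies of $\tau\, S_\tau h_+$, and the degenerate double sum $\sum_{i,j}\mathring h(\sigma^i x,\sigma^j x)$. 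The first three are linear functionals of the pair $(\tau_T, S_{\tau_T} h_+)$, for which Proposition~\ref{proposition:clt} gives joint asymptotic normality (after centering by $T/E_\mu F$ and $TE_\mu h_+/E_\mu F$ respectively), together with asymptotic independence from $R_T$ and the boundary state variables $x,\sigma^{\tau}x$. This is exactly where the dichotomy enters: if $h_+$ is cohomologous to $aF$ then $S_{\tau_T}h_+ = a S_{\tau_T}F = a(T+R_T)$ up to a coboundary, so the fluctuations of $S_{\tau_T}h_+$ are $O_P(1)$ rather than $O_P(\sqrt T)$, and after subtracting $(a/E_\mu F)T^2$ the surviving randomness is of order $T$; otherwise $S_{\tau_T}h_+$ genuinely fluctuates at scale $\sqrt T$, the $\tau\cdot S_\tau h_+$ cross term has size $T^{3/2}$, and one gets a Gaussian limit at scale $T^{3/2}$ by the delta method applied to \eqref{eq:CLT-tau}--\eqref{eq:CLT-g}.

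The remaining task is to control the degenerate double sum $D_T:=\sum_{i,j=1}^{\tau}\mathring h(\sigma^i x,\sigma^j x)$ and show it is lower order on both scales. Since $\mathring h$ is centered in each argument and the sequence $(\sigma^n x)$ is exponentially mixing under $\mu$, a direct variance computation shows $E_\mu\big[(\sum_{i,j=1}^{n}\mathring h(\sigma^i x,\sigma^j x))^2\big]=O(n^2)$: the $O(n^4)$ ``leading'' terms cancel because of the centering, and the surviving correlations are summable. Hence $D_n=O_P(n)$, and replacing the constant $n$ by the random $\tau_T\sim T/E_\mu F$ (a routine truncation/maximal-inequality step, since $\tau_T$ concentrates around $T/E_\mu F$) gives $D_T=O_P(T)$. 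On the $T^{3/2}$ scale this is immediately negligible; on the $T$ scale in case \eqref{eq:caseA} it is $O_P(1)$ after division by $T$, so it does \emph{not} vanish --- instead it contributes (jointly with the $\tau\cdot S_\tau h_+$ term, which there is also $O_P(T)$) to the non-Gaussian limit $G$, which is why $G$ is only asserted to be ``some probability distribution.'' To make the joint convergence rigorous one shows $(\tilde U_T,\tilde\tau_T, R_T, x|_{[-L,L]}, \sigma^\tau x|_{[-L,L]})$ converges jointly: the bulk pieces converge jointly by Proposition~\ref{proposition:clt}, and the degenerate piece $D_T/T$ is asymptotically independent of $R_T$ and the boundary coordinates because, as in the renewal-theoretic heuristic following Proposition~\ref{proposition:clt}, $D_T$ is a ``bulk'' quantity determined mostly by coordinates far from both endpoints of the orbit segment, whereas $R_T$ and $\sigma^\tau x$ are determined by the last $O(1)$ steps. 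I expect this last point --- a clean statement and proof that the degenerate $U$-statistic part is asymptotically independent of the overshoot and endpoint data --- to be the main obstacle, since it requires a quantitative decoupling of $D_T$ from the terminal block of the sequence; the natural route is to truncate each $\sigma^i x$ to its central coordinates (permissible since $\mathring h$ inherits the fast tail from (H3)), after which $D_T$ becomes measurable with respect to coordinates in a range $[\,-m, \tau - m\,]$ and \eqref{eq:expMixing} gives the required near-independence from the block near index $\tau$.
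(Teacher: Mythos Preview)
Your overall architecture matches the paper's: truncate using (H3) and a moment bound, Hoeffding-decompose, and handle the linear part via Proposition~\ref{proposition:clt}. The treatment of case~\eqref{eq:caseB} is essentially correct: once you know the degenerate piece $D_T$ is $O_P(T)$, division by $T^{3/2}$ kills it and the Gaussian limit comes entirely from the $\tau\cdot S_\tau h_+$ term, exactly as you describe.

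The gap is in case~\eqref{eq:caseA}. Your variance computation yields only $D_T/T=O_P(1)$, i.e.\ tightness, not convergence in distribution. You then assert that $D_T/T$ ``contributes \dots\ to the non-Gaussian limit $G$'' and go on to discuss asymptotic independence of $D_T/T$ from $R_T$ and the endpoint blocks, but you never supply a mechanism by which $D_T/T$ itself converges. Tightness plus asymptotic independence from the boundary data does not produce a limit for $D_T/T$; you need one before you can paste the pieces together. For a generic centered H\"older kernel there is no way to extract this from second moments alone. You have also misidentified the main obstacle: the decoupling from the terminal block is not the hard point---it comes for free once the real issue is resolved.

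The missing idea (the paper's Step~1) is that after truncation the centered kernel $\mathring h^{(\leq K)}(x,y)$ depends on only finitely many coordinates of $x$ and $y$, hence is a finite \emph{symmetric matrix} indexed by admissible words of length $2K{+}1$. Diagonalize it in $L^2(\mu)$: $\mathring h^{(\leq K)}(\xi,\zeta)=\sum_k \lambda_k\,\varphi_k(\xi)\varphi_k(\zeta)$, a \emph{finite} sum, with the $\varphi_k$ orthonormal and (because the kernel is centered, so constants lie in its null space) each $\varphi_k$ of mean zero. Then
\[
D_T=\sum_k \lambda_k \Bigl(\sum_{i=1}^{\tau}\varphi_k(\sigma^i x)\Bigr)^{2},
\]
a finite quadratic form in the ergodic sums $S_\tau\varphi_k$. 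Now apply Proposition~\ref{proposition:clt} to the H\"older vector $(\varphi_k)_k$: the normalized sums $T^{-1/2}S_\tau\varphi_k$ converge jointly with $\tilde\tau_T$ to a multivariate Gaussian, and are asymptotically independent of $R_T$, $x$, $\sigma^\tau x$. Hence $D_T/T$ converges in distribution to a Gaussian quadratic form, jointly with all the other pieces, and \eqref{eq:caseA} follows together with the asserted asymptotic independence. Without this diagonalization step your argument for case~\eqref{eq:caseA} is incomplete.
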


\begin{proof}
[Proof strategy] The logic of the proof is as follows. First, we will
show that the theorem is true for centered kernels $h$ such that $h
(x,y)$ depends only on finitely many coordinates of the arguments
$x,y$. This will use Proposition~\ref{proposition:clt}. Second, we
will prove by an approximation argument that the truth of the theorem
for centered kernels can be deduced from the special case of centered
kernels that depend on only finitely many coordinates. This step will
use moment estimates that depend on
Hypothesis~\ref{hypothesis:short-memory} (in particular, on the
critical assumption (H3)). Third, we will show that to prove the
theorem in the general case it suffices to consider the case where the
kernel $h$ is centered. For ease of exposition, we will present the
third step before the second; however, this step will rely on the
other two.

Observe that the validity of \eqref{eq:caseA}--\eqref{eq:caseB} is not
affected by rescaling of either $T$ or $h$. Consequently, there is no
loss of generality in assuming that $E_{\mu}F=1$ and that the
constants $C,C'$ in Hypothesis~\ref{hypothesis:short-memory} are
$C=C'=1$. 
\end{proof}

\begin{proof}
[Step 1] Assume first that $h $ is centered (thus, $h_{+}$ is
cohomologous to $aF$ with $a=0$, and so case \eqref{eq:caseA} of
Theorem~\ref{theorem:u-statistic} applies), and that $h(x,y)$ depends
only on the coordinates $x_{1}x_{2}\dotsb x_{m}$ and
$y_{1}y_{2},,,y_{m}$.  Then the function $h$ assumes only finitely
many different values, and these are given by a symmetric, real,
square matrix $h (\xi,\zeta)$, where $\xi$ and $\zeta$ range over the
set $\Sigma_{m}$ of all length-$m$ words occurring in infinite
sequences $x\in \Sigma$. This matrix induces a real, Hermitian
operator $L$ on the finite-dimensional subspace of $L^{2}
(\Sigma,\mu)$ consisting of functions that depend only on the
coordinates $x_{1}x_{2}\dotsb x_{m}$. Let $D_{m}$ be the dimension of
this subspace. Because $h$ is centered, the operator $L$ contains the
constants in its null space. Consequently, all other eigenfunctions
$\varphi_{k}$ are orthogonal to the constant function $1$, and thus,
in particular, have mean $0$. It follows by the spectral theorem for
symmetric matrices that the $U-$statistic \eqref{eq:u-statistic} can
be written as
\[
	U_{T} (x) =\sum_{i=1}^{\tau (x)} \sum_{j=1}^{\tau (x)}
	      \sum_{k=2}^{D_{m}}\lambda_{k}\varphi_{k}
	      (\sigma^{i}x)\varphi_{k} (\sigma^{j}x)
	      =\sum_{k=2}^{D_{m}}\lambda_{k}\left(\sum_{i=1}^{\tau
	      (x)}\varphi_{k} (\sigma^{i}x) \right)^{2} .
\]
Therefore, Proposition~\ref{proposition:clt}
implies that as $T \rightarrow \infty$,
\begin{equation}\label{eq:vclt}
	\left(\tilde{\tau}_{T},
\left(\frac{1}{\sqrt{T}}\sum_{i=1}^{\tau} \varphi_{k}\circ \sigma^{i}
\right)_{2\leq k\leq D (m)}\right) \stackrel{\mathcal{D}}{\longrightarrow}
\text{N} (\mathbf{0},\mathbf{A})
\end{equation}
for some possibly degenerate $( D_{m} -1)-$dimensional multivariate
normal distribution $N (0,A)$. The convergence \eqref{eq:caseA}
follows, with $a=0$ and $G$ the distribution of the quadratic form of
the multivariate normal.  Proposition~\ref{proposition:clt} also
implies that the random vector \eqref{eq:vclt} is asymptotically
independent of $x,\sigma^{\tau (x)}x$, and $R_{T} (x)$; consequently,
so is the random vector with components $U_{T}/T$ and
$\tilde{\tau}_{T}$.
\end{proof}

\begin{proof}
[Step 3] Assume that the result is true for all
\emph{centered} kernels. We will show that the theorem  then holds for any
\emph{non-}centered kernel satisfying
Hypothesis~\ref{hypothesis:short-memory}.
Recall that if $h$ satisfies Hypothesis~\ref{hypothesis:short-memory}
then its Hoeffding projection $h_{+}$ is H\"{o}lder continuous.
There are two cases to consider, according to whether or not
$h_{+}$ is cohomologous to a scalar multiple of $F$. Consider first
the case where $h_{+}$ is cohomologous to $aF$ for some $a\in \zz{R}$.
Thus, $E_{\mu}h_{+}=a$ (since $E_{\mu}F=1$), and so for some
coboundary $w-w\circ \sigma$, 
\begin{align*}
	h (x,y)&=h_{0} (x,y)+h_{+} (x)+h_{+} (y)-a\\
	  &=h_{0} (x,y)+aF (x)+aF (y)+w (x)-w (\sigma x)-w (y)-w (\sigma y)-a 
\end{align*}
where $h_{0} (x,y)$ is centered. This implies that
\begin{align*}
	U_{T}&=\sum_{i=1}^{\tau (x)} \sum_{j=1}^{\tau (x)}h (\sigma^{i}x,\sigma^{j}x)\\
	&=\sum_{i=1}^{\tau (x)} \sum_{j=1}^{\tau (x)}h_{0}
	(\sigma^{i}x,\sigma^{j}x) +2a\tau (x)S_{\tau (x)}F (x) -a\tau
	(x)^{2} +\tau (x) (w (x)-w (\sigma^{\tau (x)}x))\\
	&=\sum_{i=1}^{\tau (x)} \sum_{j=1}^{\tau (x)}h_{0}
	(\sigma^{i}x,\sigma^{j}x) +aT^{2}+a (\tau (x) -T)^{2}+a\tau
	(x)R_{T} (x)+\tau (x)  (w (x)-w (\sigma^{\tau (x)}x))\\
	&=:V_{T} +aT^{2}+a (\tau (x) -T)^{2}+a\tau
	(x)R_{T} (x)+\tau (x)  (w (x)-w (\sigma^{\tau (x)}x))
\end{align*}
where $V_{T}$ is the $U-$statistic \eqref{eq:u-statistic} with the
kernel $h$ replaced by the centered kernel $h_{0}$.  Now as $T
\rightarrow \infty$, $\tau (x)/T \rightarrow 1$ a.s., by the ergodic
theorem, and both $R_{T}$ and $\tilde{\tau}_{T}$ converge in
distribution, by Proposition~\ref{proposition:renewal} and
Proposition~\ref{proposition:clt}. Consequently, the convergence
\eqref{eq:caseA} and the joint asymptotic independence assertions hold
because by assumption they hold for centered kernels.

Next, consider the case where $h_{+}$ is \emph{not} cohomologous to a
scalar multiple of $F$; we must prove \eqref{eq:caseB}. As above, we
assume without loss of generality that $E_{\mu}F=1$. Let $h_{0}$ be
the centered kernel defined by
\[
	h (x,y)=h_{0} (x,y)+h_{+} (x)+h_{+} (y)-b
\]
where
$b=E_{\mu}h_{+}$; then
\begin{align*}
	U_{T}	&=\sum_{i=1}^{\tau (x)} \sum_{j=1}^{\tau (x)}h_{0}
	(\sigma^{i}x,\sigma^{j}x) +2\tau (x)S_{\tau (x)}h_{+} (x)-b\tau (x)^{2}\\
	&=\sum_{i=1}^{\tau (x)} \sum_{j=1}^{\tau (x)}h_{0}
	(\sigma^{i}x,\sigma^{j}x) +2\tau (x) ( S_{\tau (x)}h_{+}
	(x)-b\tau (x))+bT^{2} + b(\tau (x)^{2}-T^{2}).
\end{align*}
Now consider the effect of dividing this quantity by $T^{3/2}$. Since
the kernel $h_{0}$ is centered, the double sum divided by $T$
converges in distribution (by our hypothesis that the theorem is true
for centered kernels); hence, if it is divided by $T^{3/2}$ it
will converge to $0$. Thus, asymptotically as $T \rightarrow \infty$
the distribution of $(U_{T-bT^{2}})/T^{3/2}$ is determined by the
remaining terms $2\tau (S_{\tau}-b\tau)/T^{3/2}$ and $b
(\tau^{2}-T^{2})/T^{3/2}$. The ergodic theorem implies that $\tau /T
\rightarrow 1$, and the central limit theorem
(Proposition~\ref{proposition:clt}) implies that
$(S_{\tau}h_{+}-bT)/T^{1/2}$ and $\tilde{\tau}_{T}=(\tau -T)/T^{1/2}$ converge jointly
in distribution to a two-dimensional Gaussian distribution; consequently,
\[
	\frac{2\tau (x) ( S_{\tau (x)}h_{+}
	(x)-b\tau (x))+ b(\tau (x)^{2}-T^{2})}{T^{3/2}}
	\stackrel{\mathcal{D}}{\longrightarrow} \text{Gaussian}.
\]
This proves \eqref{eq:caseB}. The asymptotic independence assertions
follow directly from Proposition~\ref{proposition:clt}.
\end{proof}

\begin{proof}
[Step 2]
Assume, finally, that $h$ is a \emph{centered} kernel which satisfies
Hypothesis~\ref{hypothesis:short-memory}. Without loss of generality,
we can assume that the functions $h_{m}$ in the decomposition
$h=\sum_{m=1}^{\infty}h_{m}$ are themselves centered, because
replacing each $h_{m}(x,y)$ by $h_{m} (x,y)-h^{+}_{m} (x)-h^{+}_{m}
(y)$ does not change the validity of Hypothesis~\ref{hypothesis:short-memory}. 
Set
\[
	v_{m}=\sum_{k=1}^{m}h_{k} \quad \text{and} \quad 
	w_{m}=\sum_{k=m+1}^{\infty}h_{k}.
\]
Then each $v_{m}$ is centered, so by Step 1, the result is true if
$v_{m}$ is substituted for $h$ in the definition of $U_{T}$ (but of
course the limit distribution $G=G_{m}$ will depend on
$m$). Consequently, to prove the result for $h$ it suffices to show
that for any $\varepsilon >0$ there exists $m$ sufficiently large that
\begin{equation}\label{eq:OPe}
	\mu \left\{x\,:\,\bigg|\sum_{i=1}^{\tau (x)} \sum_{j=1}^{\tau (x)} w_{m}
	(\sigma^{i}x,\sigma^{j}x)\bigg| >\varepsilon T\right\}<\varepsilon 
\end{equation}
for all sufficiently large $T$.

Fix $0<\delta < 1/6$, and set
\begin{align*}
	n_{-}&=n_{-} (T)=\lfloor T-T^{1/2 +\delta} \rfloor \quad \text{and}\\
	n_{+}&=n_{+} (T)=\lfloor T+T^{1/2+\delta} \rfloor.
\end{align*}
By the central limit theorem (Proposition~\ref{proposition:clt}),
$n_{-}<\tau <n_{+}$ with $\mu -$probability approaching $1$ as $T
\rightarrow \infty$; thus $\tau (x)$ is essentially limited to one of
$T^{1/2+\delta}$ different possible values. Therefore, by the
Chebyshev inequality and a crude union bound, to establish
\eqref{eq:OPe} it suffices to prove the following.

\begin{lemma}\label{lemma:momentBound}
For each $\varepsilon >0$ there exists $m$ sufficiently large that for all
large $T$,
\begin{gather}\label{eq:momentBoundA}
	E_{\mu} \left( \sum_{i=1}^{n_{-}} \sum_{j=1}^{n_{-}} w_{m}
	(\sigma^{i}x,\sigma^{j}x)\right)^{2}<\varepsilon T^{2} \quad \text{and}\\
\label{eq:momentBoundB}
	\max_{n_{-}\leq n\leq n_{+}} E_{\mu} \left( \sum_{i=1}^{n}
	\sum_{j=1}^{n} w_{m}
	(\sigma^{i}x,\sigma^{j}x)-\sum_{i=1}^{n_{-}}
	\sum_{j=1}^{n_{-}} w_{m}
	(\sigma^{i}x,\sigma^{j}x)\right)^{4}<\varepsilon T^{3+2\delta} .
\end{gather}
\end{lemma}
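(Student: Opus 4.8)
The plan is to expand the fourth and second moments of the double sums of $w_m$ and estimate the resulting multiple sums using the exponential decay estimate (H3) together with the exponential mixing property \eqref{eq:expMixingFunctionalForm} of the Gibbs state $\mu$. The key structural fact is that $w_m=\sum_{k>m}h_k$, so by (H1) and (H3), $\int |w_m(x,\sigma^j x)|\,d\mu(x)\le C\beta^m/(1-\beta)=:c_m$ with $c_m\to 0$ as $m\to\infty$; moreover $w_m$ inherits from the $h_k$ the property that $w_m(x,y)$ is well-approximated by functions depending on coordinates $|i|\le M$ with an $L^1$-error decaying in $M$. Since we are free (by the reduction at the start of Step~2) to assume each $h_k$ — hence $w_m$ — is centered in each variable separately, the Hoeffding projections vanish, which kills the ``diagonal'' leading terms and forces the sums to be genuinely fluctuating rather than of ballistic order.

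First I would treat \eqref{eq:momentBoundA}. Writing $W_n(x)=\sum_{i,j\le n}w_m(\sigma^i x,\sigma^j x)$, expand $E_\mu W_{n_-}^2$ as a quadruple sum over indices $i_1,j_1,i_2,j_2\le n_-$ of $E_\mu\big[w_m(\sigma^{i_1}x,\sigma^{j_1}x)\,w_m(\sigma^{i_2}x,\sigma^{j_2}x)\big]$. Group the four indices by their spacings: if the gap between the two ``blocks'' $\{i_1,j_1\}$ and $\{i_2,j_2\}$ exceeds the memory length, exponential mixing \eqref{eq:expMixingFunctionalForm} applied after truncating $w_m$ to finitely many coordinates (absorbing the truncation error via (H3)) shows the expectation factors, up to an error geometric in the gap, into $(E_\mu W\text{-term})$-type quantities; since $w_m$ is centered, the product of the Hoeffding projections vanishes and only the ``close'' configurations survive. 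Counting: for the pair $(i_1,j_1)$ there are $O(n_-^2)$ choices but the constraint that $j_1$ is within memory of $i_1$ (else that inner expectation is $\le c_m$ times a geometric factor by (H3)) reduces the effective count; carefully, each $w_m$-factor contributes a bound $c_m$ when its two arguments are far apart and $O(1)$ when close, and the geometric series in all four inter-index gaps converges. The upshot is $E_\mu W_{n_-}^2\le C' n_-^2 c_m \le C' T^2 c_m$, which is $<\varepsilon T^2$ once $m$ is chosen with $C'c_m<\varepsilon$.

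For \eqref{eq:momentBoundB}, observe that $W_n-W_{n_-}$ for $n_-\le n\le n_+$ is a sum of $w_m$-terms in which at least one index lies in the ``annulus'' $(n_-,n]$, a set of size at most $n_+-n_-=O(T^{1/2+\delta})$. Expanding the fourth moment gives an eightfold sum; in each of the four $w_m$-factors at least one of its two indices is constrained to the annulus. Again classify the eight indices by gaps: far-apart blocks decouple by mixing with the product of (centered) projections vanishing, so only clustered configurations contribute. A term with $\ell$ ``clusters'' gets a combinatorial count of order $(T^{1/2+\delta})^{a}\,T^{b}$ where $a$ counts cluster-representatives forced into the annulus and $b$ counts the free ones, with $a+$(something)$\,=4$; the worst case is when each of the four $w_m$-factors forms its own cluster with exactly one annulus-index and one free index, giving count $O\big((T^{1/2+\delta})^4\cdot \text{(geometric)}\big)$... but that is $T^{2+4\delta}$, already $\le T^{3+2\delta}$ for $\delta<1/2$; the genuinely dominant case is two free indices roaming over $[1,n_+]$ (count $T^2$) and two annulus indices (count $T^{1+2\delta}$) with all four factors tied together, contributing $O(T^{3+2\delta})$ but with a prefactor $c_m$ from the centering/mixing estimate, hence $<\varepsilon T^{3+2\delta}$ for $m$ large. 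The bookkeeping — organizing the index configurations, pairing each far gap with a $\beta^{(\text{gap})}$ factor from \eqref{eq:expMixingFunctionalForm} and each isolated $w_m$-factor with a $c_m$ from (H3), and checking the powers of $T$ — is the main obstacle; once the combinatorics is set up cleanly, the summability of the geometric series in the gaps and the smallness of $c_m$ finish both bounds. Chebyshev and the crude union bound over the $O(T^{1/2+\delta})$ possible values of $\tau$ then yield \eqref{eq:OPe}, completing Step~2.
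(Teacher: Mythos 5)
Your strategy --- expand the second and fourth moments, use the uniform $L^{1}$-estimate (H3) together with exponential mixing of the Gibbs state to control the terms, and then count the index configurations by clustering --- is the same strategy as the paper's proof. The paper organizes the counting by a single parameter $\Delta$ (the largest gap separating one of the summation indices from all the others) and a single parameter $k_{*}=\max(k,k')$ after expanding $w_{m}=\sum_{k>m}h_{k}$, and encapsulates both the mixing information and (H3) in the one inequality $|E_{\mu}\prod_{r}h_{k_{r}}(\sigma^{i_{r}}x,\sigma^{j_{r}}x)|\leq C\min(\beta^{\Delta-2k_{*}},\beta^{k_{*}})$, which it derives from the conditional-expectation bound \eqref{eq:corrDecay}; your write-up uses a looser cluster/decoupling heuristic instead, but it reaches the same orders $T^{2}$ and $T^{3+2\delta}$ with a prefactor tending to zero as $m\to\infty$, so the substance agrees.

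Two small imprecisions worth fixing. First, (H3) gives the $L^{1}$-bound $\int|h_{k}(x,\sigma^{j}x)|\,d\mu(x)\leq C\beta^{k}$ \emph{uniformly} in $j$, not only when $i$ and $j$ are far apart, so the sentence claiming ``a constraint that $j_{1}$ is within memory of $i_{1}$'' is not the right statement: there is no such constraint, and the small prefactor does not come from forcing $j_{1}$ near $i_{1}$. The effective reduction from $T^{4}$ to $T^{2}$ (and from $T^{4}\cdot T^{2+4\delta}$ to $T^{3+2\delta}$ in the fourth-moment case) comes entirely from the mixing/correlation-decay factor $\beta^{\Delta-2k_{*}}$, which controls the sum over the \emph{gap} $\Delta$ and collapses two free powers of $T$; you should make explicit which two indices are ``free'' and which are tethered within distance $\Delta$ of the rest. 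Second, the mechanism that makes distant blocks decouple into a vanishing product is the same one that yields \eqref{eq:corrDecay} (a conditional-expectation estimate that \emph{uses} the centering of $h_{k}$), and invoking \eqref{eq:expMixingFunctionalForm} directly on a truncation of $w_{m}$ needs a line to pass from a fixed truncation level $M$ to the sum over all $k>m$; the paper's approach of never truncating $w_{m}$ but rather expanding it back into $\sum_{k>m}h_{k}$, each of which already has finite memory by (H2), avoids this technical detour cleanly. With those two points tightened the argument becomes the paper's.
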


\end{proof}

\begin{proof}
[Proof of \eqref{eq:momentBoundA}] This will use Hypothesis (H3) and
also the fact that Gibbs states have exponentially decaying
correlations (equation \eqref{eq:expMixing}). Since $h_{k} (x,y)$
depends only on the coordinates $x_{i},y_{i}$ with $|i|\leq k$, and
since $|h_{k}|\leq 1$ (see the earlier remark on scaling), exponential
correlation decay implies that for all $k,r\geq 1$,
\begin{equation}\label{eq:corrDecay}
	|E (h_{k} (x,\sigma^{k+r}x)\, |\, \mathcal{B}_{(-\infty
	,k]\cup [2k+2r,\infty )})| \leq C\beta^{r}.
\end{equation}
For convenience, we shall assume  that the constants $0<\beta <1$ in
\eqref{eq:expMixing} and in Hypothesis (H3) are the same (if the two
constants are different, replace the smaller with the larger).

When the square in \eqref{eq:momentBoundA} is expanded the resulting
terms have the form 
\[
	 Eh_{k} (\sigma^{i}x,\sigma^{j}x)h_{k'}(\sigma^{i'}x,\sigma^{j'}x),
\]
with $k,k'\geq m$ and $i,i',j,j'\leq n_{-}\leq T$.  Let $\Delta$ be
the largest integer such that one of the four indices $i,i',j,j'$ is
separated from \emph{all of} the other three by a gap of size $\Delta$, and let
$k_{*}=\max (k,k')$. Then by the exponential correlation decay
inequality \eqref{eq:corrDecay} and Hypothesis (H3) (using the fact
that $|h_{k}h_{k'}|\leq |h_{k_{*}}|$),
\[
	|Eh_{k}
	(\sigma^{i}x,\sigma^{j}x)h_{k'}(\sigma^{i'}x,\sigma^{j'}x)|\leq  
	C\min (\beta^{\Delta -2k_{*}},\beta^{k_{*}}).
\]
For any given value of $\Delta\geq 1$, the number of quadruples $i,i',j,j'\leq T$
with maximal gap size $\Delta $ is bounded above by $24 T^{2}
(2\Delta +1)^{2}$. (There are roughly $T^{2}$ choices for two of the
indices; once such a choice $(l,l')$ is made then one of the remaining
indices must be located within the interval of radius $\Delta$
centered at $l$, and the other within the corresponding interval
centered at $l'$. The factor 24=4! accounts for the possible
permutations of the indices.) Furthermore, for each $k_{*}\geq m$ the number of pairs
$k,k'\geq m$ such that $\max(k,k')=k_{*}$ is less than  $2k_{*}$. Consequently,
\[
	E_{\mu} \left( \sum_{i=1}^{n_{-}} \sum_{j=1}^{n_{-}} w_{m}
	(\sigma^{i}x,\sigma^{j}x)\right)^{2}
	\leq C' T^{2} \sum_{k_{*}=m}^{\infty}\sum_{\Delta =0}^{\infty
	} (2\Delta+1)^{2}k_{*} \min (\beta^{\Delta -2k_{*}},\beta^{k_{*}})
\]
where $C'=48$.  By choosing $m$ sufficiently large one can make this
bound smaller than $\varepsilon T^{2}$.
\end{proof}

\begin{proof}
[Proof of \eqref{eq:momentBoundB}] This is similar to the proof of
\eqref{eq:momentBoundA}, the difference being that here it is
necessary to count octuples instead of quadruples. The key once again
is the exponential correlation decay inequality \eqref{eq:corrDecay}:
this implies that for any 4 triples $i_{r},j_{r},k_{r}$,
\[
	|E_{\mu}\prod_{r=1}^{4} h_{k_{r}}
	(\sigma^{i_{r}}x,\sigma^{j_{r}}x)|
	\leq C \min (\beta^{\Delta -2k_{*}},\beta^{k_{*}})
\]
where $k_{*}=\max_{1\leq r\leq 4}k_{r}$ and $\Delta$ is the maximal
gap separating one of the indices $i_{r},j_{r}$ from the remaining 7.
For each $r\leq 4$ the indices $i_{r},j_{r}$ that occur in
\eqref{eq:momentBoundB} are constrained as follows (taking $i_{r}$ to
be the smaller of the two): either
\[
	1\leq i_{r}\leq n_{-}\leq j_{r} \leq n
	\quad \text{or} \quad n_{-}\leq i_{r}\leq j_{r}\leq n.
\]
Consequently, for each $\Delta \geq 1$, the total number of octuples
$(i_{r},j_{r})_{1\leq r\leq 4}$ with maximal gap $\Delta$ that occur
when the fourth power in \eqref{eq:momentBoundB} is expanded is
bounded above by $C'\Delta^{3}T^{3+2\delta}$ for some constant
$C'<\infty$ independent of $T$ and $\Delta$. For each $k_{*}\geq m$,
the number of quadruples $k_{1},k_{2},k_{3},k_{4}$ with maximum value
$k_{*}$ is bounded above by $4k_{*}^{3}$. Therefore, for each $n$ such
that $n_{-}\leq n\leq n_{+}$,
\begin{multline*}
	E_{\mu} \left( \sum_{i=1}^{n}
	\sum_{j=1}^{n} w_{m}
	(\sigma^{i}x,\sigma^{j}x)-\sum_{i=1}^{n_{-}}
	\sum_{j=1}^{n_{-}} w_{m}
	(\sigma^{i}x,\sigma^{j}x)\right)^{4}	\\
	\leq 
	C'' T^{3+2\delta}\sum_{k_{*}=m}^{\infty} \sum_{\Delta
	=0}^{\infty} \Delta^{3}k_{*}^{3} \min (\beta^{\Delta
	-2k_{*}},\beta^{k_{*}}) 	
\end{multline*}
for a constant $C''<\infty$ independent of $T$ and $m$. By choosing
$m$ large one can make this bound smaller than $\varepsilon
T^{3+2\delta}$. 
\end{proof}
 
\subsection{Extensions}\label{ssec:extensions}

\begin{corollary}\label{corollary:acU}
Let $\mu=\mu_{\varphi}$ be a Gibbs state, and let $\lambda $ be a
Borel probability measure on $\Sigma$ that is absolutely continuous
with respect to $\mu$ and such that the likelihood ratio $d\lambda
/d\mu$ is continuous on $\Sigma$. Let $h:\Sigma \times
\Sigma \rightarrow \zz{R}$ be a function that satisfies
Hypothesis~\ref{hypothesis:short-memory} relative to $\mu$. Then all
of the conclusions of Theorem~\ref{theorem:u-statistic} remain valid
under the measure $\lambda$, and the joint limit distribution of
$\tilde{U}_{T}$, $\tilde{\tau}_{T}$, and $R_{T}$ is
the same under $\lambda$ as  under $\mu$.
\end{corollary}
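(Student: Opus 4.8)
The plan is to transfer every conclusion of Theorem~\ref{theorem:u-statistic} from $\mu$ to $\lambda$ by a soft change-of-measure argument; no new estimates are required. The key input is the asymptotic-independence clause of Theorem~\ref{theorem:u-statistic}, which says that $(\tilde U_T,\tilde\tau_T)$, $R_T$, and the state variables $x,\sigma^{\tau(x)}x$ are (jointly) asymptotically independent. In particular the joint law under $\mu$ of $\big((\tilde U_T,\tilde\tau_T,R_T),\,x\big)$ converges weakly to $\Pi\otimes\mu$, where $\Pi$ is the (case-dependent) limit law of $(\tilde U_T,\tilde\tau_T,R_T)$ described there. Write $\phi=d\lambda/d\mu$; since $\Sigma$ is compact and $\phi$ is continuous, $\phi$ is bounded, and $E_\mu\phi=1$.

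First I would note the trivial identity, valid for every bounded continuous $v:\zz{R}^{3}\to\zz{R}$,
\[
	E_\lambda\,v(\tilde U_T,\tilde\tau_T,R_T)=E_\mu\big[\phi(x)\,v(\tilde U_T,\tilde\tau_T,R_T)\big].
\]
The function $(z,x)\mapsto\phi(x)v(z)$ is bounded and continuous on $\zz{R}^{3}\times\Sigma$, so the weak convergence to $\Pi\otimes\mu$ yields $E_\mu[\phi(x)v(\tilde U_T,\tilde\tau_T,R_T)]\to\big(\int\phi\,d\mu\big)\int v\,d\Pi=\int v\,d\Pi$. Hence $(\tilde U_T,\tilde\tau_T,R_T)$ converges weakly to $\Pi$ under $\lambda$; that is, it has the same joint limit law under $\lambda$ as under $\mu$. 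In particular \eqref{eq:caseA}--\eqref{eq:caseB} hold verbatim under $\lambda$, the centering and scaling constants there being deterministic and so untouched by the change of measure.

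To recover the asymptotic-independence part of Theorem~\ref{theorem:u-statistic} under $\lambda$, I would rerun the same computation with additional bounded continuous factors $u(x)$ and $u'(\sigma^{\tau(x)}x)$ inside the expectation. Since $\phi\cdot u$ is again a bounded continuous function of $x$, the $\mu$-version of asymptotic independence shows that $E_\lambda[u(x)\,u'(\sigma^{\tau(x)}x)\,v(\tilde U_T,\tilde\tau_T,R_T)]$ converges to $\big(\int u\,d\lambda\big)\big(\lim_T E_\lambda u'(\sigma^{\tau(x)}x)\big)\big(\lim_T E_\lambda v\big)$; here, to identify the middle factor, one uses once more that $\sigma^{\tau(x)}x$ is $\mu$-asymptotically independent of $x$, so that its $\lambda$-limit law coincides with its $\mu$-limit law. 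This is exactly the desired asymptotic independence under $\lambda$.

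The only point that genuinely uses the hypotheses is that ``asymptotic independence'' is a statement about \emph{bounded continuous} test functions, which is precisely why the corollary requires the likelihood ratio to be continuous rather than merely a bounded measurable function: approximating an $L^{1}(\mu)$ density by continuous functions in $L^{1}$ norm would be useless, since $L^{1}$ approximation does not interact with weak convergence. With continuity of $d\lambda/d\mu$ in hand there is no real obstacle, and the corollary is immediate.
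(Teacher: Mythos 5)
Your proposal is correct and uses essentially the same argument as the paper: rewrite $E_\lambda[\cdot]=E_\mu[\phi\,\cdot\,]$ with $\phi=d\lambda/d\mu$ bounded continuous, invoke the asymptotic-independence clause of Theorem~\ref{theorem:u-statistic} to factor the limit, and use $E_\mu\phi=1$ to drop the density factor; the extension to the joint law and to asymptotic independence under $\lambda$ proceeds by inserting additional bounded continuous test factors, exactly as the paper sketches. Your closing remark on why continuity (as opposed to mere $L^1$) of the density is essential is a helpful gloss that the paper leaves implicit.
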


\begin{proof}
This follows from the asymptotic independence assertions of
Theorem~\ref{theorem:u-statistic}.  Consider first the random variable
$\tilde{U}_{T}$: to show that it converges in distribution under
$\lambda$, we must prove that for any bounded, continuous test
function $\psi :\zz{R}\rightarrow \zz{R}$, the expectations 
$ E_{\lambda} \psi (\tilde{U}_{T})$ converge as $T \rightarrow
\infty$. But since $d\lambda /d\mu$ is a bounded, continuous function,
the convergence  \eqref{eq:caseA}--\eqref{eq:caseB}  and the
asymptotic independence of $x$ and $\tilde{U}_{T} (x)$ under $\mu$
imply that 
\begin{align*}
	\lim_{T \rightarrow \infty}E_{\lambda} \psi
	(\tilde{U}_{T})&=\lim_{T \rightarrow \infty}E_{\mu} \psi (\tilde{U}_{T})
	\frac{d\lambda}{d\mu}\\
	& = \lim_{T \gamma  \infty} E_{\mu}\psi
	(\tilde{U}_{T})\lim_{T \rightarrow \infty}E_{\mu}\frac{d\lambda}{d\mu}\\
	&=\lim_{T \gamma  \infty} E_{\mu}\psi
	(\tilde{U}_{T}).
\end{align*}
(Note: This holds in both the case where $h_{+}$ is cohomologous to a
scalar multiple of $F$ and the case where it isn't.) A similar
argument, using Proposition~\ref{proposition:renewal} and
Proposition~\ref{proposition:clt}, proves that the random variables
$R_{T}$ and $\tilde{\tau}_{T}$ converge in distribution
under $\lambda$ to the same limit distributions as under $\mu$, and
that the various random variables, vectors, and sequences are
asymptotically independent.
\end{proof}

This result will suffice to deduce limit results about continuous-time
$U-$statistics in suspension flows under suspensions of Gibbs states
(cf. section~\ref{ssec:suspensions}) from corresponding results about
discrete-time $U-$statistics in shifts of finite type. For dealing
with measures like the uniform distribution on the set of periodic
orbits of minimal period $\leq T$ the following variant of
Corollary~\ref{corollary:acU}  will be needed.

\begin{corollary}\label{corollary:ac2sided}
Let $\mu=\mu$ be a Gibbs state and  $h:\Sigma \times
\Sigma \rightarrow \zz{R}$ be a function that satisfies
Hypothesis~\ref{hypothesis:short-memory} relative to $\mu$. Let $\{
\lambda_{T}\}_{T\geq 1}$ be a family of probability measures on
$\Sigma$ such that as $T \rightarrow \infty$,
\begin{equation}\label{eq:ac2sided}
	\frac{d \lambda_{T}}{d\mu} (x)\sim g_{1} (x)g_{2}
	(\sigma^{\tau_{T} (x)}x)g_{3} (R_{T} (x)) 
\end{equation}
where $g_{1},g_{2}:\Sigma \rightarrow [0,\infty) $ and
$g_{3}:[0,\infty )$ are nonnegative, bounded, continuous functions not
depending on $T$, such that $g_{3}$ is strictly positive on an
interval and $g_{2},g_{3}$ both have positive expectation under
$E_{\mu}$. Then as $T \rightarrow \infty$ the joint distribution of
$\tilde{U}_{T}$, $\tilde{\tau}_{T}$, $R_{T}$, $x$, and $\sigma^{\tau
(x)}x$ under $\lambda_{T}$ converges. Moreover, the limiting joint
distribution of $\tilde{U}_{T}$ and $\tilde{\tau}_{T}$ is the same as
under $\mu$.
\end{corollary}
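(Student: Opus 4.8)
The plan is to follow the proof of Corollary~\ref{corollary:acU}, the only new feature being that the likelihood ratio $d\lambda_T/d\mu$ now depends not only on $x$ but also on the ``boundary data'' $\sigma^{\tau_T(x)}x$ and $R_T(x)$. The one fact that makes the argument go through is the joint asymptotic independence, under $\mu$, of the pair $(\tilde U_T,\tilde\tau_T)$ from the triple $(x,\sigma^{\tau(x)}x,R_T)$, together with the fact that the law of that triple itself converges: indeed $x$ has law $\mu$ for every $T$, and by Proposition~\ref{proposition:renewal} (taking $f\equiv 1$ there) the pair $(\sigma^{\tau(x)}x,R_T)$ converges weakly to $\mu^{*}$ and is asymptotically independent of $x$. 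Writing $\nu$ for the joint limiting law of $(\tilde U_T,\tilde\tau_T)$ under $\mu$ and $\rho$ for the limiting law of $(R_T,x,\sigma^{\tau}x)$ under $\mu$, the last assertion of Theorem~\ref{theorem:u-statistic} then says precisely that the joint law of $(\tilde U_T,\tilde\tau_T,R_T,x,\sigma^{\tau}x)$ under $\mu$ converges weakly to the product $\nu\otimes\rho$.

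Granting this, here is how I would finish. Let $\Psi$ be a bounded continuous function on $\zz{R}^2\times[0,\infty)\times\Sigma\times\Sigma$, and set $G_T(x)=g_1(x)\,g_2(\sigma^{\tau_T(x)}x)\,g_3(R_T(x))$. I would first use \eqref{eq:ac2sided}, together with the boundedness of $\Psi$ and of the $g_i$, to replace $d\lambda_T/d\mu$ by $G_T$ at a cost tending to $0$:
\[
	E_{\lambda_T}\Psi\bigl(\tilde U_T,\tilde\tau_T,R_T,x,\sigma^{\tau}x\bigr)
	=E_{\mu}\Bigl[G_T(x)\,\Psi\bigl(\tilde U_T,\tilde\tau_T,R_T,x,\sigma^{\tau}x\bigr)\Bigr]+o(1).
\]
Next --- and this is the step that neutralizes the boundary dependence --- one observes that, because $g_1,g_2$ are continuous on $\Sigma$ and $g_3$ on $[0,\infty)$, the function $\widetilde\Psi(u,s,r,y,z):=\Psi(u,s,r,y,z)\,g_1(y)\,g_2(z)\,g_3(r)$ is again bounded and continuous, so the right-hand side equals $E_{\mu}\widetilde\Psi(\tilde U_T,\tilde\tau_T,R_T,x,\sigma^{\tau}x)+o(1)$, which by the weak convergence recorded above tends to $\int\widetilde\Psi\,d(\nu\otimes\rho)$. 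As $\Psi$ was arbitrary, the joint law of $(\tilde U_T,\tilde\tau_T,R_T,x,\sigma^{\tau}x)$ under $\lambda_T$ converges, which is the first assertion. Taking $\Psi\equiv 1$ gives $\lim_T\int G_T\,d\mu=\int g_1(y)g_2(z)g_3(r)\,d\rho=1$ (the first equality because $d\lambda_T/d\mu$ integrates to $1$), and then taking $\Psi$ to depend on the first two coordinates only, $\Psi=\psi(u,s)$, gives
\[
	\lim_{T\to\infty}E_{\lambda_T}\psi(\tilde U_T,\tilde\tau_T)
	=\Bigl(\int\psi\,d\nu\Bigr)\int g_1(y)g_2(z)g_3(r)\,d\rho
	=\int\psi\,d\nu=\lim_{T\to\infty}E_{\mu}\psi(\tilde U_T,\tilde\tau_T),
\]
which is the second assertion.

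The only genuinely delicate point is the first replacement: one must read \eqref{eq:ac2sided} strongly enough --- say as $d\lambda_T/d\mu-G_T\to0$ uniformly, or at least in $L^1(\mu)$ --- so that, with the uniform boundedness of $G_T$ and of the test functions, the substitution costs only $o(1)$ and, a posteriori, $\int G_T\,d\mu\to1$. Everything else is exactly the bookkeeping of Corollary~\ref{corollary:acU}, carried out now with the larger family of jointly asymptotically independent variables furnished by Theorem~\ref{theorem:u-statistic} and Propositions~\ref{proposition:renewal} and~\ref{proposition:clt}. The positivity hypotheses on $g_2,g_3$ (and the fact that $g_3>0$ on an interval) are there only to guarantee that the limiting tilted measure $\rho$ weighted by $g_1g_2g_3$ is a genuine probability measure, i.e.\ that $\int g_1g_2g_3\,d\rho=1$ makes sense and the limit distributions are proper; they play no other role in the argument.
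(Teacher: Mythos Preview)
Your proof is correct and follows essentially the same route as the paper's: transfer the expectation from $\lambda_T$ to $\mu$ via the Radon--Nikodym derivative, absorb the continuous factors $g_1,g_2,g_3$ into the test function, and invoke the asymptotic independence of $(\tilde U_T,\tilde\tau_T)$ from $(x,\sigma^{\tau}x,R_T)$ established in Theorem~\ref{theorem:u-statistic}. If anything, you are more careful than the paper about the meaning of the asymptotic relation \eqref{eq:ac2sided} (flagging that one needs at least $L^1$ convergence) and you work with a general bounded continuous $\Psi$ rather than products of one-variable test functions, but these are stylistic refinements, not a different argument.
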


\begin{proof}
The proof is virtually the same as that of
Corollary~\ref{corollary:acU}: in particular, for 
any bounded, continuous test functions
$\psi_{1},\psi_{2},\psi_{3}:\zz{R}\rightarrow \zz{R}$ and
$\psi_{4},\psi_{5}:\Sigma \rightarrow \zz{R}$,
\begin{align}\label{eq:tiltA}
	&\lim_{T \rightarrow \infty} E_{\lambda_{T}}\psi_{1}
	(R_{T})\psi_{2} (\tilde{U}_{T})\psi_{3} (\tilde{\tau}_{T})
	\psi_{4} (\psi_{5}\circ \sigma^{\tau})\\
\notag 	=&\lim_{T \rightarrow \infty} E_{\mu }\psi_{1}
	(R_{T})\psi_{2} (\tilde{U}_{T})\psi_{3} (\tilde{\tau}_{T})
	\psi_{4} (\psi_{5}\circ \sigma^{\tau}) g_{1} ( g_{2}\circ
	\sigma^{\tau})g_{3} (R_{T} (x)) 
\end{align}
by Theorem~\ref{theorem:u-statistic}, since $g_{1}$, $g_{2}$, and
$g_{3}$ are continuous. Moreover, because the random variables
$x,\sigma^{\tau (x)} (x)$, and $R_{T}$ are asymptotically independent
of $(\tilde{U}_{T},\tilde{\tau}_{T})$ under $\mu$, they will also be
asymptotically independent under $\lambda_{T}$, and the limit
distribution of $(\tilde{U}_{T},\tilde{\tau}_{T})$ will be the same as
under state $\mu$. However, the limit distributions of $R_{T}$ and
$\sigma^{\tau (x)}x$ will in general be different, because unlike the
bulk variables $(\tilde{U}_{T},\tilde{\tau}_{T})$ t hese random
variables are highly dependent on the last few coordinates of $x$
before $\tau (x)$. In particular, the limit distribution of
$\sigma^{\tau (x)} (x)$ will be ``tilted'' by the likelihood ratio
$g_{2}$:
\[
	\lim_{T \rightarrow \infty} E_{\lambda} (\psi \circ
	\sigma^{\tau})= \lim_{T \rightarrow \infty }E_{\mu} (\psi \circ
	\sigma^{\tau}) ( g_{2}\circ \sigma^{\tau})
	=E_{\mu}\psi g_{2}.
\]
\end{proof}

\begin{remark}\label{remark:strengthening}
The corollary remains true if it is only assumed that $g_{3}$ is
\emph{piecewise} continuous, in particular, if $g_{3}$ is the
indicator function of a bounded interval $[a,b]$. This can be proved
by a routine sandwiching argument, using the fact that $\mu \{x\,:\,
R_{T} (x)\in (b-\varepsilon ,b+\varepsilon)\}=O (\varepsilon)$, by the
renewal theorem (Proposition\ref{proposition:renewal}). Observe that
for this the standing assumption that $F$ is nonarithmetic is essential.
\end{remark}

\section{Cohomology and Hoeffding Projections in Constant and Variable
Negative Curvature}\label{sec:constant-variable-curvature}

Theorem~\ref{theorem:u-statistic} shows that in general the large-time
behavior of the distribution of a $U-$statistic is dependent on the
Hoeffding projection $h_{+} (x)$ of the kernel $h (x,y)$. In
particular, if $h_{+}$ is cohomologous to a scalar multiple of the
height function $F$ then the fluctuations of the $U-$statistic are of
order $T$, but otherwise are of order $T^{3/2}$. Thus, in any
application of the theorem it will be necessary to determine whether or
not the Hoeffding projection is cohomologous to a scalar multiple of
$F$. In this section we will show that for the function $h (x,y)$ in
the representation \eqref{eq:u-representation} of the
self-intersection count for geodesics on a surface of negative
curvature, the  factor that determines this is whether or not the
curvature is constant.

\subsection{Crossing intensities}\label{ssec:intensities} 
First we make a simple observation about the asymptotic frequencies of
intersections of a random geodesic with a fixed geodesic segment. 
(See \cite{bonahon}, \cite{bonahon:2} for far-reaching extensions and
consequences of this observation.) Fix a (compact) geodesic segment $\alpha$ on
$\Upsilon$ (for instance, a [prime] closed geodesic), and for any
geodesic ray $\gamma (t)=\gamma (t;x,\theta)$ let $N_{t} (\alpha
;\gamma)$ be the number of transversal intersections of $\alpha$ with
the segment $\gamma ([0,t])$.

\begin{proposition}\label{proposition:intensities}
Assume that $\Upsilon$ is a compact surface with a Riemannian metric
of (possibly variable) negative curvature, and let $\nu$ be any
ergodic, invariant probability measure for the geodesic flow on
$S\Upsilon$. Then for each geodesic segment $\alpha$ there is a
positive constant $\kappa (\alpha;\nu)$ such that for $\nu
-$a.e. initial vector $(x,\theta)$ the geodesic ray $\gamma$ with
initial tangent vector $(x,\theta)$ satisfies
\begin{equation}\label{eq:intensities}
	\lim_{t \rightarrow \theta} \frac{N_{t} (\alpha
	;\gamma)}{t}=\kappa (\alpha ;\nu) .
\end{equation} 
\end{proposition}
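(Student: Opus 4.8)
The plan is to argue exactly as for the law of large numbers \eqref{eq:lln} in Section~\ref{ssec:lln}: express the count as the value at time $t$ of an additive cocycle over the geodesic flow and then apply Birkhoff's ergodic theorem. For $v\in S\Upsilon$ and $t\ge 0$, let $C(v,t)$ denote the number of transversal intersection points of the geodesic segment $p\circ\gamma([0,t);v)$ with $\alpha$, so that $N_{t}(\alpha;\gamma)=C((x,\theta),t)$ for the geodesic ray with initial vector $(x,\theta)$. Since every transversal crossing point is isolated (a geodesic that overlaps $\alpha$ on an interval coincides with $\alpha$ there, hence meets it tangentially, not transversally), splitting $[0,s+t)=[0,s)\sqcup[s,s+t)$ and identifying the orbit over $[s,s+t)$ with $\gamma([0,t);\gamma_{s}v)$ yields the exact cocycle identity $C(v,s+t)=C(v,s)+C(\gamma_{s}v,t)$, where $\gamma_{s}$ is the time-$s$ map of the flow. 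In particular $C(v,n)=\sum_{i=0}^{n-1}c_{1}(\gamma_{i}v)$ for every integer $n\ge1$, with $c_{1}(v):=C(v,1)$. The function $c_{1}$ is Borel measurable and \emph{uniformly bounded}: partitioning $\alpha$ and any unit-length geodesic segment into subsegments of length $\le\varrho$ (with $\varrho$ as in Section~\ref{ssec:i-kernel}), each such pair of subsegments meets transversally at most once, so $c_{1}\le\lceil 1/\varrho\rceil\,\lceil\ell(\alpha)/\varrho\rceil=:M_{\alpha}<\infty$ everywhere, where $\ell(\alpha)$ denotes the length of $\alpha$.

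Because $\nu$ is invariant under $\gamma_{1}$, Birkhoff's ergodic theorem applied to $c_{1}\in L^{1}(\nu)$ produces a $\gamma_{1}$-invariant function $\Phi$ with $\int\Phi\,d\nu=\int c_{1}\,d\nu$ and $n^{-1}C(v,n)=n^{-1}\sum_{i<n}c_{1}(\gamma_{i}v)\to\Phi(v)$ for $\nu$-a.e.\ $v$. Standard arguments then upgrade $\Phi$ to a genuine constant: for each fixed $s\ge0$ one has $C(\gamma_{s}v,n)=C(v,n+s)-C(v,s)$, and since $0\le C(v,\lceil n+s\rceil)-C(v,\lfloor n+s\rfloor)\le M_{\alpha}$ the quotient $n^{-1}C(v,n+s)$ has the same limit $\Phi(v)$ as $n^{-1}C(v,\lfloor n+s\rfloor)$; hence $n^{-1}C(\gamma_{s}v,n)\to\Phi(v)$, while this quotient also tends to $\Phi(\gamma_{s}v)$ for $\nu$-a.e.\ $v$, so $\Phi\circ\gamma_{s}=\Phi$ $\nu$-a.e.\ for every $s$, and ergodicity of the flow forces $\Phi\equiv\kappa(\alpha;\nu):=\int c_{1}\,d\nu$. (Equivalently, one may invoke the continuous-parameter Kingman subadditive ergodic theorem directly for the additive cocycle $C$.) Finally, monotonicity of $t\mapsto C(v,t)$ and the bound $0\le C(v,t)-C(v,\lfloor t\rfloor)=C(\gamma_{\lfloor t\rfloor}v,t-\lfloor t\rfloor)\le M_{\alpha}$ pass the convergence from integer to real $t$, giving $t^{-1}N_{t}(\alpha;\gamma)\to\kappa(\alpha;\nu)$ for $\nu$-a.e.\ $(x,\theta)$.

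For positivity, note $\kappa(\alpha;\nu)=\int c_{1}\,d\nu\ge0$, and it is strictly positive precisely when the set of unit vectors whose forward unit-length geodesic crosses $\alpha$ transversally has positive $\nu$-measure. This set has nonempty interior whenever $\alpha$ is a nondegenerate geodesic segment (a short geodesic issued transversally from an interior point of $\alpha$ lies in it together with a neighborhood), so $\kappa(\alpha;\nu)>0$ whenever $\nu$ has full support --- in particular for the Liouville measure and for any Gibbs state, which covers every measure to which this proposition is applied in the sequel. The only mildly delicate point in the whole argument is the verification that the Birkhoff limit $\Phi$ is a true constant and not merely $\gamma_{1}$-invariant; the rest is bookkeeping strictly parallel to the proof of \eqref{eq:lln}.
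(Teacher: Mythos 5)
Your proof is correct, but it follows a genuinely different route than the paper's. The paper picks $\varepsilon>0$ small, lets $G\subset S\Upsilon$ be the set of unit vectors whose geodesic segment $\gamma([-\varepsilon,\varepsilon])$ crosses $\alpha$ transversally, sets $g=(2\varepsilon)^{-1}I_{G}$, checks $\bigl|\int_{0}^{t}g(\gamma_{s})\,ds-N_{t}(\alpha;\gamma)\bigr|\le 2$, and then applies the \emph{continuous-parameter} Birkhoff theorem directly to the bounded function $g$. You instead keep the exact crossing cocycle $C(v,s+t)=C(v,s)+C(\gamma_{s}v,t)$, apply the discrete-time Birkhoff theorem to the time-one map $\gamma_{1}$ and the bounded generator $c_{1}=C(\cdot,1)$, and then upgrade the a.e.\ limit $\Phi$ from $\gamma_{1}$-invariance to $\gamma_{s}$-invariance for all $s$ before invoking flow-ergodicity. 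That upgrade step is genuinely necessary --- an ergodic flow-invariant measure need not be ergodic for its time-one map --- and you handle it correctly; omitting it would be a real gap. What each approach buys: the paper's one-shot continuous-time argument is shorter and avoids the ergodicity-of-$\gamma_1$ subtlety, at the cost of introducing the auxiliary cutoff $\varepsilon$ and the approximate (rather than exact) relation between the count and a flow integral; your cocycle/time-one route needs no such approximation and uses only the most elementary form of Birkhoff, but requires the extra invariance-upgrade argument. You are also right to observe that the positivity of $\kappa(\alpha;\nu)$ asserted in the statement requires more than ergodicity (e.g.\ full support, as holds for the Liouville and maximal-entropy measures): for $\nu$ concentrated on a closed geodesic disjoint from $\alpha$ the limit is $0$, so the word ``positive'' is a harmless overstatement that the paper's own proof also does not establish.
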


\begin{proof}
This  is a straightforward application of Birkhoff's ergodic
theorem. Fix $\varepsilon >0$ sufficiently small that any geodesic
segment of length $2\varepsilon$ can intersect $\alpha$ transversally
at most once, and denote by $G$ the set of all unit vectors
$(x,\theta)\in S\Upsilon$ such that the geodesic  segment $\gamma
([-\varepsilon ,\varepsilon],(x,\theta))$ crosses $\alpha$
(transversally). Define $g= (2\varepsilon)^{-1}I_{G}$ where $I_{G}$ is
the indicator function of $G$. Then 
\[
	\bigl| \int_{0}^{t} g (\gamma_{s}) \,ds -N_{t} (\alpha
	;\gamma)\bigr| \leq 2 ;
\]
the error $\pm 2$ enters only because the first and last crossing might
be incorrectly counted. Thus, the result follows from Birkhoff's
theorem.
\end{proof}

Lemma~\ref{lemma:H1} (see also \cite{bonahon:2}) implies that
if $\nu =\nu_{L}$ is normalized Liouville measure then for every
geodesic segment $\alpha$,
\begin{equation}\label{eq:charLiouville}
	\kappa (\alpha ;\nu_{L})= \kappa_{\Upsilon} |\alpha |,
\end{equation}
where $|\alpha |$ is the length of $\alpha$ and $\kappa_{\Upsilon}$ is
as in relation~\eqref{eq:lln}. On the other hand, Theorem~2 of
\cite{otal} implies that if two ergodic invariant probability measures
$\nu ,\nu'$ have the same intersection statistics, that is, if
\begin{equation}\label{eq:otal}
	\kappa (\alpha ;\nu)=\kappa (\alpha;\nu') \quad \text{for all
	closed geodesics} \;\; \alpha ,
\end{equation}
then $\nu =\nu'$. Now let $\nu_{\max}$ be the maximum entropy
invariant measure for the geodesic flow. If $\Upsilon$ has
\emph{constant} negative curvature then $\nu_{\max}=\nu_{L}$, but if
$\Upsilon$ has \emph{variable} negative curvature then $\nu_{\max}\not
=\nu_{L}$ (and in fact $\nu_{\max}$ and $\nu_{L}$ are mutually
singular). This proves the following corollary.

\begin{corollary}\label{corollary:constant-versus-variable}
If $\nu$ is an ergodic, invariant probability measure for the geodesic
flow such that the ratio $\kappa (\alpha ;\nu)/|\alpha |$ has the same
value for all {closed geodesics} $\alpha$, then $\nu
=\nu_{L}$. Consequently, in order that the ratio $\kappa (\alpha
;\nu_{\max})/|\alpha |$ has the same value for all {closed geodesics}
$\alpha$ it is necessary and sufficient that the surface $\Upsilon$
have {constant} negative curvature.
\end{corollary}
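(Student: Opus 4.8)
The plan is to assemble the first assertion from the rigidity theorem of Otal quoted around \eqref{eq:otal}, together with the Liouville intersection identity \eqref{eq:charLiouville}, and then to obtain the ``consequently'' clause by specializing to $\nu=\nu_{\max}$. So suppose $\nu$ is ergodic and flow-invariant and that $\kappa(\alpha;\nu)/|\alpha|$ takes one and the same value $c$ over all closed geodesics $\alpha$; by Proposition~\ref{proposition:intensities} each $\kappa(\alpha;\nu)$ is a well-defined positive number, so $c>0$. The point I would exploit is the homogeneity $\kappa(\alpha;t\mu)=t\,\kappa(\alpha;\mu)$ for $t>0$ and $\mu$ ergodic, which is immediate from the identity $\kappa(\alpha;\mu)=\int_{S\Upsilon}(2\varepsilon)^{-1}\mathbf{1}_{G}\,d\mu$ extracted in the proof of Proposition~\ref{proposition:intensities}, where $G=G_{\alpha}$ is a fixed Borel set depending only on $\alpha$ (and a small auxiliary parameter $\varepsilon$). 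Since Lemma~\ref{lemma:H1} gives $\kappa(\alpha;\nu_{L})=\kappa_{\Upsilon}|\alpha|$, the ergodic invariant measure $\nu':=(c/\kappa_{\Upsilon})\,\nu_{L}$ satisfies $\kappa(\alpha;\nu')=c|\alpha|=\kappa(\alpha;\nu)$ for every closed geodesic $\alpha$. Otal's Theorem~2 --- in the scaling-free form that a finite flow-invariant measure is determined by its intersection numbers against all closed geodesics, of which \eqref{eq:otal} records the probability-normalized case --- then forces $\nu=\nu'=(c/\kappa_{\Upsilon})\nu_{L}$; comparing total masses ($\nu$ and $\nu_{L}$ both have mass $1$, while $\nu'$ has mass $c/\kappa_{\Upsilon}$) gives $c=\kappa_{\Upsilon}$ and hence $\nu=\nu_{L}$.

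For the ``consequently'' clause, take $\nu=\nu_{\max}$. If the curvature is constant then $\nu_{\max}=\nu_{L}$, so $\kappa(\alpha;\nu_{\max})/|\alpha|\equiv\kappa_{\Upsilon}$ by \eqref{eq:charLiouville}; this is sufficiency. Conversely, if $\kappa(\alpha;\nu_{\max})/|\alpha|$ is the same for all closed geodesics, the first assertion yields $\nu_{\max}=\nu_{L}$, and, as recalled in the paragraph preceding the corollary, the maximum-entropy and Liouville measures coincide only when $\Upsilon$ has constant negative curvature (they are mutually singular otherwise); this is necessity.

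The argument is essentially an assembly of quoted results, so there is no single hard computation; the point to get right is the invocation of Otal's theorem at the correct level of generality --- namely that the intersection-number functional detects a finite invariant measure, not merely its normalization. This is cleanest to phrase in Bonahon's language of geodesic currents \cite{bonahon,bonahon:2}, where invariant measures embed as currents, the intersection pairing is continuous and bilinear, weighted closed geodesics are dense, and Otal's theorem is exactly the statement that this pairing is non-degenerate; the hypothesis then says $i(\nu,\cdot)$ is a positive multiple of $i(\nu_L,\cdot)$ on a dense set, hence everywhere, so $\nu$ is that multiple of the Liouville current and the probability normalization pins the multiple. The genuinely deep inputs --- Otal's rigidity and the fact that $\nu_{\max}=\nu_{L}$ characterizes constant curvature --- are taken as black boxes.
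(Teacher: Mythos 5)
Your argument follows the paper's route exactly --- the Liouville identity $\kappa(\alpha;\nu_L)=\kappa_\Upsilon|\alpha|$ from Lemma~\ref{lemma:H1}, Otal's rigidity, and the fact that $\nu_{\max}=\nu_L$ iff the curvature is constant --- so there is no divergence in strategy. What you have done that the paper's one-sentence deduction does not do is handle the normalization: the hypothesis only gives $\kappa(\alpha;\nu)=c\,|\alpha|$ for some unknown $c>0$, whereas the form of Otal's theorem quoted at \eqref{eq:otal} compares two \emph{probability} measures with \emph{equal} intersection numbers, which would require already knowing $c=\kappa_\Upsilon$. You correctly observe that $\kappa(\alpha;\cdot)$ is linear in the measure (from the integral formula in the proof of Proposition~\ref{proposition:intensities}), invoke the scaling-free form of Otal's theorem (nondegeneracy of the intersection pairing on finite invariant measures / geodesic currents), and then recover $c=\kappa_\Upsilon$ by comparing total masses. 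This is the right way to close the small gap left by the literal statement at \eqref{eq:otal}; the rest (the ``consequently'' clause via $\nu_{\max}$ vs.\ $\nu_L$) matches the paper's reasoning verbatim.
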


\subsection{Hoeffding projection of the self-intersection
kernel}\label{ssec:hoeffding-cohomology} According to the results of
section~\ref{sec:symbolicDynamics}, the geodesic flow on $S\Upsilon$
is semi-conjugate to the suspension flow $(\Sigma_{F},\phi_{t})$ over
a shift $(\Sigma ,\sigma)$ with H\"{o}lder continuous height function
$F$. By Proposition By Proposition~\ref{proposition:symbD-per}, the
self-intersection counts for closed geodesics and geodesic segments
are given by equations \eqref{eq:u-representation} and
\eqref{eq:self-intersection-Count-random}, with $h=h^{r}$ for any
$0\leq r <\min F$.

Fix a Gibbs state $\mu$ on $\Sigma$, and let $\lambda$ be the
projection to $\Sigma$ of the suspension measure $\mu^{*}$, that is,
the absolutely continuous probability measure defined by
\begin{equation}\label{eq:projectSuspension}
	\lambda (A)= E_{\mu_{L}} (I_{A}F) /E_{\mu_{L}}F.
\end{equation}

\begin{proposition}\label{proposition:cohomology}
The Hoeffding projection $h_{+}$ of the  function $h$ relative to the
measure $\lambda$ is cohomologous to a scalar multiple of the height
function $F$ if and only if the suspension measure $\mu^{*}$ is the
pullback of the Liouville measure on $S\Upsilon$.
\end{proposition}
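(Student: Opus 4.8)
The plan is to show that the Hoeffding projection $h_{+}$ encodes exactly the $\nu$-intersection statistics of closed geodesics, where $\nu:=\pi_{*}\mu^{*}$ is the push-forward of $\mu^{*}$ to $S\Upsilon$, and then to invoke the Livsic characterization of Hölder coboundaries together with the rigidity statement in Corollary~\ref{corollary:constant-versus-variable} (which repackages Otal's theorem). Observe first that $\mu^{*}$ is mixing for the suspension flow, since it is the suspension of a Gibbs state over a nonarithmetic, topologically mixing shift; hence $\nu$ is an ergodic, flow-invariant probability measure on $S\Upsilon$ to which Proposition~\ref{proposition:intensities} applies.

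The heart of the argument is the following identity. Since $h(x,y)=1$ precisely when the fiber-segments $\alpha_{x}:=p\circ\pi(\mathcal{F}_{x})$ and $\alpha_{y}:=p\circ\pi(\mathcal{F}_{y})$ cross transversally, and since intersection counts are additive over a concatenation of segments meeting only at endpoints, for every periodic sequence $x_{c}$ of minimal period $n$ — with corresponding closed geodesic $\gamma_{c}=\bigcup_{j=0}^{n-1}\alpha_{\sigma^{j}x_{c}}$ of length $S_{n}F(x_{c})$ — one has $\sum_{j=0}^{n-1}h_{+}(\sigma^{j}x_{c})=\int_{\Sigma}N(\gamma_{c};\alpha_{y})\,d\lambda(y)$, where $N(\gamma_{c};\alpha_{y})$ counts transversal crossings of $\gamma_{c}$ with $\alpha_{y}$. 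Applying Birkhoff's ergodic theorem along a generic orbit of the suspension flow (with respect to $\mu^{*}$) — decompose a long segment $\gamma_{w}[0,T]$ into its fibers and count, fiber by fiber, the crossings with the fixed closed geodesic $\gamma_{c}$ — and using Proposition~\ref{proposition:intensities} to identify the resulting time-average with $\kappa(\gamma_{c};\nu)$, one obtains $\int_{\Sigma}N(\gamma_{c};\alpha_{y})\,d\lambda(y)=c\,\kappa(\gamma_{c};\nu)$ for a fixed positive constant $c$ depending only on the suspension data. The discontinuity of $h$ is handled exactly as in Section~\ref{ssec:lln}: the decomposition $h=\sum_{m}h_{m}+h_{\infty}$ of Lemma~\ref{lemma:h-decomposition}, the control (A3) on crossings at small angles or near endpoints, and the no-long-repeats estimate of Lemma~\ref{lemma:noRepeats} together allow one to sandwich $h$ between continuous kernels and pass to the limit without the per-fiber boundary errors accumulating.

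Granting the identity $\sum_{j=0}^{n-1}h_{+}(\sigma^{j}x_{c})=c\,\kappa(\gamma_{c};\nu)$, the equivalence is now formal. Since $h_{+}$ is Hölder (as $h$ satisfies Hypothesis~\ref{hypothesis:short-memory} relative to $\lambda$) and $F$ is Hölder, Livsic's theorem says $h_{+}-aF$ is a coboundary if and only if its sum over every periodic orbit vanishes, i.e. if and only if $c\,\kappa(\gamma_{c};\nu)=a\,|\gamma_{c}|$ for every closed geodesic $\gamma_{c}$ (the finitely many exceptional boundary geodesics of Proposition~\ref{proposition:symbD-per} either obey the same formula or are absorbed by approximating them with non-exceptional closed geodesics in the space of geodesic currents). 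By Corollary~\ref{corollary:constant-versus-variable}, the ratio $\kappa(\gamma_{c};\nu)/|\gamma_{c}|$ is constant over all closed geodesics if and only if $\nu=\nu_{L}$. Finally, $\pi_{*}\mu^{*}=\nu_{L}$ is exactly the statement that $\mu^{*}$ is the pullback of Liouville measure: $\pi$ is $\mu^{*}$-almost surely one-to-one (Corollary~\ref{corollary:series-variable}), so a flow-invariant probability measure on $\Sigma_{F}$ is determined by its $\pi$-image, whence $\pi_{*}\mu^{*}=\nu_{L}$ forces $\mu^{*}=\mu_{L}^{*}$. For the converse direction one may argue even more directly: if $\nu=\nu_{L}$ then \eqref{eq:charLiouville} gives $\kappa(\gamma_{c};\nu_{L})=\kappa_{\Upsilon}|\gamma_{c}|$ for every closed geodesic, so $\sum_{j}h_{+}(\sigma^{j}x_{c})=c\kappa_{\Upsilon}\sum_{j}F(\sigma^{j}x_{c})$ on every periodic orbit, whence $h_{+}$ is cohomologous to $c\kappa_{\Upsilon}F$.

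The main obstacle is the middle step — the rigorous identification of the $\lambda$-averaged fiberwise intersection count with a multiple of the intrinsic intersection intensity $\kappa(\gamma_{c};\nu)$. The difficulty is caused entirely by the discontinuity of $h$: a naive ergodic-theorem argument would leave an error of size $O(1)$ per fiber, which over a segment of length $T$ could rival the leading term. The estimates (A3) of Lemma~\ref{lemma:h-decomposition} (non-robust crossings occur only at exponentially small angles or exponentially close to fiber endpoints) and Lemma~\ref{lemma:noRepeats} (such configurations have exponentially small probability under any Gibbs state) are precisely what is needed to show this boundary contribution is negligible and that the sandwiching argument closes. The remaining ingredients — additivity of intersection counts, Livsic's theorem, and the appeal to Otal's rigidity via Corollary~\ref{corollary:constant-versus-variable} — are then routine.
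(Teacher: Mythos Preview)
Your proposal is correct and follows essentially the same route as the paper's proof: identify $\sum_{j} h_{+}(\sigma^{j}x_{c})$ with a scalar multiple of the crossing intensity $\kappa(\gamma_{c};\nu)$ via the ergodic theorem, then apply the Livsic periodic-orbit criterion together with Corollary~\ref{corollary:constant-versus-variable}. Your concern about the discontinuity of $h$ in the middle step is overblown for this particular identity --- the paper handles the ergodic identification directly in the manner of Proposition~\ref{proposition:intensities}, without invoking the (A3) estimates or Lemma~\ref{lemma:noRepeats}.
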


\begin{proof}
For any $x\in \Sigma$, the value $h_{+} (x)$ is the probability that
the geodesic segments  $p\circ \pi (\mathcal{F}_{x})$
and $p\circ \pi (\mathcal{F}_{y})$ of the suspension flow will intersect when $y$
is randomly chosen according to the law $\lambda$. (By
Lemma~\ref{lemma:shortFibers}, we can assume that the symbolic
dynamics has been refined so that any two such segments can intersect
at most once.) In order that $h_{+}$ be cohomologous to $cF$, it is
necessary and sufficient (see \cite{bowen:book}, Theorem~1.28) that these
two functions sum to the same value over every periodic orbit of the
shift, that is, if for every periodic sequence $x\in \Sigma$ with
period (say) $n=n (x)$,
\begin{equation}\label{eq:cohomology-nsc}
	\sum_{j=0}^{n-1} h_{+} (\sigma^{j}x)=\sum_{j=0}^{n-1} cF (\sigma^{j}x).
\end{equation}
The left side is the expected number of intersections of the closed
geodesic $p\circ \pi (\mathcal{F}_{x})$ with a geodesic segment
$p\circ \pi (\mathcal{F}_{y})$ gotten by projecting a fiber
$\mathcal{F}_{y}$ of the suspension flow chosen randomly according to
the law $y\sim \lambda$. By the ergodic theorem, this expectation is
the long-run frequency (per time $E_{\mu}F$) of intersections of a
randomly chosen geodesic with the closed geodesic $\alpha =p\circ \pi
(\mathcal{F}_{x})$, and therefore coincides with (a scalar multiple
of) $\kappa (\alpha ;p_{*}\lambda)$. On the other hand, the right side
of \eqref{eq:cohomology-nsc} is just $c$ times the length of
$\alpha$. By Corollary~\ref{corollary:constant-versus-variable}, the
two sides of \eqref{eq:cohomology-nsc} coincide for all $\alpha$ if
and only if the projection $p_{*}\lambda$ is the Liouville measure.
\end{proof}

\section{Verification of Hypothesis
\ref{hypothesis:short-memory}}\label{sec:verification} 

To deduce Theorem~\ref{theorem:random} from the results of section
\ref{sec:u-statistics} it will be necessary to show that the relevant
function $h$ in the representation \eqref{eq:u-representation}
satisfies Hypothesis \ref{hypothesis:short-memory}. For the functions
$h$ and $h^{r}$ defined in section~\ref{sec:symbolicDynamics}, the
properties (H0)-- (H2) hold trivially, so only statement (H3) of
Hypothesis~\ref{hypothesis:short-memory} warrants consideration. The
purpose of this section is to prove that for any Gibbs state $\mu$
there exist values of $r$ such that the function $h^{r}$ defined in
Remark~\ref{remark:crossSection} meets the requirement (H3).

By Proposition \ref{proposition:boundaryCorrespondence} and
Corollary~\ref{corollary:series-variable}, for any compact, negatively
curved surface $\Upsilon$ the geodesic flow on $S\Upsilon$ is
semi-conjugate to a suspension flow $(\Sigma_{F},\phi_{t})$ over a
topologically mixing shift of finite type $(\Sigma ,\sigma)$ with a
H\"{o}lder continuous height function $F$.  This semi-conjugacy is
one-to-one except on a set of first category, and both the Liouville
measure and the maximum entropy measure for the geodesic flow pull
back to suspensions of Gibbs states on $\Sigma$. Furthermore, points
$x\in \Sigma$ of the underlying shift are mapped to pairs of points
$\xi_{+} (x^{+}),\xi_{-} (x^{-})$ on $\partial \zz{D}$ in such a way that the
suspension-flow orbit through $(x,0)$ is mapped to the geodesic whose
$L-$lift to the Poincar\'{e} plane has endpoints $\xi_{+} (x^{+}),\xi_{-}
(x^{-})$; and this mapping sends cylinder sets to arcs of $\delta
\zz{D}$ satisfying (D)--(E) of
Proposition~\ref{proposition:boundaryCorrespondence}.  By
Lemma~\ref{lemma:shortFibers}, for any small $\varepsilon >0$ the
symbolic dynamics admits a refinement for which the height function
$F$ satisfies $F<\varepsilon$. By
Proposition~\ref{proposition:symbD-per}, the self-intersection counts
for closed geodesics and geodesic segments are given by equations
\eqref{eq:u-representation} and
\eqref{eq:self-intersection-Count-random}, with $h=h^{r}$ for any
$0\leq r <\min F$. The function $h^{r}$ decomposes as in
equation~\eqref{eq:h-epsilon}.

\begin{proposition}\label{proposition:H4}
For any Gibbs state $\mu $, the functions $h^{r}_{m}$ satisfy (H3) of
Hypothesis~\ref{hypothesis:short-memory} relative to $\mu $ for almost
every $r$ in some interval $[0,r_{*}]$ of positive
length $r_{*}$.  
\end{proposition}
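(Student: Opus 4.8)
The plan is to estimate, for each fixed $m$ and each $j\in\zz{Z}$, the quantity $\int_\Sigma|h^r_m(x,\sigma^jx)|\,d\mu(x)=\mu\{x:h^r_m(x,\sigma^jx)=1\}$, and to exhibit an interval $[0,r_*]$ and constants $0<\beta<1$ such that this is $\le C_r\beta^m$ for a.e.\ $r\in[0,r_*]$ with $C_r$ independent of $j$. Throughout we invoke the short-fiber refinement of Lemma~\ref{lemma:shortFibers}: having fixed a small $\varepsilon>0$ we assume $0<F\le\varepsilon$ and take $r_*>0$ with $r_*<\min F$ and $r_*+\varepsilon<\varrho$. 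Two preliminary reductions. First, $p\circ\pi(\mathcal{F}^r_x)$ and $p\circ\pi(\mathcal{F}^r_{\sigma^jx})$ are sub-arcs of the \emph{same} geodesic $\gamma_x$, separated in geodesic time by at least $(|j|-1)\min F-r_*$; since a nonconstant geodesic loop has length at least twice the injectivity radius of $\Upsilon$, these arcs cannot meet transversally once $|j|<j_0$ for a suitable $j_0=O(1/\min F)$, so $h^r_m(x,\sigma^jx)\equiv0$ there and only $|j|\ge j_0$ need be treated. Second, whenever $h^r_m(x,\sigma^jx)=1$ we have $m(x,\sigma^jx)=m$, hence $\sum_{n\ge m}h^r_n(x,\sigma^jx)+h^r_\infty(x,\sigma^jx)\ne0$, so property (A3)$'$ of Remark~\ref{remark:crossSection} applies: the crossing of $p\circ\pi(\mathcal{F}^r_x)$ with $p\circ\pi(\mathcal{F}^r_{\sigma^jx})$ occurs either (a) at an angle $<C\varrho^m$, or (b) at a point within $C\varrho^m$ of an endpoint of one of the two arcs. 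It thus suffices to bound $\mu(E^{(a)}_{m,j,r})$ and $\mu(E^{(b)}_{m,j,r})$ separately, where $E^{(a)},E^{(b)}$ are the sets of $x$ realizing (a), resp.\ (b).

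\textbf{The small-angle case.} A transversal self-crossing of $\gamma_x$ at angle $\theta<C\varrho^m$ means the two unit tangent vectors $v_1,v_2$ at the crossing have a common basepoint with $\angle(v_2,\pm v_1)\le\theta$; by the Anosov contraction/expansion estimates, $\gamma_tv_1$ stays within a fixed small $\delta_0$ of $\gamma_tv_2$ (or of $\gamma_{-t}v_2$, in the nearly-antipodal sub-case) over a geodesic-time window of length $\gtrsim c\log(1/\theta)\gtrsim c'm$. Pushing this through the H\"older semi-conjugacy $\pi$ and the fiber structure (a geodesic-time window of length $L$ visits at least $L/\max F-O(1)$ consecutive fibers) forces $\sigma^{c_1}x$ to agree with $\sigma^{c_2}x$ in a bounded block of central coordinates for every $(c_1,c_2)$ in a set of $\gtrsim c''m$ pairs with $c_2-c_1$ equal to a fixed integer $k$ with $|k-j|=O(1)$; equivalently, $x_p=x_{p+k}$ for $\gtrsim c''m$ consecutive $p$ (and, in the nearly-antipodal sub-case, a long \emph{reflected} coincidence $x_p=x_{q-p}$). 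Since $|k|\ge|j|-O(1)\ge1$, Lemma~\ref{lemma:noRepeats} applied with this $k$ gives $\mu(E^{(a)}_{m,j,r})\le C\beta^m$ for all large $m$, with $C,\beta$ independent of $j$ and $r$; the reflected sub-case is handled by the same exponential-mixing argument that proves Lemma~\ref{lemma:noRepeats}. This bound holds for \emph{every} $r\in[0,r_*]$.

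\textbf{The near-endpoint case.} Here $r$ must be chosen carefully, and I would argue by averaging over $r$. Fix $x,j,m$. Over $r\in[0,r_*]$ the windows for the two fibers sweep out arcs of $\gamma_x$ of length $<\varepsilon+r_*<\varrho$, and two such arcs meet transversally at most once; hence there is at most one self-crossing pair $(t_1,t_2)$ of $\gamma_x$ that can contribute to $h^r(x,\sigma^jx)$ for any $r\in[0,r_*]$. Since $\gamma_x$ has unit speed and its short sub-arcs are embedded, ``the crossing point lies within $C\varrho^m$ of the endpoint $\gamma_x(r)$'' forces $|t_1-r|\le C_1\varrho^m$, i.e.\ $r$ lies in one interval of length $C_1\varrho^m$; the three remaining endpoints give three further such intervals. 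Therefore $\mathrm{Leb}\{r\in[0,r_*]:x\in E^{(b)}_{m,j,r}\}\le 4C_1\varrho^m$, uniformly in $x,j,m$. Integrating in $x$ and using Fubini,
\[
\int_0^{r_*}\mu(E^{(b)}_{m,j,r})\,dr\le 4C_1\varrho^m\qquad\text{for every }j.
\]
Choosing $\beta$ with $\varrho<\beta<1$ and summing $\beta^{-m}$ times this over $m$ gives $\int_0^{r_*}\bigl(\sum_{m\ge1}\beta^{-m}\mu(E^{(b)}_{m,j,r})\bigr)\,dr<\infty$ for each $j$; intersecting the resulting full-measure sets over the countably many relevant $j$ yields a full-measure $G\subset[0,r_*]$ on which $\mu(E^{(b)}_{m,j,r})\le C_{r,j}\beta^m$ for all $m$. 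To replace $C_{r,j}$ by a $j$-independent constant one invokes the exponential mixing \eqref{eq:expMixing}: the arc $p\circ\pi(\mathcal{F}^r_x)$ and its endpoints are, up to a H\"older error that can be absorbed, determined by coordinates of $x$ near the origin, whereas whether the $j$-th fiber of $\gamma_x$ returns transversally to within $\varrho^m$ of a given point depends on coordinates near index $j$; for $|j|$ large these blocks decouple exponentially, and the probability that a $\mu$-random fiber passes within $\varrho^m$ of a fixed point is $O(\varrho^m)$ uniformly (a covering estimate using that cylinders have exponentially small $\mu$-measure, as in the computation in Lemma~\ref{lemma:H1}). Thus $\mu(E^{(b)}_{m,j,r})\le C'\varrho^m$ for all $|j|\ge j_0'$ and all $r$, while the finitely many $j$ with $j_0\le|j|<j_0'$ are absorbed into $C_r:=\max_j C_{r,j}$ on $G$. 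Combining with the small-angle bound, $\int_\Sigma|h^r_m(x,\sigma^jx)|\,d\mu\le(C+C_r)\beta^m$ for all $m$ and all $j$, which is (H3).

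\textbf{Main obstacle.} The delicate point is the quantitative passage from geometry to symbols — controlling, through the H\"older semi-conjugacy $\pi$, exactly how a near-parallel or near-endpoint self-crossing of $\gamma_x$ constrains finitely many symbol coordinates of $x$ — together with the bookkeeping that keeps all estimates uniform in the shift parameter $j$ while discarding only a Lebesgue-null set of cross-section heights $r$. The small-angle case is where one must be most careful: the ``rare event'' there is a long approximate periodicity of $x$, and one must verify both the approximate-periodicity estimate (Lemma~\ref{lemma:noRepeats}) and its reflected analogue, and check that they cover every way the crossing angle can be small. By contrast, once the normalization $0<F\le\varepsilon$ (Lemma~\ref{lemma:shortFibers}) and the ``at most one relevant crossing'' observation are in place, the near-endpoint case is comparatively soft.
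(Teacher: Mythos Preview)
Your overall architecture matches the paper's: split via (A3)$'$ into the small-angle event and the near-endpoint event; handle the latter by a Fubini/Borel--Cantelli argument in $r$ for bounded $|k|$ combined with exponential mixing for large $|k|$ (your Fubini step is essentially the paper's Lemma~\ref{lemma:densityArgument}), and handle the former via Lemma~\ref{lemma:noRepeats}.

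There is, however, a genuine gap in your small-angle argument. You write that a small intersection angle forces $\gamma_t v_1$ to stay $\delta_0$-close to $\gamma_t v_2$ for time $\gtrsim c'm$, and that ``pushing this through the H\"older semi-conjugacy $\pi$'' then forces a long block of symbolic agreement $x_p = x_{p+k}$. But H\"older continuity of $\pi:\Sigma_F\to S\Upsilon$ runs the wrong way: it says that nearby symbolic points have nearby images, not the converse. Since $\pi$ is not injective, two geodesic orbits can remain $\delta_0$-close for a long time while their symbolic representatives disagree early --- this happens precisely when the orbit lingers near the boundary of the Markov partition. The paper closes this gap with a separate lemma (Lemma~\ref{lemma:nonUniqueReps}): the set $D_m$ of $x$ for which there exists $(y,s)$ with $\pi(\phi_t(x,0))$ and $\pi(\phi_t(y,s))$ staying $\varepsilon$-close for $|t|\le e^{\alpha m}$ yet $x_i\ne y_i$ for some $|i|\le m$ satisfies $\mu(D_m)\le C\beta^m$. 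The proof uses the explicit structure of Series' coding (the endpoints of the cylinder arcs $J_k$ on $\partial\zz{D}$) and is not a formality. Only after excising $D_m$ does closeness in $S\Upsilon$ translate to symbolic agreement, whereupon Lemma~\ref{lemma:noRepeats} applies. You correctly flag ``the passage from geometry to symbols'' as the main obstacle, but you have not supplied the argument.

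A secondary gap: in making the near-endpoint bound uniform in $j$ you assert that ``the probability that a $\mu$-random fiber passes within $\varrho^m$ of a fixed point is $O(\varrho^m)$,'' citing Lemma~\ref{lemma:H1}. That lemma is specific to Liouville measure; for an arbitrary Gibbs state there is no direct relation between surface area and $\mu$-mass, and one only gets $O(\varrho^{\delta m})$ for some $\delta=\delta(\mu)>0$ (the paper's Lemma~\ref{lemma:smallBalls}, again proved via the boundary-arc structure of the coding). Moreover the ``fixed point'' is $p\circ\pi(x,r)$, which depends on $x$; the paper decouples this dependence via an efficient $\varrho^m$-net on $\Upsilon$ (Lemma~\ref{lemma:bmkr}) before invoking exponential mixing.
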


The remainder of this section is devoted to the proof of this
proposition. The key is the property (A3)'
(cf. Remark~\ref{remark:crossSection}), which asserts that there
exists $\varrho <1$ such that $h^{r}_{n}\not =0$ for some $n\geq m$
only if the geodesic segments corresponding to the suspension flow
segments $\mathcal{F}^{r}_{x}$ and $\mathcal{F}^{r}_{y}$ (cf. equation
\eqref{eq:f-epsilon}) intersect either at an angle less than
$C'\varrho^{m}$, or at a point within distance $C'\varrho^{m}$ of one
of the endpoints of the two geodesic segments.

For any integers $m,k\geq 1$ define 
\begin{align*}
	A^{r}_{m,k} &=\{x\in \Sigma \,:\, p\circ \pi
	(\mathcal{F}^{r}_{x}) \;\;\text{and}\;\; \, p\circ \pi
	(\mathcal{F}^{r}_{\sigma^{k}x}) \;\;\text{intersect at
	angle}\;<\varrho^{m} \} \quad \text{and}\\
	B^{r}_{m,k} &=\{x\in \Sigma \,:\, p\circ \pi
	(\mathcal{F}^{r}_{x}) \;\;\text{and}\;\; \, p\circ \pi
	(\mathcal{F}^{r}_{\sigma^{k}x}) \;\;\text{intersect at
	distance}\;<\varrho^{m} \;\text{of} \\
	& \;\quad  p (\pi
	(x,0)) \;\text{or}\; p ( \pi (x,F (x)-r))\} .
\end{align*}
To show that  (H3) of
Hypothesis~\ref{hypothesis:short-memory} holds relative to a Gibbs
state $\mu$ it is enough to show that there exist $C<\infty$ and $\beta <1$
such that for all sufficiently large $m$ and all $k\not =0$,
\begin{equation}\label{eq:H4Obj}
	\mu (A^{r}_{m,k}) +\mu (B^{r}_{m,k})\leq C\beta^{m}.
\end{equation}
We will show in Lemmas \ref{lemma:bmkr} and \ref{lemma:angle} that
each of the probabilities $\mu (A^{r}_{m,k})$ and $\mu (B^{r}_{m,k})$
is exponentially decaying in $m$, uniformly in $k$, for almost every
$r$ in a small interval $[0,r_{*}]$ of positive length.

\subsection{Intersections in small balls}\label{ssec:smallBalls}
We begin with $\mu (B^{r}_{m,k})$. The strategy for bounding this will
be to first handle the case $|k|\leq \exp \{\varepsilon  m\}$ for small $\varepsilon  >0$
by a density argument, and then the case $|k|>\exp \{\varepsilon  m\}$ by using
the exponential mixing property \eqref{eq:expMixingFunctionalForm} of
Gibbs states.

\begin{lemma}\label{lemma:densityArgument}
If  $0<\varrho <\alpha <1$, then for any Gibbs
state $\mu $ and almost every $r <\min F /3$, if $m$ is sufficiently
large then 
\begin{equation}\label{eq:densityArgument}
	\mu (B^{r}_{m,k})\leq \alpha ^{m} \;\; \text{for
	all}\; |k|\leq (\alpha /\varrho )^{m/2}, \;\;k\not =0.
\end{equation}
\end{lemma}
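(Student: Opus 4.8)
The plan is to establish, for each \emph{fixed} $x\in\Sigma$ and each $k\neq 0$, a uniform bound $\mathrm{Leb}\{r\in[0,r_*]:x\in B^r_{m,k}\}\le C\varrho^m$ with $C$ depending only on $\Upsilon$ and $\sup F$, and then to convert this into the asserted a.e.\ statement by Fubini's theorem, Markov's inequality, and Borel--Cantelli. Throughout, fix $r_*>0$ smaller than both $\tfrac{1}{3}\min F$ and the injectivity radius $\rho_0$ of $\Upsilon$ (this choice being independent of $m$), and let $\ell_1>0$ be a length, as provided by Section~\ref{sec:kernel}, such that any two geodesic arcs of length $\le\ell_1$ meet transversally in at most one point.

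For the fixed-$x$ estimate, observe that as $r$ ranges over $[0,r_*]$ the segment $p\circ\pi(\mathcal{F}^r_x)$ stays inside a single geodesic arc $\Gamma_1$ of length $\le\sup F+r_*$, namely the $p\circ\pi$-image of $\{\phi_s(x,0):0\le s<F(x)+r_*\}$, and likewise $p\circ\pi(\mathcal{F}^r_{\sigma^k x})$ stays inside a geodesic arc $\Gamma_2$ obeying the same length bound. Subdividing each of $\Gamma_1,\Gamma_2$ into at most $M:=\lceil(\sup F+r_*)/\ell_1\rceil$ consecutive sub-arcs of length $\le\ell_1$ shows that $\Gamma_1$ and $\Gamma_2$ have at most $N_0:=M^2$ transversal crossings, a bound uniform in $x$ and $k$; every transversal crossing of $p\circ\pi(\mathcal{F}^r_x)$ with $p\circ\pi(\mathcal{F}^r_{\sigma^k x})$ is one of these finitely many points $z$. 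For a given $z$ to witness $x\in B^r_{m,k}$, the point $z$ must lie within distance $\varrho^m$ of one of the (at most two) distinguished points attached to $\mathcal{F}^r_x$. Such a distinguished point, as $r$ runs over $[0,r_*]$, traverses a geodesic arc $\eta$ at unit speed of length $\le r_*<\rho_0$, hence minimizing; writing $u_0$ for the parameter minimizing $d(\eta(\cdot),z)$, the triangle inequality $d(\eta(u),z)\ge|u-u_0|-d(\eta(u_0),z)$ forces $\{u:d(\eta(u),z)<\varrho^m\}$ into an interval of length $<4\varrho^m$. Summing over the $\le N_0$ points $z$ and the (at most two) distinguished points gives $\mathrm{Leb}\{r\in[0,r_*]:x\in B^r_{m,k}\}\le 8N_0\varrho^m=:C\varrho^m$.

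Integrating in $x$ against $\mu$ and applying Fubini's theorem yields $\int_0^{r_*}\mu(B^r_{m,k})\,dr\le C\varrho^m$ for every $m\ge1$ and $k\neq0$, so by Markov's inequality $\mathrm{Leb}\{r\in[0,r_*]:\mu(B^r_{m,k})>\alpha^m\}\le C(\varrho/\alpha)^m$. Set $\theta:=(\varrho/\alpha)^{1/2}$, which lies in $(0,1)$ because $\varrho<\alpha$. Summing over the at most $2(\alpha/\varrho)^{m/2}$ integers $k$ with $0<|k|\le(\alpha/\varrho)^{m/2}$, the set $E_m\subset[0,r_*]$ of those $r$ for which $\mu(B^r_{m,k})>\alpha^m$ for at least one such $k$ satisfies $\mathrm{Leb}(E_m)\le 2C\theta^m$, which is summable in $m$. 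Borel--Cantelli then shows that Lebesgue-a.e.\ $r\in[0,r_*]$ lies in only finitely many $E_m$; for each such $r$ there is $m_0(r)$ with $\mu(B^r_{m,k})\le\alpha^m$ for all $m\ge m_0(r)$ and all $0<|k|\le(\alpha/\varrho)^{m/2}$, which is exactly \eqref{eq:densityArgument}.

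The step needing genuine care is the uniform geometric bound $\mathrm{Leb}\{r:x\in B^r_{m,k}\}\le C\varrho^m$: the crossing count $N_0$ must be independent of $k$ (this is why both segments are enclosed in fixed arcs of length $\le\sup F+r_*$ rather than followed individually), and $r_*$ must be taken below the injectivity radius so that the endpoint curves are minimizing geodesics, which is what makes their $\varrho^m$-proximity to a fixed point $z$ correspond to an $r$-interval of length $O(\varrho^m)$. One should also check that both distinguished points in the definition of $B^r_{m,k}$ genuinely move with $r$; should one of them (e.g.\ $p(\pi(x,0))$) be $r$-independent, the crossings within $\varrho^m$ of it are handled separately and uniformly in $r,k$ by the elementary fact that a transversal self-crossing of $\gamma_x$ within $\varrho^m$ of $\gamma_x(0)$ forces $\gamma_x$ to return to within $\varrho^m$ of its initial base point within bounded flow time --- an event of $\mu$-measure $O(\varrho^m)$ --- and this contribution is simply added. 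Everything else is the routine Fubini/Markov/Borel--Cantelli bookkeeping above.
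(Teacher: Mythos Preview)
Your approach is essentially the same as the paper's: bound, for each fixed $x$, the Lebesgue measure of $\{r:x\in B^r_{m,k}\}$ by a constant times $\varrho^m$, then apply Fubini, Markov, a union bound over $|k|\le(\alpha/\varrho)^{m/2}$, and Borel--Cantelli. The bookkeeping in your last two paragraphs matches the paper exactly.

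The one place where the arguments differ is the geometric input. The paper first invokes Lemma~\ref{lemma:shortFibers} so that the enlarged segments $p\circ\pi(\mathcal{F}_x\cup\mathcal{F}_{\sigma x})$ and $p\circ\pi(\mathcal{F}_{\sigma^kx}\cup\mathcal{F}_{\sigma^{k+1}x})$ (which are independent of $r$ and contain $\mathcal{F}^r_x$, $\mathcal{F}^r_{\sigma^kx}$ for $r<\min F/3$) meet in at most one point $z$; then the only reference point used is $p\circ\pi(x,r)$, which moves along the base geodesic at unit speed, so $\mathrm{Leb}\{r: d(p\circ\pi(x,r),z)<\varrho^m\}\le 2\varrho^m$ immediately. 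You instead subdivide each arc into pieces of length $\le\ell_1$ to get a uniform crossing bound $N_0$, which is fine but a little heavier than necessary.

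Your closing paragraph about a possibly $r$-independent reference point is a complication you can avoid. In the paper's framing the only point whose $\varrho^m$-neighborhood is tested is $p\circ\pi(x,r)$, which is manifestly $r$-dependent; adopting that framing removes the issue. As written, your ``separate handling'' of the static endpoint appeals to the statement that a near-return of the geodesic to its basepoint within bounded time has $\mu$-measure $O(\varrho^m)$, which is essentially Lemma~\ref{lemma:smallBalls}; since that lemma comes \emph{after} this one in the paper's logic, relying on it here would create a forward dependence. It is harmless in the sense that Lemma~\ref{lemma:smallBalls} does not use Lemma~\ref{lemma:densityArgument}, but it is cleaner simply to work with the moving reference point and drop that paragraph.
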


\begin{proof}
Without loss of generality we can assume
(cf. Lemma~\ref{lemma:shortFibers}) that no two geodesic segments of
length less than $2\max F$ intersect transversally more than once. For
$x\in \Sigma$ let $B_{m,k} (x)$ be the set of $r \in [0, F (x)+F
(\sigma x)]$ such that the geodesic segments $p\circ \pi
(\mathcal{F}_{x}\cup \mathcal{F}_{\sigma x}) $ and $p\circ \pi
(\mathcal{F}_{\sigma^{k}x}\cup \mathcal{F}_{\sigma^{k+1}x})$ intersect
at distance less than $\varrho^{m}$ of $p\circ \pi (x,r)$. Because
there is at most one intersection, the Lebesgue measure of $B_{m,k}
(x)$ is less than $2\varrho^{m}$.  Since $x\in B^{r}_{m,k}$ implies
that $r\in B_{m,k} (x)$, it follows by Fubini's theorem that for any
$\alpha \in (\varrho ,1)$,
\begin{align*}
	m_{Leb}&\{r\in [0,\min F/3]\,:\, \mu ( B^{r}_{m,k})\geq \alpha
	^{m} \}\leq 2 
	(\varrho /\alpha )^{m} \quad \Longrightarrow \\
	m_{Leb}&\{r\in [0,\min F/3]\,:\, \mu ( B^{r}_{m,k})\geq \alpha ^{m} \;\;
	\text{for some}\; |k|\leq (\alpha /\varrho)^{1/2}\}\leq 2
	(\varrho /\alpha )^{m/2}. 
\end{align*}
Since $\sum_{m} (\varrho /\alpha )^{m/2} <\infty$, it follows by the
Borel-Cantelli lemma that for almost every $r\in [0,\min F/3]$ 
the inequality \eqref{eq:densityArgument} holds for all sufficiently
large $m$.
\end{proof}

\begin{lemma}\label{lemma:smallBalls}
For any Gibbs state $\mu$ on $\Sigma$ and any $T<\infty$, there exist
$\delta =\delta (\mu,T) >0$ and $C=C_{T,\mu}>0$ with the following property: for 
any ball $B$ in $\Upsilon$ 
of sufficiently small diameter $\varepsilon >0$,
\begin{equation}\label{eq:smallBalls}
	\mu \{x \in \Sigma \,:\, p\circ \pi
	(\{\phi_{t} (x,0) \}_{0\leq t\leq T})
	\;\;\text{intersects}\;\;B\} 
	\leq C\varepsilon ^{\delta}.
\end{equation}
\end{lemma}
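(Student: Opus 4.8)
The plan is to lift everything to the universal cover $\zz{D}$ and translate the assertion into a statement about Series' boundary correspondence (Proposition~\ref{proposition:boundaryCorrespondence}, and its variable-curvature analogue coming from Corollary~\ref{corollary:series-variable}). Write $\Upsilon=\zz{D}/\Gamma$, fix a point $o\in\mathcal{P}$, and let $q_{0}\in\zz{D}$ be a lift of the center of $B$. Since $\pi(x,0)\in\Xi$, its $L$-lift is based on $\partial\mathcal{P}$, so the lifted geodesic segment of length $T$ stays in the ball of radius $R_{T}:=T+\operatorname{diam}\mathcal{P}$ about $o$; hence $p\circ\pi(\{\phi_{t}(x,0)\}_{0\le t\le T})$ meets $B$ only if the full lifted geodesic \emph{line} passes within $\varepsilon$ of one of the finitely many translates $gq_{0}$ (say $K_{T}$ of them, $K_{T}$ depending only on $T$ and $\Gamma$) lying within distance $R_{T}+1$ of $o$. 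By Proposition~\ref{proposition:boundaryCorrespondence}(B) that line has endpoints $\xi_{\pm}(x^{\pm})$, so it suffices to bound, for each such $q=gq_{0}$,
\[
	\mu\{x\in\Sigma:(\xi_{+}(x^{+}),\xi_{-}(x^{-}))\in E_{\varepsilon}(q)\}\le C_{T}'\varepsilon^{\delta},
\]
where $E_{\varepsilon}(q)\subseteq\partial\zz{D}\times\partial\zz{D}$ is the set of endpoint pairs of geodesic lines passing within $\varepsilon$ of $q$, and then sum over the $K_{T}$ translates.

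The first and main step is a geometric lemma: for $q$ in the relevant $T$-dependent compact region, $E_{\varepsilon}(q)$ is contained in an $O_{T}(\varepsilon)$-neighborhood of the graph $\{\eta_{-}=A_{q}(\eta_{+})\}$ of a map $A_{q}:\partial\zz{D}\to\partial\zz{D}$ that is bi-Lipschitz with constant $O_{T}(1)$. In constant curvature this is elementary: conjugating by a Möbius isometry carrying $q$ to the origin reduces it to the fact that a hyperbolic geodesic passes within $\varepsilon$ of the origin iff its two endpoints are within $O(\varepsilon)$ of being antipodal, the conjugating map restricting to a boundary diffeomorphism whose bi-Lipschitz constant is controlled by $d(q,0)$. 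In variable curvature one works with the metric $\varrho_{1}$, using that $\varrho_{1}$ is bi-Lipschitz to the hyperbolic metric on $\zz{D}$ (both $\Gamma$-invariant, $\Upsilon$ compact) and that $\varrho_{1}$-geodesics have the same endpoints at infinity; the transversality one needs --- that $d_{\varrho_{1}}(q,\overline{\eta_{+}\eta_{-}})$ vanishes to first order in $d_{\partial}(\eta_{-},A_{q}(\eta_{+}))$, uniformly on compacta --- follows from the Anosov property of the geodesic flow (geodesics aimed at distinct boundary points diverge). \emph{Pinning down this geometric lemma, especially its variable-curvature form, is where the real work lies.}

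Two measure-theoretic facts then finish the job. First, the push-forward of the forward (resp.\ backward) marginal of $\mu$ under $\xi_{+}$ (resp.\ $\xi_{-}$) assigns any arc $A$ of length $\ell$ measure at most $C_{1}\ell^{\delta}$, with $\delta=|\log\alpha_{\varphi}|/|\log\beta_{1}|>0$: choosing $m$ largest with $C\beta_{1}^{m}\ge\ell$, the generation-$m$ arcs $J^{\pm}_{m}$ are pairwise disjoint of length $\ge\ell$ by (D), so $A$ meets at most two of them, and since $\xi_{\pm}$ maps generation-$m$ cylinders onto these arcs and is boundedly finite-to-one while each cylinder has $\mu$-measure $\le\alpha_{\varphi}^{m}$, the push-forward measure of each is $\le C\alpha_{\varphi}^{m}\le C_{1}\ell^{\delta}$. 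Second, $\mu$ has approximate product structure: for a forward cylinder $C^{+}$ determined by coordinates $0,\dots,m$ and an event $D$ determined by coordinates $\le-1$, the Gibbs property \eqref{eq:gibbs} and the exponential mixing bound \eqref{eq:expMixing} (applied across the one-site gap between past and future) give $\mu(C^{+}\cap D)\le C_{\ast}\mu(C^{+})\mu(D)$.

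To assemble: fix $q$, choose $m=m(\varepsilon)$ with $C\beta_{2}^{m}\le\varepsilon$, and partition $\Sigma$ into the generation-$m$ forward cylinders $C^{+}$. On each such $C^{+}$, $\xi_{+}(x^{+})$ ranges over an arc of length $\le\varepsilon$, so by the geometric lemma the condition $(\xi_{+}(x^{+}),\xi_{-}(x^{-}))\in E_{\varepsilon}(q)$ forces $\xi_{-}(x^{-})$ into a single arc $\tilde I(C^{+})$ of length $\le 3C_{T}\varepsilon$ (the $C_{T}\varepsilon$-collar of the $A_{q}$-image of that arc); hence, using the product bound and then the arc estimate on the backward marginal,
\[
	\mu\{x:(\xi_{+}(x^{+}),\xi_{-}(x^{-}))\in E_{\varepsilon}(q)\}\le\sum_{C^{+}}C_{\ast}\mu(C^{+})\,\mu\{\xi_{-}(x^{-})\in\tilde I(C^{+})\}\le C_{\ast}C_{1}(3C_{T}\varepsilon)^{\delta}\sum_{C^{+}}\mu(C^{+}),
\]
and the last sum equals $1$ since the forward cylinders partition $\Sigma$. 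Summing over the $K_{T}$ relevant translates gives the assertion, with exponent $\delta$ (depending on $\mu$ through $\alpha_{\varphi}$) and constant $C=K_{T}C_{\ast}C_{1}(3C_{T})^{\delta}$ (depending on $T$ through $K_{T},C_{T}$ and on $\mu$). The variable-curvature case runs identically, using the symbolic dynamics of Corollary~\ref{corollary:series-variable} --- the maps $\xi_{\pm}$ and the arc estimates (D),(E) are unchanged --- together with the variable-curvature form of the geometric lemma.
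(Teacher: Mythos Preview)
Your approach is correct and shares the paper's core strategy: lift to the universal cover, reduce to finitely many translates of the ball, and use Series' boundary correspondence together with the Gibbs cylinder bounds to show that constraining a geodesic line to pass through a small ball forces one boundary endpoint into a short arc once the other is pinned down. The execution differs in two ways worth noting.

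First, you partition by forward cylinders and bound the measure of the constrained \emph{backward} endpoint, whereas the paper conditions on the full past $x^{-}$ (equivalently, fixes $\zeta_{-}$) and bounds the conditional measure of the constrained \emph{forward} endpoint; these are dual formulations of the same estimate, and both rest on the approximate product structure of the Gibbs state (your $\mu(C^{+}\cap D)\le C_{*}\mu(C^{+})\mu(D)$ is exactly the paper's ``$\mu(\cdot\mid x^{-})$ is dominated by a constant multiple of $\mu$'').

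Second, and more usefully: for variable curvature the paper does \emph{not} attempt to prove your geometric lemma directly. Instead it simply invokes the H\"older orbit equivalence $\Phi$ of section~\ref{ssec:series}: a $\varrho_{1}$-geodesic passing through a $\varrho_{1}$-ball of radius $\varepsilon$ corresponds under $\Phi$ to a $\varrho_{0}$-geodesic passing through a $\varrho_{0}$-ball of radius $\varepsilon^{\alpha}$ (with $\alpha$ the H\"older exponent of $\Phi^{-1}$), and the symbolic representation $x\in\Sigma$ is unchanged, so the variable-curvature case reduces in one line to the constant-curvature case at the cost of replacing $\delta$ by $\alpha\delta$. This sidesteps entirely the first-order transversality estimate you flag as ``where the real work lies,'' and you may prefer it.
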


\begin{proof}
In the special case where the suspension $\mu_{*}$ of the Gibbs state
$\mu$ (see equation \eqref{eq:suspensionMeasures}) is the pullback of
the Liouville measure this is apparent from purely geometric
considerations, as we now show. We may assume that for $\varepsilon$ sufficiently
small the image $p\circ \pi (\mathcal{F}_{x})$ of a fiber can
intersect a ball of radius $2\varepsilon$ at most once.  Since the
surface area measure of a ball $B (x,\varepsilon)$ of radius
$\varepsilon$ is $\asymp \varepsilon^{2}$, the ergodic theorem implies
that the long-run fraction of time spent in $B (x,2\varepsilon)$ is
almost surely $K\varepsilon^{2}$, for a constant $K$ independent of
$\varepsilon$. On each visit to $B
(x,\varepsilon )$ a geodesic must spend time at least $K'\varepsilon$
in $B (x,2\varepsilon)$. Consequently, the long run fraction of the
sequence of fibers $p\circ \pi (\mathcal{F}_{\sigma^{n}x})$ on a
geodesic that visit $B (x,\varepsilon)$ is less than $K\varepsilon
/K'$; thus, by the ergodic theorem, \eqref{eq:smallBalls} holds with $\delta =1$.

Unfortunately, for arbitrary Gibbs states there is no simple relation
between the surface area and the Gibbs measure, so a different
argument is needed. Consider first the case of a Riemannian metric
with constant curvature $-1$.  Recall that in this case the surface
$\Upsilon$ can be identified with a compact polygon $\mathcal{P}$ in
the Poincar\'{e} disk $\zz{D}$ whose edges are pasted together in
pairs. With this identification, any ball $B$ in $\Upsilon$
corresponds to a ball of the same radius in the interior of
$\mathcal{P}$, provided this ball does not intersect $\partial
\mathcal{P}$, or otherwise a union of at most $4g$ sectors of balls of
the same radius, where $4g$ is the number of sides of
$\mathcal{P}$. Thus, a geodesic segment of length less than $\min F$
that intersects $B$ will lift to a geodesic segment in $\zz{D}$ that
intersects one of up to $4g$ balls of the same radius, all with
centers in the closure of $\mathcal{P}$. Consequently, a geodesic
segment of length $T$ in $\Upsilon$ that intersects a ball of radius
$\varepsilon$ in $\Upsilon$ lifts to a geodesic segment in $\zz{D}$
that intersects one of up to $4g T$ balls of the same radius, all with
centers at distance no more than $T$ from $\mathcal{P}$.

Fix a point $\zeta_{-} \in \partial \zz{D}$ on the circle at infinity,
and consider the set of all geodesics in
$\zz{D}$ with $\zeta_{-}$ as an endpoint (as $t \rightarrow -\infty$)
that intersect a ball $B$ of radius $\varepsilon$ with center in
$\mathcal{P}\cup \partial \mathcal{P}$. For any such geodesic, the
second endpoint $\zeta_{+}$ on $\partial \zz{D}$ is
constrained to lie in an arc $J (\zeta_{-},B)$ of length $\leq K
\varepsilon$, where $K$ is a constant that does not depend on
$\zeta_{-}$ or on the center of $B$. Recall
(Proposition~\ref{proposition:boundaryCorrespondence}) that
specification of the endpoint $\zeta_{-}$ of a geodesic is equivalent
(except on a set of first category) to specification of the backward
coordinates $x^{-}$ of the sequence $x\in \Sigma$ that represents the
geodesic; and similarly, specification of the endpoint $\zeta_{+}$ is
equivalent to specification of the forward coordinates $x^{+}$. By
(D)--(E) of Proposition~\ref{proposition:boundaryCorrespondence}, it
follows that constraining $\zeta_{+}$ to lie in an arc of length $\leq
K\varepsilon$ has the effect of constraining its forward itinerary
$x^{+}$ to lie in a union of one or two cylinder sets
$\Sigma^{+}_{[0,m]} (y)$ with $m=K' \log \varepsilon^{-1}$. Now for
any Gibbs state $\mu$ there exists $\beta <1$ such that the $\mu
-$measure of any cylinder set $\Sigma_{[1,m]} (x)$ is less than
$\beta^{m}$. Moreover, by inequality \eqref{eq:expMixing}, the
conditional measure $\mu (\cdot | x^{-})$ given the past is dominated
by a constant multiple of the unconditional measure $\mu $. Thus, if
$G^{\varepsilon} (B)$ denotes the set of all $x\in \Sigma$ such that
the suspension flow segment $\{\phi_{t} (x,0) \}_{0\leq t\leq T}$
lifts to a geodesic segment that intersects $B$, then
\[
	\mu (G^{\varepsilon} (B)) =E_{\mu} ( E_{\mu}
	(I_{G^{\varepsilon} (B)}\,|\,\mathcal{B}_{(-\infty,0]}))\leq
	\beta^{m}. 
\]
This implies \eqref{eq:smallBalls} in the constant curvature case.

This argument extends to metrics of variable negative curvature, with
the aid of the structural stability results of
section~\ref{ssec:series}. Let $\varrho_{1}$ be a metric of variable
negative curvature and $\varrho_{0}$ a metric of curvature -1.  Recall
that the $\varrho_{1}-$geodesic flow is orbit-equivalent, by a
H\"{o}lder continuous mapping $\Phi :S\Upsilon \rightarrow S\Upsilon$,
to the $\varrho_{0}-$geodesic flow, and that the homeomorphism $\Phi$
lifts to a homeomorphism $\tilde{\Phi}:S\zz{D} \rightarrow S\zz{D}$ of
the universal cover. Each $\varrho_{0}-$geodesic in $\zz{D}$
corresponds under $\tilde{\Phi}$ to a $\varrho_{1}-$ geodesic, and
these have the same endpoints on $\partial \zz{D}$ and the same
symbolic representation $x\in \Sigma$. Because
$\tilde{\Phi}$ is H\"{o}lder, constraining a $\varrho_{1}-$geodesic to
pass through a $\varrho_{1}-$ball of radius $\varepsilon$ forces the
corresponding $\varrho_{0}-$geodesic to pass through a $\varrho_{0}-$
ball of radius $\varepsilon^{\alpha}$, for some $\alpha >0$ depending
on the H\"{o}lder exponent and all $\varepsilon$ sufficiently small.
Therefore, the problem  reduces to the constant curvature case.
\end{proof}

\begin{lemma}\label{lemma:bmkr}
If $\varrho <1$ then for any Gibbs state $\mu$ and for almost every
$0\leq r<\min F/3$, there exists $\alpha <1$ such that if $m$ is
sufficiently large then
\begin{equation}\label{eq:mubmkr}
	\mu (B^{r}_{m,k})\leq \alpha ^{m} \;\; \text{for
	all}\; k\not =0.
\end{equation}
\end{lemma}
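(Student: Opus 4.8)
The plan is to split on the size of $|k|$ and reduce everything to the two facts already in hand: Lemma~\ref{lemma:densityArgument}, which bounds $\mu(B^{r}_{m,k})$ for all $k$ with $1\le |k|\le(\alpha/\varrho)^{m/2}$, and Lemma~\ref{lemma:smallBalls}, which will handle the remaining, exponentially large $|k|$. Fix $\alpha\in(\varrho,1)$; then Lemma~\ref{lemma:densityArgument} already gives $\mu(B^{r}_{m,k})\le\alpha^{m}$ for almost every $r<\min F/3$ and all large $m$ in the range $|k|\le(\alpha/\varrho)^{m/2}$, so all the work is in the range $|k|>(\alpha/\varrho)^{m/2}$; take $k>0$ there (the case $k<0$ is identical). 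The heuristic for this range is that $B^{r}_{m,k}$ packages two pieces of data about $x$ that, for such large $k$, depend on coordinates of $x$ lying at distance $\asymp k$ apart, and hence decorrelate under a Gibbs state by the exponential mixing estimate \eqref{eq:expMixing}: namely (i) the union $\mathcal{B}_{m}(x)$ of the two balls of radius $\varrho^{m}$ about $p(\pi(x,0))$ and $p(\pi(x,F(x)-r))$, the target that must be hit, which depends essentially on coordinates of $x$ near $0$; and (ii) the location of the segment $p\circ\pi(\mathcal{F}^{r}_{\sigma^{k}x})$ that must hit it, which depends essentially on coordinates of $x$ near $k$.

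To make this precise: since $\pi$ and $F$ are H\"older continuous, there are $\vartheta<1$ and $C_{0}<\infty$ so that if $x,x'$ agree in all coordinates $|j|\le n$ then the points $p(\pi(x,0)),p(\pi(x,F(x)-r))$ lie within $C_{0}\vartheta^{n}$ of $p(\pi(x',0)),p(\pi(x',F(x')-r))$ respectively, and likewise $p\circ\pi(\mathcal{F}^{r}_{x})$ and $p\circ\pi(\mathcal{F}^{r}_{x'})$ lie within $C_{0}\vartheta^{n}$ of one another in Hausdorff distance (using the exponential shrinking of cylinder sets). Choose $n=n(m),l=l(m)$, both $\asymp m$, with $C_{0}\vartheta^{n},C_{0}\vartheta^{l}\le\varrho^{m}$, and partition $\Sigma$ into the cylinders $[w]$ on coordinates $[-n,n]$. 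On each $[w]$ one has $\mathcal{B}_{m}(x)\subseteq\mathcal{B}'_{w}$ for a fixed union $\mathcal{B}'_{w}$ of two balls of radius $2\varrho^{m}$; since any intersection point counted in $B^{r}_{m,k}$ lies on $p\circ\pi(\mathcal{F}^{r}_{\sigma^{k}x})$, it follows that $B^{r}_{m,k}\cap[w]\subseteq[w]\cap\sigma^{-k}E'_{w}$, where $E'_{w}=\{x:p\circ\pi(\mathcal{F}^{r}_{x})\text{ meets }\mathcal{B}'_{w}\}$. Enlarging $\mathcal{B}'_{w}$ by $\varrho^{m}$ to a union $\mathcal{B}''_{w}$ of balls of radius $3\varrho^{m}$ and letting $\tilde{E}_{w}$ be the union of all cylinders on $[-l,l]$ that meet $E'_{w}$, one gets $E'_{w}\subseteq\tilde{E}_{w}\in\mathcal{B}_{[-l,l]}$ with $\tilde{E}_{w}\subseteq\{x:p\circ\pi(\mathcal{F}^{r}_{x})\text{ meets }\mathcal{B}''_{w}\}$; since $\mathcal{F}^{r}_{x}\subseteq\{\phi_{t}(x,0)\}_{0\le t\le 2\max F}$ and $\mathcal{B}''_{w}$ is a union of two small balls, Lemma~\ref{lemma:smallBalls} with $T=2\max F$ gives $\mu(\tilde{E}_{w})\le C_{1}\varrho^{m\delta}$ uniformly in $w$, for some $\delta=\delta(\mu)>0$. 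Now $[w]\in\mathcal{B}_{(-\infty,n]}$ and $\sigma^{-k}\tilde{E}_{w}\in\mathcal{B}_{[k-l,\infty)}$, and since $k>(\alpha/\varrho)^{m/2}$ is exponentially large in $m$ while $n,l=O(m)$, the separation $k-l-n$ exceeds $k/2$ for $m$ large; hence \eqref{eq:expMixing}, translated by $\sigma$-invariance, gives
\[
	\mu\bigl([w]\cap\sigma^{-k}\tilde{E}_{w}\bigr)\le(1+C\beta^{k/2})\,\mu([w])\,\mu(\tilde{E}_{w})\le C_{2}\varrho^{m\delta}\,\mu([w]).
\]
Summing over the finitely many $w$ and using $\sum_{w}\mu([w])=1$ yields $\mu(B^{r}_{m,k})\le C_{2}\varrho^{m\delta}$ for every $k>(\alpha/\varrho)^{m/2}$ (and, by the identical argument, for $k<-(\alpha/\varrho)^{m/2}$), for every $0\le r<\min F/3$ and all large $m$.

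Combining this with the bound $\mu(B^{r}_{m,k})\le\alpha^{m}$ from Lemma~\ref{lemma:densityArgument} in the complementary range and choosing any $\alpha'\in(\max(\alpha,\varrho^{\delta}),1)$, one obtains $\mu(B^{r}_{m,k})\le(\alpha')^{m}$ for all $k\ne0$ and all sufficiently large $m$, for almost every $r\in[0,\min F/3)$ (the ``almost every'' being inherited only from Lemma~\ref{lemma:densityArgument}, as the large-$|k|$ estimate holds for every such $r$). The routine parts — the choice of $n,l\asymp m$, the union bound over cylinders, and the invocation of \eqref{eq:expMixing} — present no difficulty; the step I expect to require the most care is the two-scale approximation of the second paragraph, namely replacing both the target balls and the hitting segment by events supported on coordinate windows of width only $O(m)$, so that the decorrelation gap is the full $\asymp k$, while tracking that the successive $\varrho^{m}$-enlargements needed to make those replacements exact keep all radii of order $\varrho^{m}$ — small enough that Lemma~\ref{lemma:smallBalls} still applies with only a bounded loss in the constant.
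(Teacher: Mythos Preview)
Your proof is correct and follows the same overall strategy as the paper: split on $|k|$ at the threshold $(\alpha/\varrho)^{m/2}$, invoke Lemma~\ref{lemma:densityArgument} for small $|k|$, and for large $|k|$ decorrelate the ``target'' (depending on coordinates near $0$) from the ``hitting segment'' (depending on coordinates near $k$) via exponential mixing, with Lemma~\ref{lemma:smallBalls} supplying the needed small-ball estimate. The implementation of the large-$|k|$ step differs, however. The paper works on the manifold side: it chooses an efficient $\varrho^{m}$-net $z_{1},\dots,z_{n}$ in $\Upsilon$, covers $B^{r}_{m,k}$ by $\bigcup_{i} H^{r}_{i,m}\cap G_{i,m,k}$, and exploits that at most $\kappa$ of the events $H^{r}_{i,m}$ can hold simultaneously, so the sum over $i$ collapses to $\kappa\max_{i}\mu(G_{i,m,k}\mid H^{r}_{i,m})$; it then replaces indicators by smooth bump functions $\psi_{i}$ and applies the functional form \eqref{eq:expMixingFunctionalForm} of mixing, which brings in the H\"older norm $\sim\varrho^{-m}$ of the $\psi_{i}$ and forces one to check that $\beta^{k}\varrho^{-m}$ is still super-exponentially small. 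You instead work on the symbolic side: partition $\Sigma$ into cylinders $[w]$ of width $O(m)$, freeze the target balls on each $[w]$, thicken the hitting event to a cylinder set $\tilde{E}_{w}$, and apply the set form \eqref{eq:expMixing} of mixing directly, then sum over $w$. Your route avoids the bump-function bookkeeping entirely; the paper's route avoids your two-scale cylinder approximation. Both land on the same exponential bound and use Lemma~\ref{lemma:smallBalls} identically.
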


\begin{proof}
Lemma \ref{lemma:densityArgument} implies that if $\varrho \leq \alpha
<1$ then for almost every $r<\min F/3$ the inequality \eqref{eq:mubmkr} holds
for all $|k|< (\alpha /\varrho)^{m/2}$. We will show that for
\emph{every} $0\leq r<\min F /3$ the inequality \eqref{eq:mubmkr}  also holds
for $|k|\geq  (\alpha /\varrho)^{m/2}$; for this we shall appeal to the
exponential mixing inequality \eqref{eq:expMixing}, using
Lemma~\ref{lemma:smallBalls} to control the first moment. 
The proof will rely on the following elementary geometric fact:
\emph{For any compact Riemannian manifold $\mathcal{M}$ there exists $\kappa
<\infty$ (depending on the metric) such that for every
sufficiently small $\varepsilon >0$ there is a finite set of points
$z_{1},z_{2}, \dotsc ,z_{n}$ such that every $x\in \mathcal{M}$ is
within distance $\varepsilon$ of some $z_{i}$, but is within distance
$6\varepsilon$ of at most $\kappa$ distinct points $z_{i}$.} Call
such a collection of points $z_{i}$ an \emph{efficient $\varepsilon -$net.}

In order that $x\in B^{r}_{m,k}$ it is necessary that the geodesic
segment $p\circ \pi (\mathcal{F}^{r}_{\sigma^{k}x})$ intersects either
the ball of radius $\varrho^{m}$ centered at $p\circ \pi (\phi_{r}
(x))$, or the ball of radius $\varrho^{m}$ centered at $p (\phi_{F
(x)- r} (x))$, or both. Let $z_{1},..,z_{n}$ be an efficient
$\varrho^{m}-$net and let $B(z_{i},3\varrho^{m})$ be the ball of
radius $3\varrho^{m}$ centered at $z_{i}$.  Then
\[
	B^{r}_{m,k} \subset \bigcup_{i=1}^{n} H^{r}_{i,m}\cap G^{r}_{i,m,k}
\]
where $H^{r}_{i,m}$ is the set of all $x\in \Sigma$ such that $p\circ
\pi (x,r)\in B (z_{i},3\varrho^{m})$ and $G_{i,m,k}$ is the set of all
$x\in \Sigma $ such that the geodesic segment $p\circ \pi
(\mathcal{F}_{\sigma^{k}x}\cup \mathcal{F}_{\sigma^{k+1}x})$
intersects $B(z_{i},3\varrho^{m})$. (This is because
$\mathcal{F}^{r}_{y}\subset \mathcal{F}_{y}\cup \mathcal{F}_{\sigma
y}$.) Since $z_{1},..,z_{n}$ is an efficient $\varrho^{m}-$net, at
most $\kappa$ of the events $H^{r}_{i,m}$ can occur together;
consequently,
\begin{align*}
	\mu (B^{r}_{m,k})&\leq \sum_{i=1}^{n} \mu (H^{r}_{i,m}\cap
	G_{i,m,k}) \\
	&= \sum_{i=1}^{n} \mu (H^{r}_{i,m}) \mu
	(G_{i,m,k}\,|\, H^{r}_{i,m}) \\
	&\leq \kappa \max_{i\leq n} \mu
	(G_{i,m,k}\,|\, H^{r}_{i,m}).
\end{align*}
Thus, it remains to bound the conditional probabilities $\mu
(G_{i,m,k}\,|\, H^{r}_{i,m})$ for  $|k|> (\alpha /\varrho)^{m/2}$.

For each $i$ let $0\leq \psi _{i}\leq 1$ be a smooth function on
$\Upsilon$ with Lipschitz norm less than $6\varrho ^{-m}$
that takes the value $1$ on $B (z_{i},3\varrho^{m})$ and $0$ on the
complement of $B (z_{i},6\varrho^{m})$. 
For each $x\in \Sigma$ and $0\leq r\leq \min F/3$ define 
\begin{align*}
	g_{i,m} (x)&=\max_{0\leq s\leq F (x)+F (\sigma x)}
		\psi_{i} (p\circ \pi (x,s))  \quad \text{and}\\ 
	h^{r}_{i,m} (x)&=\psi_{i} (p\circ \pi (x,r)).
\end{align*}
Since the projection $p\circ \pi$ is $\delta  -$H\"{o}lder continuous
for some exponent $\delta  $, both $g_{i,m}$ and $h^{r}_{i,m}$ have
$\delta  -$H\"{o}lder norms bounded by $6\xnorm{p\circ \pi}_{\delta 
}\varrho  ^{-m}$. Therefore, the exponential mixing inequality
\eqref{eq:expMixingFunctionalForm} implies that for some $C<\infty$
and $0<\beta <1$
independent of $i,m,k$ and $r$,
\begin{align*}
	\mu (G_{i,m,k}\cap  H^{r}_{i,m})&\leq 
	E_{\mu} (g_{i,m}\circ \sigma^{k})h^{r}_{i,m} \\
	&\leq E_{\mu}g_{i,m} E_{\mu} h^{r}_{i,m} +C\beta^{k}\varrho ^{-m}
\end{align*}
For $|k|> (\alpha /\varrho)^{m/2}$ the second term is
super-exponentially decaying in $m$. But
Lemma~\ref{lemma:smallBalls}  implies that the
expectation $E_{\mu}g_{i,m}$ is bounded by $(6\varrho^{m})^{q}$ for
some $q>0$, and so the result now follows.

\end{proof}

\subsection{Intersections at small angles}\label{ssec:smallAngles}
Next we must show that the events $A^{r}_{m,k}$ have uniformly
exponentially decaying probabilities, in the sense
\eqref{eq:H4Obj}. The strategy here will be to show that if two
geodesic segments corresponding to distinct fibers of the suspension
flow cross at a small angle, then it will be impossible for their
successors to cross for a long time. This fact, coupled with the
ergodic theorem, will imply that the probability of a crossing at a
small angle must be small. The key geometric fact is as follows (see
also \cite{birman-series:1}).

\begin{lemma}\label{lemma:smallAngles}
For any $\kappa >0$ sufficiently small and any $\varrho <1$ there
exists $C<\infty$ such that for all large $m\geq 1$ the following
holds. If two geodesic segments $\gamma ([0,2\kappa],x)$ and $\gamma
([0,2\kappa],y)$ of length $2\kappa$ cross transversally at an angle
less than $\varrho^{m}$ then for every $1\leq j\leq Cm$ the geodesic
segments $\gamma ([j\kappa ,j\kappa +2\kappa],x)$ and $\gamma
([j\kappa ,j\kappa +2\kappa],y)$ do not cross.
\end{lemma}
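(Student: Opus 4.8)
The plan is to reduce everything to the universal cover $\tilde{\Upsilon}$ and exploit the exponential divergence of nearby geodesics in negative curvature, together with the fact that two distinct geodesics of a Hadamard surface meet in at most one point. Write $\Gamma=\pi_{1}(\Upsilon)$ for the deck group, pinch the curvature as $-b^{2}\le K\le -a^{2}<0$, and fix once and for all a constant $\epsilon_{0}>0$, depending only on the metric, which is smaller than the length $\ell_{0}$ of the shortest closed geodesic and small enough that no two geodesic arcs of length $\le 2\kappa$ meet more than once. Suppose $\gamma([0,2\kappa],x)$ and $\gamma([0,2\kappa],y)$ cross transversally at a point $p$, at parameters $t_{x},t_{y}\in[0,2\kappa]$, with angle $\theta<\varrho^{m}$. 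Lift the crossing to a common point $\tilde p$ of two lifts, reparametrized by arc length so that $\tilde c_{x}(s)$ lies over $\gamma_{x}(t_{x}+s)$, $\tilde c_{y}(s)$ over $\gamma_{y}(t_{y}+s)$, and $\tilde c_{x}(0)=\tilde c_{y}(0)=\tilde p$; transversality makes these distinct geodesics, so $\tilde p$ is their only common point. By the Rauch comparison theorem (comparing with the space form of curvature $-b^{2}$), $d_{\tilde\Upsilon}\big(\tilde c_{x}(s),\tilde c_{y}(s)\big)\le C_{1}\theta\, e^{bs}\le C_{1}\varrho^{m}e^{bs}$, so this distance stays below $\epsilon_{0}$ for all $0\le s\le S_{0}:=\tfrac1b\log\!\big(\epsilon_{0}/(C_{1}\varrho^{m})\big)$, and $S_{0}\ge c_{2}m$ for $m$ large, where $c_{2}=\tfrac1{2b}\log(1/\varrho)>0$. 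Projecting down, the two geodesic arcs $\epsilon_{0}$-fellow-travel over the corresponding parameter intervals.

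Now choose $C\ge 1$ with $C\kappa<c_{2}$; this is exactly the place where $\kappa$ must be taken small in terms of $\varrho$ and the metric. Fix $j$ with $3\le j\le Cm$ (the two values $j=1,2$ are immaterial, since those windows overlap the original crossing only in the single point $p$). The successor segments $\gamma([j\kappa,j\kappa+2\kappa],x)$ and $\gamma([j\kappa,j\kappa+2\kappa],y)$ lift to arcs of $\tilde c_{x}$ and $\tilde c_{y}$ with parameters in $[\kappa,(Cm+2)\kappa]\subseteq(0,S_{0}]$ for $m$ large. Suppose, for contradiction, that they cross at a point $q\in\Upsilon$, lifting on $\tilde c_{x}$ to $\tilde q=\tilde c_{x}(s_{x})$; since $q$ also lies on $\gamma_{y}$, there is $g\in\Gamma$ with $\tilde c_{x}(s_{x})=g\,\tilde c_{y}(s_{y})$, where $s_{x},s_{y}\in(\kappa,S_{0}]$. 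Fellow-traveling gives
\[
 d\big(g\,\tilde c_{y}(s_{y}),\tilde c_{y}(s_{y})\big)=d\big(\tilde c_{x}(s_{x}),\tilde c_{y}(s_{y})\big)\le d\big(\tilde c_{x}(s_{x}),\tilde c_{y}(s_{x})\big)+|s_{x}-s_{y}|\le\epsilon_{0}+|s_{x}-s_{y}|.
\]
If $|s_{x}-s_{y}|<\ell_{0}-\epsilon_{0}$ the right-hand side is $<\ell_{0}$, impossible for a nontrivial element of $\Gamma$ since every such element displaces every point by at least its translation length $\ge\ell_{0}$; hence $g=\mathrm{id}$, and then $\tilde c_{x}(s_{x})=\tilde c_{y}(s_{y})$ is a common point of $\tilde c_{x}$ and $\tilde c_{y}$ distinct from $\tilde p$ — a contradiction. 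Therefore $|s_{x}-s_{y}|\ge\ell_{0}-\epsilon_{0}$.

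It remains to rule out this last possibility, and this is the hard part. The inequality $d(\tilde c_{y}(s_{x}),g\,\tilde c_{y}(s_{y}))\le\epsilon_{0}$, together with its mirror $d(\tilde c_{x}(s_{y}),g^{-1}\tilde c_{x}(s_{x}))\le\epsilon_{0}$, says that each of $\gamma_{x},\gamma_{y}$ returns to within $\epsilon_{0}$ of itself after time $|s_{x}-s_{y}|\ge\ell_{0}-\epsilon_{0}$; by the closing lemma in negative curvature each of these near-closed subarcs is $O(\epsilon_{0})$-shadowed by a closed geodesic, and since $\gamma_{x}$ and $\gamma_{y}$ are $\epsilon_{0}$-close over an interval containing both subarcs, these closed geodesics are freely homotopic, hence equal — call it $\delta$, with lift $\tilde\delta$. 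Thus over the interval in question $\tilde c_{x}$, $\tilde c_{y}$ and $g\,\tilde c_{y}$ all stay within $O(\epsilon_{0})$ of $\tilde\delta$; a geodesic $O(\epsilon_{0})$-close to $\tilde\delta$ over a stretch of length $\ge\ell_{0}-\epsilon_{0}$ is nearly parallel to it and, being distinct from $\tilde\delta$, crosses it at most once and then at a tiny angle, so for $\epsilon_{0}$ small it stays on one side; since $g$ preserves $\tilde\delta$ and orientation, $\tilde c_{x}$ and $g\,\tilde c_{y}$ lie on the same side, nearly parallel, and the crossing $\tilde c_{x}(s_{x})=g\,\tilde c_{y}(s_{y})$ therefore forces $g$ to be (a power of) the primitive hyperbolic element with axis $\tilde\delta$ and $\tilde c_{x},\tilde c_{y}$ to be a parallel pair of shadows of $\tilde\delta$ — which is incompatible with their transversal crossing at $\tilde p$ at a positive angle. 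Hence no such $q$ exists, and $C$ depends only on $\varrho$ and the metric, as required. I expect this last paragraph — excluding a re-crossing created by a nontrivial deck transformation, which would make both geodesics spiral along a common short closed geodesic — to be the main obstacle; everything before it (the Rauch estimate, the fellow-traveling, and the $g=\mathrm{id}$ reduction) is routine once $\epsilon_{0}$ is fixed below the systole, and it is precisely in this step that the smallness of $\epsilon_{0}$ relative to the systole and injectivity radius is used.
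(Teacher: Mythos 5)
Your first two paragraphs follow the same route as the paper: lift to the universal cover, use a Rauch/comparison estimate to get $\epsilon_0$-fellow-traveling over a window of length $\geq c_2 m$, note that two distinct geodesics in a Cartan--Hadamard surface meet at most once, and reduce a putative re-crossing downstairs to a nontrivial deck element $g$ with small displacement, which the systole forbids. That is exactly the paper's argument.

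The trouble is in the third paragraph, and in fact you never needed it. You split into the cases $|s_x-s_y|<\ell_0-\epsilon_0$ (fine) and $|s_x-s_y|\geq\ell_0-\epsilon_0$, and then spend a paragraph of shadowing/closing-lemma machinery trying to kill the second case. But the second case cannot occur, for a purely bookkeeping reason that you have passed over. The two successor segments are $\gamma([j\kappa,j\kappa+2\kappa],x)$ and $\gamma([j\kappa,j\kappa+2\kappa],y)$, i.e.\ they live over the \emph{same} parameter window $[j\kappa,j\kappa+2\kappa]$ in the original parametrizations. After your reparametrization (which shifts the $x$-parametrization by $t_x\in[0,2\kappa]$ and the $y$-parametrization by $t_y\in[0,2\kappa]$), these windows become $[j\kappa-t_x,\,j\kappa+2\kappa-t_x]$ and $[j\kappa-t_y,\,j\kappa+2\kappa-t_y]$, so any $s_x$ and $s_y$ in them satisfy $|s_x-s_y|\leq 2\kappa+|t_x-t_y|\leq 4\kappa$. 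Choosing $\kappa$ small enough that $4\kappa+\epsilon_0<\ell_0$ makes the second case vacuous, and this is precisely what the hypothesis ``$\kappa>0$ sufficiently small'' is for --- you instead used smallness of $\kappa$ only to arrange $C\kappa<c_2$, which is a separate and much weaker requirement. So the correct proof is your first two paragraphs together with the bound $|s_x-s_y|\leq 4\kappa$; the third paragraph should be deleted.

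For what it's worth, the third paragraph also has real gaps as written: the claim that the two near-closed subarcs are shadowed by \emph{freely homotopic, hence equal} closed geodesics is not justified, and the final assertion that ``parallel shadows of $\tilde\delta$'' are ``incompatible with a transversal crossing at a positive angle'' is not true in general --- two geodesics can spiral toward the same closed geodesic and still cross each other at small positive angles, which is exactly the configuration you are trying to exclude. You flagged this as the hard part; the reason it felt hard is that it is both false in the generality you stated it and unnecessary for the lemma.
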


\begin{proof}
Let $\kappa >0$ be sufficiently small that no two geodesic segments of
length $3\kappa$ on $\Upsilon$ can cross transversally more than once.
Consider lifts $\tilde{\gamma} (t,x)$ and $\tilde{\gamma} (t,y)$ of
the geodesic rays $\gamma (t,x)$ and $\gamma (t,y)$ to the universal
covering surface $\tilde{\Upsilon}$ whose initial segments
$\tilde{\gamma} ([0,2\kappa],\tilde{x})$ and $\tilde{\gamma}
([0,2\kappa],\tilde{y})$ cross transversally at angle
$<\varrho^{m}$. These geodesic rays cannot cross again, because for any
two points in a Cartan-Hadamard manifold there is only one connecting
geodesic. Consequently, if for some $j$ the geodesic segments $\gamma
([j\kappa ,j\kappa +2\kappa],x)$ and $\gamma ([j\kappa ,j\kappa
+2\kappa],y)$ were to cross, then their lifts $\tilde{\gamma}
([j\kappa ,j\kappa +2\kappa],x)$ and $\tilde{\gamma} ([j\kappa
,j\kappa +2\kappa],y)$ would contain points $\tilde{w},\tilde{z}$,
respectively, such that $\tilde{z}=g\tilde{w}$ for some element $g\not
=1$ of the group of deck transformations. However, if the initial
angle of intersection is less than $\varrho^{m}$ then the geodesic rays
$\tilde{\gamma} (t,x)$ and $\tilde{\gamma} (t,y)$ cannot diverge by
more than $\varepsilon$ for time $Cm$, where $C$ is a constant
determined by $\varepsilon$ and the curvature of $\Upsilon$ (which is
bounded, since $\Upsilon$ is compact).  If $\varepsilon >0$ and
$\kappa >0$ are sufficiently small then this would preclude the
existence of points $\tilde{w},\tilde{z}$ such that
$\tilde{w}=g\tilde{z}$ for some $g\not =1$.
\end{proof}

Because the semi-conjugacy $\pi :\Sigma_{F}\rightarrow S\Upsilon$ is
not one-to-one, two orbits of the geodesic flow can remain close for a
long time but have symbolic representations that are not close. The next
lemma shows that, at least for the symbolic dynamics constructed by
Series (cf. section~\ref{ssec:series}) and refinements such as that
described in the proof of Lemma~\ref{lemma:shortFibers}, this event has
small probability under any Gibbs state. 

Fix $\alpha >0$ and $\varepsilon >0$, and for each $m\geq 1$ let
$D_{m}=D^{\alpha,\varepsilon}_{m}$ be the set of all sequences $x\in
\Sigma$ such that there exists $(y,s)\in \Sigma_{F}$ satisfying
\begin{gather*}
	\text{distance} (\pi (\phi_{t} (x,0)),\pi (\phi_{t}
	(y,s)))\leq \varepsilon  \;\;\text{for all} \;\; |t|\leq
	e^{\alpha m} \quad \text{and}\\
	x_{i}\not =y_{i} \quad \text{for some}\;\;  |i|\leq m.
\end{gather*}

\begin{lemma}\label{lemma:nonUniqueReps}
Let $\mu$ be any Gibbs state. Then
for all sufficiently small $\varepsilon >0$ and all sufficiently large
$\alpha$ there exist $\beta <1$ and $C<\infty$ such that 
\begin{equation}\label{eq:nonUniqueReps}
	\mu (D_{m})\leq C\beta^{m} \quad \text{for all} \;\; m\geq 1.
\end{equation}
\end{lemma}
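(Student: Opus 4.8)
The plan is to reduce \eqref{eq:nonUniqueReps} to the statement that, under $\mu$, it is exponentially unlikely that $\xi_+(x^+)$ lies anomalously close to an endpoint of its own generation-$m$ arc $J^+_m(x^+)$ (or the same with $+$ replaced by $-$), and then to estimate the latter using the exponential decay of cylinder measures under a Gibbs state.

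\emph{Step 1 (geometry).} Take $\varepsilon$ smaller than the injectivity radius of $\Upsilon$ and smaller than an expansivity constant of the geodesic flow. Suppose $x\in D_m$ with witness $(y,s)\in\Sigma_F$, so the geodesic rays $p\circ\pi(\phi_t(x,0))$ and $p\circ\pi(\phi_t(y,s))$ stay within $\varepsilon$ of one another for $|t|\le e^{\alpha m}$ while $x_i\ne y_i$ for some $|i|\le m$. Lifting to the universal cover $\zz{D}$, with lifts $\varepsilon$-close at time $0$, gives two geodesic lines in $\zz{D}$ fellow-travelling within $\varepsilon$ over $|t|\le e^{\alpha m}$; on a Cartan--Hadamard surface of pinched negative curvature their endpoints on $\partial\zz{D}$ are then at visual distance at most $\eta_m:=Ce^{-ce^{\alpha m}}$ for constants $c,C>0$ depending on $\varepsilon$ and the curvature bounds. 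By Proposition~\ref{proposition:boundaryCorrespondence}(B) the $L$-lift of the $x$-orbit has endpoints $\xi_\pm(x^\pm)$, so the corresponding lift of the $y$-orbit has endpoints within $\eta_m$ of $\xi_\pm(x^\pm)$. In Series' coding the initial coordinates of $x$ record the tiles of $\bigcup_g g(\partial\mathcal P)$ traversed by the lifted geodesic, and likewise for $y$; since $x$ and $y$ differ at a coordinate of index $\le m$, the two lifts must traverse distinct tiles by crossing number $m$, which (for $\varepsilon$ small) is impossible under such long fellow-travelling \emph{unless} the $x$-lift passes within $O(\eta_m)$ of a tessellation vertex at that depth. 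Through $\xi_\pm$, and using that distinct generation-$m$ arcs have interior-disjoint closures (Proposition~\ref{proposition:boundaryCorrespondence}(E)) with $\xi_\pm(x^\pm)\in J^\pm_m(x^\pm)$, this says exactly that either $\xi_+(x^+)$ is within $\eta_m$ of $\partial J^+_m(x^+)$ or $\xi_-(x^-)$ is within $\eta_m$ of $\partial J^-_m(x^-)$; i.e. $D_m\subseteq E^+_m\cup E^-_m$ with $E^\pm_m:=\{x:\mathrm{dist}(\xi_\pm(x^\pm),\partial J^\pm_m(x^\pm))\le\eta_m\}$. (Since $\pi$ is not injective, the lift of the $y$-orbit tracking the $x$-lift is $g$-translated from the canonical one, but $g$ ranges over the finite set of deck transformations moving $\mathcal P$ to a near neighbour, so this is a bounded amount of bookkeeping; alternatively one invokes uniform expansivity to place $\pi(x,0)$ within $\eta_m$ of the $y$-orbit directly, as in the proof of Lemma~\ref{lemma:smallAngles}.)

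\emph{Step 2 (measure estimate).} It suffices to bound $\mu(E^+_m)$; the bound for $E^-_m$ is identical, since $(\Sigma^-,\sigma)$ is again a topologically mixing shift of finite type and $x\mapsto x^-$ pushes $\mu$ to a measure with exponentially small cylinder probabilities by \eqref{eq:expMixing} and the Gibbs property. Let $0<a<1$ satisfy $\mu(\Sigma_{[0,n]}(\cdot))\le a^n$ (Section~\ref{ssec:gibbs}), let $\beta_1$ be as in Proposition~\ref{proposition:boundaryCorrespondence}(D), and put $n_m:=\lceil\log(1/\eta_m)/\log(1/\beta_1)\rceil$, so $\beta_1^{n_m}\asymp\eta_m$. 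Partition $E^+_m$ over generation-$m$ cylinders: for a fixed admissible word $w$ of length $m+1$, the set of $z\in\Sigma$ with $z_0\cdots z_m=w$ and $\xi_+(z^+)$ within $\eta_m$ of $\partial J^+_m(w)$ is carried by $\xi_+$ into the $\eta_m$-neighbourhood, inside $J^+_m(w)$, of its two endpoints, which meets at most a bounded number $N_0$ of generation-$n_m$ arcs (these refine the generation-$m$ arcs, are interior-disjoint, and have length $\ge C\beta_1^{n_m}\asymp\eta_m$); hence such $z$ has $z_0\cdots z_{n_m}$ equal to one of at most $N_0$ prescribed words extending $w$, so this set has $\mu$-measure at most $N_0a^{n_m}$. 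Summing over the at most $|\mathcal A|^{m+1}$ words $w$ gives $\mu(E^+_m)\le N_0|\mathcal A|^{m+1}a^{n_m}$. Since $\eta_m=Ce^{-ce^{\alpha m}}$ we have $n_m\ge c'e^{\alpha m}$ for some $c'>0$, so $a^{n_m}$ decays faster than any exponential in $m$ and the entropy factor $|\mathcal A|^{m+1}$ is swamped: for every $\beta\in(0,1)$, $\mu(E^+_m)\le\beta^m$ once $m$ is large, and after enlarging the constant, $\mu(E^+_m)+\mu(E^-_m)\le C\beta^m$ for all $m\ge1$, which is \eqref{eq:nonUniqueReps}.

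The main obstacle is Step~1 --- converting metric fellow-travelling on $S\Upsilon$ into the combinatorial assertion that $\xi_+(x^+)$ sits near an arc boundary --- precisely because $\pi$ fails to be injective exactly on the geodesics whose lifts meet a tessellation vertex, i.e. exactly on the set whose $\eta_m$-neighbourhood $E^\pm_m$ we are estimating; one must check that the tile-by-tile matching of the two lifts, clean off this exceptional set, can be controlled near it using the explicit description of $J^\pm_m$ as $\xi_\pm$-images of cylinders together with properties (D)--(E) of Proposition~\ref{proposition:boundaryCorrespondence} (and, in variable curvature, the structural-stability transfer of Section~\ref{ssec:series}). Step~2 is routine and carries enormous slack, which is why no sharp form of the divergence rate $\eta_m$ is needed, and why it is harmless to take the parameter $\alpha$ as large as convenient (note $D^{\alpha,\varepsilon}_m$ decreases in $\alpha$ and increases in $\varepsilon$).
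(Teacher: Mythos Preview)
Your approach is essentially the paper's: show that $x\in D_m$ forces $\xi_+(x^+)$ (or $\xi_-(x^-)$) to lie anomalously close to an arc endpoint of generation $\le m$, then use that a Gibbs state gives exponentially small mass to the resulting cylinder constraint. Two differences are worth noting.

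First, your divergence estimate $\eta_m=Ce^{-ce^{\alpha m}}$ is the sharp one; the paper only claims the initial vectors $\pi(x,0),\pi(y,s)$ lie within $\kappa e^{-\alpha m}$ (an imprecisely justified bound, though still a valid upper estimate since $e^{-e^{\alpha m}}\le e^{-\alpha m}$). Consequently the paper must choose $\alpha$ large enough that the cylinder decay $e^{-b\alpha m}$ beats the entropy factor $e^{2Am}$ counting arc endpoints, whereas your doubly-exponential $\eta_m$ gives a cylinder constraint at generation $n_m\asymp e^{\alpha m}$ and hence superexponential decay, swamping $|\mathcal A|^{m+1}$ for any $\alpha>0$.

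Second, the ``bookkeeping'' you defer is exactly what the paper makes explicit via its case split. Having first shown the initial vectors are within $\kappa e^{-\alpha m}$ in $S\Upsilon$, the paper distinguishes: case~(a), where the lift $L$ is continuous on this small ball, so the $L$-lift of $y$ \emph{is} the tracking lift and your tile-traversal argument applies verbatim; and case~(b), where $L$ is discontinuous nearby, forcing $L\circ\pi(x,0)$ to lie within $C\kappa e^{-\alpha m}$ of a vector tangent to a side of $\mathcal P$, hence $\xi_\pm(x^\pm)$ within $C'\kappa e^{-\alpha m}$ of one of the finitely many boundary-geodesic endpoints on $\partial\zz D$. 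Since those are themselves arc endpoints of bounded generation (and arc endpoints of generation $k$ are also generation-$m$ arc endpoints for $m\ge k$), case~(b) again lands $x$ in your $E^+_m\cup E^-_m$ after enlarging $\eta_m$ by a fixed factor. So your containment $D_m\subset E^+_m\cup E^-_m$ is correct, but its justification requires this dichotomy rather than the single tile-matching argument you sketch; your Step~1 as written only covers case~(a).
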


\begin{proof}
Recall (Proposition~\ref{proposition:conformalDeformation} and
following) that the geodesic flow with respect to a
Riemannian metric  of variable negative
curvature is orbit-equivalent to the geodesic flow on the same surface
but with a Riemannian metric of constant curvature $-1$, and that the
orbit equivalence is given by a H\"{o}lder-continuous mapping
$S\Upsilon  \rightarrow S\Upsilon$. Therefore, it suffices to prove
the lemma for the geodesic flow on a surface of
constant curvature $-1$. (The conformal deformation of metric might
change the values of $\beta$ and $\varepsilon$, but this is
irrelevant.) 

Suppose, then, that the Riemannian metric has curvature $-1$, and that
$\pi \circ \phi_{t} (x,0)$ and $\pi \circ \phi_{t} (y,s)$ are two
geodesics on the unit tangent bundle that stay within distance
$\varepsilon$ for all $|t|\leq e^{\alpha m}$, for some small
$\varepsilon$ and large $\alpha$. Because distinct orbits of the
geodesic flow separate exponentially fast (at exponential rate $1$,
since the curvature is $-1$), the initial vectors $\pi (x,0)$ and $\pi
(y,s)$ must be within distance $\kappa e^{- \alpha m}$, for some
constant $\kappa =\kappa (\varepsilon) >0$ independent of $\alpha$ and
$m$ (provided $m$ is sufficiently large).  

Recall that geodesics can be lifted to $S\zz{D}$ via the mapping $L$
described in section~\ref{ssec:series}. This mapping has
discontinuities only at vectors  tangent to one of the sides
of the fundamental polygon $\mathcal{P}$, but everywhere else is
smooth; consequently, either
\begin{enumerate}
\item [(a)] $L\circ \pi (x,0)$ and $L\circ \pi (y,s)$ are  within
distance $C\kappa e^{- \alpha m}$, or
\item [(b)] $L\circ \pi (x,0)$ is within distance
$C\kappa e^{- \alpha  m}$ of a vector tangent to one of the sides of
$\mathcal{P}$.
\end{enumerate}
In case (a), the lifted geodesics must have endpoints on $\partial
\zz{D}$ that are within distance $C'\kappa e^{- \alpha m}$; in case
(b) the lifted geodesics must have endpoints within distance $C'\kappa
e^{-\alpha m}$ of the endpoints on $\partial \zz{D}$ of one of the
geodesics that bound $\mathcal{P}$ (recall that the sides of
$\mathcal{P}$ are geodesic arcs). In either case, if $x$ and $y$
disagree in some coordinate $|i|\leq m$ then by
Proposition~\ref{proposition:boundaryCorrespondence} at least one of
the endpoints of the geodesic  $L\circ \pi \circ \phi_{t} (x,0)$ must
be within distance $C'\kappa e^{- \alpha m}$ of 
one of the endpoints of an arc $J_{k} (z^{+})$ of some generation
$k\leq m$. (Recall that the arcs $J_{k} (z^{+})$ correspond to cylinder
sets $\Sigma^{+}_{[0,m]} (z^{+})$.) 
There are at most $e^{2Am}$ such endpoints, where $A$ is
the number of sides of $\mathcal{P}$. 

If $\zeta$ is one of the endpoints of an arc $J_{k} (z^{+})$ of
generation $k$, then $\zeta$ has two symbolic expansions (i.e., there
are two sequences $z^{+},z_{*}^{+}$ that are mapped to $\zeta$ by
$\xi$). Since the arcs $J_{k} (\cdot)$ do not shrink faster than
exponentially (Proposition~\ref{proposition:boundaryCorrespondence}
part (D)), the forward endpoint $\xi (x^{+})$ of the geodesic  $L\circ
\pi \circ \phi_{t} (x,0)$ will lie within distance  $C'\kappa e^{-
\alpha m}$ of $\xi (z^{+})$ only if either 
\[
	x_{i}=z_{i} \;\; \forall \; 0\leq i\leq  C'' \alpha m \quad \text{or}
	\quad
	x_{i}= (z_{*})_{i} \;\; \forall \; 0\leq i\leq C''\alpha m,
\]
for a suitable constant $C''>0$ not depending on $m$ or
$\alpha$. Hence, since a Gibbs states $\mu$ will attach mass at most
$e^{-bm}$ to cylinder sets of generation $m$, for some $b=b (\mu) >0$,
it follows that 
\[
	\mu (D_{m})\leq C''' \exp \{-b \alpha m \} \exp \{2Am \}.
\]
By choosing $\alpha >0$ such that $b\alpha >2A$ we can arrange that
\eqref{eq:nonUniqueReps} holds.
\end{proof}

\begin{lemma}\label{lemma:angle}
For any Gibbs
state $\mu$
there exists $\beta =\beta (\varrho)<1$ such that for all
sufficiently large $m$ and all $k\not =0$,
\[
	\mu (A^{r}_{m,k})\leq \beta^{m}.
\] 
\end{lemma}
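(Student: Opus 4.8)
The plan is to show that membership in the set $A^{r}_{m,k}$ forces the sequence $x$ to be nearly periodic with period $k$ over a window of length proportional to $m$, and then to invoke Lemma~\ref{lemma:noRepeats}, whose bound is uniform in $k$. The starting point is that the two fibers involved lie on a \emph{single} bi-infinite geodesic: since $(\sigma^{k}x,0)=\phi_{S_{k}F(x)}(x,0)$, the segments $\mathcal{F}^{r}_{x}$ and $\mathcal{F}^{r}_{\sigma^{k}x}$ project under $p\circ\pi$ to two sub-arcs of the geodesic $\Gamma_{x}$ through $\pi(x,0)$, separated in arclength by $\ell=S_{k}F(x)$, with $|k|\min F\le \ell\le |k|\max F$. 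Hence $x\in A^{r}_{m,k}$ means that $\Gamma_{x}$ has an approximate self-return at lag $\ell$: it passes through a common point of $\Upsilon$ at two times whose difference $\ell'$ satisfies $\ell'\asymp k$, with tangent directions making (unoriented) angle $<\varrho^{m}$. (If $\ell$ is smaller than the length of the shortest closed geodesic this is impossible, so we may assume $|k|$ exceeds a fixed threshold.) Fixing orientations, either $d_{S\Upsilon}(v,v')<C\varrho^{m}$ or $d_{S\Upsilon}(v,-v')<C\varrho^{m}$ for the two tangent vectors $v,v'$ at the crossing; in the second (nearly anti-parallel) case one argues with the time-reversed orbit and the palindromic analogue of Lemma~\ref{lemma:noRepeats}, which is proved by the same mixing argument, so I will treat only the first case.

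First I would extract a shadowing estimate. Because the curvature of $\Upsilon$ is pinched between two negative constants, nearby geodesics diverge at an exponential rate $\lambda$ that is bounded above; this and Lemma~\ref{lemma:smallAngles} give $d_{S\Upsilon}(\gamma_{t}(v),\gamma_{t+\ell'}(v))\le C\varrho^{m}e^{\lambda|t|}$ for $|t|\le c_{0}m$, hence $\le\varrho^{m/2}$ for $|t|\le c_{1}m$ once $c_{1}$ is small relative to $\log(1/\varrho)$ and $\lambda$. Carrying this through the semi-conjugacy $\pi$, and using that the height function may be taken as small as we like (Lemma~\ref{lemma:shortFibers}), the suspension orbits of $(x,0)$ and of $(\sigma^{k}x,0)$ remain within $\varrho^{m/2}$ of one another over a window of at least $c_{2}m$ symbol steps. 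Now in Series' coding the symbol $x_{i}$ records which side of the fundamental polygon $\mathcal{P}$ the lifted geodesic exits at step $i$; two orbits that stay $\varrho^{m/2}$-close over a window of $c_{2}m$ steps therefore have \emph{identical} itineraries, i.e.\ $x_{i}=x_{i+k}$ for all $|i|\le c_{3}m$, \emph{unless} at some step $i_{0}$ with $|i_{0}|\le c_{2}m$ the orbit of $(x,0)$ passes within $\varrho^{m/2}$ of a lift of a vertex of $\mathcal{P}$ --- the only way the coding can ``jump'' between two $\varrho^{m/2}$-close geodesics. Let $\mathcal{D}_{m,k}^{(r)}$ be the set of $x$ for which this exceptional alternative occurs.

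Off $\mathcal{D}_{m,k}^{(r)}$ we have $x_{i}=x_{i+k}$ for all $|i|\le c_{3}m$, so Lemma~\ref{lemma:noRepeats} (valid for every $k\ne0$ with a rate $\beta_{0}<1$ independent of $k$) gives $\mu\bigl(A^{r}_{m,k}\setminus\mathcal{D}_{m,k}^{(r)}\bigr)\le\beta_{0}^{\,c_{3}m}$. To bound $\mu(\mathcal{D}_{m,k}^{(r)})$ I would argue as in the proof of Lemma~\ref{lemma:nonUniqueReps}: for a fixed step $i_{0}$ and a fixed vertex of $\mathcal{P}$ (there are finitely many vertices), a geodesic coded by $\sigma^{i_{0}}x$ that passes within $\varrho^{m/2}$ of that vertex has \emph{both} of its endpoints on $\partial\zz{D}$ confined to arcs of length $O(\varrho^{m/2})$, so by part (D) of Proposition~\ref{proposition:boundaryCorrespondence} the sequence $\sigma^{i_{0}}x$ is confined to $O(1)$ cylinders of generation $\ge c_{4}m$; summing over the $O(m)$ choices of $i_{0}$ and the finitely many vertices gives $\mu(\mathcal{D}_{m,k}^{(r)})\le Cm\,\beta_{\mu}^{\,c_{4}m}$, where $\beta_{\mu}<1$ is a bound on the $\mu$-mass of a cylinder of generation $n$. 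Adding the two estimates yields $\mu(A^{r}_{m,k})\le\beta_{0}^{\,c_{3}m}+Cm\,\beta_{\mu}^{\,c_{4}m}\le\beta^{m}$ for a suitable $\beta=\beta(\varrho)<1$ and all large $m$, uniformly in $k\ne0$.

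The step I expect to be the main obstacle is the dichotomy of the second paragraph together with the estimate of $\mu(\mathcal{D}_{m,k}^{(r)})$: one must make quantitatively precise that $\varrho^{m/2}$-closeness of two suspension orbits propagates to \emph{exact} equality of the coding over a window of length $\asymp m$ except when the orbit skims a polygon vertex, and then that skimming a vertex to within $\varrho^{m/2}$ is exponentially rare under $\mu$ --- which uses the uniform exponential contraction of the coding arcs (Proposition~\ref{proposition:boundaryCorrespondence}(D)--(E)) and the Gibbs cylinder bound. Some care is also needed to arrange the geometric constants ($\varrho$, the divergence rate $\lambda$, the arc-contraction exponents, the cylinder-decay exponent) so that $c_{4}>0$ with room to spare; this can be secured by first passing, via Lemma~\ref{lemma:shortFibers}, to a refinement of the symbolic dynamics in which the fibers, and hence the relevant geometric scales, are as small as desired. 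By contrast, the uniformity in $k$ is automatic once the non-degenerate case is routed through Lemma~\ref{lemma:noRepeats}, whose rate does not depend on $k$.
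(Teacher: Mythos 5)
Your proposal follows essentially the same path as the paper's proof: a small-angle crossing forces the suspension orbits through $(x,0)$ and $(\sigma^k x,0)$ — which lie on a single orbit of the flow — to shadow each other over a window of length $\asymp m$ (the content of Lemma~\ref{lemma:smallAngles}); except on an exceptional set the shadowing propagates to exact coincidence $x_i=x_{i+k}$ of the itineraries for $|i|\le m$; and Lemma~\ref{lemma:noRepeats} then finishes. The paper's handling of the exceptional set is terser than yours: it works directly with $A^{r}_{m,k}\setminus D^{\alpha,\varepsilon}_m$, where $D^{\alpha,\varepsilon}_m$ is precisely the set from Lemma~\ref{lemma:nonUniqueReps}, and cites that lemma for the bound; you unpack the vertex-skimming dichotomy that lives inside its proof. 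One imprecision in your version: a geodesic in $\zz{D}$ passing within $\varrho^{m/2}$ of a fixed lifted vertex forms a \emph{two}-parameter family, so the endpoint pair $(\xi_-,\xi_+)$ is confined to a $\varrho^{m/2}$-neighborhood of a curve in $\partial\zz{D}\times\partial\zz{D}$, not to a product of two short arcs; the claim that \emph{both} endpoints are independently confined is false. The estimate is nevertheless correct because, conditionally on the past itinerary (hence on $\xi_-$ up to an exponentially short arc), proximity to the vertex or to a boundary-geodesic endpoint pins $\xi_+$ into a short arc, and the Gibbs cylinder bound together with the mixing inequality \eqref{eq:expMixing} gives the exponential decay — which is exactly how Lemma~\ref{lemma:nonUniqueReps} proceeds. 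Finally, your flag that a nearly anti-parallel crossing does not yield $x_i=x_{i+k}$ but rather a palindromic relation, and hence requires a time-reversed variant of Lemma~\ref{lemma:noRepeats}, is a genuine subtlety that the paper's proof does not spell out; the repair you sketch (same mixing argument applied to the reversed sequence) is the right idea, but as stated it is left as a gesture rather than carried out.
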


 \begin{proof} 
By Lemma~\ref{lemma:smallAngles}, it suffices to show
that there exist $\alpha >0$, $\varepsilon >0$ and $\beta <1$ such
that for all large $m$,
\[
	\mu (A^{r}_{m,k}\setminus D^{\alpha,\varepsilon}_{m})\leq \beta^{m}. 
\]
Suppose that $x\in A^{r}_{m,k}\setminus D^{\alpha,\varepsilon}_{m}$;
then the geodesic segments $p\circ \pi (\mathcal{F}^{r}_{x})$ and
$p\circ \pi (\mathcal{F})^{r}_{\sigma^{k}x}$ cross at angle less than
$\varrho^{m}$; in particular, there exist $r\leq s_{1}\leq F (x)+F
(\sigma x)$ and $r\leq s_{2}\leq F (\sigma^{k}x)+F (\sigma^{k+1}x)$
such that $p\circ p (x,s_{1})=p\circ \pi
(\sigma^{k},s_{2})$. Consequently, for some $\alpha >0$ depending on
the curvature of the underlying Riemannian metric,
\[
\text{distance} (\pi (\phi_{t} (x,s_{1})),\pi (\phi_{t}
	(\sigma^{k},s_{2})))\leq \varepsilon  \;\;\text{for all} \;\; |t|\leq
	e^{\alpha m} 	.
\]
Since $x\not \in  D^{\alpha,\varepsilon}_{m}$, it follows that 
$x_{i}=x_{i+{k}}$ for all $|i|\leq m$. The lemma now follows from
Lemma~\ref{lemma:noRepeats}.

\end{proof}

\section{Proof of Theorem \ref{theorem:random}}\label{sec:proof}

In this section we deduce Theorem~\ref{theorem:random} from the
results of section~\ref{sec:u-statistics}, using the symbolic dynamics
for the geodesic flow outlined in section~\ref{sec:symbolicDynamics}.
For this symbolic dynamics, the normalized Liouville measure $\nu_{L}$
pulls back to a measure $\mu^{*}$ on the suspension space $\Sigma$
that is the suspension (cf. equation~\eqref{eq:suspensionMeasures}) of
a Gibbs state $\mu=\mu_{L}$ on $\Sigma$. 
Proposition~\ref{proposition:H4} implies that for any Gibbs state
$\mu$ there exist values of $r$ such that the functions $h^{r}$ and
$h^{r}_{m}$ in equation~\eqref{eq:h-epsilon} satisfy
Hypothesis~\ref{hypothesis:short-memory} for $\lambda =\mu_{L}$, and
therefore also for any probability measure $\lambda$ on $\Sigma$ that
is absolutely continuous with respect to $\mu_{L}$. Recall from
Remark~\ref{remark:crossSection} that replacing the functions
$h,h_{m}$ by $h^{r},h^{r}_{m}$ is equivalent to moving the
Poincar\'{e} section of the suspension flow.  For notational ease, we
shall assume henceforth that the cross section has been adjusted in
such a way that (H3) holds for $r =0$, and drop the superscript from
the functions $h,h_{m}$.

Let $\lambda $ be the projection to
$\Sigma$ of the suspension measure $\mu^{*}$, that is, the absolutely
continuous probability measure defined by
\begin{equation}\label{eq:projectSuspension}
	\lambda (A)= E_{\mu_{L}} (I_{A}F) /E_{\mu_{L}}F.
\end{equation}
By Proposition~\ref{proposition:cohomology}, the  Hoeffding projection
$h_{+}$ of the function $h$ relative to the measure $\lambda$ is a scalar
multiple of $F$, in particular,
\begin{equation}\label{eq:globalHoeffding}
	h_{+}=\kappa F
\end{equation}
where $\kappa =1/ (4\pi |\Upsilon |)$. Consequently, case
\eqref{eq:caseA} of Theorem~\ref{theorem:u-statistic} applies.
Theorem~\ref{theorem:random} would  follow immediately from
Theorem~\ref{theorem:u-statistic} if not for
the presence of the ``boundary terms''sum $\sum_{1}^{\tau} (
g_{0}+g_{1})$ in \eqref{eq:self-intersection-Count-random}, since this
is of order $O(T)$. The following lemma will show that this sum,
normalized by $T$, converges in distribution as $T \rightarrow
\infty$, and that the limits depend only on the initial and final
points of the flow segment.

\begin{lemma}\label{lemma:ergodic}
For $\lambda -$almost
every $x\in \Sigma $ , every $0\leq s\leq F (x)$, and every $0\leq
r\leq F (\sigma^{\tau_{T}}x)$
\begin{align}\label{eq:initial-final}
	\lim_{T \rightarrow \infty}
	{T}^{-1}&\sum_{i=1}^{\tau_{T}}g_{0}(s,x,\sigma^{i}x)  =
	s \kappa \quad \text{and}\\
\label{eq:IF2}
 	\lim_{T	\rightarrow
	\infty}{T}^{-1}&\sum_{i=0}^{\tau_{T}}g_{1} (S_{\tau_{T} +1}F
	(x)-r,x,\sigma^{i}x) = r \kappa ,
\end{align}
where $\kappa =\kappa_{\Upsilon}=1/ (4\pi |\Upsilon |)$.
\end{lemma}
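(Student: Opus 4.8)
The plan is to read each sum as counting how many of the fibers $\mathcal{F}_{\sigma^{i}x}$ project onto geodesic segments that cross a short, \emph{fixed} reference segment, and then to run an ergodic theorem for the shift $\sigma$ (rather than for the geodesic flow), so that the almost‑sure statement is automatically one about $\lambda$. Work throughout with the refinement of Lemma~\ref{lemma:shortFibers}, so $0<F\le\varepsilon$ with $\varepsilon$ small enough that two geodesic segments of length $\le 2\max F$ cross transversally at most once; thus $\alpha_{s}(x):=p\circ\pi(\{(x,t):0\le t<s\})$ has length $s\le F(x)$ and is crossed at most once by each fiber image. Since $d\lambda/d\mu_{L}=F/E_{\mu_{L}}F$ is bounded above and below, it suffices to prove each limit for $\mu_{L}$‑a.e.\ $x$; and since $S_{n}F/n\to E_{\mu_{L}}F$ a.s., $\tau_{T}(x)/T\to 1/E_{\mu_{L}}F$ for $\mu_{L}$‑a.e.\ $x$, so both assertions reduce to showing
\[
	\tau_{T}(x)^{-1}\sum_{i}g_{0}(s,x,\sigma^{i}x)\longrightarrow \kappa s\,E_{\mu_{L}}F
\]
uniformly in $s\in[0,\varepsilon]$, and likewise for the $g_{1}$‑sum, where the reference segment is the terminal partial fiber over $\sigma^{\tau_{T}}x$, of length $r$.

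Regard $\tilde g_{0}(y;z,s):=g_{0}(s,z,y)\in\{0,1\}$ as a bounded Borel function of $y\in\Sigma$ with parameter $(z,s)$ in the compact space $\Sigma\times[0,\varepsilon]$. Exactly as in the proof of Lemma~\ref{lemma:sandwich}, for each $\eta>0$ one can choose jointly continuous $K^{\pm}(y;z,s)$ with $K^{-}\le\tilde g_{0}\le K^{+}$ and $\int_{\Sigma}(K^{+}-K^{-})(y;z,s)\,d\mu_{L}(y)<\eta$ uniformly in $(z,s)$: the only obstruction to continuity in $y$ is near‑tangency of $\mathcal{F}_{y}$ to $\alpha_{s}(z)$ or grazing of one of its endpoints, a set of $y$ of small $\mu_{L}$‑measure uniformly in $(z,s)$ (cf.\ Lemma~\ref{lemma:smallBalls}), while continuity in $z$ comes from H\"{o}lder continuity of $\pi$. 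Now run the proof of Proposition~\ref{proposition:ergodic} verbatim, with the compact parameter space $\Sigma\times[0,\varepsilon]$ in the role of the second copy of $\mathcal{X}$, $(\Sigma,\sigma,\mu_{L})$ the orbit system, and kernel $K^{\pm}$: equicontinuity of $\{K^{\pm}(\cdot\,;z,s)\}_{(z,s)}$ together with Birkhoff's theorem for finitely many functions give, for $\mu_{L}$‑a.e.\ $x$,
\[
	\sup_{(z,s)}\Bigl|\,n^{-1}\!\sum_{j=1}^{n}K^{\pm}(\sigma^{j}x;z,s)-\!\int_{\Sigma}\!K^{\pm}(y;z,s)\,d\mu_{L}(y)\,\Bigr|\longrightarrow 0 .
\]
Squeezing $\tilde g_{0}$ between $K^{-}$ and $K^{+}$ and letting $\eta\downarrow 0$ along a sequence yields that $n^{-1}\sum_{j=1}^{n}g_{0}(s,z,\sigma^{j}x)\to\beta_{0}(s,z):=\int_{\Sigma}g_{0}(s,z,y)\,d\mu_{L}(y)$ uniformly in $(z,s)$, for $\mu_{L}$‑a.e.\ $x$. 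Taking $n=\tau_{T}(x)$, with $(z,s)=(x,s)$ for \eqref{eq:initial-final} and $(z,s)=(\sigma^{\tau_{T}(x)}x,r)$ for \eqref{eq:IF2} (the uniformity absorbs the dependence of $z$ on $T$), and multiplying by $\tau_{T}/T$, reduces everything to the identity $\beta_{0}(s,z)=\kappa s\,E_{\mu_{L}}F$, independent of $z$; the analogous identity for the terminal‑fiber kernel gives $\kappa r\,E_{\mu_{L}}F$.

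To establish this identity, fix $z$ and put $\alpha=\alpha_{s}(z)$. Since $\pi$ carries $\mu_{L}^{*}$ to $\nu_{L}$, equation~\eqref{eq:suspensionMeasures} gives, for every bounded Borel $\Phi\ge 0$ on $S\Upsilon$,
\[
	E_{\mu_{L}}F\cdot\int_{S\Upsilon}\Phi\,d\nu_{L}=\int_{\Sigma}\Bigl(\int_{0}^{F(y)}\Phi(\pi(y,t))\,dt\Bigr)\,d\mu_{L}(y).
\]
Apply this with $\Phi=\Phi_{\varepsilon'}:=(2\varepsilon')^{-1}I_{G_{\varepsilon'}}$, where $G_{\varepsilon'}=\{v\in S\Upsilon:\gamma([-\varepsilon',\varepsilon'],v)\text{ crosses }\alpha\text{ transversally}\}$; by the computation in the proof of Proposition~\ref{proposition:intensities} and the identity~\eqref{eq:charLiouville}, $\int\Phi_{\varepsilon'}\,d\nu_{L}=\kappa(\alpha;\nu_{L})=\kappa_{\Upsilon}s=\kappa s$ for every $\varepsilon'$. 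On the symbolic side, because $\mathcal{F}_{y}$ together with its $\varepsilon'$‑extensions meets $\alpha$ at most once, the quantity $(2\varepsilon')^{-1}\int_{0}^{F(y)}I_{G_{\varepsilon'}}(\pi(y,t))\,dt$ lies in $[0,1]$ and equals $g_{0}(s,z,y)$ as soon as $\varepsilon'$ is smaller than the distance in flow time of the crossing from the endpoints of $\mathcal{F}_{y}$; hence it converges to $g_{0}(s,z,y)$ as $\varepsilon'\downarrow 0$ for every $y$ outside the set $\{y:\mathcal{F}_{y}\text{ or a neighbor meets }\alpha\text{ exactly at a fiber endpoint}\}$, which is $\mu_{L}$‑null (for $\alpha$ not contained in a boundary geodesic) by Series' description of the coding, Proposition~\ref{proposition:boundaryCorrespondence}. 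Dominated convergence now gives $\kappa s\,E_{\mu_{L}}F=\beta_{0}(s,z)$, independent of $z$; the same argument with $\alpha$ the terminal partial fiber of length $r$ gives the constant for \eqref{eq:IF2}.

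The step I expect to be the crux is this last identification of $\beta_{0}$: one cannot simply quote Proposition~\ref{proposition:intensities}, since the initial vectors $\pi(x,0)$ occurring here lie on a cross‑section that is $\nu_{L}$‑null, so the crossing intensity has to be re‑derived from the Liouville side through the suspension identity, with the fiber‑endpoint issues controlled as above. The parametrized sandwiching of the discontinuous kernel $\tilde g_{0}$ (needed so that the uniform version of Proposition~\ref{proposition:ergodic} applies) is the other technical point, handled just as in Lemmas~\ref{lemma:sandwich} and~\ref{lemma:smallBalls}.
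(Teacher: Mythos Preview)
Your argument is correct and complete, but it takes a different route from the paper's. The paper works directly on $S\Upsilon$: it rewrites $\sum_{i}g_{0}(s,x,\sigma^{i}x)$ as a double sum $\sum_{i=1}^{[s/\delta]}\sum_{j=1}^{[T/\delta]}H_{\delta}(\tilde\gamma(-s+i\delta),\tilde\gamma(j\delta))+O(1)$ over the intersection kernel of section~\ref{sec:kernel}, applies Birkhoff's theorem along the geodesic flow to the inner sum for each fixed $u=\tilde\gamma(-s+i\delta)$ (together with Lemma~\ref{lemma:H1}, $H_{\delta}1=\delta^{2}\kappa$), and then sums the $[s/\delta]$ contributions and lets $\delta\to 0$. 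By contrast you stay on the symbolic side: you treat $g_{0}(s,z,\cdot)$ as a single kernel on $\Sigma$, sandwich it by continuous kernels with uniformly small defect, run the parametrized ergodic argument of Proposition~\ref{proposition:ergodic} for the shift, and then identify the limiting integral $\int g_{0}(s,z,y)\,d\mu_{L}(y)$ via the suspension identity and Proposition~\ref{proposition:intensities}/\eqref{eq:charLiouville}. Your approach is more elaborate in its identification of the constant, but it pays off in two ways: the almost-sure statement is visibly one about $\mu_{L}$ (hence $\lambda$) on $\Sigma$, and the explicit uniformity in $(z,s)$ lets you handle \eqref{eq:IF2} cleanly despite the $T$-dependent base point $\sigma^{\tau_{T}(x)}x$---a point the paper dispatches with ``the second limit is obtained in a similar fashion.''
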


\begin{proof}
The relation \eqref{eq:initial-final} follows from the results of
section~\ref{sec:kernel} and the ergodic theorem.  The sum
$\sum_{i=1}^{\tau_{T}}g_{0}(s,x,\sigma^{i}x) $ counts the number of
intersections of the geodesic segment $p\circ \pi (\{\phi_{t} (x,s)
\})_{-s\leq t\leq 0}$ with the union of the segments $p\circ p
(\mathcal{F}_{\sigma^{j}x})$ for $0\leq j\leq \tau_{T} (x)$; this sum
can be re-expressed in terms of the intersection kernel $H_{\delta}$
(cf. section~\ref{sec:kernel}), yielding
\[
	\sum_{i=1}^{\tau_{T}}g_{0}(s,x,\sigma^{i}x)=\lim_{\delta
	\rightarrow 0}\sum_{i=1}^{[s/\delta
	]}\sum_{j=1}^{[T/\delta ]} H_{\delta} (\tilde{\gamma}
	(-s+i\delta),\tilde{\gamma} (j\delta)) +O (1) .
\]
(The error term accounts for the possibility of an intersection with
the geodesic segment corresponding to the final partial fiber, and
therefore is either 0 or 1.)
For each fixed point $\tilde{\gamma} (-s+i\delta)$, the ergodic
theorem and Lemma~\ref{lemma:H1} imply that 
\[
	\lim_{T \rightarrow \infty}T^{-1} \sum_{j=1}^{[T/\delta]}
	H_{\delta} (\tilde{\gamma} 
	(-s+i\delta),\tilde{\gamma} (j\delta)) =
	\delta^{2}\kappa 
\]
almost surely. Letting $\delta  \rightarrow 0$ one obtains the first
limit in \eqref{eq:initial-final}. The second limit is obtained in a
similar fashion.
\end{proof}

\begin{proof}
[Proof of Theorem~\ref{theorem:random}]

The measure $\lambda$ is the projection of the suspension measure
$\mu^{*}_{L}$, which in turn is the pullback to the suspension space
$\Sigma_{F}$ of the Liouville measure on $S\Upsilon$.  Thus, if
$(x,s)\in \Sigma_{F}$ is randomly chosen with distribution
$\mu^{*}_{L}$ then $x$ has distribution $\lambda$ and the normalized
vertical coordinate $s/F (x)$ is uniformly distributed on the unit
interval $[0,1]$, and is independent of $x$. Hence,
Lemma~\ref{lemma:ergodic} implies that if $(x,s)$ has distribution
$\mu^{*}_{L}$ then the first normalized boundary sum
\eqref{eq:initial-final} will converge to $YF (x)\kappa$, where $Y$ is
a uniform [0,1]  random variable independent of $x$. Since the
double sum $\sum_{0}^{\tau}\sum_{0}^{\tau}$ in the representation
\eqref{eq:u-representation} depends only on $x$, it follows from
Corollary~\ref{corollary:acU} that the random variables
\[
	\frac{\sum_{0}^{\tau}\sum_{0}^{\tau}h
	(\sigma^{i}x,\sigma^{j}x)-E_{\mu^{*}_{L}}h_{+}}{T}
	\quad \text{and} \quad 
	\frac{\sum_{i=1}^{\tau_{T}}g_{0}(s,x,\sigma^{i}x)}{T} 
\]
are asymptotically independent as $T \rightarrow \infty$. Similarly,
by Corollary~\ref{corollary:acU} and the renewal theorem,
if $(x,s)$ has distribution $\mu^{*}_{L}$ then the overshoot $R_{T}$
and the terminal state $\sigma^{\tau}x$ are asymptotically independent
of  $x,Y$ and of the double sum $\sum_{0}^{\tau}\sum_{0}^{\tau}$, and
so by Lemma~\ref{lemma:ergodic}, the second boundary sum
\[
	{T}^{-1}\sum_{i=0}^{\tau_{T}}g_{1} (S_{\tau_{T} +1}F
	(x)-t+s,x,\sigma^{i}x)
\]
is asymptotically independent of the other two sums in
\eqref{eq:self-intersection-Count-random}. Theorem~\ref{theorem:random}
now follows, as Lemma~\ref{lemma:ergodic} implies that the normalized
boundary-term sum converges almost surely and
Corollary~\ref{corollary:acU} implies that the normalized sum
$\sum_{1}^{\tau}\sum_{1}^{\tau}$ converges in distribution.
\end{proof}

\section{Proof of Theorem \ref{theorem:local}}\label{sec:local}

\subsection{Local self-intersection counts}\label{ssec:localSI}
Recall that for any smooth function $\varphi:\Upsilon  \rightarrow
\zz{R}_{+}$ the $\varphi -$\emph{localized} self-intersection counts of a
geodesic $\gamma$ are defined by 
\[
	N_{\varphi} (T)=N_{\varphi} (T;\gamma)=\sum_{i=1}^{N (T)}\varphi (x_{i})
\]
where $N (T)=N (T;\gamma)$ is the number of self-intersections of the
geodesic segment $\gamma [0,T]$ and $x_{i}$ are the locations of the
self-intersections on $\Upsilon$. Like the global self-intersection
counts, these can be expressed as sums of suitable functions defined
on a shift of finite type. Let $(\Sigma_{F},\phi_{t})$ and $(\Sigma
,\sigma)$ be the suspension flow and shift of finite type,
respectively, provided by
Proposition~\ref{proposition:symbD-per}. Define
a function $h_{\varphi}:\Sigma \times \Sigma \rightarrow \zz{R}_{+}$
by setting
\begin{equation}\label{eq:h-phi}
	h_{\varphi} (x,y)=\varphi  (z (x,y))h (x,y)
\end{equation}
where $h=1$ if the geodesic segments corresponding to the suspension
flow segments $\mathcal{F}_{x}$ and $\mathcal{F}_{y}$ intersect at a
point $z=z (x,y)\in \Upsilon$, and $h=0$ if these segments do not
intersect. By the same reasoning as in
equation~\eqref{eq:self-intersection-Count-random},
\begin{equation}\label{eq:localSIRepresentation}
	N_{\varphi} (T;\gamma)=\frac{1}{2}
	\sum_{i=1}^{\tau_{T}}\sum_{i=1}^{\tau_{T}} h_{\varphi}
	(\sigma^{i}x,\sigma^{j}x) +O (T).
\end{equation}
The error term accounts for intersections with the geodesic segments
corresponding to the first and last partial fibers (cf. equation
\eqref{eq:self-intersection-Count-random}), of which there are at most
$O (T)$.  Because the normalization in Theorem~\ref{theorem:local}
(cf. relation \eqref{eq:local}) entails division by $T^{3/2}$, the
error term in \eqref{eq:localSIRepresentation} can be ignored.

The proof of Theorem~\ref{theorem:local}, like that of
Theorem~\ref{theorem:random} in section~\ref{sec:proof}, will rely on
Corollary~\ref{corollary:acU}. Once again, let $\mu^{*}$ be the
pullback of the Liouville measure to $\Sigma_{F}$; recall that this is
the suspension of a Gibbs state $\mu$ for the shift. Let $\lambda$ be
the projection of $\mu^{*}$ to $\Sigma$, as defined by
\eqref{eq:projectSuspension}. This is absolutely continuous with
respect to $\mu$, so by Corollary~\ref{corollary:acU} the conclusions
of Theorem~\ref{theorem:u-statistic} remain valid for $\lambda$.  We
must show (1) that the function $h_{\varphi}$ satisfies Hypothesis
\ref{hypothesis:short-memory} with respect to $\lambda$, and (2) that
it is the \emph{second} case of Theorem~\ref{theorem:u-statistic} that
applies when the support of $f$ has small diameter, that is, that the
Hoeffding projection 
\begin{equation}\label{eq:hoeffdingF}
	h_{\varphi}^{+} (x):= \int_{\Sigma}h_{\varphi} (x,y)\,d\lambda (y)
\end{equation}
is not cohomologous to a scalar multiple of $F$. The first of these
tasks will be carried out in section~\ref{ssec:HH4} by an argument
similar to that carried out in section~\ref{sec:verification}
above for the function $h$. The second will be addressed in
sections~\ref{ssec:hoeffdingRep}--\ref{ssec:coboundaries}. 

\subsection{Representation of the Hoeffding
projection}\label{ssec:hoeffdingRep} For each small $\delta >0$ define
a function $H^{\varphi}_{\delta}:S\Upsilon\times S\Upsilon$ by setting
$H^{\varphi}_{\delta} (u,v)=\varphi (z (u,v))$ if the geodesic
segments of length $\delta$ based at $u$ and $v$ intersect at a point
$z (u,v)\in \Upsilon$, and setting $H^{\varphi}_{\delta} (u,v)=0$
otherwise. This is the obvious analogue of the intersection kernel
$H_{\delta}$ defined in section~\ref{sec:kernel}. The primary
difference between the self-intersection kernel $H_{\delta}$ and the
localized kernel $H^{\varphi}_{\delta}$ is that the constant function
$1$ is not, in general, an eigenfunction of $H^{\varphi}_{\delta}$. To
see this, define
\[
	k^{\varphi}_{\delta} (u)=\frac{1}{\delta^{2}\kappa}\int_{S\Upsilon}
	 H^{\varphi}_{\delta} (u,v) L (dv) 
\]
where $L$ is the normalized Liouville measure on $S\Upsilon$.

\begin{lemma}\label{lemma:localHoeffding} If $f:\Upsilon  \rightarrow
\zz{R}$ is continuous then
\[
	\lim_{ \delta  \rightarrow 0}
	\xnorm{k^{\varphi}_{\delta}-\varphi \circ p}_{\infty}=0. 
\]
\end{lemma}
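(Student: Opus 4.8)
The plan is to show that $k^\varphi_\delta$ converges uniformly to $\varphi\circ p$ by unpacking the definition of $k^\varphi_\delta$ and comparing it to the computation already carried out in the proof of Lemma~\ref{lemma:H1}. Fix $u\in S\Upsilon$ and let $\gamma=\gamma([0,\delta];u)$ be the geodesic segment of length $\delta$ with initial tangent vector $u$, and write $x_0=p(u)$ for its base point. By definition,
\[
	\delta^2\kappa\, k^\varphi_\delta(u) = \int_{S\Upsilon} \varphi(z(u,v))\, H_\delta(u,v)\, L(dv),
\]
so the integrand is supported on the set of $v$ whose length-$\delta$ geodesic segment meets $\gamma$, and on that set it equals $\varphi$ evaluated at the (unique, for $\delta$ small) intersection point $z(u,v)$.

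First I would recall the local picture from the proof of Lemma~\ref{lemma:H1}: for small $\delta$ and each fixed angle $\theta$, the set of base points $x$ whose length-$\delta$ segment crosses $\gamma$ at angle $\theta$ is, to leading order as $\delta\to 0$, a rhombus of side $\delta$ and area $\delta^2|\sin\theta|$, sharply. Crucially, every intersection point $z(u,v)$ arising here lies on $\gamma$ itself, hence within distance $\delta$ of $x_0$. Therefore on the support of the integrand we have $|\varphi(z(u,v))-\varphi(x_0)|\le \omega_\varphi(\delta)$, where $\omega_\varphi$ is the modulus of continuity of $\varphi$ (which is uniform since $\Upsilon$ is compact). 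Pulling $\varphi(x_0)$ out and using Lemma~\ref{lemma:H1} (i.e. $\int H_\delta(u,v)\,L(dv)=\delta^2\kappa$) gives
\[
	\bigl| \delta^2\kappa\, k^\varphi_\delta(u) - \varphi(x_0)\,\delta^2\kappa \bigr|
	\le \omega_\varphi(\delta)\int_{S\Upsilon} H_\delta(u,v)\,L(dv) = \omega_\varphi(\delta)\,\delta^2\kappa.
\]
Dividing by $\delta^2\kappa$ yields $|k^\varphi_\delta(u)-\varphi(p(u))|\le \omega_\varphi(\delta)$ for all $u$, and since $\omega_\varphi(\delta)\to 0$ as $\delta\to 0$ this is exactly the claimed uniform convergence.

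The only genuinely nontrivial point — and the step I expect to be the main obstacle — is the assertion that for all sufficiently small $\delta$, the relevant intersection point $z(u,v)$ lies on the segment $\gamma$ (so that its distance to $x_0$ is at most $\delta$), together with the uniformity in $u$ of the estimate. The first is immediate from the definition of $H^\varphi_\delta$ once $\delta\le\varrho$, since $z(u,v)$ is by construction a point of $\gamma([0,\delta];u)$; the uniformity in $u$ follows because $\Upsilon$ is compact, so both $\varrho$ and the modulus of continuity $\omega_\varphi$ can be chosen independently of the base point, and the leading-order rhombus approximation in Lemma~\ref{lemma:H1} holds uniformly in $u$. Strictly speaking one does not even need the sharp area formula here — the exact identity $\int H_\delta(u,v)\,L(dv)=\delta^2\kappa$ from Lemma~\ref{lemma:H1} does all the work — so the argument reduces to the trivial observation that $\varphi$ is nearly constant on a set of diameter $O(\delta)$. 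I would present it in that streamlined form.
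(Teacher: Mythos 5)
Your proposal is correct and follows essentially the same route as the paper's proof: write $H^\varphi_\delta(u,v)=\varphi(z(u,v))H_\delta(u,v)$, bound $|\varphi(z(u,v))-\varphi(pu)|$ by the modulus of continuity of $\varphi$ since the intersection point lies within distance $\delta$ of $pu$, and integrate using the exact identity $\int H_\delta(u,\cdot)\,d\nu_L=\delta^2\kappa$ from Lemma~\ref{lemma:H1}. Your version is in fact slightly more explicit than the paper's (which compresses this into one inequality), but the argument is the same.
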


\begin{proof}
Because $\varphi$ is continuous and $H^{\varphi}_{\delta} (u,v)$ is
nonzero only for pairs $u,v$ at distance $<\delta$, the value of
$\varphi (z (u,v))$ will be close to $\varphi (pu)$ when $\delta >0$
is small, uniformly for $u\in S\Upsilon$. Hence,
\[
	|\varphi  (pu) H_{\delta} (u,v) -\kappa
	\delta^{2}H^{\varphi}_{\delta} (u,v)| 
	\leq \max_{d (u,v)\leq \delta} |\varphi  (pu)-\varphi  (pv)|.
\]
Since the constant function $1$ is an eigenfunction of $H_{\delta}$,
with eigenvalue $\delta^{2}\kappa$ (by Lemma~\ref{lemma:H1}), the
result follows.
\end{proof}

The relevance of the kernel $H^{\varphi}_{\delta}$ is that the
Hoeffding projection $h_{\varphi}^{+}$ defined by
\eqref{eq:hoeffdingF} can be expressed approximately in terms of
$k^{\varphi}_{\delta}$. Both $h_{\varphi}^{+}$ and
$k^{\varphi}_{\delta}$ are defined as expectations of $\varphi
-$values at intersection points of geodesic segments: (i)
$h^{+}_{\varphi} (x)$ is the expected value of $f$ at the intersection
point (if there is one) of the geodesic segments corresponding to the
fibers $\mathcal{F}_{x}$ and $\mathcal{F}_{y}$ of the suspension flow
when $y$ is chosen according to the distribution $\lambda$; and (ii)
$k^{\varphi}_{\delta}$ is the corresponding expectation for the
geodesic segments of fixed length $\delta $. Hence, for small $\delta$
the value of $h^{\varphi}_{+} (x)$ can be obtained approximately by
integrating along the fiber $\mathcal{F}_{x}$. Together with
Lemma~\ref{lemma:localHoeffding}, this implies that if ${\gamma
} (t)$ is the orbit of the geodesic flow corresponding to the orbit
$\phi_{t} (x,0)$ of the suspension flow then
\begin{equation}\label{eq:approx}
	h^{\varphi}_{+} (x)=\lim_{\delta  \rightarrow 0}\int_{0}^{F
	(x)-\delta}k_{\delta} ({\gamma} (s)) \,ds =
	\int_{0}^{F (x)} \varphi  (p ({\gamma} (s)))
	\,ds .
\end{equation}

\subsection{Coboundaries of the geodesic
flow}\label{ssec:coboundaries} If $f,g :\Sigma \rightarrow \zz{R}$ are
H\"{o}lder continuous functions, then a {necessary} and sufficient
condition for $f$ and $g$ to be cohomologous is that they sum to the
same values on all periodic sequences (cf.\cite{bowen:book}, Theorem~1.28 for
the sufficiency). In particular, for every periodic sequence $x\in
\Sigma$, if $x$ has period $n=n (x)$ ten
\begin{equation}\label{eq:necessaryForCohomology}
	S_{n}f  (x)=g\psi  (x).
\end{equation}
In the case of interest, the relevant functions are integrals
over fibers of the suspension space $\Sigma_{F}$. For the function $F$
this is obvious: 
\[
	F (x)=\int_{0}^{F (x)}1\,ds,
\]
while for the Hoeffding projection $h^{\varphi}_{+}$ it follows from
formula \eqref{eq:approx}. Consequently, for $F$ and $ah_{+}^{\varphi}$ to
be cohomologous it is necessary that the function $a\varphi \circ p-1$
integrate to $0$ on every periodic orbit of the suspension flow. Since
both $\varphi \circ p$ and the constant $1$ are pullbacks of smooth
functions on $\Upsilon$, this implies that $a\varphi -1$ must
integrate to $0$ on every closed geodesic.

Call a continuous function $\psi :\Upsilon \rightarrow \zz{R}_{+}$ a
\emph{coboundary} for the geodesic flow if it integrates to zero along
every closed geodesic, and say that two functions are
\emph{cohomologous} if they differ by a coboundary.  It is quite easy
to construct a function $g:\Upsilon \rightarrow \zz{R}$ that is not
cohomologous to a constant. Take two closed geodesics $\alpha$ and
$\beta$ that do not intersect on $\Upsilon$, and let $g:\Upsilon
\rightarrow \zz{R}$ be any $C^{\infty}$, nonnegative function that is
identically $1$ along $\alpha$ but vanishes in a neighborhood of
$\beta$; then by the criterion established above, $g$ cannot be
cohomologous to a constant. In fact, the existence of non-intersecting
closed geodesics yields the existence of a large class of functions
that are not cohomologous to constants:

\begin{proposition}\label{proposition:plethora}
Let $\varepsilon >0$ be the distance in $\Upsilon$ between two
non-intersecting closed geodesics $\alpha$ and $\beta$. Then no
$C^{\infty}$, nonnegative function $g:\Upsilon \rightarrow \zz{R}$ that is
not identically zero and whose support has diameter less than
$\varepsilon$ is cohomologous to a constant.
\end{proposition}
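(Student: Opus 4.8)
The plan is a short argument by contradiction. I would suppose that some $C^{\infty}$, nonnegative $g\not\equiv 0$ with $\operatorname{diam}(\operatorname{supp}g)<\varepsilon$ were cohomologous to a constant $c$, and deduce the absurd conclusion $g\equiv 0$. First I would unwind the definition of cohomology given just above the statement: $g$ cohomologous to $c$ means $g-c$ is a coboundary for the geodesic flow, i.e.\ $\int_{\gamma}(g-c)=0$ for every closed geodesic $\gamma$. Writing $\ell(\gamma)$ for the length of $\gamma$ and letting $\int_{\gamma}g$ denote the arc-length integral of $g$ along $\gamma$ (equivalently, the integral of the pullback $g\circ p$ over the corresponding periodic orbit of the geodesic flow, whose period is $\ell(\gamma)$), this says $\int_{\gamma}g=c\,\ell(\gamma)$ for every closed geodesic $\gamma$.

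Next I would exploit the hypothesis on the support. Since $\operatorname{diam}(\operatorname{supp}g)<\varepsilon=\mathrm{dist}(\alpha,\beta)$, the set $\operatorname{supp}g$ cannot meet both $\alpha$ and $\beta$: if $p\in\operatorname{supp}g\cap\alpha$ and $q\in\operatorname{supp}g\cap\beta$, then $\mathrm{dist}(p,q)\le\operatorname{diam}(\operatorname{supp}g)<\varepsilon=\mathrm{dist}(\alpha,\beta)\le\mathrm{dist}(p,q)$, which is impossible. So I may pick $\gamma_{1}\in\{\alpha,\beta\}$ with $\operatorname{supp}g\cap\gamma_{1}=\emptyset$; then $g$ vanishes identically along $\gamma_{1}$, so $0=\int_{\gamma_{1}}g=c\,\ell(\gamma_{1})$, and since $\ell(\gamma_{1})>0$ this forces $c=0$. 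But then $\int_{\gamma}g=0$ for \emph{every} closed geodesic $\gamma$, and because $g\ge 0$ is continuous this says $g$ vanishes identically along every closed geodesic. The projections to $\Upsilon$ of the periodic orbits of the geodesic flow are dense in $\Upsilon$ — this is the density of periodic orbits of the Anosov geodesic flow, equivalently the density of periodic sequences in the topologically mixing shift of finite type of Section~\ref{sec:symbolicDynamics} carried to $S\Upsilon$ by the continuous surjection $\pi$ and projected by $p$ — so $g$ vanishes on a dense subset of $\Upsilon$, whence $g\equiv 0$ by continuity. This contradicts the hypothesis, completing the proof.

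I do not anticipate any genuine obstacle here: beyond unwinding the definitions, the only ingredient is the density of closed geodesics, which is standard and already in the background of the paper. It is worth recording, as the text preceding the statement makes clear, that this $\varepsilon$ is precisely the threshold promised after Theorem~\ref{theorem:local}: since $\varphi\circ p$ and the constant $1$ are pullbacks of functions on $\Upsilon$, formula~\eqref{eq:approx} shows that the Hoeffding projection $h^{\varphi}_{+}$ can be cohomologous to a scalar multiple $aF$ of the height function only if $a\varphi$ is cohomologous to the constant $1$ in the sense of this subsection; hence, as soon as $\varphi$ has support of diameter less than $\varepsilon$, the proposition excludes this and the second (Gaussian) case of Theorem~\ref{theorem:u-statistic} applies.
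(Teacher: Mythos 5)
Your argument is correct and is essentially the paper's proof with the steps reordered: the paper derives $c>0$ from a closed geodesic entering the open set where $g>0$ (using density) and $c=0$ from the one of $\alpha,\beta$ missing $\operatorname{supp}g$, then contradicts; you first force $c=0$ from that geodesic and then use nonnegativity plus density to conclude $g\equiv 0$, contradicting $g\not\equiv 0$. Same ingredients (the coboundary criterion, the diameter/distance observation that $\operatorname{supp}g$ misses one of $\alpha,\beta$, and density of closed geodesics), so no material difference.
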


\begin{proof}
In order that $g$ be cohomologous to a constant $c$ it must be the
case that the average value of $g$ along any closed geodesic is $c$.
If $g$ is continuous and nonnegative, and not identically zero, then
there is an open set $U$ in which $g$ is strictly positive. Because
closed geodesics are dense in $S\Upsilon$, their projections are dense
in $\Upsilon$, and so there is at least one closed geodesic $\xi$ that
enters $U$. Since $g$ is continuous, its integral -- and hence its
average -- along $\xi$ must be positive. However, by hypothesis, $g$
vanishes on at least one of the geodesics $\alpha,\beta$, and so there
is at least one closed geodesic on which the average value of $g$ is
$0$.
\end{proof}

\begin{remark}\label{remark:nonintersectingClosedGeodesics}
That there exist pairs of non-intersecting closed geodesics on any
negatively curved surface can be proved using the conformal
equivalence of Riemannian metrics discussed in
section~\ref{ssec:series}  above.  First, elementary arguments in
hyperbolic geometry show that there are non-intersecting closed
geodesics on any surface of constant curved -1. Next,
Proposition~\ref{proposition:conformalDeformation}  implies that for
any Riemannian metric $\varrho_{1}$ of variable negative curvature on
a compact surface $\Upsilon$ there is a smooth deformation of
$\varrho_{1}$ to a constant-curvature metric $\varrho_{0}$ through
metrics $\varrho_{s}$ of negative curvature. In this deformation, the
closed geodesic in a given free homotopy class deforms smoothly;
moreover, transversal intersections can be neither created nor
destroyed. Therefore, if $\gamma_{0},\gamma_{1}$ are non-intersecting
closed geodesics relative to $\varrho_{0}$, then the closed geodesics
$\gamma_{0}',\gamma_{1}'$ in the corresponding free homotopy classes
are also non-intersecting.
\end{remark}

\subsection{Verification of
Hypothesis~\ref{hypothesis:short-memory}}\label{ssec:HH4} 
It remains to show that the function $h_{\varphi}:\Sigma \times \Sigma
\rightarrow \zz{R}_{+}$ defined by \eqref{eq:h-phi} satisfies  Hypothesis
\ref{hypothesis:short-memory} relative to the measure $\lambda$, or
to some equivalent (mutually a.c.) probability measure. For the same
reason as in section~\ref{sec:verification} (see in particular
Lemma~\ref{lemma:densityArgument}) we must allow for  adjustment
of the Poincar\'{e} section of the suspension. Thus, for small $r\geq
0$ define
\[
	h^{r}_{\varphi} (x,y) =\varphi (z_{r} (x,y))h^{r} (x,y)
\]
where $h^{r}=1$ if the geodesic segments corresponding to the suspension
flow segments $\mathcal{F}^{r}_{x}$ and $\mathcal{F}^{r}_{y}$ intersect at a
point $z=z_{r} (x,y)\in \Upsilon$, and $h^{r}=0$ if these segments do not
intersect. Then the  representation~\eqref{eq:localSIRepresentation}
holds with $h_{\varphi}$ replaced by $h^{r}_{\varphi}$, and the
Hoeffding projection of $h^{r}_{\varphi}$ will again be given by
\eqref{eq:approx}, but with 
\[
	\int_{0}^{F (x)} \quad \text{replaced by} \;\; \int_{r}^{F (x)+r}.
\]

For the verification of Hypothesis~\ref{hypothesis:short-memory}, we
use the decomposition
\begin{gather}\label{eq:h-phi-decomp}
	h^{r}_{\varphi} (x,y)=\sum_{m=1}^{\infty} \psi^{r}_{m} (x,y)
	+\psi^{r}_{\infty} (x,y)\quad 
		\text{where}\\			
	\notag	\sum_{j=1}^{m}\psi^{r}_{j} (x,y)=\min \{
 	h^{r}_{\varphi} (x',y')\,:\, x'_{i}=x_{i} \;\;\text{and}\;\;
	y'_{i}=y_{i} \;\; \forall \; |i|\leq m\}.
\end{gather}
The functions $\psi^{r}_{m}$ are obviously symmetric, since
$h_{\varphi}$ is, and $\psi^{r}_{m}$ depends only on the coordinates
$|i|\leq m$. Moreover, each $\psi^{r}_{m}$ is nonnegative, and
$\sum_{m}\psi^{r}_{m}+\psi^{r}_{\infty} =h^{r}_{\varphi}$ is bounded
by $\xnorm{\varphi}_{\infty}$. Hence, (H0), (H1), and (H2) of
Hypothesis~\ref{hypothesis:short-memory} all hold, leaving only (H3).

\begin{lemma}\label{lemma:wiggles}
There exist constants $\varrho <\beta <1$ such that for all large $m$,
\begin{equation}\label{eq:psi-small}
	\psi^{r}_{m} (x,y)\leq \beta^{m}	
\end{equation}
unless the geodesic segments $p\circ \pi (\mathcal{F}^{r}_{x})$ and
$p\circ \pi (\mathcal{F}^{r}_{y})$ intersect at angle less than
$\varrho^{m}$ or at a point $z_{r}(x,y)$ within distance $\varrho^{m}$ of
one of the endpoints of one of the geodesic segments. 
\end{lemma}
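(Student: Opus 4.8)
The plan is to read the size of $\psi^r_m$ directly off its definition and then invoke the robustness of transversal crossings of short geodesic segments under small perturbations. By definition $\sum_{j=1}^{m}\psi^r_j (x,y)$ is the minimum of $h^r_\varphi$ over the cylinder $\Sigma_{[-m,m]}(x)\times\Sigma_{[-m,m]}(y)$; these cylinders shrink with $m$, so the partial sums are nondecreasing, whence $\psi^r_m\geq 0$, and since $(x,y)$ lies in each cylinder,
\[
	0\leq\psi^r_m (x,y)\leq h^r_\varphi (x,y)-\min\bigl\{h^r_\varphi (x',y')\,:\,x'\in\Sigma_{[-(m-1),m-1]}(x),\ y'\in\Sigma_{[-(m-1),m-1]}(y)\bigr\}.
\]
So it is enough to bound the oscillation of $h^r_\varphi$ over the generation-$(m-1)$ cylinder of $(x,y)$.

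If the segments $p\circ\pi (\mathcal{F}^r_x)$ and $p\circ\pi (\mathcal{F}^r_y)$ do not intersect, then $h^r_\varphi (x,y)=0$; since $h^r_\varphi\geq 0$, the minimum in the display is identically $0$, so $\psi^r_m (x,y)=0$ for all $m$ and there is nothing to prove. Assume therefore that the two segments meet, necessarily in a single transversal point $z=z_r (x,y)$ (Lemma~\ref{lemma:shortFibers}), at angle $\theta$, and that we are outside the excluded regime: $\theta\geq\varrho^m$ and $z$ is at distance $\geq\varrho^m$ from each endpoint of either segment. The constant $\varrho<1$ is fixed as follows: moving a point of $\Sigma$ within a generation-$(m-1)$ cylinder moves its $\pi$-image by at most a quantity that decays at an exponential rate determined solely by the H\"older exponent of $\pi$, so we may choose $\varrho<1$ close enough to $1$ that this perturbation scale is $o(\varrho^{2m})$ as $m\to\infty$.

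Now let $(x',y')$ lie in the generation-$(m-1)$ cylinder of $(x,y)$. The segments $p\circ\pi (\mathcal{F}^r_{x'})$ and $p\circ\pi (\mathcal{F}^r_{y'})$ are obtained from the original pair by moving endpoints and directions a distance $o(\varrho^{2m})$. Since geodesic segments look locally like straight lines in the tangent space at $z$ — the same estimate that underlies property (A3)$'$ of Remark~\ref{remark:crossSection} — such a perturbation changes the crossing angle by $o(\varrho^{2m})$, hence keeps the crossing transversal since $\theta\geq\varrho^m$, and displaces the crossing point by $O(\varrho^{2m}/\sin\theta)=O(\varrho^m)$, which for $m$ large is smaller than $\varrho^m$ and hence than the distance from $z$ to the endpoints. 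Consequently the perturbed segments still cross, so $h^r (x',y')=1$, $h^r_\varphi (x',y')=\varphi (z_r (x',y'))$, and $\mathrm{dist}(z_r (x',y'),z)\leq C\varrho^m$. Since $h^r_\varphi (x,y)=\varphi (z)$ and $\varphi$ is smooth, hence Lipschitz, $|h^r_\varphi (x,y)-h^r_\varphi (x',y')|\leq C_\varphi\varrho^m$, and with the reduction of the first paragraph $\psi^r_m (x,y)\leq C_\varphi\varrho^m$. For any $\beta$ with $\varrho<\beta<1$ this is $\leq\beta^m$ once $m$ is large, which is the claim. (Enlarging $\varrho$ only enlarges the excluded regime, which is harmless later, since the bounds on the measures of the ``danger'' events in section~\ref{sec:verification} hold for any $\varrho<1$.)

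The one delicate point is the displacement estimate: one must control how far the transversal crossing of two geodesic segments moves under a perturbation of the segments, precisely in the regime where both the crossing angle and the distance from the crossing point to the nearest endpoint are only of order $\varrho^m$. This is what forces $\varrho$ to be taken close to $1$ (so that the H\"older perturbation scale is genuinely smaller than $\varrho^{2m}$), and, unlike in the proof of (A3)$'$ where mere persistence of the crossing suffices, here the quantitative bound $\mathrm{dist}(z_r (x',y'),z)\leq C\varrho^m$ is indispensable: it is what lets the Lipschitz continuity of $\varphi$ turn the geometry into the exponential bound on $\psi^r_m$.
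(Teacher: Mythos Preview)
Your proof is correct and follows essentially the same approach as the paper's: both exploit H\"older continuity of the semi-conjugacy to bound the perturbation of the crossing point, then use smoothness of $\varphi$ to convert this into an exponential bound on $\psi^r_m$. Your treatment is in fact more careful than the paper's, which simply asserts that ``if $\alpha<\varrho$ then the intersection point $z(x',y')$ will lie within distance $\beta^m$ of $z(x,y)$'' without making explicit the $O(\text{perturbation}/\sin\theta)$ displacement estimate that you spell out; your choice of $\varrho$ so that the H\"older scale is $o(\varrho^{2m})$ is exactly what makes that estimate yield $o(\varrho^m)$ and hence keeps the perturbed crossing inside the segments.
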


\begin{proof}
For ease of exposition we shall discuss only the case $r=0$; the
general case can be handled in the same manner.
The semi-conjugacy $\pi :\Sigma_{F}  \rightarrow S\Upsilon$ is
H\"{o}lder continuous, so there exists $\alpha <1$ such that if two
sequences $x,x'\in \Sigma$ agree in coordinates $|i|\leq m$ then $\pi
(x,s)$ and $\pi (x',s)$ are within distance $\alpha^{m}$ for all $s\in
[0,F (x)\wedge F (x')]$, and $|F (x)-F (x')|<\alpha^{m}$, at least for
sufficiently large $m$. 

Suppose now that $x_{i}=x'_{i}$ and $y_{i}=y'_{i}$ for all $|i|\leq
m$, and that the geodesic segments $p\circ \pi (\mathcal{F}_{x})$ and
$\mathcal{F}_{y}$ intersect at an angle not smaller than $\varrho^{m}$
and at a point $z (x,y)$ not within distance $\varrho^{m}$ of one of
the endpoints.  Then by (A3)' of Remark~\ref{remark:crossSection}
(section~\ref{ssec:cc-gf}), the geodesic segments $p\circ \pi
(\mathcal{F}_{x'})$ and $\mathcal{F}_{y'}$ will also
intersect. Furthermore, if $\alpha <\varrho$ (as we may assume without
loss of generality) then the intersection point $z (x',y')$ will lie
within distance $\beta  ^{m}$ of $z (x,y)$, for some $\beta
<1$. Since $\varphi$ is smooth, it follows that for some $C<\infty$
depending on the $C_{1}-$norm of $\varphi$,
\[
	|\varphi (z (x,y))-\varphi (z (x',y'))|<C\beta^{m}.
\]
\end{proof}

Lemma~\ref{lemma:wiggles} implies that to prove condition (H3) of
Hypothesis~\ref{hypothesis:short-memory}, translation invariant
suffices to establish the inequality~\eqref{eq:H4Obj}. But this has
already been done, in
Lemmas~\ref{lemma:densityArgument}--\ref{lemma:angle}. This yields the
following result.

\begin{proposition}\label{proposition:Verification-2}
For any Gibbs state $\mu $, the functions $\psi ^{r}_{m}$ satisfy (H3) of
Hypothesis~\ref{hypothesis:short-memory} relative to $\mu $ for almost
every $r$ in some interval $[0,r_{*}]$ of positive
length $r_{*}$.  
\end{proposition}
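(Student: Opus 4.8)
Proposition~\ref{proposition:Verification-2} is an immediate corollary of the machinery already assembled in Sections~\ref{sec:verification} and~\ref{ssec:HH4}, so the plan is to show that no new work is required beyond citing the right lemmas. The plan is first to recall that by Lemma~\ref{lemma:wiggles}, whenever the geodesic segments $p\circ\pi(\mathcal{F}^{r}_{x})$ and $p\circ\pi(\mathcal{F}^{r}_{y})$ neither cross at an angle less than $\varrho^{m}$ nor cross within distance $\varrho^{m}$ of an endpoint, the function $\psi^{r}_{m}(x,y)$ is at most $\beta^{m}$ for a fixed $\beta<1$; and it is always at most $\xnorm{\varphi}_{\infty}$. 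Therefore the ``bad'' contribution to $\int|\psi^{r}_{m}(x,\sigma^{j}x)|\,d\mu$ is dominated, up to the factor $\xnorm{\varphi}_{\infty}$, by the $\mu$-measure of the union of the two bad events, which are exactly the sets $A^{r}_{m,k}$ (small-angle crossings) and $B^{r}_{m,k}$ (near-endpoint crossings) of Section~\ref{sec:verification} with $k=j$.

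Second, I would invoke the estimates already proved: Lemma~\ref{lemma:bmkr} gives, for almost every $r\in[0,\min F/3)$, a constant $\alpha<1$ with $\mu(B^{r}_{m,k})\le\alpha^{m}$ for all $k\neq0$ once $m$ is large; and Lemma~\ref{lemma:angle} gives $\beta=\beta(\varrho)<1$ with $\mu(A^{r}_{m,k})\le\beta^{m}$ for all $k\neq0$ and all large $m$. Combining these exactly as in the displayed inequality~\eqref{eq:H4Obj}, one gets $\mu(A^{r}_{m,k})+\mu(B^{r}_{m,k})\le C\beta_{0}^{m}$ uniformly in $k\neq0$ for some $\beta_{0}<1$, valid for almost every $r$ in a positive-length interval $[0,r_{*}]$ with $r_{*}<\min F/3$. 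Hence
\[
	\int_{\Sigma}|\psi^{r}_{m}(x,\sigma^{j}x)|\,d\mu(x)
	\le \beta^{m}+\xnorm{\varphi}_{\infty}\bigl(\mu(A^{r}_{m,j})+\mu(B^{r}_{m,j})\bigr)
	\le C'\beta_{1}^{m}
\]
for a suitable $\beta_{1}<1$ and $C'<\infty$ not depending on $j$, which is precisely condition (H3) of Hypothesis~\ref{hypothesis:short-memory} for the kernel $h^{r}_{\varphi}$ relative to $\mu$. The case $j=0$ is handled the same way (or simply by boundedness, since the sum in $U_{T}$ with $i=j$ contributes the diagonal, which is $O(\tau)=O(T)$ and irrelevant after normalization), and the case of general $r$ in the proof of Lemma~\ref{lemma:wiggles} was already noted to go through unchanged.

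I do not expect any genuine obstacle here: every geometric and probabilistic input — the H\"older continuity of $\pi$, the structural-stability reduction to constant curvature, the density/Borel--Cantelli argument for $B^{r}_{m,k}$, the no-long-repeats estimate Lemma~\ref{lemma:noRepeats} behind $A^{r}_{m,k}$, and the small-balls estimate Lemma~\ref{lemma:smallBalls} — has already been carried out for the plain kernel $h$ in Section~\ref{sec:verification}, and the only change is the harmless bounded smooth factor $\varphi(z_{r}(x,y))$, which Lemma~\ref{lemma:wiggles} shows merely contributes a Lipschitz perturbation of size $C\beta^{m}$ on the ``good'' set. If anything requires a word of care it is making sure the exceptional $r$-sets (of full measure in $[0,r_{*}]$) from Lemmas~\ref{lemma:bmkr} and~\ref{lemma:angle} coincide with, or can be intersected with, the $r$-set on which Lemma~\ref{lemma:wiggles}'s comparison holds for all large $m$; but all of these are full-measure subsets of the same interval, so their intersection is again full measure, and the proposition follows.
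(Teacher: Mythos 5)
Your argument is correct and is the same one the paper gives (the paper compresses it into the sentence ``Lemma~\ref{lemma:wiggles} implies that to prove condition (H3) ... it suffices to establish the inequality~\eqref{eq:H4Obj}. But this has already been done, in Lemmas~\ref{lemma:densityArgument}--\ref{lemma:angle}''). You merely spell out the displayed inequality and the intersection of the full-measure exceptional $r$-sets, which is a harmless and sensible amplification.
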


\subsection{Proof of
Theorem~\ref{theorem:local}}\label{ssec:localProof}
Proposition~\ref{proposition:Verification-2} implies that after
appropriate modification of the Poincar\'{e} section of the suspension
flow, the function $h_{\varphi}$ in the representation
\eqref{eq:localSIRepresentation} of the localized self-intersection
count meets the requirements of Corollary~\ref{corollary:acU}.
Proposition~\ref{proposition:plethora} implies that if the support of
$\varphi:\Upsilon \rightarrow \zz{R}_{+}$ is less than the distance
between two non-intersecting closed geodesics, and if $\varphi \geq 0$
is smooth and not identically $0$, then the Hoeffding projection
$h^{\varphi}_{+}$ of $h_{\varphi}$ relative to the measure $\lambda$
is not cohomologous to a constant multiple of $F$. By the argument of
section~\ref{ssec:localSI} it follows that the \emph{second} case of
Theorem~\ref{theorem:u-statistic} (cf. relation~\eqref{eq:caseB})
applies.

\qed

\section{$U-$statistics and randomly chosen periodic
orbits}\label{sec:perOrbits}

The remainder of the paper will be devoted to the proof of
Theorem~\ref{theorem:closed}. The main technical tool,
Theorem~\ref{theorem:u-periodic}, will be an extension of
Theorem~\ref{theorem:u-statistic} to measures concentrated on finite
sets of periodic sequences.  As in Theorem~\ref{theorem:u-statistic}
there will be two cases, one leading to fluctuations of size $T$, the
other to fluctuations of size $T^{3/2}$. Which of these two cases will
apply will once again be determined by whether or not the relevant
Hoeffding projection is cohomologous to a scalar multiple of the
height function $F$ in the suspension
flow. Proposition~\ref{proposition:cohomology}  of
section~\ref{sec:constant-variable-curvature} gives  necessary and
sufficient conditions for this: the Hoeffding projection is
cohomologous to a scalar multiple of $F$ if and only if the suspension
measure $\mu^{*}$ associated with the Gibbs state $\mu$ is the
pullback of the Liouville measure.

\subsection{An Extension of
Theorem~\ref{theorem:u-statistic}}\label{ssec:perOrbitSampling}  

It is well understood that the distribution of periodic orbits in a
hyperbolic dynamical system is, in a certain sense, governed by the
invariant measure of maximal entropy. There are two aspects of the
connection. First, according to Margulis' \emph{prime orbit theorem}
(and its generalization in \cite{parry-pollicott}) the number of
periodic orbits of minimal period less than $T$ grows like $e^{\theta
T}/ (\theta T)$, where $\theta $ is the entropy of the max-entropy
measure \cite{margulis}, \cite{parry-pollicott}. Second, the empirical
distribution of a random chosen periodic orbit (from among those with
minimal period less than $T$) is, with high probability, close to the
max-entropy measure in the weak topology on measures (see
\cite{lalley:acta}, Theorem~7). It is the latter connection that is primarily
responsible for Theorem~\ref{theorem:closed}. 

In this section we will formulate and prove an extension of
Theorem~\ref{theorem:u-statistic} for $U-$statistics of randomly
chosen periodic orbits of a suspension flow. This result will be
combined with the results of section~\ref{sec:symbolicDynamics} on
symbolic dynamics for geodesic flows to prove
Theorem~\ref{theorem:closed} in section~\ref{sec:closed}.

Fix a topologically mixing suspension flow  on a suspension
space $\Sigma_{F}$ over a shift $(\Sigma ,\sigma )$ of finite type
with a H\"{o}lder continuous height function $F>0$.  The invariant
probability measure of maximal entropy for the suspension flow is the
suspension $\mu^{*}_{-\theta F}$ (cf. sec.~\ref{ssec:suspensions}) of
the Gibbs state $\mu_{-\theta F}$, where $\theta$ is the unique
positive number such that $\text{Pressure} (-\theta F)=0$. The value
$\theta$ is the topological entropy of the suspension flow (see,
e.g., \cite{parry-pollicott}, \cite{lalley:acta}).

Say that a sequence $x\in \Sigma$ \emph{represents} an orbit $\gamma$
of the suspension flow if the point $(x,0)\in \Sigma_{F}$ lies on the
path $\gamma$. If $x\in \Sigma$ is a periodic sequence then all of its
cyclic shifts represent the same periodic orbit $p=p_{x}$ of the
suspension flow, and these are the only representatives of
$p$. Theorem 7 of \cite{lalley:acta} implies that for large $T$ nearly
100\% of the periodic orbits of the suspension flow with minimal
period approximately $T$ have the property that their representative
periodic sequences have minimal period $T/E_{\mu_{-\theta F}}F +o
(T)$. Hence, for large $T$ nearly all of the periodic orbits of the
flow have $T/E_{\mu_{-\theta F}}F +o (T)$ representative
sequences. This implies that the uniform distribution on periodic
orbits of the flow with period $\approx T$ is nearly identical to the
image (under the natural correspondence) of the uniform distribution
on periodic sequences $x$ such that $S_{\tau_{T} (x)}F (x)\approx
T$. (Recall that $\tau (x)=\tau_{T} (x)$ is the smallest integer $n$
such that $S_{n}F (x)\geq T$, and $R_{T} (x)=S_{\tau (x)}F (x)-T$ is the
overshoot.)  Thus, we now change our focus from periodic orbits of
the flow to periodic sequences.

A periodic sequence $x\in \Sigma$ represents a periodic
orbit of the suspension flow with minimal period between $T$ and
$T+\varepsilon$ if and only if (a)  the period of the sequence $x$ is
$\tau (x)$, and (b) $R_{T} (x)<\varepsilon$. 
Denote by $B_{T,\varepsilon}$ the set of all periodic sequences
satisfying these conditions. This set is finite: in fact, Margulis'
prime orbit theorem and the law of large numbers cited above (or,
alternatively, Theorem 1 of \cite{lalley:acta}) imply that
\begin{equation}\label{eq:margulisEstimate}
	|B_{T,\varepsilon}|\sim C e^{\theta T} (e^{\theta \varepsilon}-1)
	\quad \text{as}\;\; T \rightarrow \infty 
\end{equation}
for a constant $C>0$ independent of $T$ and $\varepsilon$. Define
\begin{equation}\label{eq:nu-t-e}
	\nu_{T,\varepsilon}=\,\text{ uniform probability distribution
	on} \;\;B_{T,\varepsilon}.
\end{equation}
Our objective in this section is to extend the
results of Theorem~\ref{theorem:u-statistic} to the family of measures
$\nu_{T,\varepsilon}$. These results concern the large-$T$ limiting
behavior of the distribution of the $U-$statistics $U_{T}$ defined by
\[
		U_{T} (x):=\sum_{i=1}^{\tau (x)}\sum_{j=1}^{\tau (x)} h
	(\sigma^{i}x,\sigma^{j}x).
\]
Since ultimately we will want to use these results to prove that the
distribution of self-intersection counts of closed geodesics
converges, it is important that they should hold for functions
$h:\Sigma \times \Sigma \rightarrow \zz{R}$ that are not necessarily
continuous. The following relatively weak hypothesis on the function
$h$ is tailored to the particular case of self-intersection
counts. For any periodic sequence $x$ with minimal period $\tau
(x)=\tau_{T} (x)$ and any integer $m\geq 1$ define
\begin{equation}\label{eq:Delta-T-U}
	\Delta^{m}_{T}U (x)= \max \left| \sum_{i=1}^{\tau (x)}
	\sum_{j=1}^{\tau (x)} (h (\sigma^{i}x,\sigma^{j}x)-h
	(\sigma^{i}x',\sigma^{j}x'))\right|  
\end{equation}
where the maximum is over all sequences $x'\in \Sigma$ (not necessarily
periodic) such that $x'_{i}=x_{i}$ for all $-m\leq i\leq \tau (x)+m$.

\begin{hypothesis}\label{hypothesis:periodic}
For each $\varepsilon >0$ there exist positive constants
$\varepsilon_{m}\rightarrow 0$ as $m \rightarrow \infty$ such that for
all  sufficiently large $m\geq 1$ and  $T$ (i.e., for $T\geq
t_{m,\varepsilon}$),
\begin{equation}\label{eq:hypo-per}
	\nu_{T,\varepsilon}\{\Delta^{m}_{T} U (x) \geq
	\varepsilon_{m}T\} <\varepsilon_{m} .
\end{equation}
\end{hypothesis}
 
If $h$ is  H\"{o}lder continuous on $\Sigma \times \Sigma$ then
Hypothesis~\ref{hypothesis:periodic} is trivially satisfied, because
in this case $\Delta^{m}_{T}U$ will be uniformly bounded by
$\beta^{m}T$, for some $0<\beta <1$. In section~\ref{sec:proof} we will
show that the hypothesis holds for the function $h$ in the
representation \eqref{eq:u-representation} of self-intersection
counts.

\begin{theorem}\label{theorem:u-periodic}
Assume that $h:\Sigma \times \Sigma \rightarrow \zz{R}$ satisfies
Hypothesis~\ref{hypothesis:periodic} and also
Hypothesis~\ref{hypothesis:short-memory} for the measure $\lambda
=\mu_{-\theta F}$. Let $h_{+}$ be the Hoeffding projection of $h$
relative to $\mu_{-\theta F}$ (cf. equation \eqref{eq:hoeffdingF}),
and let $\tilde{U}_{T}$ and $\tilde{\tau}_{T}$ be the renormalizations
of $U_{T}$ and $\tau_{T}$ defined in \eqref{eq:caseA},
\eqref{eq:caseB}, and \eqref{eq:tau-tilde}. (In particular, if $h_{+}$
is cohomologous to a scalar multiple $aF$ of the height function $F$
then $\tilde{U}_{T}$ is defined by \eqref{eq:caseA}, but otherwise it
is defined by \eqref{eq:caseB}.)  Then as $T \rightarrow \infty$ the
joint distribution of $\tilde{U}_{T}$, $\tilde{\tau}_{T}$, $R_{T}$,
$x$, and $\sigma^{\tau_{T} (x)} (x)$ under $\nu_{T,\varepsilon}$
converges, and the limiting joint distribution of $\tilde{U}_{T}$ and
$\tilde{\tau}_{T}$ is the same as under $\mu_{-\theta F}$ (that is, by
\eqref{eq:caseA} and \eqref{eq:caseB} of Theorem~\ref{theorem:u-statistic}).
\end{theorem}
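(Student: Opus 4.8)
The plan is to follow the three-step scheme used for Theorem~\ref{theorem:u-statistic}, the new input being a quantitative comparison between the periodic-orbit measures $\nu_{T,\varepsilon}$ and the Gibbs state $\mu=\mu_{-\theta F}$. The bridge is the Ruelle transfer operator $\mathcal{L}_{-\theta F}$, whose leading eigenvalue is $1$ because $\text{Pr}(-\theta F)=0$. Together with the classical trace formula
\[
	\sum_{\sigma^{\ell}x=x} g(x)\,e^{-\theta S_{\ell}F(x)} \;=\; \mathrm{tr}\big(\mathcal{L}_{-\theta F}^{\ell}\,g\big)\;\longrightarrow\; \int g\,d\mu_{-\theta F}\qquad(\ell\to\infty),
\]
valid with exponentially small error for H\"older $g$ by the spectral gap (cf.\ \cite{bowen:book}, \cite{parry-pollicott:asterisque}), and the Margulis estimate \eqref{eq:margulisEstimate}, which shows that $B_{T,\varepsilon}$ is, up to exponentially small relative error, concentrated on sequences of minimal period $\ell=\tau_{T}(x)\approx T/E_{\mu}F$, this yields the two facts that drive everything. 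First: for each fixed $L$ the law of the block $x_{[-L,L]}$ under $\nu_{T,\varepsilon}$ converges as $T\to\infty$ to its law under $\mu$ (a quantitative form of the equidistribution theorem, \cite{lalley:acta}, Theorem~7). Second, and more useful for upper bounds: uniformly in $L$, and for $T$ large relative to $L$, $\nu_{T,\varepsilon}$ and $\mu$ are comparable within fixed multiplicative constants on events depending only on coordinates $|i|\le L$, so that moment \emph{upper} bounds and the exponential correlation-decay estimate \eqref{eq:corrDecay} transfer from $\mu$ to $\nu_{T,\varepsilon}$.

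For Step~1, suppose $h$ is centered and depends on finitely many coordinates; then, exactly as in Step~1 of Theorem~\ref{theorem:u-statistic}, $U_{T}(x)=\sum_{k\ge 2}\lambda_{k}\big(\sum_{i=1}^{\tau(x)}\varphi_{k}(\sigma^{i}x)\big)^{2}$ with mean-zero H\"older eigenfunctions $\varphi_{k}$. What is needed is the periodic-orbit analogue of Proposition~\ref{proposition:clt}: under $\nu_{T,\varepsilon}$ the vector $\big(\tilde{\tau}_{T},\,(T^{-1/2}\sum_{i=1}^{\tau}\varphi_{k}\circ\sigma^{i})_{k}\big)$ converges to the \emph{same} centred Gaussian as under $\mu$, jointly with asymptotic independence of $R_{T}$, $x$ and $\sigma^{\tau_{T}(x)}x$. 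I would prove this by the Nagaev--Guivarc'h method applied to traces of perturbed transfer operators: the relevant characteristic functions equal $|B_{T,\varepsilon}|^{-1}$ times sums over periodic points of $e^{-\theta S_{\ell}F}\exp\{i\sum_{k}t_{k}S_{\tau}\varphi_{k}/\sqrt{T}\}$ against an indicator of $\{R_{T}<\varepsilon\}$, hence (up to the overshoot factor) traces of $\mathcal{L}_{-\theta F}$ perturbed by $i\sum_{k}t_{k}\varphi_{k}/\sqrt{\ell}$, and perturbation of the simple leading eigenvalue gives the Gaussian limit with covariance and $\tilde{\tau}_{T}$-asymptotics identical to those Ratner's theorem produces in Proposition~\ref{proposition:clt}; the overshoot constraint contributes only a renewal-type normalization that, as in Remark~\ref{remark:strengthening}, tilts the limit of $R_{T}$ but leaves the bulk limit unchanged. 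The asymptotic independence of $R_{T},x,\sigma^{\tau}x$ from the bulk holds because, as in Proposition~\ref{proposition:clt}, these are determined by $O(1)$ coordinates near the seam. The convergence \eqref{eq:caseA} with $a=0$ for $\tilde{U}_{T}$ (to the distribution of the associated Gaussian quadratic form) is then immediate.

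Step~3 is purely algebraic and copies Step~3 of Theorem~\ref{theorem:u-statistic}, with one simplification: for $x\in B_{T,\varepsilon}$ the integer $\tau_{T}(x)$ is the minimal period, so $\sigma^{\tau_{T}(x)}x=x$ and the coboundary term $\tau(x)\big(w(x)-w(\sigma^{\tau(x)}x)\big)$ vanishes identically. Writing $h=h_{0}+h_{+}(x)+h_{+}(y)-E_{\mu}h_{+}$ with $h_{0}$ centered and expanding $U_{T}$, the surviving extra terms are bounded functions of $T$, $\tau(x)$, $R_{T}(x)$, $S_{\tau}h_{+}(x)$, $x$ and $\sigma^{\tau}x$ --- precisely the quantities whose joint limiting behaviour under $\nu_{T,\varepsilon}$ was pinned down in Step~1 --- so both \eqref{eq:caseA} and \eqref{eq:caseB} follow, the cohomologous-to-$aF$ case giving order-$T$ fluctuations and the generic case order-$T^{3/2}$; which alternative occurs is exactly the dichotomy of Proposition~\ref{proposition:cohomology}.

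Finally, for Step~2, take $h$ centered and satisfying Hypothesis~\ref{hypothesis:short-memory}, and split $h=v_{m}+w_{m}$ with $v_{m}=\sum_{k\le m}h_{k}$ (handled by Step~1) and $w_{m}=\sum_{k>m}h_{k}$. By Hypothesis~\ref{hypothesis:periodic}, $U_{T}$ differs from its $m$-truncated version by $O(\varepsilon_{m}T)$ with $\nu_{T,\varepsilon}$-probability at least $1-\varepsilon_{m}$, so it suffices to bound $\nu_{T,\varepsilon}\{|\sum_{i,j=1}^{\tau(x)}w_{m}(\sigma^{i}x,\sigma^{j}x)|>\varepsilon T\}$ for $m$ large. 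I would repeat the second- and fourth-moment estimates of Lemma~\ref{lemma:momentBound}, now under $\nu_{T,\varepsilon}$: since $\tau_{T}$ is concentrated in a window of width $T^{1/2+\delta}$, and $w_{m}$ obeys the exponential bound (H3) and (via the comparability on bounded blocks from the first paragraph) the estimate \eqref{eq:corrDecay}, the same counting of quadruples and octuples yields $E_{\nu_{T,\varepsilon}}(\,\cdot\,)^{2}<\varepsilon T^{2}$ and $E_{\nu_{T,\varepsilon}}(\,\cdot\,)^{4}<\varepsilon T^{3+2\delta}$; letting $m\to\infty$ finishes the proof. The technical heart --- and the step I expect to be the main obstacle --- is the periodic-orbit central limit theorem of Step~1, where the Margulis counting, the spectral gap of $\mathcal{L}_{-\theta F}$, the perturbation of its leading eigenvalue, and the renewal treatment of the first-passage/overshoot constraints must all be made uniform enough to survive the truncation and the limit $m\to\infty$.
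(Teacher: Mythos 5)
Your proposal takes a genuinely different route from the paper. The paper does \emph{not} re-establish the CLT or the moment bounds under $\nu_{T,\varepsilon}$; instead it constructs an explicit coupling. It partitions $B_{T,\varepsilon}$ by the $(2m{+}1)$-block $x_{[-m,m]}$ at the seam, shows via the Ruelle-operator estimate (Proposition~\ref{proposition:near-uniform}) that the uniform measure on each cell is within coupling distance $O(\beta^{m})$ of the normalized restriction of $\mu_{-\theta F}$ to the corresponding union of cylinders $\Sigma_{[-m,\tau(y)+m]}(y)$ (Proposition~\ref{proposition:weakApprox}), verifies via Lemma~\ref{lemma:corLambda} that these absolutely continuous measures fall under Corollary~\ref{corollary:ac2sided}, and then transfers the conclusion across the coupling using Hypothesis~\ref{hypothesis:periodic} to control the change in $U_T$ between two sequences that agree in coordinates $[-m,\tau(x)+m]$. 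All of the thermodynamic machinery (mixing, moments, CLT) is invoked only under the Gibbs state $\mu$, where it is already established.

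There is a real gap in the step you yourself flag as the technical heart, and a second gap you do not flag. First, you propose to prove a periodic-orbit multivariate CLT with joint asymptotic independence of $R_T,x,\sigma^{\tau}x$ by the Nagaev--Guivarc'h method on traces of perturbed transfer operators; this is plausibly completable (cf.\ \cite{lalley:axiomA}, \cite{lalley:homology}), but it is a substantial piece of work that you leave as an outline, and making the perturbation, the first-passage constraint, and the overshoot cutoff $\{R_T<\varepsilon\}$ uniform enough to pass the $m\to\infty$ limit is exactly the kind of delicate estimate the paper avoids. Second, and more seriously, your Step~2 asserts that the moment bounds of Lemma~\ref{lemma:momentBound} carry over to $\nu_{T,\varepsilon}$ because ``$\nu_{T,\varepsilon}$ and $\mu$ are comparable within fixed multiplicative constants on events depending only on coordinates $|i|\le L$.'' But the quadruple and octuple terms expanded from $\big(\sum_{i,j=1}^{\tau}w_m(\sigma^{i}x,\sigma^{j}x)\big)^{2}$ and $(\,\cdot\,)^{4}$ involve shifts $i,i',j,j'$ ranging over the whole interval $[1,\tau]\approx[1,T]$; these are not events measurable with respect to a window of fixed radius $L$, so block-comparability gives no control, and the exponential correlation decay \eqref{eq:corrDecay} is a property of Gibbs states, not of uniform distributions on periodic points (which, being supported on sequences of period $\approx T$, have exact long-range dependence). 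To carry out your Step~2 honestly you would need to establish a multi-point mixing estimate directly under $\nu_{T,\varepsilon}$ --- another transfer-operator trace argument --- which is again precisely what the paper's coupling-plus-Hypothesis-\ref{hypothesis:periodic} device renders unnecessary. Note also that Hypothesis~\ref{hypothesis:periodic} bounds the variation of $U_T$ under perturbation of $x$ within a cylinder $\Sigma_{[-m,\tau(x)+m]}(x)$; it does not, as written, bound the tail $\sum_{i,j}w_m(\sigma^{i}x,\sigma^{j}x)$ directly, so the first sentence of your Step~2 also needs an extra argument linking the two.
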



The remainder of this section is devoted to the proof of this
theorem. The strategy will be to deduce
Theorem~\ref{theorem:u-periodic} from
Corollary~\ref{corollary:ac2sided}; however, since
$\nu_{T,\varepsilon}$ is supported by a finite set of periodic
sequences, it is mutually singular with respect to any Gibbs state,
and so Corollary~\ref{corollary:ac2sided} does not apply directly.
Consequently, we will be forced to show that $\nu_{T,\varepsilon}$ is
close in the weak topology to a probability measure absolutely
continuous relative the Gibbs state $\mu_{-\theta F}$, to which
Corollary~\ref{corollary:ac2sided} \emph{does} apply.
Hypothesis~\ref{hypothesis:periodic} will ensure that the distribution
of $\tilde{U}_{T}$ under this approximating measure is close to its
distribution under $\nu_{T,\varepsilon}$, and so
Theorem~\ref{theorem:u-periodic} will follow.

Observe that to prove Theorem~\ref{theorem:u-periodic} it suffices to
prove the result for small values of $\varepsilon >0$, because for
each $T>0$ and each integer $M\geq 1$ the measure
$\nu_{T,\varepsilon}$ is a convex combination of the measures
$\nu_{T+i\varepsilon /M,\varepsilon /M}$. 

\subsection{Skeleton of the Proof}\label{ssec:skeleton}
To show that $\nu_{T,\varepsilon}$ is weakly close  to a
probability measure absolutely continuous with respect to
$\mu_{-\theta F}$, we will partition the support $B_{T,\varepsilon}$
of $\nu_{T,\varepsilon}$ into subsets on which the ``likelihood
function'' of the measure $\mu_{-\theta F}$
(cf. equation~\eqref{eq:gibbs}) is nearly constant. See
Proposition~\ref{proposition:near-uniform} below for a precise
statement. This will imply
that the uniform distribution on each set $A$ of the partition is
weakly close to the normalized restriction of $\mu_{-\theta F}$ to the
cylinder set of all sequences in $\Sigma$ that agree with some element
$x\in A$ in coordinates $-m\leq j\leq \tau (x)+j$, for some large
$m$. 

To define the partition, for each $x\in \Sigma$ and $m\geq 1$, let
$B_{T,\varepsilon ,m,x} $  be the set of all $y\in
B_{T,\varepsilon}$ such that
\begin{equation}\label{eq:m-agreement}
	y_{j}= y_{\tau (y)+j}=x_{j} 
	\quad \text{for all} \;\; j\in [-m,m] .
\end{equation}
Note that since $y\in B_{T,\varepsilon}$ it is periodic with period
$\tau (y)=\tau_{T} (y)$, so the restriction $y_{\tau (y)+j}=x_{j}$ is
redundant. Define $\nu_{T,\varepsilon ,m,x}$ to be the uniform distribution on
$B_{T,\varepsilon ,m,x} $. Clearly, $B_{T,\varepsilon ,m,x}$ depends
on $x$ only by way of the coordinates $x_{[-m,m]}$, and the sets
$B_{T,\varepsilon ,m,x}$ are pairwise disjoint, so they partition
$B_{T,\varepsilon}$. Thus, for each $m\geq 1$,
\begin{equation}\label{eq:cvx}
	\nu_{T,\varepsilon}=\sum_{x_{[-m,m]}} \frac{|B_{T,\varepsilon
	,m,x}|}{|B_{T,\varepsilon}|} \nu_{T,\varepsilon ,m,x},
\end{equation}
where the sum is over all \emph{admissible} sequences $x_{[-m,m]}$,
that is, sequences obtained by restricting sequences $x\in \Sigma$.
Theorem 1 of \cite{lalley:acta} implies that for each $x\in \Sigma$
and $m\geq 1$, as $T \rightarrow \infty$,
\begin{equation}\label{eq:lalley-est}
	|B_{T,\varepsilon ,m,x} |\sim 	\mu_{-\theta F}
	(\Sigma_{[-m,m]} (x)) |B_{T,\varepsilon}|.
\end{equation}
Since $\mu (\Sigma_{[-m,m]} (x))>0 $ for any admissible $x_{[-m,m]}$, this
estimate implies that for each $x_{[-m,m]}$ the set $|B_{T,\varepsilon
,m,x} |$ grows exponentially with $T$. In particular, for all
sufficiently large $T$ the set $B_{T,\varepsilon ,m,x} $ is nonempty,
and so $\nu_{T,\varepsilon ,m,x}$ is well-defined.

\begin{proposition}\label{proposition:near-uniform}
There exist constants $C=C_{\varepsilon},t_{m}<\infty$ and $\beta \in (0,1)$  such
that for all sufficiently large $m$,  all
$T\geq t_{m}$, and any two periodic sequences $x,y\in
B_{T,\varepsilon}$ for which \eqref{eq:m-agreement} holds,
\begin{equation}\label{eq:near-uniform}
	1-C\beta^{m}\leq \frac{\mu_{-\theta F} (\Sigma_{[-m,\tau (x)+m]}
	(x))}{\mu_{-\theta 	F} (\Sigma_{[-m,\tau (y)+m]}
	(y)) } \frac{e^{-\theta S_{\tau (y)}F (y)}}{e^{-\theta S_{\tau
	(x)}F (x)}} \leq  1+C\beta^{m}. 
\end{equation}
\end{proposition}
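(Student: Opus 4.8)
The plan is to estimate the ratio in \eqref{eq:near-uniform} directly from the Gibbs property \eqref{eq:gibbs}, exploiting that the two periodic sequences $x,y\in B_{T,\varepsilon}$ agree in their outermost $m$ coordinates and have nearly the same ``exponential weight'' $S_{\tau}F$. First I would use the defining inequality \eqref{eq:gibbs} for the Gibbs state $\mu_{-\theta F}$ applied to the cylinder $\Sigma_{[-m,\tau(x)+m]}(x)$: since $\mathrm{Pr}(-\theta F)=0$, this gives
\[
	C_{1}\le \frac{\mu_{-\theta F}(\Sigma_{[-m,\tau(x)+m]}(x))}{\exp\{S_{[-m,\tau(x)+m]}(-\theta F)(x)\}}\le C_{2},
\]
and similarly for $y$. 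Dividing the two, the ratio of Gibbs masses is pinched between $C_{1}/C_{2}$ and $C_{2}/C_{1}$ times the ratio $\exp\{S_{[-m,\tau(x)+m]}(-\theta F)(x)-S_{[-m,\tau(y)+m]}(-\theta F)(y)\}$. This by itself is not good enough — it only gives boundedness, not closeness to $1$ — so the real work is to show that the \emph{difference} of the Birkhoff sums $S_{[-m,\tau(x)+m]}(-\theta F)$ over $x$ and $y$ differs from $-\theta(S_{\tau(x)}F(x)-S_{\tau(y)}F(y))$ by at most $O(\beta^{m})$, uniformly.

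The key step is a Livsic/telescoping argument. Write $S_{[-m,\tau(x)+m]}(F)(x)=S_{\tau(x)}F(x)+\sum_{i=1}^{m}F(\sigma^{-i}x)+\sum_{i=\tau(x)+1}^{\tau(x)+m}F(\sigma^{i}x)$, using periodicity of $x$ with period $\tau(x)$ to fold the two ``overhang'' blocks into blocks over $\sigma^{-i}x$ and $\sigma^{i}x$ that, by periodicity, depend only on coordinates of $x$ in a window around the origin and around $\tau(x)$. Condition \eqref{eq:m-agreement} says $x$ and $y$ agree in coordinates $[-m,m]$ (and hence, by periodicity, in $[\tau-m,\tau+m]$), so each of the $2m$ overhang terms $F(\sigma^{\pm i}x)$ for $1\le i\le m$ is within $C\,\alpha^{\max(i,\,m-i)}$ of the corresponding term for $y$, where $\alpha<1$ is the Hölder parameter of $F$ — because the two sequences agree in a block of length $m$ symmetric about the relevant coordinate up to radius roughly $\min(i,m-i)$... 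I would set this up more carefully: the point is that $\sigma^{-i}x$ and $\sigma^{-i}y$ agree in coordinates $[-(m-i),m+i]$, so $|F(\sigma^{-i}x)-F(\sigma^{-i}y)|\le |F|_{\alpha}\alpha^{m-i}$, and summing a geometric series over $i=1,\dots,m$ gives a total of $O(\alpha^{0})$ — which is again not summable the way I want. The correct bookkeeping is to split each overhang sum at its midpoint: terms with $i\le m/2$ are controlled by $\alpha^{m-i}\le\alpha^{m/2}$, while terms with $i>m/2$ sit at depth $\ge m/2$ inside the sequence but are matched against the \emph{opposite} overhang block via periodicity, again at depth $\ge m/2$. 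Either way the total discrepancy is $O(m\,\alpha^{m/2})=O(\beta^{m})$ for any $\beta\in(\alpha^{1/2},1)$.

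Once the Birkhoff-sum difference is pinned down, exponentiating gives
\[
	\exp\{-\theta\big(S_{[-m,\tau(x)+m]}F(x)-S_{[-m,\tau(y)+m]}F(y)\big)\}
	= e^{-\theta(S_{\tau(x)}F(x)-S_{\tau(y)}F(y))}\,(1+O(\beta^{m})),
\]
and combining with the two-sided Gibbs estimate yields that the full ratio in \eqref{eq:near-uniform} equals $1+O(\beta^{m})$ \emph{after absorbing the $C_{1},C_{2}$ constants} — here one must note that the crude bounds $C_{1}/C_{2}$ and $C_{2}/C_{1}$ are \emph{not} themselves $1+O(\beta^{m})$, so one cannot simply multiply. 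The resolution is to apply \eqref{eq:gibbs} not to the huge interval $[-m,\tau+m]$ but to chop it into $[-m,m]$ and $[m,\tau+m]$ separately, or better, to use the stronger quasi-multiplicativity of Gibbs states (equivalently, \eqref{eq:expMixing}): the ratio $\mu_{-\theta F}(\Sigma_{[-m,\tau(x)+m]}(x))\big/\big(\mu_{-\theta F}(\Sigma_{[-m,m]}(x))\,e^{-\theta S_{[m+1,\tau(x)+m]}F(x)}\big)$ is bounded between two constants \emph{that are themselves within $O(\beta^{m})$ of each other} because the overlap window has width $2m$. I would state and use the standard fact (following from \eqref{eq:expMixing} applied across a gap of width $2m$) that for cylinders sharing a common prefix/suffix of length $2m$ the Gibbs ratios are $1+O(\beta^{m})$-multiplicatively pinned; this is precisely the mechanism that upgrades the uniform Gibbs bounds to the sharp estimate \eqref{eq:near-uniform}. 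The main obstacle, then, is not the algebra but getting this last quasi-multiplicativity step clean — i.e., showing the constants degenerate to $1$ as the common window grows — and I expect to cite \cite{bowen:book} or \cite{parry-pollicott:asterisque} for the relevant Gibbs-state estimate rather than reprove it.
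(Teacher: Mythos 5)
Your high-level strategy — recognize that the raw Gibbs pinching \eqref{eq:gibbs} only gives boundedness, and reduce to a quasi-multiplicativity estimate whose multiplicative constants tighten to $1$ at rate $\beta^{m}$ — is exactly the paper's strategy, but the two mechanisms you offer to get that tightening both have gaps. The ``split at $m/2$ and match against the opposite overhang block via periodicity'' rescue of the telescoping does not work: for $|i|$ near $m$ the two-sided sequences $\sigma^{i}x$ and $\sigma^{i}y$ agree only to depth $m-|i|$, and periodicity merely identifies $\sigma^{-i}x$ with $\sigma^{\tau(x)-i}x$; since $\tau(x)\neq\tau(y)$ in general this produces no new agreement between $x$ and $y$. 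Your own first computation — that $\sum_{i}|F(\sigma^{i}x)-F(\sigma^{i}y)|=O(1)$ — is in fact correct as stated; the overhang Birkhoff-sum discrepancy for the \emph{two-sided} $F$ genuinely fails to be $O(\beta^{m})$, so no rebookkeeping fixes it. Likewise the quasi-multiplicativity is not ``equivalently \eqref{eq:expMixing}'': \eqref{eq:expMixing} controls correlation across a \emph{gap}, whereas you need the ratio $\mu(\text{long cylinder})/\bigl(\mu(\text{short cylinder})\,e^{-\theta S_{n}F}\bigr)$ for \emph{adjacent} blocks. Moreover that ratio, as you have written it with $x$-independent constants, is actually $e^{-\theta S_{m}F_{+}(x)}(1\pm C\beta^{m})$ and hence varies by $O(1)$ as $x$ varies; it is only the \emph{ratio of these ratios} for $x$ and $y$ that is $1+O(\beta^{m})$, and seeing that requires a cancellation you have not produced.

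The missing idea, which is what the paper's proof supplies and which makes both of your steps go through, is to first replace $F$ by a cohomologous H\"older function $F_{+}$ depending only on \emph{forward} coordinates (Livsic; \cite{bowen:book} Lemma 1.6). This leaves the Gibbs state $\mu_{-\theta F}=\mu_{-\theta F_{+}}$ and every periodic-orbit Birkhoff sum $S_{\tau(x)}F(x)=S_{\tau(x)}F_{+}(x)$ unchanged, but changes the overhang estimate decisively: with $F_{+}$ one-sided and $x,y$ agreeing on $[0,2m]$ (after shifting), one has $|F_{+}(\sigma^{i}x)-F_{+}(\sigma^{i}y)|\le C\alpha^{2m-i}$ for $0\le i\le m-1$, so the overhang sum is a convergent geometric tail of size $O(\alpha^{m})$ rather than $O(1)$. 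The quasi-multiplicativity itself is then proved from the spectral representation $d\mu=h\,d\nu$, $\mathcal{L}_{-\theta F_{+}}h=h$, $\mathcal{L}_{-\theta F_{+}}^{*}\nu=\nu$ (eigenvalue $1$ since $\mathrm{Pr}(-\theta F_{+})=0$): applying $\mathcal{L}^{n+m}$ to $hI_{\Sigma^{+}_{[0,n+2m]}(x)}$ collapses the preimage sum to a single term, and H\"older continuity of $F_{+}$ and $h$ gives $\mu(\Sigma_{[0,n+2m]}(x))=e^{-\theta S_{n+m}F_{+}(x)}\mu(\Sigma_{[0,2m]}(x))(1\pm C\beta^{m})$; dividing this for $x$ and $y$ (whose $[0,2m]$-cylinders coincide) yields \eqref{eq:near-uniform}. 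So the plan is right, but the one-sided reduction and the Ruelle-operator computation are the actual content, and without them neither the telescoping nor the appeal to \eqref{eq:expMixing} closes the argument.
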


Since $\text{Pr} (-\theta F)=0$, it follows from the definition
\eqref{eq:gibbs} of a Gibbs state that the ratio in
\eqref{eq:near-uniform} is bounded above and below. The proof that the
upper and lower bounds are within $C\beta^{m}$ of $1$  will rely on 
the periodicity of the sequences $x,y$ and the condition
\eqref{eq:m-agreement}, together with the spectral theory of the
Ruelle operator. The details of the argument are deferred to
section~\ref{ssec:near-uniform} below.

\begin{remark}\label{remark:tau}
It is not assumed in Proposition~\ref{proposition:near-uniform} that
$\tau (x)=\tau (y)$, so the number of coordinates specified in the two
cylinder sets appearing in \eqref{eq:near-uniform} need not be the
same. Also, by definition of $\tau =\tau_{T}$, the sums $S_{\tau (x)}F
(x)$ and $S_{\tau (y)}F (y)$ both lie in the interval
$[T,T+\varepsilon]$, so the ratio of exponentials in
\eqref{eq:near-uniform} is bounded above and below by $e^{\pm \theta
\varepsilon}$. Thus, for small $\varepsilon$ the measures of the
cylinder sets in \eqref{eq:near-uniform} are nearly equal. 
\end{remark}

For any integer $m\geq 1$ define $\var_{m}F$ to be the
maximum difference $|F (x)-F (y)|$ for sequences $x,y\in \Sigma$ such
that $x_{j}=y_{j}$ for all $|j|\leq m$. Because the function $F$ is
H\"{o}lder continuous, the sequence $\var_{m}F$ decays exponentially
in $m$. Consequently, if two sequences $x,y\in \Sigma$ satisfy
$x_{j}=y_{j}$ for all $-m\leq j\leq n+m$ then
\begin{equation}\label{eq:DeltaM}
	 |S_{n}F (x)-S_{n}F (y)|<
	 \delta_{m}:=3\sum_{k=m}^{\infty}\var_{k}F. 
\end{equation}
The sequence $\delta_{m}$ decays exponentially with $m$.
 
\emph{Assume henceforth that} $\varepsilon < \min F$. This guarantees
that if $T<S_{n}F (x)\leq T+\varepsilon$ then $\tau(x)=n$.  Assume
also that $m$ is large enough that $5\delta_{m}<\varepsilon$.  For
each $m\geq 1$ and $x\in \Sigma$, define $A_{T,\varepsilon ,m,x}$ to
be the set of all sequences $y\in \Sigma$ that satisfy
\eqref{eq:m-agreement} and are such that $0<R_{T} (y)\leq
\varepsilon$, and define $\lambda_{T,\varepsilon ,m,x}$ to be the
probability measure with support $A_{T,\varepsilon ,m,x}$ that is
absolutely continuous relative to $\mu_{-\theta F}$ with Radon-Nikodym
derivative 
\begin{equation}\label{eq:lambdaTemx}
	\frac{d\lambda_{T,\varepsilon ,m,x}}{d\mu_{-\theta F}} (z) =
	 Ce^{\theta R_{T} (z)} I_{A_{T,\varepsilon ,m} (x)} (z),
\end{equation}
where $C=C_{T,\varepsilon ,m,x}$ is the normalizing constant needed to
make $\lambda_{T,\varepsilon ,m,x}$ a probability measure. In
section~\ref{ssec:weakApprox}  below we will show that when $m$ and
$T$ are large the set $A_{T,\varepsilon ,m,x}$ nearly coincides with
\[
	A^{*}_{T,\varepsilon  ,m,x}: =\bigcup_{y\in B_{T,\varepsilon ,m,x}}
	\Sigma_{[-m,\tau (y)+m]} (y)
\]
and so Proposition~\ref{proposition:near-uniform} will imply that the
probability measure $\lambda_{T,\varepsilon ,m,x}$ distributes its
mass nearly uniformly over the cylinder sets in the union
$A^{*}_{T,\varepsilon ,m,x}$.  Using this, we will prove that for
large $m$ the measure $\lambda_{T,\varepsilon ,m,x}$ is close in the
weak (L\'{e}vy) topology to $\nu_{T,\varepsilon ,m,x}$. It will be
most convenient to formulate this statement using the following
\emph{coupling metric} for the weak topology on the space of Borel
probability measures. See \cite{strassen} or \cite{dudley} for a proof
that the coupling metric generates the weak topology.

\begin{definition}\label{definition:coupling}
Let $Q_{A},Q_{B}$ be Borel probability measures on a complete,
separable metric space $(\mathcal{X},d)$. The \emph{coupling distance}
$d_{C} (Q_{A},Q_{B})$ between $Q_{A}$ and $Q_{B}$ is the infimal
$\kappa   \geq 0$ for which there exists a Borel probability
measure $Q$ on $\mathcal{X}\times \mathcal{X}$ with marginals $Q_{A}$
and $Q_{B}$ such that
\begin{equation}\label{eq:couplingDistance}
	Q\{(x,y)\,:\, d (x,y)>\kappa \}<\kappa  .
\end{equation}
\end{definition}

Since this definition requires a metric on $\mathcal{X}$, we must
specify a metric for the case $\mathcal{X}=\Sigma$, so henceforth we
will let $d=d_{\Sigma}$ be the metric $d (x,y)=2^{-n (x,y)}$ where $n
(x,y)$ is the minimum nonnegative integer $n$ such that $x_{j}\not
=y_{j}$ for either $j=n$ or $j=-n$.

\begin{proposition}\label{proposition:weakApprox}
There exist constants $C=C_{\varepsilon},t_{m}<\infty$ and $0<\beta <1$ such that for
all sufficiently large $m$ and $T\geq t_{m}$, and all $x\in \Sigma$,
\begin{equation}\label{eq:weakCloseA}
	d_{C} (\nu_{T,\varepsilon ,m,x},\lambda_{T,\varepsilon
	,m,x})\leq Ce^{\theta e}\beta^{m}
\end{equation}
Consequently,  for
all $\varepsilon >0$, all sufficiently large $m$ and $T$,
\begin{equation}\label{eq:weakCloseB}
	d_{C} (\nu_{T,\varepsilon},\lambda_{T,\varepsilon,m})\leq 2C\beta^{m}
\end{equation}
where 
\begin{equation}\label{eq:lambda}
	\lambda_{T,\varepsilon,m}:=\sum_{x_{-m,m}} \mu_{-\theta F}
	(\Sigma_{[-m,m]} (x)) \nu_{T,\varepsilon ,m,x}.
\end{equation}
\end{proposition}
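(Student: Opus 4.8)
The plan is to prove the per-piece bound \eqref{eq:weakCloseA} by exhibiting an explicit coupling of $\nu_{T,\varepsilon,m,x}$ with $\lambda_{T,\varepsilon,m,x}$ built from the decomposition of the support $A_{T,\varepsilon,m,x}$ into the cylinders $\Sigma_{[-m,\tau(y)+m]}(y)$, $y\in B_{T,\varepsilon,m,x}$, and then to deduce the mixture bound \eqref{eq:weakCloseB} from \eqref{eq:weakCloseA} together with the convex representations \eqref{eq:cvx} and \eqref{eq:lambda} and the orbit count \eqref{eq:lalley-est}. Fix once and for all a $\beta<1$ exceeding both $1/2$ and the geometric decay rate of the constants $\delta_m$ of \eqref{eq:DeltaM}. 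The first and essential step is the comparison $A_{T,\varepsilon,m,x}\approx A^{*}_{T,\varepsilon,m,x}$. If $y\in B_{T,\varepsilon,m,x}$ and $z\in\Sigma_{[-m,\tau(y)+m]}(y)$, then $z$ agrees with the $\tau(y)$-periodic sequence $y$ throughout $[-m,\tau(y)+m]$, so \eqref{eq:DeltaM} gives $|S_nF(z)-S_nF(y)|<\delta_m$ for $0\le n\le\tau(y)$; hence, outside an $R_T$-layer of width $O(\delta_m)$, $\tau_T(z)=\tau(y)$ and $R_T(z)\in(R_T(y)-\delta_m,R_T(y)+\delta_m)$, while periodicity of $y$ forces $z_j=z_{\tau_T(z)+j}=x_j$ for $|j|\le m$. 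In the other direction, given $z\in A_{T,\varepsilon,m,x}$ with $n=\tau_T(z)$, the condition \eqref{eq:m-agreement} yields $z_n=x_0=z_0$, so the block $z_{[0,n-1]}$ closes up to an admissible periodic sequence $\hat z$ that agrees with $z$ on $[-m,n+m]$ and again satisfies \eqref{eq:m-agreement}; by \eqref{eq:DeltaM}, $S_nF(\hat z)$ differs from $S_nF(z)$ by less than $\delta_m$, so, apart from an $O(\delta_m)$-layer in $R_T$ and from those $z$ whose block $z_{[0,n-1]}$ has a proper subperiod, one gets $\hat z\in B_{T,\varepsilon',m,x}$ with $\tau(\hat z)=n$ for some $\varepsilon'$ within $\delta_m$ of $\varepsilon$, whence $z\in\Sigma_{[-m,n+m]}(\hat z)\subseteq A^{*}_{T,\varepsilon',m,x}$.

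The subperiodic blocks have $\mu_{-\theta F}$-measure exponentially small in $n\asymp T$ by the Gibbs cylinder bound, and the $R_T$-boundary layers have relative $\mu_{-\theta F}$-measure $O(\delta_m)$ by the renewal estimate \eqref{eq:renewalEstimate}; since on $A_{T,\varepsilon,m,x}$ the density of $\lambda_{T,\varepsilon,m,x}$ relative to $\mu_{-\theta F}$ is $e^{\theta R_T}\le e^{\theta\varepsilon}$, it follows that for $T\ge t_m$ the sets $A_{T,\varepsilon,m,x}$ and $A^{*}_{T,\varepsilon,m,x}$ differ by a set of $\lambda_{T,\varepsilon,m,x}$-mass at most $Ce^{\theta\varepsilon}\beta^{m}$. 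I would then pin down near-uniformity: on each cylinder $\Sigma_{[-m,\tau(y)+m]}(y)$ the overshoot varies by at most $\delta_m$, so $e^{\theta R_T(z)}$ there lies within a factor $e^{\pm\theta\delta_m}$ of $e^{\theta(S_{\tau(y)}F(y)-T)}$; combining this with Proposition~\ref{proposition:near-uniform} — which gives $\mu_{-\theta F}(\Sigma_{[-m,\tau(y)+m]}(y))=(1+O(\beta^{m}))\,e^{-\theta S_{\tau(y)}F(y)}$ times a factor independent of $y\in B_{T,\varepsilon,m,x}$ — the two exponentials cancel and $\lambda_{T,\varepsilon,m,x}(\Sigma_{[-m,\tau(y)+m]}(y))=(1+O(e^{\theta\varepsilon}\beta^{m}))/|B_{T,\varepsilon,m,x}|$.

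Next I would build the coupling. The previous estimate says the image of $\lambda_{T,\varepsilon,m,x}$ under the a.e.-defined map sending $z$ to the label $y(z)$ of the cylinder containing it is within $Ce^{\theta\varepsilon}\beta^{m}$ in total variation of the uniform law $\nu_{T,\varepsilon,m,x}$. So I couple by drawing $z\sim\lambda_{T,\varepsilon,m,x}$ and outputting $(z,y(z))$ on the matched part of mass $1-O(e^{\theta\varepsilon}\beta^{m})$, pairing the residual mass arbitrarily inside $\Sigma_{[-m,m]}(x)$. Since any two points of $\Sigma_{[-m,m]}(x)$ are within $2^{-m-1}$, this yields \eqref{eq:weakCloseA}; indeed the crude bound $d_C\le 2^{-m-1}$ is already forced by the common support $\Sigma_{[-m,m]}(x)$, and the value of the coupling is the sharper fact that off an event of probability $Ce^{\theta\varepsilon}\beta^{m}$ the coupled pair agrees in \emph{all} coordinates $-m\le i\le\tau(y)+m$ — this is the form in which it is used, through Hypothesis~\ref{hypothesis:periodic}, in the next subsection.

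Finally, for \eqref{eq:weakCloseB} I would write $\nu_{T,\varepsilon}=\sum_{x_{[-m,m]}}w_x\,\nu_{T,\varepsilon,m,x}$ with $w_x=|B_{T,\varepsilon,m,x}|/|B_{T,\varepsilon}|$ as in \eqref{eq:cvx}, and $\lambda_{T,\varepsilon,m}=\sum_{x_{[-m,m]}}v_x\,\lambda_{T,\varepsilon,m,x}$ with $v_x=\mu_{-\theta F}(\Sigma_{[-m,m]}(x))$ as in \eqref{eq:lambda}, and invoke convexity of the coupling metric, $d_C(\nu_{T,\varepsilon},\lambda_{T,\varepsilon,m})\le\max_x d_C(\nu_{T,\varepsilon,m,x},\lambda_{T,\varepsilon,m,x})+\tfrac12\sum_x|w_x-v_x|$: the first term is bounded by \eqref{eq:weakCloseA}, while the second tends to $0$ as $T\to\infty$ for fixed $m$, because there are only finitely many admissible $x_{[-m,m]}$ and \eqref{eq:lalley-est} gives $w_x\to v_x$, so choosing $t_m$ large makes it $<\beta^{m}$ for $T\ge t_m$; renaming the constant yields \eqref{eq:weakCloseB}. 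I expect the real work to be the first comparison step — closing a non-periodic $z$ into a genuine periodic representative of the correct minimal period, checking admissibility and the coordinate matchings at both ends, and bookkeeping the several $O(\delta_m)$ boundary layers (in $R_T$ and near the first-passage index) so that they contribute only geometrically in $m$; the near-uniformity input, the coupling, and the mixture estimate are then routine.
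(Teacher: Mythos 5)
Your proof follows the same route as the paper's: decompose the support into the cylinders $\Sigma_{[-m,\tau(y)+m]}(y)$ indexed by $B_{T,\varepsilon,m,x}$, use Proposition~\ref{proposition:near-uniform} together with the renewal estimate and a $\delta_m$-boundary-layer argument to reconcile $A_{T,\varepsilon,m,x}$ with $A^*_{T,\varepsilon,m,x}$ and to get near-uniformity of the cylinder masses under $\lambda_{T,\varepsilon,m,x}$, couple via the label map $z\mapsto\hat z$, and pass to the mixture by convexity of $d_C$ plus \eqref{eq:lalley-est}; the paper packages the same idea through two auxiliary measures $\lambda^*_{T,\varepsilon,m,x}$ and $\lambda^\dagger_{T,\varepsilon,m,x}$ and three triangle-inequality steps, but the substance is identical. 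Your observation that the stated bound \eqref{eq:weakCloseA} is already forced by the common support $\Sigma_{[-m,m]}(x)$ (whose $d_\Sigma$-diameter is $2^{-(m+1)}\le\beta^m$), and that the actual content of the construction is a coupling under which, off an event of probability $O(e^{\theta\varepsilon}\beta^m)$, the paired sequences agree in \emph{all} coordinates $-m\le i\le\tau+m$, is accurate and worth making explicit --- that stronger pathwise agreement, not the raw coupling-distance bound, is what feeds Hypothesis~\ref{hypothesis:periodic} in the proof of Theorem~\ref{theorem:u-periodic}.
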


The proof is given in section~\ref{ssec:weakApprox} below.

\begin{lemma}\label{lemma:corLambda}
Fix $\varepsilon >0$ and $m\geq 1$ large enough that
$5\delta_{m}<\varepsilon$. Then for each $x\in \Sigma$ the conclusions
of Corollary~\ref{corollary:ac2sided} hold for the family of
probability measures $(\lambda_{T,\varepsilon ,m,x})_{T\geq
1}$. Consequently, they hold also for the family
$\{\lambda_{T,\varepsilon ,m} \}_{T\geq 1}$. 
\end{lemma}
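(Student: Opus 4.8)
The plan is to observe that $\lambda_{T,\varepsilon,m,x}$ is, by its very definition \eqref{eq:lambdaTemx}, already written in the shape required by the hypothesis \eqref{eq:ac2sided} of Corollary~\ref{corollary:ac2sided}; once this is made explicit the first assertion is essentially immediate, and the second follows by a finite-mixture argument. First I would unwind the indicator $\mathbf{1}_{A_{T,\varepsilon,m,x}}$: a sequence $z\in\Sigma$ lies in $A_{T,\varepsilon,m,x}$ exactly when (i) $z\in\Sigma_{[-m,m]}(x)$, (ii) $\sigma^{\tau_{T}(z)}z\in\Sigma_{[-m,m]}(x)$ — this is condition \eqref{eq:m-agreement} — and (iii) $0<R_{T}(z)\leq\varepsilon$. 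Hence
\[
	\frac{d\lambda_{T,\varepsilon,m,x}}{d\mu_{-\theta F}}(z)=C_{T,\varepsilon,m,x}\,g_{1}(z)\,g_{2}(\sigma^{\tau_{T}(z)}z)\,g_{3}(R_{T}(z)),
\]
with $g_{1}=g_{2}=\mathbf{1}_{\Sigma_{[-m,m]}(x)}$ and $g_{3}(r)=e^{\theta r}\mathbf{1}_{(0,\varepsilon]}(r)$, which is precisely the form of \eqref{eq:ac2sided}.

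Next I would check that $g_{1},g_{2},g_{3}$ meet the requirements of Corollary~\ref{corollary:ac2sided}. The functions $g_{1},g_{2}$ are indicators of cylinder sets, hence nonnegative, bounded, and continuous on $\Sigma$ (cylinder sets are clopen for the metric $d$), and $E_{\mu_{-\theta F}}g_{2}=\mu_{-\theta F}(\Sigma_{[-m,m]}(x))>0$ because every nonempty cylinder set has positive Gibbs measure. The factor $g_{3}$ is nonnegative, bounded by $e^{\theta\varepsilon}$, and strictly positive on $(0,\varepsilon)$, but it is only \emph{piecewise} continuous; this is exactly the situation addressed by Remark~\ref{remark:strengthening}, so Corollary~\ref{corollary:ac2sided} still applies (the argument there being a sandwiching argument resting on the renewal estimate \eqref{eq:renewalEstimate}, which uses nonarithmeticity of $F$). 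The one remaining point is that the three factors must not depend on $T$, so the scalar $C_{T,\varepsilon,m,x}$ must have a positive finite limit. Since $C_{T,\varepsilon,m,x}^{-1}=E_{\mu_{-\theta F}}[\,g_{1}\,(g_{2}\circ\sigma^{\tau_{T}})\,g_{3}(R_{T})\,]$, the renewal theorem (Proposition~\ref{proposition:renewal}, in the piecewise-continuous form of Remark~\ref{remark:strengthening}) gives that this converges to $\mu_{-\theta F}(\Sigma_{[-m,m]}(x))\cdot\int_{\Sigma_{F}}\mathbf{1}_{\Sigma_{[-m,m]}(x)}(y)\,g_{3}(t)\,d\mu^{*}_{-\theta F}(y,t)$, which is strictly positive: the cylinder has positive $\mu_{-\theta F}$-mass, and because $\varepsilon<\min F$ (a standing assumption here, alongside $5\delta_{m}<\varepsilon$) the slab $\{(y,t):y\in\Sigma_{[-m,m]}(x),\,0<t\leq\varepsilon\}$ has positive $\mu^{*}_{-\theta F}$-mass. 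Absorbing the limit of $C_{T,\varepsilon,m,x}$ into $g_{1}$ puts \eqref{eq:ac2sided} in force with $T$-independent factors, and Corollary~\ref{corollary:ac2sided} applies verbatim. (Throughout we use the standing assumption of Theorem~\ref{theorem:u-periodic} that $h$ satisfies Hypothesis~\ref{hypothesis:short-memory} relative to $\mu_{-\theta F}$, so that $\tilde U_{T}$ is defined and Corollary~\ref{corollary:ac2sided} is applicable.)

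For the second assertion I would note that, for each fixed $m$, the measure $\lambda_{T,\varepsilon,m}$ of \eqref{eq:lambda} is a \emph{finite} convex combination — indexed by the finitely many admissible words $x_{[-m,m]}$, with weights $\mu_{-\theta F}(\Sigma_{[-m,m]}(x))$ that do not depend on $T$ and sum to $1$ (the cylinders $\Sigma_{[-m,m]}(x)$ partition $\Sigma$) — of the families $\lambda_{T,\varepsilon,m,x}$ to which the first part applies. The conclusions of Corollary~\ref{corollary:ac2sided} are stable under such mixtures: the joint law of $(\tilde U_{T},\tilde\tau_{T},R_{T},x,\sigma^{\tau_{T}(x)}x)$ under a finite mixture is the corresponding mixture of the component joint laws, hence converges; and since the limiting law of $(\tilde U_{T},\tilde\tau_{T})$ is, for every component, the one under $\mu_{-\theta F}$, the mixture has that same limiting law for $(\tilde U_{T},\tilde\tau_{T})$. (If \eqref{eq:lambda} is read with $\nu_{T,\varepsilon,m,x}$ in place of $\lambda_{T,\varepsilon,m,x}$, the same conclusion is reached after replacing each $\nu_{T,\varepsilon,m,x}$ by $\lambda_{T,\varepsilon,m,x}$ at a coupling cost tending to $0$ as $m\to\infty$, by Proposition~\ref{proposition:weakApprox}.) This argument has no genuinely hard step; the substance is the bookkeeping in the first paragraph, and the only two points demanding a little care are the discontinuity of the interval-indicator factor $g_{3}$ (handled by Remark~\ref{remark:strengthening}) and the verification that the normalizing constant $C_{T,\varepsilon,m,x}$ stays bounded away from $0$ and $\infty$ (handled by the renewal theorem together with $\varepsilon<\min F$).
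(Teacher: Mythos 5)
Your proof is correct and takes essentially the same route as the paper's: the same factorization of the Radon--Nikodym derivative \eqref{eq:lambdaTemx} into the product form required by \eqref{eq:ac2sided}, the same appeal to Remark~\ref{remark:strengthening} for the piecewise-continuous factor $g_{3}=e^{\theta r}\mathbf{1}_{(0,\varepsilon]}(r)$, and the same convex-combination observation for $\lambda_{T,\varepsilon,m}$. You supply some verification the paper leaves implicit (continuity and positive expectation of $g_{1},g_{2},g_{3}$, and convergence of the normalizing constant $C_{T,\varepsilon,m,x}$ via the renewal theorem, which the paper dispatches with ``this is clearly of the form \eqref{eq:ac2sided}''), but the argument is the same one.
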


\begin{proof}
We must show that the Radon-Nikodym derivatives in equation
\eqref{eq:lambdaTemx} have the form \eqref{eq:ac2sided}. By
definition, the set $A_{T,\varepsilon ,m,x} $ consists of all
sequences $z\in \Sigma$ such that $R_{T} (z)\in (0,\varepsilon ]$ and
such that \eqref{eq:m-agreement} holds (with $y=z$). Hence, the
likelihood ratio \eqref{eq:lambdaTemx} can be factored as
\[
	 e^{-\theta R_{T} (z)}I_{A_{T,\varepsilon ,m} (x)}
	 (z)=e^{-\theta R_{T} (z)}I_{(0,\varepsilon ]} 
	 (R_{T} (z)) I_{\Sigma_{-m,m} (x)} (z) I_{\Sigma_{-m,m} (x)}
	 (\sigma^{\tau (z)} z) .
\]
This is clearly of the form \eqref{eq:ac2sided}. Although the function
$g_{3}$ in this factorization is not continuous (because of the
indicator $I_{(0,\varepsilon ]}$), it is piecewise continuous and
bounded, so by Remark~\ref{remark:strengthening} the conclusions of
Corollary~\ref{corollary:ac2sided} are valid for the family
$(\lambda_{T,\varepsilon ,m,x})_{T\geq 1}$. Since each of the
probability measures $\lambda_{T,\varepsilon ,m}$ is a convex
combination of the measures $\lambda_{T,\varepsilon ,m,x}$ (cf. equation
\eqref{eq:lambda}), it follows that the conclusions of
Corollary~\ref{corollary:ac2sided} hold also for the family
$\{\lambda_{T,\varepsilon ,m} \}_{T\geq 1}$. 
\end{proof}

\begin{proof}
[Proof of Theorem~\ref{theorem:u-periodic}] Given
Proposition~\ref{proposition:weakApprox} and
Lemma~\ref{lemma:corLambda}, Theorem~\ref{theorem:u-periodic} follows
routinely. Lemma~\ref{lemma:corLambda} implies that for all
sufficiently large $m$ the conclusions of
Theorem~\ref{theorem:u-periodic} hold if the measures
$\nu_{T,\varepsilon}$ are replaced by $\lambda_{T,\varepsilon
,m}$. Thus, to complete the proof, it will suffice to show that for
any $\delta >0$ and any continuous, bounded function $\varphi
:\zz{R}^{3}\times \Sigma^{2} \rightarrow \zz{R}$, there exists $m$
sufficiently large such that
\begin{equation}\label{eq:objCoupling}
	\limsup_{T \rightarrow \infty} |E_{\lambda_{T,\varepsilon ,m}}\Phi 
	-E_{\nu_{T,\varepsilon}} \Phi |\leq \delta \quad 
	\text{where} \quad 
	\Phi (x)=\varphi (\tilde{U}_{T},\tilde{\tau}_{T} ,R_{T},x,\sigma^{\tau
	(x)}x).
\end{equation}

By Proposition~\ref{proposition:weakApprox}, for all sufficiently
large $m$ and $T$ there exist Borel probability measures $Q=Q_{T,\varepsilon
,m}$ on $\Sigma^{2}$ with marginals $\nu_{T,\varepsilon}$
and $\lambda_{T,\varepsilon ,m}$ such that \eqref{eq:couplingDistance}
holds with $\kappa  =2Ce^{\theta \varepsilon}\beta^{m}$. Now for
any probability measure $Q$ on $\Sigma \times \Sigma$ with marginals 
$\nu_{T,\varepsilon}$
and $\lambda_{T,\varepsilon ,m}$,
\[
	E_{\lambda_{T,\varepsilon ,m}}\Phi 
	-E_{\nu_{T,\varepsilon}} \Phi =\int_{\Sigma \times \Sigma}
	(\Phi (x)-\Phi (y)) \,dQ (x,y).
\]
By \eqref{eq:couplingDistance},
\[
	\int_{d(x,y)>\kappa_{m}}|\Phi (x)-\Phi (y)|\, dQ (x,y)\leq
	\kappa_{m} \xnorm{\Phi}_{\infty} ,
\]
where $\kappa_{m} =2Ce^{\theta \varepsilon}\beta^{m}$. By choosing $m$
sufficiently large, we may arrange that $\kappa_{m}
\xnorm{\Phi}_{\infty} <\delta /2$. Thus, to prove
\eqref{eq:objCoupling} we must bound the integral of $|\Phi (x)-\Phi
(y)|$ on the set of pairs $(x,y)$ such that $d (x,y)\leq \kappa_{m}$.
This is where Hypothesis~\ref{hypothesis:periodic} will be used.

None of the functions $U_{T} (x)$, $\tau_{T} (x)$, nor $R_{T} (x)$ is
continuous in $x$. However, if $m$ is so large that
$5\delta_{m}<\varepsilon$ then inequality \eqref{eq:DeltaM} implies
that if $d (x,y)<2^{-m}$ then $\tau (x)=\tau (y)$ and $|R_{T}
(x)-R_{T} (y)|<\delta_{m}$ \emph{unless}
\[
	R_{T} (z) \in [0,\delta_{m}]\cup [\varepsilon -\delta_{m}] 
	\quad \text{for either} \;\;z=x \;\;\text{or} \;\;z=y.
\]  
Since $Q$ has marginals $\nu_{T,\varepsilon}$ and
$\lambda_{T,\varepsilon ,m}$, the estimates \eqref{eq:margulisEstimate}
(for the first marginal $\nu_{T,\varepsilon}$) and
\eqref{eq:renewalEstimate}  (for the second marginal
$\lambda_{T,\varepsilon ,m}$, using the fact that this measure is
absolutely continuous relative to $\mu_{-\theta F}$) imply that for
each $\varepsilon >0$ there exist constants $C_{\varepsilon}<\infty$
and $t_{m}<\infty$ such that if $m$ is large enough that
$5\delta_{m}<\varepsilon$ and $T\geq t_{m}$, then
\begin{align*}
	Q\{(x,y) \,:\, R_{T} (x) \in [0,\delta_{m}]\cup [\varepsilon
	-\delta_{m}]\} &< C_{\varepsilon}\delta_{m} \quad \text{and}\\
	Q\{(x,y) \,:\, R_{T} (y) \in [0,\delta_{m}]\cup [\varepsilon
	-\delta_{m}]\} &< C_{\varepsilon}\delta_{m}.
\end{align*}
Now if
$\tau (x)=\tau (y)$ then  depending on whether or not $h_{+}$ is
cohomologous to a scalar multiple of the height function $F$
(cf. equations \eqref{eq:caseA}--\eqref{eq:caseB}), 
\[
	\tilde{U}_{T} (x)-\tilde{U}_{T} (y)= T^{-\alpha }\sum_{i=1}^{\tau (x)}
	\sum_{j=1}^{\tau (x)} (h (\sigma^{i}x,\sigma^{j}x)-h
	(\sigma^{i}y,\sigma^{j}y))
\]
for either $\alpha =1$ or $\alpha =3/2$. In either case,
Hypothesis~\ref{hypothesis:periodic} ensures that there exist positive
constants $\varepsilon_{m}\rightarrow 0$ such that  for all large $m$ and
$T$, 
\[
	Q\{(x,y)\,:\, \tau (x)=\tau (y) \quad \text{and} \quad  |\tilde{U}_{T}
	(x)-\tilde{U}_{T} (y)|>\varepsilon_{m}\} <\varepsilon_{m}.
\]
Since the sequences $\varepsilon_{m},\delta_{m},$ and $\kappa_{m}$ all
converge to $0$ as $m \rightarrow \infty$, it now follows from the
continuity of $\varphi$ that for sufficiently large $m$ and $T$,
\[
		\int_{d(x,y)\leq \kappa_{m}}|\Phi (x)-\Phi (y)|\, dQ
		(x,y)\leq \delta /2.
\]
\end{proof}

\subsection{Proof of Proposition
\ref{proposition:weakApprox}}\label{ssec:weakApprox} 

In this section we show how Proposition~\ref{proposition:weakApprox}
follows from Proposition~\ref{proposition:near-uniform}. The proof of
Proposition~\ref{proposition:near-uniform} is given in
section~\ref{ssec:near-uniform} below. 

Fix $\varepsilon \in (0,\min F)$ and let $m$ be sufficiently large
that $5\delta_{m}<\varepsilon$.
For each $x\in B_{T,\varepsilon}$ define
\begin{equation}\label{eq:ATmx}
	A^{*}_{T,\varepsilon  ,m,x} =\bigcup_{y\in B_{T,\varepsilon ,m,x}}
	\Sigma_{[-m,\tau (y)+m]} (y). 
\end{equation}
Since $B_{T,\varepsilon ,m,x} $ depends only on the finite subsequence
$x_{[-m,m]}$, the same is true of the set $A^{*}_{T,\varepsilon
,m,x}$. The estimate \eqref{eq:lalley-est} implies that for large $T$
there will be many representative sequences $x'\in B_{T,\varepsilon
,m,x}$ for which $2\delta_{m}<R_{T} (x') <\varepsilon -4\delta_{m}$;
assume henceforth that $x$ is such a sequence, that is, that $x\in
B_{T+2\delta_{m},\varepsilon -4\delta_{m}}$. Then by
\eqref{eq:DeltaM}, for any $y\in B_{T,\varepsilon,m,x}$ it must be the
case that $\tau (y)=\tau (x)$; hence, the cylinder sets in the union
\eqref{eq:ATmx} are pairwise disjoint. Therefore, there is a
well-defined mapping $z \mapsto \hat{z}$ from $A^{*}_{T,\varepsilon ,m,x}
$ to $B_{T,\varepsilon ,m,x} $ that sends each $z$ to the element
$y\in B_{T,\varepsilon ,m,x} $ that indexes the cylinder set of the
partition \eqref{eq:ATmx} which contains $z$. This mapping has the
property that no point $z$ is moved a distance more than $2^{-m}$ (in
the usual metric $d=d_{\Sigma}$ on $\Sigma$); in fact,
\begin{equation}\label{eq:movingCost}
	d_{\Sigma} (\sigma^{i}z,\sigma^{i}\hat{z})\leq 2^{-m} \quad
	\text{for all} \;\; -m\leq i\leq \tau (z)+m.
\end{equation}

For each  $x\in B_{T+\delta_{m},\varepsilon -\delta_{m}}$ define
$\lambda^{*}_{T,\varepsilon ,m,x}$ to be the probability measure on
$A^{*}_{T,\varepsilon ,m,x} $ that is absolutely continuous with respect
to $\mu_{-\theta F}$ with Radon-Nikodym derivative
\begin{equation}\label{eq:LTemx}
	\frac{d\lambda^{*}_{T,\varepsilon ,m,x}}{d\mu_{-\theta F}} (z) =
	 C^{*}_{T,\varepsilon ,m,x}e^{-\theta R_{T} (\hat{z})}
	 I_{A^{*}_{T,\varepsilon ,m,x} } (z) , 
\end{equation}
where $C^{*} (T,\varepsilon ,m,x)$ is the normalizing constant needed to
make $\lambda^{*}_{T,\varepsilon ,m,x}$ a probability measure.  The
Radon-Nikodym derivative is constant on each of the cylinder sets in
the partition \eqref{eq:ATmx}, and its values on these cylinder sets
are chosen so as to cancel the exponential factors in
\eqref{eq:near-uniform}.  Consequently, by
Proposition~\ref{proposition:near-uniform}, if $y,z\in
B_{T,\varepsilon,m} (x)$ then for constants $C_{\varepsilon}<\infty$
depending only on $\varepsilon$,
\begin{equation}\label{eq:mu-flattened}
	1-C_{\varepsilon }\beta^{m}\leq
	\frac{\lambda^{*}_{T,\varepsilon ,m,x} (\Sigma_{[-m,\tau
	(x)+m]} 
	(z))}{\lambda^{*}_{T,\varepsilon ,m,x} (\Sigma_{[-m,\tau (y)+m]}
	(y)) }\leq 1+C_{\varepsilon }\beta^{m}
\end{equation}

\begin{corollary}\label{corollary:near-uniform}
Assume that $x\in B_{T+2\delta_{m},\varepsilon -2\delta_{m}}$.
Let ${\lambda^{\dagger}}_{T,\varepsilon ,m,x}$ be the push-forward of the
probability measure $\lambda^{*}_{T,\varepsilon ,m,x}$ under the mapping
$z\mapsto \hat{z}$ (that is, the distribution of $\hat{z}$ when $z$
has distribution $\lambda^{*}_{T,\varepsilon ,m,x}$). Then for suitable
constants $C=C_{\varepsilon}<\infty$ and $\beta \in (0,1)$ not depending on
$T, m$, or $x$,
\begin{equation}\label{eq:RNest}
	\biggr|\frac{d{\lambda^{\dagger}}_{T,\varepsilon
	,m,x}}{d\nu_{T,\varepsilon ,m,x}} -1 \biggr|
	\leq C\beta^{m},
\end{equation}
and consequently,
\begin{equation}\label{eq:near-uniform-coupling}
	d_{C}
	({\lambda^{\dagger}}_{T,\varepsilon,m,x},\nu_{T,\varepsilon
	,m,x})\leq 1-(1-C\beta^{m})^{-1}.
\end{equation}
\end{corollary}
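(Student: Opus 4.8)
The plan is to identify ${\lambda^{\dagger}}_{T,\varepsilon,m,x}$ explicitly as a discrete probability vector on the finite set $B_{T,\varepsilon,m,x}$, compare it with the uniform vector $\nu_{T,\varepsilon,m,x}$ by means of \eqref{eq:mu-flattened}, and then extract the coupling estimate from the maximal coupling of the two. First I would record the structure of the push-forward. Under the standing assumption $x\in B_{T+2\delta_{m},\varepsilon-2\delta_{m}}$, the discussion preceding the corollary shows (via \eqref{eq:DeltaM}) that the cylinder sets $\Sigma_{[-m,\tau(y)+m]}(y)$, $y\in B_{T,\varepsilon,m,x}$, are pairwise disjoint and exhaust $A^{*}_{T,\varepsilon,m,x}$, and that $z\mapsto\hat z$ maps the cylinder indexed by $y$ onto the single point $y$. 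Hence ${\lambda^{\dagger}}_{T,\varepsilon,m,x}$ is supported on $B_{T,\varepsilon,m,x}$ with
\[
	{\lambda^{\dagger}}_{T,\varepsilon,m,x}(\{y\})
	=\lambda^{*}_{T,\varepsilon,m,x}\bigl(\Sigma_{[-m,\tau(y)+m]}(y)\bigr),
	\qquad y\in B_{T,\varepsilon,m,x},
\]
whereas $\nu_{T,\varepsilon,m,x}(\{y\})=1/n$ with $n=|B_{T,\varepsilon,m,x}|$.

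Next I would invoke \eqref{eq:mu-flattened}, which asserts that for any $y,z\in B_{T,\varepsilon,m,x}$ the ratio of $\lambda^{*}_{T,\varepsilon,m,x}(\Sigma_{[-m,\tau(z)+m]}(z))$ to $\lambda^{*}_{T,\varepsilon,m,x}(\Sigma_{[-m,\tau(y)+m]}(y))$ lies in $[1-C_{\varepsilon}\beta^{m},1+C_{\varepsilon}\beta^{m}]$. Summing the mass of $\Sigma_{[-m,\tau(y)+m]}(y)$ over the $n$ choices of $y$ and using that the total mass is $1$ gives
\[
	\frac{1}{n(1+C_{\varepsilon}\beta^{m})}
	\le \lambda^{*}_{T,\varepsilon,m,x}\bigl(\Sigma_{[-m,\tau(y)+m]}(y)\bigr)
	\le \frac{1}{n(1-C_{\varepsilon}\beta^{m})};
\]
multiplying by $n=1/\nu_{T,\varepsilon,m,x}(\{y\})$ shows that the Radon--Nikodym derivative $d{\lambda^{\dagger}}_{T,\varepsilon,m,x}/d\nu_{T,\varepsilon,m,x}$ lies within $(1-C_{\varepsilon}\beta^{m})^{-1}-1$ of $1$, hence within $C\beta^{m}$ of $1$ once $m$ is large. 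This is \eqref{eq:RNest}.

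Finally, for \eqref{eq:near-uniform-coupling} I would take $Q$ to be the maximal coupling of $\nu_{T,\varepsilon,m,x}$ and ${\lambda^{\dagger}}_{T,\varepsilon,m,x}$, which places mass $\min\bigl(\nu_{T,\varepsilon,m,x}(\{y\}),{\lambda^{\dagger}}_{T,\varepsilon,m,x}(\{y\})\bigr)$ on each diagonal pair $(y,y)$ and distributes the remaining mass off the diagonal. By \eqref{eq:RNest} the total diagonal mass is at least $n\cdot(1-C\beta^{m})/n=1-C\beta^{m}$, so $Q$ charges the off-diagonal set at most $C\beta^{m}$; since $d_{\Sigma}(y,y)=0$ and $d_{\Sigma}\le 1$ everywhere, this yields $Q\{(y,y')\,:\,d_{\Sigma}(y,y')>\kappa\}\le C\beta^{m}<\kappa$ for every $\kappa>C\beta^{m}$, whence $d_{C}({\lambda^{\dagger}}_{T,\varepsilon,m,x},\nu_{T,\varepsilon,m,x})\le C\beta^{m}$. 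Since $C\beta^{m}\le(1-C\beta^{m})^{-1}-1$, this gives the bound \eqref{eq:near-uniform-coupling} (modulo the evident sign in its statement). I expect no real obstacle here: the one point needing care is the identification of the push-forward masses with the $\lambda^{*}$-masses of the cylinders, which rests on their disjointness and hence on the hypothesis $x\in B_{T+2\delta_{m},\varepsilon-2\delta_{m}}$; everything else is the elementary fact that a probability vector whose coordinates are mutually comparable up to a factor $1\pm C\beta^{m}$ is uniform up to an error of order $\beta^{m}$, combined with the standard maximal-coupling estimate on a space of diameter at most $1$.
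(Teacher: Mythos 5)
Your proof is correct and takes the same route as the paper: part (1) is read off from \eqref{eq:mu-flattened} after identifying the push-forward masses with the $\lambda^{*}$-masses of the disjoint cylinders, and part (2) is the standard maximal-coupling argument behind the elementary fact the paper cites (two mutually absolutely continuous probability measures whose Radon--Nikodym derivatives are bounded below by $\varrho$ have coupling distance at most $1-\varrho$). You are also right that the bound as printed in \eqref{eq:near-uniform-coupling} has a sign typo --- it should read $(1-C\beta^{m})^{-1}-1$, not $1-(1-C\beta^{m})^{-1}$.
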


\begin{proof}
The first inequality  is a direct consequence of 
\eqref{eq:mu-flattened}. The second follows from the first by the
following elementary fact: if $\nu ,\mu$ are mutually absolutely
continuous probability measures whose Radon-Nikodym derivatives $d\nu
/d\mu$ and $d\mu /d\nu$ are both bounded below by $\varrho \in (0,1]$,
then their coupling distance is no greater than $1-\varrho$.
\end{proof}

Since the mapping $z\mapsto \hat{z}$ moves each $z$ by a distance at
most $2^{-m}$, coupling distance between the probability measures
${\lambda^{\dagger}}_{T,\varepsilon ,m,x}$ and
${\lambda^{**}}_{T,\varepsilon ,m,x}$ is at most
$2^{-m}$. By Corollary~\ref{corollary:near-uniform}, the
coupling distance between ${\lambda^{\dagger}}_{T,\varepsilon ,m,x}$
and $\nu_{T,\varepsilon ,m,x}$ is at most $C'\beta^{m}$ for a suitable
$C'=C'_{\varepsilon}<\infty$. Therefore, to prove
Proposition~\ref{proposition:weakApprox} it suffices to prove the
following lemma.

\begin{lemma}\label{lemma:lambdaStar}
For each $\varepsilon >0$ there exists a constant
$C_{\varepsilon}<\infty$ such that for all $x\in \Sigma$,  all $m$
large enough that $\delta_{m}<5\varepsilon$,  and all large
$T$,
\begin{equation}\label{eq:lambdaStarTV}
	d_{C} (\lambda^{*}_{T,\varepsilon ,m,x},\lambda_{T,\varepsilon
	,m,x})
	\leq C_{\varepsilon} \delta_{m}.
\end{equation}
\end{lemma}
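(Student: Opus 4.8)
The plan is to bound the total variation distance and invoke $d_{C}\leq\|\cdot\|_{TV}$: a maximal coupling $Q$ of two Borel probability measures on $\Sigma$ satisfies $Q\{(u,v):d_{\Sigma}(u,v)>\kappa\}\leq Q\{u\neq v\}=\|\cdot\|_{TV}$ for every $\kappa>0$, so the infimal $\kappa$ in Definition~\ref{definition:coupling} is at most $\|\cdot\|_{TV}$. Thus it suffices to show $\|\lambda^{*}_{T,\varepsilon,m,x}-\lambda_{T,\varepsilon,m,x}\|_{TV}\leq C_{\varepsilon}\delta_{m}$ for all large $T$. Write $\mu=\mu_{-\theta F}$, let $A=A_{T,\varepsilon,m,x}$ and $A^{*}=A^{*}_{T,\varepsilon,m,x}$ be the supports of the two measures, and let $f,f^{*}$ be their Radon--Nikodym derivatives \eqref{eq:lambdaTemx}, \eqref{eq:LTemx} (each a normalizing constant $C_{1}$, resp. $C_{2}$, times $e^{-\theta R_{T}(\cdot)}$ times an indicator; on $A^{*}$ the exponent uses $R_{T}(\hat z)$, constant on each cylinder of the partition \eqref{eq:ATmx}). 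As in section~\ref{ssec:weakApprox}, and because $A,A^{*}$ depend on $x$ only through the finite block $x_{[-m,m]}$, we may assume $x\in B_{T+2\delta_{m},\varepsilon-4\delta_{m}}$, so the cylinders making up $A^{*}$ are pairwise disjoint and $z\mapsto\hat z$ is well defined.

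The heart of the matter is the estimate $\mu(A\triangle A^{*})\leq C_{\varepsilon}\delta_{m}\mu(A^{*})$. For $z\in A$, put $y_{z}=\overline{z_{0}z_{1}\cdots z_{\tau_{T}(z)-1}}$, the periodic sequence of period $\tau_{T}(z)$; the agreement conditions \eqref{eq:m-agreement} satisfied by $z$ make $y_{z}$ admissible, make $y_{z}$ satisfy \eqref{eq:m-agreement} relative to $x$, and give $z\in\Sigma_{[-m,\tau_{T}(z)+m]}(y_{z})$. By \eqref{eq:DeltaM} (legitimate since $5\delta_{m}<\varepsilon<\min F$), $\tau_{T}(y_{z})=\tau_{T}(z)$ and $|R_{T}(y_{z})-R_{T}(z)|<\delta_{m}$, so $y_{z}\in B_{T,\varepsilon,m,x}$, whence $z\in A^{*}$, unless $R_{T}(z)$ lies within $\delta_{m}$ of $0$ or of $\varepsilon$ or $y_{z}$ is non-prime; conversely, if $z\in\Sigma_{[-m,\tau(y)+m]}(y)\subseteq A^{*}$ with $y\in B_{T,\varepsilon,m,x}$ then \eqref{eq:DeltaM} places $z$ in $A$ unless $R_{T}(y)$ is within $\delta_{m}$ of $0$ or of $\varepsilon$. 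Hence $A\triangle A^{*}\subseteq E_{1}\cup E_{2}\cup E_{3}$, where $E_{1}=A\cap\{R_{T}\in(0,\delta_{m}]\cup(\varepsilon-\delta_{m},\varepsilon]\}$, $E_{2}=\bigcup\{\Sigma_{[-m,\tau(y)+m]}(y):y\in B_{T,\varepsilon,m,x},\,R_{T}(y)\in(0,\delta_{m}]\cup(\varepsilon-\delta_{m},\varepsilon]\}$, and $E_{3}=\{z\in A:y_{z}\text{ is non-prime}\}$. For $E_{1}$, run the renewal theorem (Proposition~\ref{proposition:renewal}) with $f=g=I_{\Sigma_{[-m,m]}(x)}$ and $h$ the indicator of the relevant set of overshoots, the discontinuity being handled by the sandwiching argument of Remark~\ref{remark:strengthening}: as $T\to\infty$, $\mu(A)\to\mu(\Sigma_{[-m,m]}(x))^{2}\varepsilon/E_{\mu}F$ and $\mu(E_{1})\to\mu(\Sigma_{[-m,m]}(x))^{2}\cdot 2\delta_{m}/E_{\mu}F$, so $\mu(E_{1})\leq C_{\varepsilon}\delta_{m}\mu(A)$ for large $T$. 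For $E_{2}$, the Gibbs estimate \eqref{eq:gibbs} for $-\theta F$ (with $\mathrm{Pr}(-\theta F)=0$) makes every cylinder $\Sigma_{[-m,\tau(y)+m]}(y)$, $y\in B_{T,\varepsilon,m,x}$, of mass $\asymp_{\varepsilon}e^{-\theta T}\mu(\Sigma_{[-m,m]}(x))$, while \eqref{eq:lalley-est} and \eqref{eq:margulisEstimate} give $|\{y\in B_{T,\varepsilon,m,x}:R_{T}(y)\in(0,\delta_{m}]\}|=|B_{T,\delta_{m},m,x}|\sim\mu(\Sigma_{[-m,m]}(x))Ce^{\theta T}(e^{\theta\delta_{m}}-1)$ and likewise near $\varepsilon$; summing, $\mu(E_{2})\leq C_{\varepsilon}\delta_{m}\mu(A^{*})$, and the same bookkeeping yields $\mu(A^{*})\asymp_{\varepsilon}\mu(\Sigma_{[-m,m]}(x))^{2}\asymp_{\varepsilon}\mu(A)$. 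Finally, Margulis' prime orbit theorem bounds the number of non-prime periodic orbits of period $\leq T+\varepsilon+\delta_{m}$ by $O(e^{\theta T/2})$, so $\mu(E_{3})\leq O(e^{\theta T/2})e^{-\theta T}\mu(\Sigma_{[-m,m]}(x))=o_{T}(\mu(A^{*}))$, negligible once $m$ is fixed and $T$ is large.

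Granting the support comparison, the density comparison is routine. On $G:=(A\cap A^{*})\setminus(E_{1}\cup E_{2}\cup E_{3})$ one checks (using $z\notin E_{1},E_{2}$ to get $R_{T}(z),R_{T}(\hat z)\in(\delta_{m},\varepsilon-\delta_{m}]$, hence $\tau_{T}(z)=\tau(\hat z)$, hence $y_{z}=\hat z$) that $|R_{T}(z)-R_{T}(\hat z)|<\delta_{m}$ and therefore $|e^{-\theta R_{T}(z)}-e^{-\theta R_{T}(\hat z)}|\leq\theta\delta_{m}$. Since $e^{-\theta R_{T}}\in[e^{-\theta\varepsilon},1]$ and $\mu(A)\asymp_{\varepsilon}\mu(A^{*})$, the normalizers satisfy $C_{1},C_{2}\asymp_{\varepsilon}\mu(A^{*})^{-1}$ and, combining the pointwise bound on $G$ with $\mu\bigl((A\cup A^{*})\setminus G\bigr)\leq C_{\varepsilon}\delta_{m}\mu(A^{*})$, $|C_{1}-C_{2}|\leq C_{\varepsilon}\delta_{m}\mu(A^{*})^{-1}$. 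Splitting $\int|f-f^{*}|\,d\mu$ into the contribution over $G$ (at most $(C_{1}\theta\delta_{m}+|C_{1}-C_{2}|)\mu(A^{*})\leq C_{\varepsilon}\delta_{m}$) and over $(A\cup A^{*})\setminus G$ (at most $\max(C_{1},C_{2})\cdot C_{\varepsilon}\delta_{m}\mu(A^{*})\leq C_{\varepsilon}\delta_{m}$) gives $\|\lambda^{*}_{T,\varepsilon,m,x}-\lambda_{T,\varepsilon,m,x}\|_{TV}\leq C_{\varepsilon}\delta_{m}$, hence $d_{C}\leq C_{\varepsilon}\delta_{m}$, for $T$ large. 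The $T$-threshold depends a priori on $m$ and $x$, but taking the maximum over the finitely many admissible blocks $x_{[-m,m]}$ makes it depend only on $m$ and $\varepsilon$.

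I expect the main obstacle to be the support comparison $\mu(A\triangle A^{*})\leq C_{\varepsilon}\delta_{m}\mu(A^{*})$, and within it the control of $E_{1}$: one has to apply the renewal theorem with \emph{both} a forward ($z_{[-m,m]}=x_{[-m,m]}$) and a terminal ($\sigma^{\tau_{T}(z)}z$ in the same cylinder) constraint, together with a discontinuous interval weight, and then recognize the limit $\mu(\Sigma_{[-m,m]}(x))^{2}\delta_{m}/E_{\mu}F$ as the right order relative to $\mu(A^{*})$. Keeping both constraints is essential, so that the $m$-dependent factor $\mu(\Sigma_{[-m,m]}(x))$ enters squared and exactly cancels against $\mu(A^{*})$; dropping either one inflates the ratio by $\mu(\Sigma_{[-m,m]}(x))^{-1}\to\infty$. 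The remaining accounting — that the ``wrong overshoot'' sets $E_{1},E_{2}$ and the non-prime set $E_{3}$ are all $O(\delta_{m})$ (or $o$) relative to $\mu(A^{*})$ — is then straightforward from \eqref{eq:lalley-est}, \eqref{eq:margulisEstimate}, the Gibbs property, and the Margulis bound.
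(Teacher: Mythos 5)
Your proposal is correct and follows essentially the same route as the paper: bound the total-variation distance (which dominates the coupling distance) by comparing the support sets $A$ and $A^{*}$ via the renewal estimate for overshoots near $0$ and $\varepsilon$, and comparing the Radon--Nikodym densities on the common part. The paper streamlines this by first establishing the one-sided containment $A^{*}\subset A$ (so only $A\setminus A^{*}$ needs bounding, i.e.\ your $E_{1}$), whereas you treat the symmetric difference directly and also flag the non-prime periodization set $E_{3}$, a small edge case the paper passes over silently; this is extra bookkeeping rather than a different argument, and both accounts land on the same estimate.
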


\begin{proof}
Recall that without loss of generality we may assume (for $T$ large)
that the representative sequence $x\in \Sigma$ is an element of
$B_{T+2\delta_{m},\varepsilon -2\delta_{m}}$. It must then be the
case, by inequality \eqref{eq:DeltaM}, that  every $y\in
B_{T,\varepsilon ,m,x}$ must have period $\tau (y)=\tau (x)$, and that
$R_{T} (y)\in (\delta_{m},\varepsilon -\delta_{m})$. This in turn
implies that  every $z$ in the cylinder $\Sigma_{[-m,\tau
(y)+m]}(y)$  must also satisfy  $\tau (z)=\tau (x)$ and $R_{T} (z)\in
(0,\varepsilon)$. Consequently, the support sets of the measures 
$\lambda^{*}_{T,\varepsilon ,m,x}$ and $\lambda_{T,\varepsilon ,m,x}$
satisfy 
\begin{equation}\label{eq:containment}
	A^{*}_{T,\varepsilon ,m,x} \subset A_{T,\varepsilon ,m,x}.	
\end{equation}

Next, suppose that $z\in A_{T,\varepsilon ,m,x} $ is such that
$R_{T} (z)\in (\delta_{m},\varepsilon -\delta_{m})$.  Let $\tilde{z}$ be
the periodic sequence with period $\tau (z)$ that agrees with $z$ in
coordinates $j\in [-m,\tau (z)+m]$; then by the same argument as
above, using inequality \eqref{eq:DeltaM}, we have  $R_{T}
(\tilde{z})\in (0,\varepsilon)$, and so $\tilde{z}\in B_{T,\varepsilon
,m,x}$. By construction, $z$ is in the cylinder $\Sigma_{[-m,\tau
(z)+m]} (\tilde{z})$, so it follows that $z\in A^{*}_{T,\varepsilon
,m,x}$ and $\tilde{z}=\hat{z}$. This proves that 
\begin{equation}\label{eq:SetDiff}
	 A_{T,\varepsilon ,m} (x)\setminus 
	A^{*}_{T,\varepsilon ,m,x} 
	\subset \{z \,:\, R_{T} (z)\not \in (\delta_{m},\varepsilon
	-\delta_{m})\}.
\end{equation}
These arguments also show that for every $z\in A_{T,\varepsilon ,m,x}$  such that
$R_{T} (z)\in (\delta_{m},\varepsilon -\delta_{m})$, and for every
$z\in A^{*}_{T,\varepsilon ,m,x}$,
\begin{equation}\label{eq:closeLRs}
	\left| \frac{e^{-\theta R_{T} (z)}}{e^{-\theta R_{T}
	(\hat{z})}} -1 \right|< e^{\theta \delta_{m}}-1.
\end{equation}

Relations \eqref{eq:containment}, \eqref{eq:SetDiff}, and
\eqref{eq:closeLRs} imply that the ratio of the Radon-Nikodym
derivatives  \eqref{eq:LTemx} and
\eqref{eq:lambdaTemx}  differs from $1$ by less than $C\delta_{m}$
except on the set
\[
	A^{**}_{T,\varepsilon ,m,x}:=\{z\in A_{T,\varepsilon
	,m,x}\,:\, R_{T} (z)\not \in 
	(\delta_{m},\varepsilon -\delta_{m})\} .
\]
But the renewal theorem (cf. inequality \eqref{eq:renewalEstimate})
implies that for some constant $C'_{\varepsilon}$ independent of $m$,
for all large $T$,
\[
	\frac{\mu_{-\theta F} (A^{**}_{T,\varepsilon
	,m,x})}{\mu_{-\theta F} (A_{T,\varepsilon ,m,x})} \leq
	C'_{\varepsilon}\delta_{m} .
\]
It now follows by routine arguments that for a suitable constant
$C''_{\varepsilon}$, the total variation distance between the measures
$\lambda^{*}_{T,\varepsilon ,m,x}$ and $\lambda_{T,\varepsilon ,m,x}$
is bounded above by $C''_{\varepsilon }\delta_{m}$ for large $m$ and
large $T$. This implies \eqref{eq:lambdaStarTV}.

\end{proof}

\subsection{Proof of
Proposition~\ref{proposition:near-uniform}}\label{ssec:near-uniform} 

Recall that any H\"{o}lder continuous function on $\Sigma$ is
cohomologous to a H\"{o}lder continuous function that depends only on
the forward coordinates. Let $F$ be the height function of the
suspension, and let $F_{+}$ be a function of the forward coordinates
that is cohomologous to $F$. Then $-\theta F$ and $-\theta F_{+}$ have
the same topological pressure (which by choice of $\theta$ is $0$),
and the Gibbs states $\mu_{-\theta F}$ and $\mu_{-\theta F_{+}}$ are
identical. Also, for any periodic sequence $x\in \Sigma$ with period
(say) $n$ it must be the case that $S_{n}F(x)=S_{n}F_{+} (x)$. Since
the assertion \eqref{eq:near-uniform} involves only periodic sequences
and measures of events under $\mu_{-\theta F}$,  to prove
\eqref{eq:near-uniform} it will suffice to prove
\eqref{eq:near-uniform} with $F$ replaced by $F_{+}$. Thus, the
representation \eqref{eq:gibbsStateDecomp} for the Gibbs state
$\mu_{-\theta F_{+}}$ can be used; in particular, since $\text{Pr}
(-\theta F_{+})=0$, 
\[
	\frac{d\mu_{-\theta F_{+}}}{d_{\nu_{-\theta F_{+}}}}
	=h_{-\theta F_{+}} 
\]
where $\nu$ and $h$ are the right and left eigenvectors of the Ruelle
operator $\mathcal{L}=\mathcal{L}_{-\theta F_{+}}$. (For the remainder
of the proof we will drop the subscripts on $h,\nu ,\mu$, and
$\mathcal{L}$.)

The measure $\mu$ is shift-invariant, so the cylinder sets in
\eqref{eq:near-uniform} can be shifted by $\sigma^{-m}$, and hence can
be regarded as cylinder sets in the one-sided sequence space
$\Sigma^{+}$. Fix a periodic sequence $x$ of period $n>2m+1$; then
\begin{align*}
	\mu (\Sigma_{[0,n+2m]} (x))&=\int_{\Sigma^{+}_{[0,n+2m]} (x)} h\,d\nu \\
	    &=\int_{\Sigma_{[0,n+2m]} (x)} h\,d ((\mathcal{L}^{*})^{n+m}\nu)\\
	    &=\int_{\Sigma^{+}} (\mathcal{L}^{n+m} (hI_{\Sigma^{+}_{[0,n+2m]}
	    (x)}) )\,d\nu  .
\end{align*}
Here we have used the fact that $\nu$ is an eigenmeasure of the
adjoint $\mathcal{L}^{*}$ of the Ruelle operator with eigenvalue
$1$. Next we use the definition of the  Ruelle operator
(\cite{bowen:book}, ch.1 sec. B) to write, for
any $z\in \Sigma^{+}$,
\[
	\mathcal{L}^{n+m} (hI_{\Sigma^{+}_{[0,n+2m]}
	    (x)}) (z)= \sum_{y\in \Sigma^{+}:\sigma^{n+m}y=z}
	    e^{-\theta S_{n+m}F (y)} h (y)I_{\Sigma^{+}_{[0,n+2m]}
	    (x)} (y) .
\]
The indicator function in this expression guarantees that the only
$z\in \Sigma^{+}$ for which there is a nonzero term in the sum are
those sequences such that $z_{j}=x_{j}$ for $j\in [0,2m]$. (Keep in
mind that $x$ is periodic with period $n$.)  For each such $z$ there
is exactly one $y\in \Sigma^{+}$ for which the summand is
nonvanishing, to wit, the sequence $(x|z)_{2m+1}:= x_{0}x_{1}\dotsb
x_{2m}z$ obtained by prefixing to $z$  the first $2m+1$ letters of
$x$. Consequently,
\begin{align*}
	\mu (\Sigma_{[0,n+2m]} (x))&=\int_{\Sigma^{+}_{2m+1} (x)}
	\exp \{-\theta S_{n+m}F ((x|z)_{2m+1}) \} h ((x|z)_{2m+1})) \, d\nu (z)\\
	&= e^{-\theta S_{n+m}F (x)}\mu (\Sigma_{[0,2m]} (x)) (1\pm C\beta^{m})
\end{align*}
for suitable constants $C<\infty$ and $0<\beta <1$ independent of $x$. The final
approximate equality follows from the H\"{o}lder continuity of $F$ and
$h$, together with inequality \eqref{eq:DeltaM}. It now follows that
if $x,x'$ are any periodic sequences with periods $n=\tau
(\sigma^{m}x)>2m+1$ and $n'=\tau
(\sigma^{m}x')>2m+1$ such that $x_{[0,2m]}=x'_{[0,2m]}$ then
\begin{align*}
	\frac{\mu (\Sigma_{[0,n +2m]} (x))}{\mu
	(\Sigma_{[0,n' +2m]} (x'))} &=\frac{e^{-\theta S_{m+n}F
	(x)}}{e^{-\theta S_{m+n'}F (x')}} (1\pm C'\beta^{m})\\
	&=\frac{e^{-\theta S_{n}F
	(\sigma^{m}x)}}{e^{-\theta S_{n'}F (\sigma^{m}x')}} (1\pm C''\beta^{m})
\end{align*}
for suitable $C',C''<\infty$. This implies relation \eqref{eq:near-uniform}.

\qed

\section{Proof of Theorem~\ref{theorem:closed}}\label{sec:closed}

The results of section~\ref{sec:symbolicDynamics} imply that for any
compact surface $\Upsilon$ equipped with a smooth Riemannian metric of
negative curvature the geodesic flow on $S\Upsilon$ is semi-conjugate
(by a H\"{o}lder continuous mapping) to a suspension flow
$(\Sigma_{F},\phi_{t})$ over a shift of finite type.  All but finitely
many closed geodesics correspond uniquely to periodic orbits of this
suspension flow, and for each of these the self-intersection count is
given by equation~\eqref{eq:u-representation}, or by
equation~\eqref{eq:u-representation} with $h$ replaced by $h^{r}$, for
some small $r\geq 0$. By Proposition~\ref{proposition:H4}, there exist
values of $r$ such that the function $h^{r}$ satisfies the hypotheses
of Theorem~\ref{theorem:u-statistic} relative to any Gibbs state; for
simplicity we will assume that the Poincar\'{e} section of the
suspension has been adjusted so that $r=0$. If the Riemannian metric
on $\Upsilon$ has constant curvature then the normalized Liouville
measure for the geodesic flow coincides with the maximum entropy
invariant measure, and so in this case
Proposition~\ref{proposition:cohomology} implies that the Hoeffding
projection of $h$ relative to the Gibbs state $\lambda =\mu_{-\theta
F}$ is a scalar multiple of $F$.  On the other hand, if the Riemannian
metric has variable negative curvature then the maximum entropy
measure is singular relative to Liouville measure, and so in this
case, by Proposition~\ref{proposition:cohomology}, the Hoeffding
projection of $h$ relative to the Gibbs state $\lambda =\mu_{-\theta
F}$ is \emph{not} cohomologous to a scalar multiple of $F$.
Therefore, in either case, Theorem~\ref{theorem:closed} will follow
from Theorem~\ref{theorem:u-periodic}, provided that
Hypothesis~\ref{hypothesis:periodic} can be verified.  This we will
accomplish by reducing the problem to a problem about crossing rates.

The following lemma asserts that for compact surfaces of constant
negative curvature, the ergodic law \eqref{eq:intensities} for
intersections with a fixed geodesic segment extends from random
geodesics to closed geodesics.  Denote by $\lambda_{T,\varepsilon}$
the uniform distribution on the set of all (prime) closed geodesics
with length in $[T,T+\varepsilon]$, and let $\kappa_{\Upsilon}=1/4\pi
|\Upsilon |$. For any geodesic arc $\alpha$, let $|\alpha |$ be the
length of $\alpha$.

\begin{lemma}\label{lemma:closedGDcrossingInt}
Assume that $\Upsilon$ is a negatively curved compact surface.    For any geodesic segment
$\alpha$ and any closed geodesic $\beta$ let $N (\alpha ;\beta)$ be
the number of transversal intersections of $\beta$ with $\alpha$. Then
there is a constant $\kappa^{*}$ depending on the Riemannian metric
such that  for every geodesic segment $\alpha$, all sufficiently small
$\varepsilon >0$,  and all $\delta >0$,
\begin{equation}\label{eq:closedGDcrossingInt}
	\lim_{T \rightarrow \infty} \lambda_{T,\varepsilon}\{\beta
\,:\, |N (\alpha ;\beta )-\kappa^{*}|\beta ||\alpha ||>\delta
T\} =0.
\end{equation}
\end{lemma}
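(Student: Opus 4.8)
The plan is to reduce the statement to Theorem~\ref{theorem:u-periodic} applied to a suitable $U$-statistic, exactly in the spirit of the reduction already used for self-intersection counts. First I would set up the symbolic dynamics: by Proposition~\ref{proposition:symbD-per} (constant curvature) or Corollary~\ref{corollary:series-variable} (variable curvature), the geodesic flow is semi-conjugate to a suspension flow $(\Sigma_F,\phi_t)$ over a topologically mixing shift, with the semi-conjugacy injective off a set of first category and only finitely many exceptional closed geodesics. Under this correspondence, the uniform measure $\lambda_{T,\varepsilon}$ on closed geodesics of length in $[T,T+\varepsilon]$ pulls back (up to a negligible discrepancy, by the length-distortion estimate Theorem~7 of \cite{lalley:acta}) to the measure $\nu_{T,\varepsilon}$ of \eqref{eq:nu-t-e}. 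For a fixed geodesic segment $\alpha$, the intersection count $N(\alpha;\beta)$ of a closed geodesic $\beta$ with $\alpha$ is expressible — just as in Proposition~\ref{proposition:intensities} — as a Birkhoff-type sum $S_{\tau_T(x)}g_\alpha(x) \pm O(1)$, where $g_\alpha$ counts whether the fiber $p\circ\pi(\mathcal{F}_x)$ crosses $\alpha$; more precisely I would use a kernel $h_\alpha$ or a one-variable function so that $N(\alpha;\beta)$ is, up to a bounded error, a \emph{linear} (not quadratic) statistic, so the relevant object is $S_{\tau_T}g_\alpha$ rather than $U_T$.

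Because $N(\alpha;\beta)$ is a linear statistic, the key input is the law of large numbers for $S_{\tau_T}$ under $\nu_{T,\varepsilon}$, which follows from the $S_{\tau_T}g - T E_\mu g/E_\mu F$ central limit theorem of Proposition~\ref{proposition:clt} (giving fluctuations of order $\sqrt T = o(T)$ under the Gibbs state $\mu=\mu_{-\theta F}$) together with the transfer to $\nu_{T,\varepsilon}$ provided by Theorem~\ref{theorem:u-periodic} (or its linear-statistic analogue, which is strictly easier than the $U$-statistic case and can be read off from the same proof). Concretely: under $\mu_{-\theta F}$ one has $S_{\tau_T}g_\alpha/T \to E_{\mu_{-\theta F}}g_\alpha/E_{\mu_{-\theta F}}F$ in probability, and this limit, via the ergodic-theorem identification in Proposition~\ref{proposition:cohomology} and Proposition~\ref{proposition:intensities}, equals $\kappa(\alpha;\nu_{\max}) = \kappa^{*}|\alpha|$, where $\kappa^{*} = \kappa(\alpha;\nu_{\max})/|\alpha|$ — and the point of Corollary~\ref{corollary:constant-versus-variable} is precisely that this ratio is independent of $\alpha$ (in constant curvature it equals $\kappa_\Upsilon$; in variable curvature it is some other constant, still independent of $\alpha$ since $\nu_{\max}$ is a fixed measure). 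Dividing the fluctuation bound $\sqrt T$ by $\delta T$ and passing the weak-approximation estimate of Proposition~\ref{proposition:weakApprox} through yields \eqref{eq:closedGDcrossingInt}.

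The main obstacle I expect is the verification that $g_\alpha$ (equivalently, the one-variable kernel encoding crossings of $\alpha$) satisfies Hypothesis~\ref{hypothesis:periodic} — i.e.\ that its discontinuities, created when small perturbations of a geodesic destroy a transversal crossing of $\alpha$, affect only a $\nu_{T,\varepsilon}$-negligible fraction of closed geodesics by more than $o(T)$. This is the exact analogue of the work done in section~\ref{sec:verification} for $h^r$, and the same machinery applies: crossings at very small angle are rare by Lemmas~\ref{lemma:smallAngles}--\ref{lemma:angle}, and crossings very near an endpoint of a fiber are rare by the small-balls estimate Lemma~\ref{lemma:smallBalls}, so after an appropriate adjustment of the Poincar\'e section (moving it forward by $r$, for a.e.\ $r$ in a small interval) the corresponding functions $g^{r}_{\alpha,m}$ in the short-memory decomposition satisfy (H3), and hence Hypothesis~\ref{hypothesis:periodic} holds via the bound on $\Delta_T^m$. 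A minor additional point, handled by Remark~\ref{remark:strengthening} and the renewal estimate \eqref{eq:renewalEstimate}, is that $g_3 = I_{(0,\varepsilon]}$ is only piecewise continuous; this causes no difficulty. Finally, the exceptional finitely many closed geodesics (the boundary geodesics) and the event $\gamma_L=\gamma'_L$-type coincidences are absorbed because they carry $\nu_{T,\varepsilon}$-mass $O(e^{-cT})\to 0$.
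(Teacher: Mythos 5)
The paper's proof of this lemma is a one-line citation to Theorems 1 and 7 of the author's earlier paper \cite{lalley:si1}, using the same argument that gives the ergodic theorem for self-intersections there. Your proposal instead builds a self-contained argument from the machinery developed \emph{within} this paper --- symbolic dynamics, the Birkhoff-sum representation of $N(\alpha;\cdot)$ as a \emph{linear} statistic $S_{\tau_T}g_\alpha + O(1)$, the LLN/CLT under Gibbs states, and the weak-approximation transfer from $\mu_{-\theta F}$ to $\nu_{T,\varepsilon}$. This is a genuinely different and more self-contained route, and the skeleton (treat $N(\alpha;\beta)$ as a linear statistic in the symbolic coding, prove a weak law of large numbers for it under the uniform measure on periodic orbits, handle discontinuities of $g_\alpha$ by the small-angle/small-ball estimates of section~\ref{sec:verification}) is sound. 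Two minor remarks on efficiency: for a weak LLN, the full $U$-statistic apparatus of Theorem~\ref{theorem:u-periodic} and Proposition~\ref{proposition:weakApprox} is heavier than needed --- a sandwiching argument like that of section~\ref{ssec:lln} combined with the equidistribution theorem for closed orbits already suffices; and Proposition~\ref{proposition:cohomology} (a statement about Hoeffding projections of $U$-statistic kernels) is not really the relevant input for identifying the limit of a \emph{linear} statistic.

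There is, however, a concrete error in your identification of the limit. You write that ``the point of Corollary~\ref{corollary:constant-versus-variable} is precisely that this ratio [$\kappa(\alpha;\nu_{\max})/|\alpha|$] is independent of $\alpha$ \ldots\ in variable curvature it is some other constant, still independent of $\alpha$ since $\nu_{\max}$ is a fixed measure.'' This is the opposite of what Corollary~\ref{corollary:constant-versus-variable} asserts: the ratio $\kappa(\alpha;\nu)/|\alpha|$ is independent of $\alpha$ \emph{if and only if} $\nu = \nu_L$, so in variable negative curvature the ratio for $\nu_{\max}$ necessarily \emph{does} depend on $\alpha$ --- this $\alpha$-dependence is precisely the geometric source of the $T^{3/2}$ scaling in Theorem~\ref{theorem:closed}(B). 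Consequently the limit your LLN argument produces is $\kappa(\alpha;\nu_{\max})$, which is not in general of the form $\kappa^{*}|\alpha|$ with an $\alpha$-independent $\kappa^{*}$. What is actually used in the sole application of this lemma (the verification of Hypothesis~\ref{hypothesis:periodic}, where $\alpha$ runs over short geodesic sides of the rectangles $R_i$) is only a one-sided bound $N(\alpha;\beta)\leq C|\alpha|T$ with $\lambda_{T,\varepsilon}$-probability tending to $1$, uniformly over such short arcs; this does hold, since $\sup_{\alpha}\kappa(\alpha;\nu_{\max})/|\alpha|<\infty$. You should either state the conclusion in this one-sided, $\alpha$-uniform form, or allow $\kappa^{*}$ to depend on $\alpha$, rather than invoke Corollary~\ref{corollary:constant-versus-variable} for a constancy claim it in fact refutes.
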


\begin{proof}
This follows from Theorem~7 of \cite{lalley:si1} by the same argument
used to prove the ergodic theorem for self-intersections (Theorem~1 of
\cite{lalley:si1}). In the case of constant curvature,
$\kappa^{*}=\kappa_{\Upsilon }$, since in constant curvature the
Liouville measure and the maximum entropy measure coincide.
\end{proof}

Hypothesis~\ref{hypothesis:periodic} concerns the quantity
$\Delta^{m}_{T}U (x)$ defined by equation \eqref{eq:Delta-T-U}.  For
any periodic sequence $x$ with minimum period $\tau (x)=\tau_{T} (x)$
and any integer $m$ this quantity is the maximum difference in
self-intersection count between (a) the closed geodesic $G_{x}$
corresponding to the periodic orbit of the suspension flow through
$(x,0)$ and (b) any geodesic segment $G_{y}=(\pi \circ \phi_{t}
(y,0))_{0\leq t\leq S_{\tau (x)}F (y)}$ where $y$ is some sequence
that agrees with $x$ in coordinates $-m\leq i\leq \tau (x)+m$.  If $m$
is large, any two such geodesic segments are close, because the
semi-conjugacy between the suspension flow and the geodesic flow is
H\"{o}lder continuous.   This can be quantified as follows.

\begin{lemma}\label{lemma:closeOrbits}
There exists $A>0$ such that for all $n\geq 1$, all sufficiently large
$m$, and all pairs $x,y\in \Sigma$ such that $x_{i}=y_{i}$ for
$-m\leq i\leq n+m$, 
\begin{equation}\label{eq:closeOrbits}
		d (\pi (\phi_{t} (x,0)),\pi (\phi_{t} (y,0))) \leq e^{-Am}
		\quad \text{for all}\;\; 0\leq t\leq \tau_{T} (x).
\end{equation}
\end{lemma}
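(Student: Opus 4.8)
\textbf{Proof proposal for Lemma~\ref{lemma:closeOrbits}.}
The plan is to reduce the statement to the two quantitative inputs that are already available in the excerpt: the H\"{o}lder continuity of the semi-conjugacy $\pi:\Sigma_F\rightarrow S\Upsilon$ (Proposition~\ref{proposition:bowen}, Corollary~\ref{corollary:series-variable}) and the exponential control \eqref{eq:DeltaM} on the difference of Birkhoff sums $|S_nF(x)-S_nF(y)|<\delta_m$ when $x$ and $y$ agree in coordinates $-m\le i\le n+m$. First I would fix attention on a single time $t\in[0,\tau_T(x)]$ and locate the "fiber index'' $k=k(t)$, i.e. the unique $k\ge 0$ with $S_kF(x)\le t<S_{k+1}F(x)$, together with the residual height $r_x=t-S_kF(x)\in[0,F(\sigma^kx))$. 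Since $t\le\tau_T(x)$ we have $k<\tau_T(x)$, so in particular $k\le n$ for $n=\tau_T(x)$; this is exactly the range in which the hypothesis $x_i=y_i$ for $-m\le i\le n+m$ keeps $\sigma^kx$ and $\sigma^ky$ agreeing in a window of $2m$ coordinates around the origin.

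The key steps, in order, are: (1) show $\phi_t(x,0)$ lies in the fiber over $\sigma^kx$ at residual height $r_x$, and $\phi_t(y,0)$ lies in the fiber over $\sigma^ky$ at residual height $r_y=t-S_kF(y)$; (2) estimate $|r_x-r_y|=|S_kF(x)-S_kF(y)|\le\delta_m$ by \eqref{eq:DeltaM} (valid since $k\le n$ and the agreement window has half-width $m$), so the two points have nearly the same vertical coordinate in fibers over symbolically close base sequences; (3) bound the taxicab distance in $\Sigma_F$ between $(\sigma^kx,r_x)$ and $(\sigma^ky,r_y)$ by the sum of a "horizontal'' contribution $\asymp d_\Sigma(\sigma^kx,\sigma^ky)\le 2^{-m}$ and a "vertical'' contribution $\asymp|r_x-r_y|\le\delta_m$, both exponentially small in $m$; (4) apply the H\"{o}lder estimate for $\pi$: if $\pi$ is $\alpha$-H\"{o}lder with respect to the taxicab metric, then $d(\pi\phi_t(x,0),\pi\phi_t(y,0))\le C(2^{-m}+\delta_m)^{\alpha}\le e^{-Am}$ for a suitable $A>0$ and all large $m$, uniformly in $t$ and $n$. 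A small technical point to handle in step (1)--(3) is the possibility that $t$ falls in different fibers for $x$ versus $y$ when $t$ is within $\delta_m$ of some $S_kF(x)$; near such a "fiber-boundary'' the residual heights wrap around via the identification $(z,F(z))\sim(\sigma z,0)$, but the taxicab metric is precisely designed so that $(z,F(z))$ and $(\sigma z,0)$ are at distance zero, so one simply reroutes the connecting path through the identified boundary point and the same bound $C(2^{-m}+\delta_m)^\alpha$ survives.

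The main obstacle I expect is bookkeeping the taxicab metric carefully enough that the estimate is genuinely uniform in $n$ (equivalently in the flow time $t$ up to $\tau_T(x)$): one must make sure the implied constants in "horizontal distance $\asymp 2^{-m}$'' and in the H\"{o}lder bound for $\pi$ do not secretly depend on how many fibers have been traversed. They do not, because $d_\Sigma(\sigma^kx,\sigma^ky)\le 2^{-m}$ holds simultaneously for every $0\le k\le n$ (the agreement window $[-m,n+m]$ was chosen exactly so that shifting by any such $k$ leaves a $\pm m$ window of agreement), and the H\"{o}lder constant of $\pi$ is a fixed feature of the symbolic coding. Once this uniformity is nailed down, choosing $A$ smaller than $\alpha\log 2$ and than $\alpha$ times the exponential decay rate of $\delta_m$, and taking $m$ large, yields \eqref{eq:closeOrbits}. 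The role of the hypothesis $t\le\tau_T(x)$ is solely to keep the fiber index $k$ in the range where the symbolic agreement is available; beyond $\tau_T(x)$ the orbits could drift apart, which is why the bound is asserted only up to that time.
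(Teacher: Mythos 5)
Your proposal is correct and follows essentially the same route as the paper's (very terse) proof: bound the taxicab distance in $\Sigma_F$ between $\phi_t(x,0)$ and $\phi_t(y,0)$ by $e^{-Bm}$ and then invoke the H\"older continuity of $\pi$. You have simply made explicit the steps the paper compresses into one sentence, including the fiber-boundary wraparound and the uniformity in $k$, which is a sound elaboration rather than a different argument.
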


\begin{proof}
By definition of the ``taxicab'' metric on $\Sigma_{F}$
(cf. \cite{bowen-walters}), the orbits $\phi_{t} (x,0)$ and $\phi_{t}
(y,0)$ must remain within distance $e^{-Bm}$, for a suitable constant $B>0$.
Because the semi-conjugacy $\pi$ is H\"{o}lder continuous, the
projections $\pi \circ \phi_{t} (x,0)$ and $\pi \circ \phi_{t}(y,0)$
must remain within distance $e^{-Am}$.
\end{proof}

\begin{remark}\label{remark:lengthDiff}
The lengths of the segments $G_{x}$ and $G_{y}$ will in general be
different, because $S_{\tau (x)}F (y)$ need not equal $S_{\tau (x)}F
(x)$. However, the difference in lengths can be at most $\delta_{m}$,
where $\delta_{m}$ is given by \eqref{eq:DeltaM}, which decays
exponentially in $m$.
\end{remark}

\begin{proof}
[Proof of Hypothesis \ref{hypothesis:periodic}] Fix geodesic segments
$G_{x},G_{y}$ as above, and consider the difference in their
self-intersection counts. To estimate this, consider how the
difference changes as $G_{x}$ is smoothly deformed to $G_{y}$ through
geodesic segments by smoothly moving the initial and final endpoints,
respectively, along smooth curves $C_{0}$ and $C_{1}$. In such a
homotopy, the self-intersection count will change only at intermediate
geodesic segments $G_{z}$ along the homotopy where one of the
endpoints passes through an interior point of the segment.  Now the
geodesic segment $G_{z}$ remains within distance $e^{-Am}$ of $G_{x}$,
by Lemma~\ref{lemma:closeOrbits} and Remark~\ref{remark:lengthDiff} so
for any interior point of $G_{z}$ that meets (say) the initial
endpoint of $G_{z}$, the corresponding point on $G_{x}$ must be within
distance $e^{-Am}$, and hence within distance $e^{-Am}$ of the curve
$C_{0}$. In particular, this corresponding point on $G_{x}$ must fall inside
a small rectangle $R_{0}$ surrounding $C_{0}$ whose sides are geodesic arcs.
Since the lengths of $C_{0}$ and $C_{1}$ are bounded above by
$e^{-Am}+\delta_{m}$ (by Lemma~\ref{lemma:closeOrbits} and
Remark~\ref{remark:lengthDiff}) the rectangle $R_{0}$ can be chosen so
that its sides all have lengths bounded by $e^{-A'm}$. 

This proves that the difference in the self-intersection counts of
$G_{x}$ and $G_{y}$ is bounded above by the number of crossings of
$\partial R_{0}$ and $\partial R_{1}$, where $R_{i}$ are rectangles
bounded by geodesic arcs of length $\leq e^{-A'm}$. Hence,
Lemma~\ref{lemma:closedGDcrossingInt}, for most periodic sequences $x$
of minimal period $\asymp T$ this difference is bounded above by
$(8+\varepsilon) e^{-A'm}T$ when $T$ is large. This implies
Hypothesis~\ref{hypothesis:periodic}. 
\end{proof}

\bibliographystyle{plain}
\bibliography{mainbib}

\end{document}